\newtheorem{lemma}{Lemma}[section]
\newtheorem{theorem}[lemma]{Theorem}
\newtheorem{proposition}[lemma]{Proposition}
\newtheorem{corollary}[lemma]{Corollary}
\newtheorem{remark}[lemma]{Remark}
\newtheorem{example}[lemma]{Example}
\newtheorem{definition}[lemma]{Definition}
\newcommand{\sT}{{ \mathscr{T}}}
\newcommand{\sP}{{\mathscr{P}}}
\newcommand{\spazio}[1]{\ell_c^{#1}}
\newcommand{\spazior}[1]{\ell_r^{#1}}
\newcommand{\Br}{B_r}
\newcommand{\di}{{\rm d}}
\newcommand{\bvf}{{\boldsymbol  \varphi}}
\newcommand{\tame}[1]{{\la \abs{#1} \ra}}
\numberwithin{equation}{section}
\renewcommand{\bar}{\overline}
\title{Tame majorant analyticity for   the Birkhoff map of the defocusing Nonlinear Schr\"odinger equation  on the circle}
\author{
A. Maspero
\footnote{
 International School for Advanced Studies (SISSA), Via Bonomea 265, 34136, Trieste, Italy, \newline
 \textit{Email: } \texttt{alberto.maspero@sissa.it}} 
  }
\date{\today}
\begin{document}
	
	\maketitle
	
	\begin{abstract}
	For the defocusing Nonlinear Schr\"odinger equation  on the circle, we construct a Birkhoff map  $\Phi$ which is tame majorant analytic in a neighborhood of the origin.  Roughly speaking, majorant analytic means that  replacing the coefficients of the Taylor expansion of $\Phi$  by their absolute values gives rise to a series (the majorant map) which  is uniformly and absolutely convergent, at least in a small neighborhood. Tame majorant analytic means that the majorant map of $\Phi$ fulfills tame estimates.\\
	The proof is based on a new tame version of the Kuksin-Perelman theorem \cite{kuksinperelman}, which is an infinite dimensional  Vey type theorem. 
 \end{abstract}

\section{Introduction and statement of the main result}
\subsection{Introduction}	
It is well known that the 
cubic defocusing	Nonlinear Schr\"odinger equation (dNLS) on the circle 
\begin{equation}
\label{1dnls0}
\im \dot \varphi = - \partial_{xx} \varphi +2 |\varphi|^2 \varphi \ , \qquad x \in \T:= \R/\Z
\end{equation}
is an integrable system \cite{zakharov_shabat,zakharov_manakov}.
The actual construction of action-angle coordinates is quite complicated, and it has been studied analytically in the last decade  by Gr\'ebert, Kappeler and collaborators in a series of works culminating in \cite{grebert_kappeler}.
 In particular these authors showed that there exists
a globally defined map $\Phi: \varphi \mapsto (z_k, \bar z_k)_{k \in \Z}$, the {\em Birkhoff map}, 
which introduces  {\em Birkhoff coordinates}, namely complex conjugates  canonical coordinates $(z_k, \bar z_k)_{k \in \Z}$,  with the property that the dNLS Hamiltonian, once expressed in such coordinates,  is a real analytic  function of the actions   $I_k := |z_k|^2$ alone.  
As a consequence, in the Birkhoff coordinates the flow \eqref{1dnls0} is conjugated to an infinite chain of nonlinearly coupled oscillators:
\begin{equation}
  \label{dnls.bc}
 \im  \dot z_k = \omega_k(I) z_k\qquad  \forall k \in \Z \ , 
  \end{equation}  
  where the $\omega_k(I)$ are frequencies depending only on the actions $\{I_k\}_{k\in \Z}$. \\
Recently  much effort has been made to understand various analytic properties of the Birkhoff map which are useful in applications.  
 Such properties include for example the $1$-smoothing of the nonlinear part of $\Phi$ \cite{kuksinperelman,kappelerschaad2}, two-sided polynomial estimates on the norm of $\Phi$ \cite{jan1},  extension of $\Phi$ to spaces of low regularity \cite{jan2}.
  
 In this paper we contribute to such analysis by investigating the property of tame majorant  analyticity  of the Birkhoff map. 
 Roughly speaking, an analytic map is  majorant analytic if 
 replacing the coefficients of its Taylor expansion   by their absolute value gives rise to a series (the majorant map) which  is uniformly and absolutely convergent, at least in a small neighborhood.  Then tame majorant analytic means that the majorant map  fulfills tame estimates.
 
  Here we prove that this is indeed true for the Birkhoff map of dNLS, at least  in a small neighborhood of the origin and  in appropriate topologies.
  Our construction of the Birkhoff map is quite different and less explicit than the one of  Gr\'ebert-Kappeler, however  the two Birkhoff maps coincide (up to some normalization) as they are both perturbations of the Fourier transform. 
   
While    Gr\'ebert and Kappeler provide  explicit and globally defined formulas for the action-angle coordinates of dNLS, 
our construction is valid only close to zero and it is based on the Kuksin-Perelman theorem \cite{kuksinperelman}, which is an infinite dimensional  Vey-type  theorem  \cite{vey,eliasson}.
   The Kuksin-Perelman theorem states that given a set of non-canonical coordinates, it is possible,  under certain circumstances, to deform them into canonical Birkhoff coordinates.
   Therefore the main idea of our proof is to construct a starting set of non-canonical coordinates (essentially following the construction of    B\"attig,  Gr\'ebert,  Guillot and Kappeler \cite{battig}), and then show that they   fulfill the assumptions of the Kuksin-Perelman theorem, so that they can be deformed into Birkhoff coordinates.
Actually we need  a little bit more,  since our aim is to construct {\em tame} Birkhoff coordinates. Therefore we prove that the 
starting coordinates are tame majorant analytic, and then we develop a tame version of the Kuksin-Perelman theorem,
 which guarantees that if the starting coordinates are tame-majorant analytic, so are the final Birkhoff coordinates. 
We think that  the tame version of Kuksin-Perelman theorem   could be interesting in itself.

 Majorant analyticity and tameness of vector fields are properties extremely useful in perturbation theory and when one wants to apply Birkhoff normal form techniques. 
Indeed  such properties are closed under composition, generation of flows and solution of homological equations, which are  the typical operations needed in a perturbative scheme.
 This makes tame majorant analyticity an extremely robust tool when investigating stability of solutions. 
  For example, majorant analyticity was  used by  Nikolenko \cite{nikolenko}  to obtain  Poincar\'e normal forms for some dissipative PDEs.
  Similarly,  tame majorant analyticity was exploited 
	by Bambusi and Gr\'ebert \cite{bambusi.grebert}  to develop Birkhoff normal form theory for a wide class of Hamiltonian PDEs, 
and by 
Cong,  Liu and Yuan	\cite{cong} to study long time stability of small KAM tori of NLS with external potential; see also Berti, Biasco and Procesi \cite{bbp13} for applications to KAM theory.

Concerning the  usefulness of tame  properties in  perturbation theory, the idea is essentially the following. Tame estimates are estimates linear in the highest norm, a typical example being  $\norm{u^n}_{H^s} \leq C  \norm{u}_{H^s} \norm{u}_{H^1}^{n-1}$; such estimates  allow to control the size of a  nonlinear term in a high regularity norm by conditions on the size of the function in a lower regularity norm.
In  \cite{bambusi.grebert} this property is exploited to show that, in the algorithm of  Birkhoff normal form,  large parts of the nonlinearity   are actually  very small in size and therefore harmless. 
A different applications of tame estimates is in differentiable Nash-Moser scheme, see e.g. \cite{bbp}; in this case the employ  of tame estimates is one of the necessary ingredients for the convergence of the quadratic scheme.\\
Also our interest in tame majorant analyticity of the Birkhoff map of dNLS  was first motivated by applications:   in the  paper \cite{masp_nls}, we  discuss the stability  of small finite gap solutions of \eqref{1dnls0} when they are considered as solutions of the defocusing NLS on $\T^2$. We first introduce Birkhoff coordinates and then perform  a few steps of Birkhoff normal form.  As in \cite{bambusi.grebert} this requires the {\em majorant analyticity} of the Hamiltonian. 

Furthermore we think that properties of tame majorant analyticity of the Birkhoff map might be useful in the study of long time stability of perturbed dNLS.

\subsection{Main result} 
 
  As usual it is convenient to augment  \eqref{1dnls0} with the conjugated equation for $\bar \varphi$ and  to consider $(\varphi, \bar\varphi)$ as independent variables belonging to the phase space  $L^2_c := L^2(\T, \C) \times L^2(\T, \C)$  with elements $\varphi = (\varphi_1, \varphi_2)$. 
   More generally we denote $H^s_c := H^s(\T, \C) \times H^s(\T, \C)$ for any $s \in \R$. 
 The dNLS Hamiltonian is given by 
 $$
 H_{{\rm NLS}}(\varphi_1, \varphi_2) = \int_\T \Big(\partial_x \varphi_1(x) \, \partial_x \varphi_2(x) + \varphi_1^2(x) \, \varphi_2^2(x) \Big) \, dx
 $$
 and the corresponding Hamiltonian system is 
 \begin{equation}
 \begin{cases}
 \im \dot \varphi_1 = \partial_{\varphi_2} H_{{\rm NLS}} = - \partial_{xx} \varphi_1 + 2 \varphi_2 \varphi_1^2 \\
 \im \dot \varphi_2 = -\partial_{\varphi_1} H_{{\rm NLS}} =  \partial_{xx} \varphi_2 - 2 \varphi_1 \varphi_2^2  
 \end{cases} \ .
 \end{equation}
 In such a way  equation \eqref{1dnls0} is obtained by restricting the system above to the {\em real} invariant subspace
 \begin{equation}
 \label{Hsr}
  H^s_r := \{ (\varphi_1, \varphi_2) \in H^s_c \colon \varphi_2 = \bar \varphi_1 \} 
 \end{equation}
 of states of {\em real} type.  We denote  $L^2_r := H^0_r$ and in such space we introduce the {\em real} scalar product $\la \cdot , \cdot \ra$ and the symplectic form $\Omega_0$  defined for $\bvf_1 \equiv ( \varphi_1, \bar \varphi_1)$  and $\bvf_2 \equiv ( \varphi_2, \bar \varphi_2)$ by
 \begin{equation}
 \label{scalar_product2}
 \la \bvf_1 , \bvf_2 \ra := 2 {\rm \, Re } \int_{\T} \varphi_1(x) \, \bar \varphi_2(x) \, dx \ , \qquad \Omega_0(\bvf_1, \bvf_2) := \la E \bvf_1, \bvf_2 \ra \ ,
 \end{equation}
where  $E := \im$.

It is useful to identify functions $(\varphi_1, \varphi_2)$ with their Fourier coefficients. Thus we denote by $\cF: L^2(\T, \C) \to \ell^2(\Z, \C)$ the Fourier transform and associate to $\varphi_1$ its sequence of Fourier coefficients
  $\{\xi_j \}_{j \in \Z} = \cF(\varphi_1)$ and to $\varphi_2$ its sequence of Fourier coefficients $\{ \eta_{-j} \}_{j \in \Z}= \cF(\varphi_2)$:
 \begin{equation}
 \label{fcvarphi}
 \varphi_1(x) = \sum_{j \in \Z} \xi_j \, e^{-\im 2j \pi x} \ , \qquad  \varphi_2(x) = \sum_{j \in \Z} \eta_j \, e^{\im 2j \pi x} \ . 
 \end{equation}
Clearly
$$
(\varphi_1, \varphi_2) \in L^2_c \quad \Longleftrightarrow \quad (\xi, \eta) \in \ell^2_c := \ell^2(\Z, \C) \times \ell^2(\Z, \C) \ ,
$$ 
and
$$
(\varphi_1, \varphi_2) \in L^2_r \quad \Longleftrightarrow \quad (\xi, \eta) \in \ell^2_r :=\{ (\xi, \eta) \in  \ell^2_c \colon \bar \xi_j = \eta_j \ , \ \ \forall j \in \Z \} \ . 
$$
We endow  $\spazior{2}$ with the {\em real } scalar product $\la \cdot, \cdot \ra$  and symplectic form $\omega_0$ defined for  $\xi^1 \equiv (\xi^1, \bar \xi^1)$ and $\xi^2 \equiv (\xi^2, \bar \xi^2)$ by
\begin{equation}
\label{scalar_productR}
\la \xi^1,\xi^2\ra:= 2 \, {\rm Re}\, \sum_{j \in \Z} \xi^1_j\,\overline{\xi^2_j}\ , 
\qquad 
\omega_0(\xi^1,\xi^2):=\la E \, \xi^1,\xi^2\ra \ ,
\end{equation}
and one has  $\omega_0 := (\cF^{-1})^* \Omega_0$. \\

We are interested  also in more general spaces  which we now introduce.
 It is more convenient to define such spaces in term of the Fourier coefficients $(\xi, \eta)$ of $(\varphi_1, \varphi_2)$.  So for  any real $1 \leq p \leq 2$, $s \geq 0$  define  
\begin{equation}
\label{spaziooo}
\spazio{p, s}:= \left\lbrace (\xi, \eta) \in \spazio{2} \colon  \quad  \norm{(\xi, \eta)}_{p,s}   < \infty \right\rbrace \ , 
\end{equation}
where $\norm{(\xi, \eta)}_{p,s}  := \norm{\xi}_{p,s}  + \norm{\eta}_{p,s} $ and 
\begin{equation}
\label{sole}
\norm{\xi}_{p,s}:= \left(\sum_{j \in \Z} \la j \ra^{ps} \,  |\xi_j|^p \right)^{1/p} \ ; 
\end{equation}
here $\la j \ra := 1 + |j|$.  
Correspondingly $\spazior{p, s} := \left\lbrace (\xi, \bar \xi) \in \spazio{p,s} \right\rbrace$ with the induced norm. 
Note that when $s  = 0$, then the norm \eqref{sole} is simply the $\ell^p$ norm of the Fourier coefficients of $(\varphi_1, \varphi_2)$; therefore the spaces $\spazio{p,s}$ can also be thought as  {\em weighted Fourier Lebesgue spaces}.
Furthermore 
$$
(\varphi_1, \varphi_2) \in H^s_c \quad \Longleftrightarrow \quad (\xi, \eta) \in \spazio{2,s} \ , 
\qquad 
(\varphi_1, \varphi_2) \in H^s_r \quad \Longleftrightarrow \quad (\xi, \eta) \in \spazior{2,s} \ . 
$$ 
We denote by $B^{p,s}(\rho)$ the ball with center $0$ and radius $\rho$ in the topology of $\spazio{p,s}$, 
and by $B_r^{p,s}(\rho)$ the same ball in $\spazior{p, s}$.
 For $s = 0$, we will write simply $\spazio{p} \equiv \spazio{p,0}$ and $B^p(\rho) \equiv B^{p,0}(\rho)$.

In order to state our main theorem we need to introduce the concept of tame majorant analytic map more precisely. 
Given a $\rho >0$, $1 \leq p \leq 2$, let  $F:  B^{p,s}_r(\rho) \to \ell^{p,s'}_r$ be  a real analytic map in a neighborhood of the origin\footnote{here real analytic in a neighborhood of the origin means that there exists an analytic map $\wt F: B^{p,s}(\rho) \to \ell^{p,s'}$ (defined in a  complex ball)  which coincides with $F$ on the real subspace $B^{p,s}(\rho) \cap \spazior{p,s} \equiv B^{p,s}_r$}. 
 Write $F(\xi, \bar \xi) = (F_j(\xi, \bar \xi), \bar{F_j(\xi, \bar \xi)})_{j \in \Z}$ in components 
 and  expand each component $F_j(\xi, \eta)$ into its  uniformly convergent Taylor series in a neighborhood of the origin:
$$
F_j(\xi, \eta) = \sum_{|K| + |L| \geq 0} F_{KL}^j \ \xi^K \eta^{L} \ . 
$$
Define  
$$
\und  F_j(\xi, \eta) := \sum_{|K| + |L| \geq 0} \abs{F_{KL}^j}\  \xi^K \eta^{L} \ 
$$
and the majorant map  $\und{F}$ component-wise by $\und{F}(\xi, \bar \xi) = \big(\und{F_j}(\xi, \bar \xi), {\und{F_j}(\bar \xi, \xi)}\big)_{j \in \Z}$.
Then $F$ will  be said to be {\em majorant analytic}  if  $\exists\,\rho_*>0$ s.t. $\und F$ defines a real analytic map in a neighborhood of the origin mapping $B^{p,s}_r(\rho_*) \to \ell^{p,s'}_r$.\\
Given $0 \leq s \leq s' \leq s''$,  $F$ will we said to be $(p, s, s', s'')$- {\em tame majorant analytic}  if it is majorant analytic $B^{p,s}_r(\rho_*) \to \ell^{p,s}_r$ and furthermore $\und F$  restricts to a real analytic map $B^{p,s}_r(\rho_*)\cap \spazior{p,s'} \to \spazior{p,s''}$ fulfilling
\begin{equation}
\label{tame}
\sup_{\zeta \in B^{p, s}(\rho_*) \cap \spazior{p,s'} } \frac{\norm{\und F(\zeta)}_{p, s''} }{ \norm{\zeta}_{p,s'}} < \infty \ . 
\end{equation}
 Note that, in the estimate \eqref{tame}, the supremum  is taken over $B^{p, s}(\rho_*) \cap \spazior{p,s'}$, namely on all the elements of $\spazior{p,s'}$ which belong to a ball of fix radius in the weaker topology of $\spazior{p,s}$.   
As we will show below (see Remark \ref{rem:tame.c}), \eqref{tame} implies that each polynomial of the Taylor expansion of  $F$ is tame in the sense of polynomial maps.\\

Our main  theorem is the following one:
\begin{theorem}
\label{main} 
There exists  $\rho_0>0$ and a real  analytic and majorant analytic map $\Phi: B^2_r(\rho_0) \to \spazior{2}$ s.t. the following is true:
\begin{itemize}
\item[(i)] $\Phi$ is canonical: $\Phi^*\omega_0 = \omega_0$.
\item[(ii)] The map $\Phi$ is a perturbation of the identity; more precisely $\di\Phi(0,0) = \uno $,  with $\uno$
the identity map.
\item[(iii)] For any reals  $1 \leq p \leq 2$,  $s \geq 1$,   $\exists \, 0<\rho_s<\rho_0$ s.t.  $\Phi - \uno$ restricts to a  $(p, 0, s, s)$- tame majorant analytic  map  $B^{p}(\rho_s) \cap \spazio{p, s} \to  \spazio{p, s}$. Moreover there exists $C >0$, independent of $s$, s.t. 
 $\forall \, 0 < \rho \leq \rho_s$ one has 
$$
 \norm{\und{\Phi - \uno}(\xi, \eta)}_{p, s} \leq  C \, 4^s \rho^2 \norm{(\xi, \eta)}_{p,s} \ ,  \qquad  \forall (\xi, \eta) \in B^{p}(\rho) \cap \spazio{p,s}  \ . 
$$
The same is true for $\Phi^{-1} - \uno$, with different constants.
\item[(iv)] $\Phi$ is a local Birkhoff map in the following sense: for any $(\xi, \bar \xi) \in B^{2}_r(\rho_1)\cap \spazior{2,1}$,
 define 
 $(z_j, \bar z_j) := \Phi_j(\xi, \bar \xi)$. Then the  integrals of motion of dNLS are real analytic functions of the actions
$I_j = |z_j|^2$. In particular, the Hamiltonian $H_{NLS}(\varphi) \equiv  \int_\T \abs{\derx \varphi(x)}^2 dx+ \int_\T \abs{\varphi(x)}^4 dx$, 
the mass $M(\varphi):= \int_\T \abs{\varphi(x)}^2 dx$ and the momentum $P(\varphi):= \int_\T \bar \varphi(x) \im \derx \varphi(x) dx$ have the form
\begin{align}
&\left(H_{NLS}\circ \cF^{-1} \circ \Phi^{-1}\right)(z, \bar z) = h_{nls}(\ldots, I_{-1}, I_0, I_1, \ldots) \ , \\
&\left(M\circ \cF^{-1}\circ \Phi^{-1}\right)(z, \bar z) = \sum_{j \in \Z} I_j \ , \\
&\left(P\circ \cF^{-1}\circ \Phi^{-1}\right)(z, \bar z) = \sum_{j \in \Z}  j I_j \ . 
\end{align}
\end{itemize}
Finally $\Phi - \uno$ is 1-smoothing, in the following sense:
\begin{itemize}
\item[(v)]  For any reals  $1 \leq p \leq 2$, $ s \geq 1 $,  
$\exists \, 0 < \rho_s'<\rho_0$ s.t.  $\Phi - \uno$ restricts to a  $(p, 1, s, s+1)$-tame majorant analytic  map  $B^{p, 1}(\rho_s') \cap \spazio{p, s} \to  \spazio{p, s+1}$. Moreover there exists $C' >0$, independent of $s$,  s.t.   
 $\forall \, 0 < \rho \leq \rho_s'$ one has 
$$
 \norm{\und{\Phi - \uno}(\xi, \eta)}_{p, s+1} \leq  C' 4^s  \rho^2 \norm{(\xi, \eta)}_{p,s} \ ,  \qquad  \forall (\xi, \eta) \in B^{p,1}(\rho) \cap \spazio{p,s}  \ . 
$$
The same is true for $\Phi^{-1} - \uno$, with different constants.
\end{itemize}
\end{theorem}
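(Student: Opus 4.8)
The plan is to realize the program sketched in the introduction in three stages: (1) construct an explicit set of (non-canonical) coordinates from the spectral theory of the Zakharov--Shabat operator of dNLS and prove that, once composed with the Fourier transform, they form a tame majorant analytic perturbation of the identity which is moreover $1$-smoothing; (2) establish a tame version of the Kuksin--Perelman theorem, showing that such coordinates can be deformed into genuine Birkhoff coordinates while preserving all the analytic and tameness features; (3) read off properties (i)--(v) from the construction. For stage (1), I would associate to $\varphi=(\varphi_1,\varphi_2)$ the Zakharov--Shabat operator $L(\varphi)$, whose potential-free part $L(0)$ has explicit spectrum, and define candidate coordinates $z^0_k(\varphi)$ out of suitably normalized combinations of entries of the fundamental matrix solution of $L(\varphi)$ evaluated at the periodic and Dirichlet spectrum, essentially following B\"attig--Gr\'ebert--Guillot--Kappeler. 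Expanding $L(\varphi)$ around $L(0)$ by a Neumann series and representing each $z^0_k$ through contour integrals around the unperturbed eigenvalues, one gets $z^0_k=\xi_k+O(\|\varphi\|^2)$ in Fourier variables, so that $\Psi:=\cF\circ(\text{spectral map})$ satisfies $\di\Psi(0)=\uno$. The crucial point is to run this expansion keeping track of absolute values term by term: Cauchy estimates on the contour integrals, combined with the tame algebra and interpolation properties of the weighted Fourier--Lebesgue spaces $\spazio{p,s}$, yield that $\Psi-\uno$ is $(p,0,s,s)$-tame majorant analytic and, exploiting the structure of the spectral nonlinearity already at quadratic order, also $1$-smoothing.

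For stage (2) I would prove an abstract statement of the form: if $\Psi$ is a real analytic, majorant analytic, tame majorant analytic perturbation of the identity such that the would-be actions $I_k\circ\Psi=|z^0_k|^2$ pairwise Poisson-commute (equivalently $\Psi^*\omega_0-\omega_0$ vanishes on the relevant directions), then there is a canonical map $\Theta$, itself tame majorant analytic and $O(\|\cdot\|^2)$-close to the identity, such that $\Phi:=\Theta\circ\Psi$ is symplectic and has the $I_k\circ\Phi$ as true action variables. The map $\Theta$ is constructed by a path/homotopy argument: one interpolates between $\Psi^*\omega_0$ and $\omega_0$, writes the generating vector field as the solution of a homological equation, and takes the corresponding time-one flow. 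The essential content of the \emph{tame} version is that composition, time-one flows of analytic vector fields, and solution of the homological equation all preserve tame majorant analyticity with quantitative control of the radius of convergence and of the constants; the $4^s$ factor in the final bounds is produced by the repeated use of inequalities of the type $\norm{uv}_{p,s}\leq C\,2^s\big(\norm{u}_{p,s}\norm{v}_{p,0}+\norm{u}_{p,0}\norm{v}_{p,s}\big)$ along the scheme.

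For stage (3): (i) is the canonicity built into the construction of $\Theta$; (ii) follows because both $\Psi$ and $\Theta$ are perturbations of the identity with trivial differential at the origin; (iii) and (v) are the tame majorant analytic and $1$-smoothing bounds, obtained by combining the estimates on $\Psi-\uno$ from stage (1) with the preservation statements from stage (2) and then shrinking the radii $\rho_s,\rho_s'$ enough to absorb the $s$-dependent constants into the asserted form $C\,4^s\rho^2\norm{(\xi,\eta)}_{p,s}$ (the statement for $\Phi^{-1}-\uno$ comes from the analytic inverse function theorem in the majorant category, since the iteration defining the inverse of an identity-like map only involves compositions and a geometric series); (iv) is a consequence of the spectral provenance of the coordinates, since the periodic and Dirichlet spectral data of $L(\varphi)$, hence every integral of motion of dNLS --- in particular $H_{NLS}$, the mass $M$ and the momentum $P$, whose normalizations are fixed by evaluation at $\varphi=0$ --- are invariant under the Hamiltonian flows generated by the $I_k$ and therefore depend on the $z_k$ only through the actions $I_k=|z_k|^2$.

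The hard part will be stage (2): carrying out the Kuksin--Perelman deformation entirely inside the class of tame majorant analytic maps. One has to check that solving the homological equation does not destroy the majorant structure --- this requires the relevant linear operator to be diagonal (or at least triangular) in the Fourier basis, so that division by the small denominators stays harmless at the level of Taylor coefficients with absolute values --- and that the time-one flow bound is simultaneously majorant and tame, with a loss of derivatives that is uniform in $s$. A secondary obstacle is the verification in stage (1) that the explicit B\"attig-type coordinates satisfy the approximate Poisson-commutativity hypothesis of Kuksin--Perelman \emph{together with} the quantitative tame majorant estimates; this rests on a delicate, absolute-value-tracking analysis of the spectral expansion of $L(\varphi)$ near $\varphi=0$.
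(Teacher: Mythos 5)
Your plan follows essentially the same route as the paper: a spectral construction of the starting map via contour-integral and Neumann-series expansions around the unperturbed Zakharov--Shabat eigenvalues with absolute-value tracking of every Taylor coefficient, a tame version of the Kuksin--Perelman theorem proved by the Darboux/Moser path method using closure of tame majorant analytic maps under composition, flow generation and inversion, and the deduction of (i)--(v) (including the inverse via the majorant-analytic inversion lemma and item (iv) via the spectral origin of the actions), with the $4^s$ loss arising exactly as you anticipate from $s$-dependent constants in the weighted kernel/product estimates. Two minor corrections to your sketch: the quadratic term of the spectral map actually vanishes, so the nonlinearity is $O(\zeta^3)$ and the $1$-smoothing comes from the decay of the cubic and higher kernels (encoded in the weight triple $\la j\ra,\ \la j\ra^{s},\ \la j\ra^{s+1}$), and no small denominators occur in the deformation step, since the equations $\partial f/\partial\theta_j=h_j$ are solved by explicit averaging/integration operators whose solvability conditions are guaranteed by the exact (not approximate) commutativity of the $|\Psi_j|^2$.
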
	
	
	The main novelty of Theorem \ref{main}  are the tame majorant analytic properties of the Birkhoff map illustrated in  item $(iii)$ and $(v)$. 
In particular item $(iii)$ shows that $\Phi$ is convergent provided $(\xi, \eta)$ are small  in the low regularity space $\spazio{2}$, despite having  large norm in higher regularity spaces. 
This turns out to be useful in applications (e.g. in perturbation theory), since in such a way one has typically to control only  the low regularity norms of the solution.
Finally  item $(v)$ shows that the nonlinear part of $\Phi$ is  1-smoothing provided the variables  $(\xi, \eta)$  are chosen at least  in  $\spazio{p,1}$. In such a way, one recovers (in a neighborhood of the origin), the 1-smoothing property of the Birkhoff map proved in \cite{kappelerschaad2}.

	We are  actually able to prove convergence  of the Birkhoff map in spaces more general than $\spazio{p,s}$; for example we are able to deal with weighted Fourier Lebesgue spaces where the weight $\la j \ra^{s}$ in \eqref{sole} is replaced by a more general weight $w$, e.g. by an  analytic weight of the form 
\begin{equation}
\label{www}
w_j := \la j \ra^{s} \, e^{a |j|} \ , \qquad a > 0 , \ \ j \in \Z \ ;
\end{equation}
in such a case	the norm is defined by  
$\norm{\xi}_{p,s,a}:=
 \left(\sum_{j \in \Z} \la j \ra^{ps} \, e^{p a |j|} \, |\xi_j|^p \right)^{1/p}$ 
 and the space  by 
 $\spazio{p, s,a}:= 
 \left\lbrace (\xi, \eta) \in \spazio{2} \colon
      \norm{\xi}_{p,s, a}  + \norm{\eta}_{p, s, a }  < \infty \right\rbrace$. 
Then we have the following theorem 
	\begin{theorem}
	\label{main2}
With $\rho_0$ as in Theorem \ref{main},  $\forall \, 1 \leq p \leq 2$,  $s, a  \geq 0$, the map   $\Phi - \uno$ of Theorem \ref{main} restricts to a majorant analytic  map  $B^{p, s, a}(\rho_0) \to  \spazio{p, s, a}$. Morever there exists $C >0$, independent of $s,a$, s.t.  $\forall \, 0 < \rho \leq \rho_0$ one has 
$$
\sup_{\norm{(\xi, \eta)}_{p,s, a} \leq \rho} \norm{\und{\Phi - \uno}(\xi, \eta)}_{p, s, a} \leq  C \rho^3 \ .
$$
The same is true for $\Phi^{-1} - \uno$, with different constants.
	\end{theorem}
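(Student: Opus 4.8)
\emph{Plan of the proof.}
I would repeat the construction used to prove Theorem~\ref{main} --- a set of non-canonical starting coordinates fed into the tame Kuksin--Perelman theorem --- but carry all the majorant estimates in the scale $\{\spazio{p,s,a}\}_{s,a\geq 0}$ rather than in $\{\spazio{p,s}\}_{s\geq 0}$. The only feature of the weights $w_j:=\la j\ra^{s}e^{a\abs{j}}$ that enters is their \emph{submultiplicativity},
$$
w_j\ \leq\ w_{j_1}\cdots w_{j_n}\qquad\text{whenever}\qquad \abs{j}\leq\abs{j_1}+\cdots+\abs{j_n},
$$
which follows from $\la j\ra\leq\prod_i\la j_i\ra$ (a consequence of $1+\sum_i t_i\leq\prod_i(1+t_i)$ for $t_i\geq 0$) and $e^{a\abs{j}}\leq\prod_i e^{a\abs{j_i}}$. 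Contrary to Theorem~\ref{main}$(iii)$, here one asks only for a \emph{plain} estimate --- domain and codomain are measured in the \emph{same} norm $\norm{\cdot}_{p,s,a}$ --- so no interpolation/tameness step occurs; this is exactly why $\rho_0$ and the constant may be chosen independent of $s$ and $a$, and why the factor $4^{s}$ of Theorem~\ref{main}$(iii)$ is absent. The whole argument will be reduced to the ``bottom of the scale'', i.e.\ to a majorant estimate for $\Phi-\uno$ in $\spazio{p}=\spazio{p,0,0}$.

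\textbf{Step 1: momentum grading.}
First I would record that $\Phi$ is equivariant under the momentum (phase-rotation) torus action $\tau_\theta\colon(\xi_j,\eta_j)_{j}\mapsto(e^{\im j\theta}\xi_j,\,e^{-\im j\theta}\eta_j)_{j}$, $\theta\in\T$. Indeed $\Phi$ is canonical (Theorem~\ref{main}$(i)$) and, by Theorem~\ref{main}$(iv)$, it conjugates $P\circ\cF^{-1}$ --- which in the Fourier variables is the diagonal quadratic form $(\xi,\eta)\mapsto\sum_j j\,\xi_j\eta_j$ --- to $\sum_j j\,\abs{z_j}^{2}$; since both of these generate exactly the flow $\tau_\theta$ on the respective copies of the phase space, it follows that $\Phi\circ\tau_\theta=\tau_\theta\circ\Phi$, i.e.\ $\Phi_j\circ\tau_\theta=e^{\im j\theta}\Phi_j$ for all $\theta$. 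Comparing Taylor coefficients at the origin, every monomial $\xi^K\eta^L$ occurring in $\Phi_j$ obeys the selection rule $\sum_m m(K_m-L_m)=j$; the same holds for the monomials of $(\Phi-\uno)_j$, which are moreover of order $\geq 2$ (by Theorem~\ref{main}). In particular $\abs{j}\leq\sum_m\abs{m}(K_m+L_m)$, and hence, by submultiplicativity, each such monomial satisfies
$$
w_j\ \leq\ \prod_m w_m^{K_m+L_m}.
$$
The same holds for $\Phi^{-1}-\uno$, since $\Phi^{-1}$ inherits the equivariance $\Phi^{-1}\circ\tau_\theta=\tau_\theta\circ\Phi^{-1}$. (Alternatively, the grading can be read directly off the construction: the starting coordinates of \cite{battig} are spectral data of the associated Zakharov--Shabat operator and are manifestly $\tau_\theta$-equivariant, and every operation in the tame Kuksin--Perelman scheme --- moment maps, solution of homological equations, generation of flows --- preserves $\tau_\theta$-equivariance.)

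\textbf{Step 2: absorbing the weight; reduction to the base case.}
Fix $(\xi,\bar\xi)$ with $\norm{(\xi,\bar\xi)}_{p,s,a}\leq\rho\leq\rho_0$, and set $\xi'_m:=w_m\abs{\xi_m}$, so that $\norm{\xi'}_{p,0}=\norm{\xi}_{p,s,a}$. By the definition of the majorant map and Step~1, for every $j$,
$$
w_j\,\und{(\Phi-\uno)_j}(\abs{\xi},\abs{\xi})\ =\ \sum_{K,L}\abs{(\Phi-\uno)^{j}_{KL}}\,w_j\,\abs{\xi}^{K}\abs{\xi}^{L}\ \leq\ \sum_{K,L}\abs{(\Phi-\uno)^{j}_{KL}}\,(\xi')^{K}(\xi')^{L}\ =\ \und{(\Phi-\uno)_j}(\xi',\xi').
$$
Using $\abs{\und{(\Phi-\uno)_j}(\xi,\bar\xi)}\leq\und{(\Phi-\uno)_j}(\abs{\xi},\abs{\xi})$ and summing $p$-th powers over $j$, this yields
$$
\norm{\und{\Phi-\uno}(\xi,\bar\xi)}_{p,s,a}\ \leq\ \norm{\und{\Phi-\uno}(\xi',\bar{\xi'})}_{p,0}.
$$
Since $\norm{(\xi',\bar{\xi'})}_{p,0}$ is comparable to $\norm{\xi}_{p,s,a}\leq\rho$, it is enough to prove the bottom-of-the-scale bound $\norm{\und{\Phi-\uno}(v)}_{p,0}\leq C\norm{v}_{p,0}^{3}$ for real $v$ in a fixed ball about the origin in $\spazio{p}$. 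This, however, is precisely what the proof of Theorem~\ref{main} provides: it is the $s=0$ instance of the tame framework used there for item $(iii)$, i.e.\ the output of the tame Kuksin--Perelman theorem applied on the scale built on $\spazio{p}$. Inserting it back gives $\norm{\und{\Phi-\uno}(\xi,\bar\xi)}_{p,s,a}\leq C'\rho^{3}$ with $C'$ independent of $s$ and $a$; the identical chain of inequalities applied to $\Phi^{-1}-\uno$ proves the last assertion. Finally, $\und{\Phi-\uno}$ is a power series with non-negative coefficients, so the uniform bound just obtained on the real positive ball of radius $\rho_0$ forces absolute convergence of the series on the whole complex ball $B^{p,s,a}(\rho_0)$; hence $\Phi-\uno$ is majorant analytic $B^{p,s,a}(\rho_0)\to\spazio{p,s,a}$, as claimed.

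\textbf{Expected main difficulty.}
The manipulation of Step~2 is routine and the bottom-of-the-scale estimate is already available from Theorem~\ref{main}, so the only genuine content lies in Step~1: one must ensure that the construction producing $\Phi$ is truly equivariant under the momentum torus action --- equivalently, that the Taylor coefficients of $\Phi-\uno$ and of $\Phi^{-1}-\uno$ obey the momentum selection rule --- since it is exactly this grading that makes the analytic weight $\la j\ra^{s}e^{a\abs{j}}$ absorbable monomial by monomial. Granting that, Theorem~\ref{main2} follows as a corollary of the machinery behind Theorem~\ref{main}, specialized to the $\spazio{p}$-scale.
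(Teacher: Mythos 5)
Your proposal is essentially correct, but it takes a genuinely different route from the paper, so let me compare the two. The paper proves Theorem \ref{main2} by simply re-running the proof of Theorem \ref{main} in the weighted scale: Proposition \ref{prop:sw}$(i)$ shows that the weights $\tu=\tv=\tw=\{\la j\ra^{s}e^{a|j|}\}_{j\in\Z}$ satisfy condition {\rm (W$)_p$} with constants $R_0=1$ and $R_1$ independent of $s,a$ --- this is exactly the submultiplicativity you isolate, but applied to the explicit kernels of the \emph{starting} map $\Psi$ from Proposition \ref{lem:exp}, whose support is $k_1+\cdots+k_n=j$ --- and then Theorem \ref{thm:psi} together with the tame Kuksin--Perelman Theorem \ref{KP}, whose smallness threshold depends only on $R_0,R_1,\rho$, yields $\Phi-\uno$ and $\Phi^{-1}-\uno$ in the class $\cA_{w,\rho'}^{3}$ with constants and radius uniform in $s,a$; no structural property of the \emph{final} map $\Phi$ beyond what the KP construction delivers is ever used. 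You instead take the already-built $\Phi$, extract the momentum selection rule $\sum_m m(K_m-L_m)=j$ for its Taylor monomials from equivariance under the phase-rotation torus, absorb the weight monomial by monomial, and reduce to the unweighted majorant bound; this is an appealing shortcut, and your Step 2 and the final complex-ball convergence argument are correct, while also making transparent why the radius does not shrink with $s,a$. The price is that Step 1 --- which you rightly flag --- is where all the substance sits, and as written it is not free: (a) the flow-conjugation argument involves the unbounded Hamiltonian $P\circ\cF^{-1}$, whose vector field is defined only on a dense set, and the identity of Theorem \ref{main}$(iv)$ is stated only on $B^{2}_r(\rho_1)\cap\spazior{2,1}$, so $\Phi\circ\tau_\theta=\tau_\theta\circ\Phi$ must be proved there and extended before comparing Taylor coefficients; (b) with the convention \eqref{fcvarphi} one has $P\circ\cF^{-1}(\xi,\bar\xi)=2\pi\sum_j j|\xi_j|^2$, so item $(iv)$ must be read with the same normalization on both sides --- an \emph{exact} selection rule is essential here, since the analytic weight $e^{a|j|}$ tolerates no constant-factor mismatch between $j$ and the momentum of the monomial; and (c) item $(iv)$ is itself proved in the paper by appealing to \cite{grebert_kappeler} and uniqueness of the normal form, so your proof of Theorem \ref{main2} imports that external input, whereas the paper's argument stays entirely inside its own quantitative machinery; your alternative justification --- that $\Psi$ is manifestly equivariant and every step of the Kuksin--Perelman scheme preserves equivariance --- is true but is precisely the verification that would have to be written out. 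Modulo firming up that step, your reduction works and is a legitimate alternative to the paper's proof.
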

In this case we prove just majorant analyticity (and not tameness), but in spaces of analytic functions.	Note that  the domain of (majorant) analyticity of the Birkhoff map  does not shrink to 0 as $s,a$ go to infinity. This is a consequence of an explicit control of every constant  in the proof of the quantitative Kuksin-Perelman theorem \ref{KP}. 
	 Finally we mention that we are able to treat even more general weighted Fourier Lebesgue spaces, giving sufficient conditions for the weight, see Section \ref{sec:app}. \\

An immediate corollary of Theorem \ref{main2} concerns the dNLS dynamics in  $\spazior{p,s,a}$. Recall that the Cauchy problem for \eqref{1dnls0} is well posed in $L^2_r$ \cite{bou}. In Birkhoff coordinates, the flow of \eqref{1dnls0} is given by  
\begin{equation}
\label{bc.ev}
(z_j(t), \bar z_j(t)) = \left( e^{-\im \omega_j(I) t} z_j(0) \ , 
e^{\im \omega_j(I) t} \bar z_j(0) \right) \ ,  \qquad \forall j \in \Z \ , 
\end{equation}
where $\omega_j := \partial_{I_j} H_{NLS}\circ \Phi^{-1}$   is the $j^{th}$ frequency, which depends only on the actions $(I_k)_{k \in \Z}$. 
Then in the original Fourier coordinates $\xi = \cF(\varphi)$, provided $\xi(0)$ is small enough to belong to the domain of the Birkhoff map,   one has 
$(\xi(t), \bar \xi(t)) = \Phi^{-1}\left( z(t), \bar z(t)\right)$, where $z(t) := (z_j(t))_{j \in \Z}$.
Since the norm $\norm{\cdot}_{p,s,a}$ is invariant by the dynamics \eqref{bc.ev},  one gets the following result:
	\begin{corollary}
	\label{cor:norm}
	There exist constants $\rho_*, C_* >0$ s.t. for any $1 \leq p \leq 2$,  $s, a \geq 0$  the following holds true. Consider the solution $\xi(t) = \cF(\varphi(t))$ of \eqref{1dnls0} corresponding to initial data $\xi_0 = \cF(\varphi_0)\in B^{p,s,a}_r(\rho)$, $\rho \leq \rho_*$.  Then one has 
	\begin{equation}
	\label{gsn}
		\sup_{t \in \R}\norm{\xi(t)}_{p,s, a} \leq \rho(1+C_* \rho^2) \ .
	\end{equation}
	\end{corollary}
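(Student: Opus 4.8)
The plan is to transport the solution to the Birkhoff coordinates of Theorem~\ref{main2}, where the norm $\norm{\cdot}_{p,s,a}$ is exactly conserved, and then to pay only the cubically small cost of passing forward and back through $\Phi$ and $\Phi^{-1}$. The first ingredient is the elementary remark that majorant analyticity controls the map itself: writing $\abs{\zeta}:=(\abs{\zeta_j})_{j\in\Z}$, for $G:=\Phi-\uno$ one has $\norm{G(\zeta)}_{p,s,a}\leq\norm{\und G(\abs{\zeta})}_{p,s,a}$, since each Taylor coefficient of $G$ is dominated in modulus by the corresponding coefficient of $\und G$ and $\norm{\abs{\zeta}}_{p,s,a}=\norm{\zeta}_{p,s,a}$. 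Combined with Theorem~\ref{main2}, this gives, for all $1\leq p\leq2$, $s,a\geq0$ and all $\zeta$ with $\norm{\zeta}_{p,s,a}\leq\rho\leq\rho_0$, the distortion bounds $\norm{(\Phi-\uno)(\zeta)}_{p,s,a}\leq C\rho^3$ and $\norm{(\Phi^{-1}-\uno)(\zeta)}_{p,s,a}\leq C'\rho^3$, with $C,C'$ independent of $p,s,a$, and in particular $\Phi,\Phi^{-1}$ map $B^{p,s,a}_r(\rho_0)$ into $\spazior{p,s,a}$.

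Given $\xi_0=\cF(\varphi_0)\in B^{p,s,a}_r(\rho)$ with $\rho\leq\rho_*$, I would set $z_0:=\Phi(\xi_0,\bar\xi_0)$ and let $z(t)$ be the Birkhoff flow \eqref{bc.ev} issued from $z_0$, i.e.\ $z_j(t)=e^{-\im\omega_j(I_0)t}z_{0,j}$ with $I_{0,j}=\abs{z_{0,j}}^2$. Each modulus $\abs{z_j(t)}=\abs{z_{0,j}}$ is constant and $\norm{\cdot}_{p,s,a}$ depends only on the moduli of the components, so $\norm{z(t)}_{p,s,a}=\norm{z_0}_{p,s,a}$ for all $t$. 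The first distortion bound at $\zeta=\xi_0$ gives $\norm{z_0}_{p,s,a}\leq\rho+C\rho^3$; choosing $\rho_*$ so that $\rho_*+C\rho_*^3\leq\rho_0$, the whole trajectory $z(t)$ stays in $B^{p,s,a}_r(\rho_0)$, hence $\xi(t):=\Phi^{-1}(z(t))$ is defined for all $t\in\R$. By Theorem~\ref{main}(iv) and the discussion preceding the corollary, this $\xi(t)$ solves \eqref{1dnls0} with datum $\xi_0$, and by uniqueness of the dNLS Cauchy problem in $L^2_r$ \cite{bou} it coincides with the solution in the statement. Applying the second distortion bound at $\zeta=z(t)$ and using the conservation, $\norm{\xi(t)}_{p,s,a}\leq\norm{z(t)}_{p,s,a}+C'\norm{z(t)}_{p,s,a}^3=\norm{z_0}_{p,s,a}+C'\norm{z_0}_{p,s,a}^3\leq(\rho+C\rho^3)+C'(\rho+C\rho^3)^3$, which for $\rho_*$ small enough (so that e.g.\ $(1+C\rho_*^2)^3\leq2$) is $\leq\rho(1+C_*\rho^2)$ with $C_*:=C+2C'$; since $\rho_0,C,C'$ are $p,s,a$-independent, so are $\rho_*,C_*$.

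The only genuinely delicate point --- and what I expect to be the main obstacle --- is the apparent circularity: using \eqref{bc.ev} and $\Phi^{-1}(z(t))$ presupposes that the trajectory never leaves the domain of the Birkhoff map, which is part of what one wants to prove. This is resolved because the conservation $\norm{z(t)}_{p,s,a}=\norm{z_0}_{p,s,a}$ is \emph{exact}: once $z_0$ is small, the Birkhoff-coordinate trajectory defined directly by \eqref{bc.ev} is automatically global and never leaves $B^{p,s,a}_r(\rho_0)$, so no bootstrap argument is needed and $\Phi^{-1}(z(t))$ makes sense for all $t$ simultaneously. One should also ensure $\rho_*$ is small enough for the local-Birkhoff property of Theorem~\ref{main}(iv) --- and hence the identification of the dNLS flow with \eqref{bc.ev} --- to hold along the whole bounded small trajectory; this again costs only a shrinking of $\rho_*$ by a $p,s,a$-independent amount, consistent with the uniform constants asserted in the statement.
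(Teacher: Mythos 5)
Your proposal is correct and follows essentially the same route as the paper: conjugate to the Birkhoff coordinates of Theorem~\ref{main2}, use that the flow \eqref{bc.ev} preserves each $\abs{z_j}$ and hence the $\norm{\cdot}_{p,s,a}$ norm, and pay only the $O(\rho^3)$ distortion of $\Phi$ and $\Phi^{-1}$ coming from the $p,s,a$-independent majorant estimates, shrinking $\rho_*$ so the trajectory never leaves the domain of the Birkhoff map. The extra care you take with the domain/identification issue (exact conservation of the Birkhoff norm plus uniqueness of the $L^2_r$ Cauchy problem) is precisely the content of the discussion preceding the corollary in the paper.
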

	
Note that in Corollary \ref{cor:norm} there is no loss of analyticity of the solution (as it happens in \cite{kapp_pos} for example), in the sense that  exponential decay of the initial datum is preserved by the flow. 
The point is that we work only with small initial datum, for which we know that the Birkhoff map and its inverse map $B^{p,s,a}(\rho) \to \spazior{p,s,a}$ with {\em the same } $a$.

Before closing this introduction, we recall some previous works on analytic properties of  the Birkhoff map of infinite dimensional integrable systems. \\
Concerning  majorant analyticity of the Birkhoff map, the first result was  proved by Kuksin and Perelman  in the case of KdV on $\T$ \cite{kuksinperelman}. In particular, these authors proved that in a small neighborhood of the origin in $H^s_r$, $\forall s \geq 0$,  the nonlinear part of the Birkhoff map is both majorant analytic and 1-smoothing. The techniques of this paper were extended by Bambusi and the author \cite{masp_toda} in order to deal with the Toda lattice  with $N$ particles, $N$ arbitrary large. 

Later on, it was proved  in \cite{kappelerschaad}  that the nonlinear part of the Birkhoff map of the KdV on $\T$ is globally 1-smoothing, and the same is true also for the Birkhoff map of the  dNLS  on $\T$ \cite{kappelerschaad2}  (see also  \cite{masp_kdv} for the case of KdV on $\R$). However, none of these papers addresses the question of tameness.

Also the use  of Fourier-Lebesgue spaces (namely spaces with norms like \eqref{sole} with $p \neq 2$) is not new in this context; e.g. in   \cite{masp_kdvf} and  \cite{jan2} the Birkhoff map of KdV and of dNLS were extended to weighted Fourier-Lebesgue spaces in order to study analytic properties of the action-to-frequency map $I \mapsto \omega(I)$.

Concerning tameness properties, recently Kappeler and Montalto \cite{kappelermontalto} constructed  real analytic, canonical coordinates for the
dNLS  on $\T$, which are defined in neighborhoods of families of finite dimensional invariant tori, and which  satisfy  tame estimates. However such coordinates are not Birkhoff coordinates, and the dNLS  Hamiltonian, once expressed in such coordinates,  is in
normal form only up to order three.
On the contrary, our coordinates are well defined only in a neighborhood of the origin, but the dNLS Hamiltonian, written in such coordinates, is in normal form at every order.

Finally we want to comment  on Corollary \ref{cor:norm}, which  shows that  weighted Fourier-Lebesgue  norms of the solution are uniformly bounded in time. 
As a consequence, there is no growth of Sobolev norms. 
The problem of giving  upper bounds  of the form \eqref{gsn}   has been widely studied both for linear time dependent and nonlinear Schr\"odinger equations  (see e.g. \cite{masp_sch, BGMR2} for the linear case, \cite{sohinger, visciglia} for the nonlinear one and references therein). 
In case of linear Schr\"odinger equations quasiperiodic in time, $\im \dot \psi = -\Delta \psi + V(\omega t, x)\psi$,  it is known that the Sobolev norms of the solution can be  uniformly bounded,  provided the frequency vector
$\omega$ is well chosen, see e.g.  \cite{elikuk} for bounded perturbations on $\T^d$ (see also  \cite{BGMR1} for some special  perturbations on $\R^d$, and reference therein).

 In case of dNLS, the inequality \eqref{gsn} is well
known for data in $H^s_r$, $s \in \N$, and can be proved using the conservation laws of the dNLS hierarchy. In $H^s_r$, $s >1 $ real, inequality \eqref{gsn} is proved in \cite{kappelerschaad3}. The novelty of inequality \eqref{gsn} is to treat the case $1 \leq p <2$ and weighted spaces.
We point out that the uniform bound \eqref{gsn} is not a mere  consequence of integrability, but of the stronger property that the Birkhoff map preserves the topology, see Theorem \ref{main}. 
Indeed G\'erard and Grellier proved that  the  cubic Szeg\H{o} equation on $\T$ is integrable \cite{szego,szego2}, and nevertheless there are phenomenons of growth of Sobolev norms  \cite{szego3}.

\vspace{2em}

\subsection{Scheme of the proof}
In order to prove Theorem \ref{main}, we  apply a tame version of the Kuksin-Perelman theorem  \cite{kuksinperelman}
to the dNLS equation.
We recall that the  starting point of the  Kuksin-Perelman theorem 
is to construct  a map  $\zeta \mapsto \Psi(\zeta)$ (not symplectic and locally defined), s.t.   the quantities $|\Psi_j(\zeta)|^2$ are in involution,
 the level sets $|\Psi_j(\zeta)|^2=c_j$ give  a foliation in invariant tori, and $\Psi$ and $\di \Psi^*$ are majorant analytic maps. 
Then Kuksin and Perelman \cite{kuksinperelman} showed that it is possible to deform $\Psi$ into a new map $\Phi$ which is symplectic,  majorant analytic and it is a Birkhoff map, in the sense that $(z, \bar z) := \Phi(\xi, \bar \xi)$ are complex Birkhoff coordinates.

Therefore the first step of our proof is to prove a  tame version of the Kuksin-Perelman theorem, which  tells that if  $\Psi$ (namely the original map)  is  a tame majorant analytic map, so is the  new map  $\Phi$. 
In order to prove such a theorem, we revisit the proof of the Kuksin-Perelman theorem (actually, of the quantitative version of the Kuksin-Perelman theorem proved in \cite{masp_toda}), and prove that the algorithm of construction of $\Phi$ can be made tame, in the sense that at each step of the construction we can control  quantities as in  \eqref{tame} for every object involved.
This turns out to be true since the Kuksin-Perelman algorithm  is based on a combination of some basic operations (like composition of functions, inversion of functions, generation of flows, and solution of a system of equations) which can be made tame.

Then the second step of our proof is to apply the tame Kuksin-Perelman theorem to the dNLS. 
This amounts to  construct the starting map $\zeta \mapsto \Psi(\zeta)$ and to prove that it fulfills the assumptions of the  tame Kuksin-Perelman theorem (in particular, that $\Psi$ is tame majorant analytic). 
 Here we adapt to dNLS the ideas already employed in \cite{kuksinperelman} for the KdV on $\T$ and in \cite{masp_toda} for the Toda lattice
 (see also the pioneering work of  B\"attig, Gr\'ebert, Guillot and Kappeler \cite{battig}). 
 The strategy is to construct $\Psi$ by exploiting the integrable structure of dNLS, and in particular the Lax pair of dNLS. More precisely,  starting from the spectral data of the Lax operator,  one constructs perturbatively  a map $\zeta \mapsto \Psi(\zeta)$ s.t.  the quantities  $|\Psi_j|^2$ equal the spectral gaps $\gamma_j^2$, which are  real analytic functions in involution.
 The main technical challenge is to show that the map $\Psi$ is tame majorant analytic. 
 This is proved by computing explicitly every polynomial in the Taylor expansion of $\Psi$, in order to have a precise formula for a majorant map.

The paper is structured in the following way: in Section 2 we recall the setup of weighted Sobolev spaces and  state the tame Kuksin-Perelman theorem. Its proof is a variant of the proof written in  \cite{masp_toda}, therefore we postpone it to Appendix  \ref{app:KP}. In Section 3 we consider the dNLS equation and construct the map $\Psi$ required by the tame Kuksin-Perelman theorem, and show that it is tame majorant analytic.

\vspace{1em}
\noindent{\bf Acknowledgments.}
We wish to thank Marcel Guardia, Zaher Hani, Emanuele Haus, Thomas Kappeler  and Michela Procesi for  discussions and suggestions.
During the preparation of this work, we were  hosted at Laboratoire Jean Leray, University of Nantes (France) and were supported by ANR-15-CE40-0001-02 ''BEKAM`` of the Agence Nationale de
la Recherche. We  wish to thank  the laboratory of Nantes for the hospitality and the uncountable scientific exchanges.\\
Currently we are partially supported by PRIN 2015 ``Variational methods, with applications to problems in mathematical physics and geometry".

	\section{The tame Kuksin-Perelman theorem}
\label{sec:KP}

We prefer to work in the setting of abstract weighted Fourier-Lebesgue spaces, which we now recall.
 First we define  {\em weight} a function $w: \Z \to \R$ such that  $w_j> 0$ $\forall j \in \Z$.
A weight will be said to be {\em symmetric} if $w_{-j} = w_j$ $ \ \forall j \in \Z$ and {\em sub-multiplicative} if $w_{j+i} \leq w_i \, w_j$ $\ \forall i,j \in \Z$. Given two weights $w$ and $v$, we will say that
$v \leq w$  iff $v_j \leq w_j,$  $\forall j \in \Z$.\\
Given a weight $w$ we define  for any $\R \ni p \geq 1$  the space $\ell^{p,w} $ of complex sequences $\xi = \{ \xi_j\}_{j \in \Z} $ with norm
\begin{equation}
\label{ell_w.norm}
 \norm{\xi}_{p,w}:=\left( \sum_{j \in \Z} w_j^p \, |\xi_j|^p \right)^{1/p}  < \infty .
\end{equation}
 We   denote by $\spazio{p,w}$ the complex Banach space 
 $\spazio{p,w}:=\ell^{p,w} \oplus \ell^{p,w} \ni (\xi, \eta)\equiv \zeta$ endowed with the norm 
 $$
 \norm{\zeta}_{p,w} \equiv \norm{(\xi, \eta)}_{p,w} := \norm{\xi}_{p,w} + \norm{\eta}_{p,w} \ .
  $$
 We denote by $\spazior{p,w}$ the {\em real} subspace of $\spazio{p,w}$ defined by
  \begin{equation}
\label{C^N_w}
\spazior{p,w}:=\left\{(\xi, \eta) \in \spazio{p,w}\, :  \ \eta_j = \overline{\xi}_j \ \ \  \forall \, j \in \Z  \right\} \ . 
  \end{equation}
  We endow such a space with the real scalar product and symplectic form  \eqref{scalar_productR}.
   We will denote by $B^{p,w}(\rho)$ (respectively $\Br^{p,w}(\rho)$) the
 ball in the topology of $\spazio{p,w}$ (respectively $\spazior{p,w}$) with center $0$ and radius $\rho>0$. Clearly if   $w_j = 1 \, \forall j$ one has $\ell^{p,w}_c \equiv \ell^p_c:= \ell^p(\Z, \C) \times \ell^p(\Z, \C)$. In this case we denote the norm simply by  $\norm{\cdot}_{p}$ and the ball of radius $\rho$ by $B^p(\rho)$. Similarly  we  write  $\ell^p_r \equiv \ell^{p,w}_r$ and $B^{p}_r (\rho)\equiv B^{p,w}_r(\rho)$. \\
  As for any $1 \leq p \leq 2$ and weight $w>0$  one has the inclusion $\spazior{p,w}\hookrightarrow \spazior{2} $, the scalar product and the symplectic form \eqref{scalar_productR} are well defined on $\spazior{p,w}$ as well. 
  \begin{remark}
  The space $\spazio{p,s}$ defined in \eqref{spaziooo} coincides with the weighted space $\spazio{p,w}$ choosing the weight $w= \{ \la j \ra^s  \}_{j \in \Z}$.
  \end{remark}

Given a smooth function $F: \spazior{p,w} \to \R$, we denote by $X_F$ the
Hamiltonian vector field of $F$, given by $ X_F = J \nabla F$, where
$J = E^{-1}$.  
For $F, G: \spazior{p,w} \to \R$ we denote by $\{F,G\}$ the
Poisson bracket (with respect to $\omega_0$): $\{F,G\}:= \la \nabla F,
J \nabla G\ra$ (provided it exists).  We say that the
functions $F, G$ \textit{commute} if $\{F,G\} = 0$. \\
  For $\cX, \cY$ Banach spaces, we
 shall write $\cL(\cX,\cY)$ to denote the set of linear and bounded
 operators from $\cX$ to $\cY$. For $\cX=\cY$ we will write just $\cL(\cX)$.\\

\noindent Recall that a map $\wt P^n:(\spazio{p,w})^n\to \cB$, with $\cB$ a  Banach space,
is said to be  $n$-\textit{multilinear} if $\wt P^n(\zeta^{(1)},\ldots,\zeta^{(n)})$ is $\C$-linear in
each variable $\zeta^{(j)}\equiv (\xi^{(j)}, \eta^{(j)})$; a $n$-multilinear map is said to be \textit{bounded} if there
exists a constant $C>0$ such that
$$\norm{\tilde P^n(\zeta^{(1)},\ldots,\zeta^{(n)})}_\cB \leq C \norm{\zeta^{(1)}}_{p,w}\ldots \norm{\zeta^{(n)}}_{p,w}  \ , \quad
\forall \zeta^{(1)},\ldots, \zeta^{(n)} \in \spazio{p,w}.
$$
 Correspondingly its norm is defined by
$$\norm{\wt P^n}:=\sup_{\norm{\zeta^{(1)}}_{p,w}, \cdots, \norm{\zeta^{(n)}}_{p,w}\leq 1}{\norm{\wt P^n(\zeta^{(1)},\cdots,\zeta^{(n)})}_\cB}.$$
A map $P^n:\spazio{p,w}\rightarrow \cB $ is a \textit{homogeneous polynomial} of order $n$ if there exists
a $n$-multilinear map $\wt{P}^n:(\spazio{p,w})^r\rightarrow \cB $ such that
\begin{equation}
\label{polin}
P^n(\zeta)=\wt{P}^n (\zeta,\ldots,\zeta) \ , \quad \forall \zeta\in \spazio{p,w}\ .
\end{equation}
A $n$-homogeneous  polynomial is bounded if it has  finite norm
$$\norm{P^n}:=\sup_{\norm{\zeta}_{p,w}\leq 1}\norm{P^n(\zeta)}_\cB\ .$$

\begin{remark}
Clearly $\norm{P^n}\leq \norm{\wt{P}^n}$. Furthermore one has 
$\norm{\wt{P}^n}\leq e^n\norm{P^n}$  -- cf. \cite{mujica}.
\end{remark}

\begin{remark} It is easy to see that a multilinear map and the corresponding polynomial are continuous (and analytic) if and only if they are bounded.
\end{remark}

A map $F: \spazio{p,w} \rightarrow \cB$ is said to be an \textit{ analytic
  germ} if there exists $\rho>0$ such that $F: B^{p,w}(\rho) \to
\cB$ is analytic. Then $F$ can be written as a power series
absolutely and uniformly convergent in $B^{p,w}(\rho)$:
$F(\zeta)=\sum_{n\geq 0}{F^n(\zeta)}$.  
Here $F^n(\zeta)$ is a homogeneous
polynomial of degree $n$ in the variables $\zeta=(\xi, \eta)$. We will write $F =
O(\zeta^N)$ if in the previous expansion $F^n(\zeta)= 0$ for every $n < N$.  \\
Let $U \subset \spazior{p,w}$ be open.  A map $F: U \to \cB$ is said to
be a {\sl real analytic germ}  on $U$ if for each point $(\xi, \bar \xi) \in U$ there exist a
neighborhood $V$ of $(\xi, \bar \xi)$ in $\spazio{p,w}$ and an analytic germ
 which coincides with $F$ on $U
\cap V$.\\
Let now $F:U\subset \spazio{p,w^1} \to \spazio{p,w^2}$ be an analytic
map. We will say that $F$ {\sl is real for real sequences} if $ F (U
\cap \spazior{p,w^1}) \subseteq \spazior{p,w^2}$, namely $F(\xi,
\eta)=(F_1(\xi, \eta), F_2(\xi, \eta))$ satisfies $\overline{F_1(\xi,
  \bar \xi)} = F_2(\xi, \bar \xi)$. Clearly, the restriction $\left. F
\right|_{U \cap \spazior{p,w^1}}$ is a real analytic map.

\subsection{Majorant analytic maps}

Let $P^n:\spazio{p,w}\to\cB$ be a homogeneous polynomial of order $n$; assume $\cB$ separable and let
$\left\{\bb_m\right\}_{m \in \Z}\subset \cB$ be a basis for the space $\cB$. Expand $P^n$ as follows
\begin{equation}
\label{exp.1}
P^n(\zeta)\equiv P^n(\xi, \eta)=\sum_{\substack{ |K| + |L| = n \\ m \in \Z}} P^{n,m}_{K, L} \  \xi^K \ \eta^L  \ \bb_m ,  
\end{equation}
where $K, L \in \N^\Z_0$, $\N_0 = \N \cup \{ 0 \}$,
 $|K|:= \sum_{j \in \Z} K_j$,
  $\xi \equiv ( \xi_j)_{j \in \Z}$ and $\xi^K := \prod_{j \in \Z} \xi_j^{K_j}$.
\begin{definition}
The modulus of a polynomial $P^n$ is the polynomial $\und{P^n} $
defined by
\begin{equation}
\label{exp.11}
\und{P^n}(\xi, \eta):=\sum_{\substack{ |K| + |L| = n \\ m \in \Z }} \abs{P^{n,m}_{K, L}} \,  \xi^K \eta^L \bb_m \ .
\end{equation}
A polynomial $P^n$ is said to have {\sl bounded modulus} if $\und{P^n}$ is
a bounded polynomial.
\end{definition}

We generalize now the notion of majorant analytic map given in the introduction.
\begin{definition}
\label{def.na} 
An  analytic germ $F: \spazio{p,w} \to \cB$ is said to be  {\sl majorant
  analytic} if there exists $\rho>0$ such that 
\begin{equation}
\label{exp.3}
\und F(\zeta):=\sum_{n\geq 0} \und{F^n}(\zeta)
\end{equation}
is absolutely and uniformly convergent in  $B^{p,w}(\rho)$. In such a case we will write
$F\in \cN_\rho(\spazio{p,w}, \cB)$. $\cN_\rho(\spazio{p,w}, \cB)$ is a Banach space when endowed by the norm
\begin{equation}
\label{Nc.norm}
\left|\und F \right|_{\rho}:=
\sup_{\zeta \in B^{p,w}(\rho)} \| \und{F}(\zeta)\|_{\cB}.
\end{equation}
\end{definition}

A map $F: U \to \cB$ is said to
be  {\sl real majorant
  analytic} on $U$ if for each point $(\xi, \bar \xi) \in U$ there exist a
neighborhood $V$ of $(\xi, \bar \xi)$ in $\spazio{p,w}$ and a   majorant analytic germ which coincides with $F$ on $U
\cap V$.

\begin{remark}
From Cauchy inequality one has that the Taylor polynomials $F^r$ of $F$ 
satisfy
\begin{equation}
\label{exp.4}
\norm{\underline{F}^r(\zeta)}_\cB \leq \left|\und F\right|_{\rho} \frac{\norm{\zeta}^r_{p,w}}{\rho^r} \ ,  \qquad
\forall \zeta \in B^{p,w}(\rho)\ .
\end{equation}
\end{remark}
\begin{remark}
\label{rem.conv}
Since $\forall r \geq 1$ one has 
$\norm{F^r}\leq \norm{\und{F}^r},$
if $F \in \cN_\rho(\spazio{p,w}, \cB)$ then the Taylor series of $F$ is uniformly convergent in $B^{p,w}(\rho)$.
\end{remark}

We will often consider the case $\cB=\spazio{p,w}$; in such a case the basis $\{ \bb_m\}_{m \in \Z}$ coincide with the natural
basis $\be_{2m} :=(e_m,0)$, $\be_{2m+1} := (0,e_m)$ of such a space (where $e_m$ is  the vector in $\C^\Z$ with
all components equal to zero except the $m^{th}$ one which is equal to
$1$). We will consider also the case
$\cB=\cL(\spazio{p,w^1},\spazio{p,w^2})$ (bounded linear operators from
$\spazio{p,w^1}$ to $\spazio{p,w^2}$), where $w^1$ and $w^2$
are weights. Here the chosen basis is $\bb_{jk}=\be_j\otimes \be_k$
(labeled by 2 indexes).

\begin{remark}
\label{Diff}
For $\zeta \equiv (\xi, \eta) \in \spazio{p, w}$, we denote by $|\zeta|$ the vector
of the modulus of the components of $\zeta$: 
$|\zeta| = (|\zeta_j|)_{j \in \Z}$,  $|\zeta_j|:= (|\xi_j|, |\eta_j|) \in \R^2$. If
$F\in\cN_{\rho}(\spazio{p,w},\spazio{p,w})$ then for any $\zeta, \upsilon \in \spazio{p,w} $ one has 
$$
\underline{\di F}(|\zeta|) |\upsilon|\leq
\di \underline F(|\zeta|)|\upsilon|
$$
 (see \cite{kuksinperelman}). Thus  $\forall 0<d<1$, Cauchy estimates imply that $\di F\in
\cN_{(1-d)\rho}(\spazio{p,w}, \cL(\spazio{p,w},\spazio{p,w}))$ with
\begin{equation}
\label{diff.1}
\left|\underline{\di F}\right|_{\rho(1-d)}\leq
\frac{1}{d\rho}\left|{\underline F}\right|_{\rho}\ ,
\end{equation}   
where $\und{ \di F}$ is computed with respect to the basis $\be_j\otimes \be_k$.
\end{remark}
Following Kuksin-Perelman \cite{kuksinperelman} we will need also a further property. 
\begin{definition}
\label{def:arho}
Let $\R \ni \rho >0$ and $\N \ni N \geq 2$. 
 A majorant analytic  germ $F\in\cN_\rho(\spazio{p,w},\spazio{p,w})$ will be said to be of
  class $\cA_{w, \rho}^{N}$ if $F=O(\zeta^N)$ and the map $\zeta  \mapsto \di F(\zeta)^* \in\cN_\rho(\spazio{p,w}, \cL(\spazio{p,w},\spazio{p,w}))$. 
 On $\cA_{w,\rho}^{N}$ we will use the norm
\begin{equation}
\label{nor.arho}
\norm{F}_{\cA_{w,\rho}^{N }}:=\left|\underline F\right|_\rho+{\rho}
\left|\underline{\di F}\right|_\rho+\rho \left|\underline{\di 
  F^*}\right|_\rho .
\end{equation}
\end{definition}


\begin{remark}
\label{rem:norm.in.A}
Assume that for some $ \rho > 0$ the map $F \in \cA_{ w,
  \rho}^{ N}$, $N \geq 2$,  then for every $0 < d \leq \tfrac{1}{2}$ one has
$\abs{\und{F}}_{d\rho} \leq 2 d^N \abs{\und{F}}_{\rho}$ and
$\norm{F}_{\cA_{ w, d\rho}^{N }} \leq 6d^N \norm{F}_{\cA_{ w,
    \rho}^{ N}}$.
\end{remark}
A real majorant analytic germ $F: \Br^{p,w}(\rho)\to \spazior{p,w}$
will be said to be of class $\cN_\rho(\spazior{p,w}, \spazior{p,w})$ (respectively $\mathcal{A}_{ w, \rho}^{N}$) if there
exists a map  of class  $\cN_\rho(\spazio{p,w}, \spazio{p,w})$ (respectively
$\cA_{ w, \rho}^{N}$), which coincides with $F$ on
$\Br^{p,w}(\rho)$, namely on the restriction $\bar\xi_j =\eta_j$, $\forall j \in \Z$.  In this case we will also denote by $\abs{\und{F}}_\rho$ (respectively
$\norm{F}_{\cA_{ w, \rho}^{N }}$) the norm
defined by \eqref{Nc.norm} (respectively \eqref{nor.arho}) of the complex extension of $F$.

\subsection{Tame majorant analytic maps}
We begin with the following definition

\begin{definition}
\label{def.ta} 
Fix $1 \leq p \leq 2$, weights $w^0\leq w^1\leq w^2$ and  $F \in \cN_\rho(\spazio{p, w^0}, \spazio{p, w^0})$.  $F$ is said to be {\em $(p, w^0,w^1, w^2)$-tame majorant analytic} 
 if $\und{F}: B^{p, w^0}(\rho)\cap \spazio{p, w^1} \to \spazio{p,w^2}$ is analytic and 
\begin{equation}
\label{exp.40}
\abs{\und F}_{\rho}^T:= \sup\left\lbrace \frac{\norm{\und F(\zeta)}_{p,w^2}}{\norm{\zeta}_{p,w^1}}  \ 	 \colon \ \zeta \in B^{p, w^0}(\rho) \cap \spazio{p,w^1} \right\rbrace < \infty \ . 
\end{equation}
In such a case we will write
$F\in \cN_{\rho}^T(B^{p,w^0}\cap\spazio{p,w^1}, \spazio{p,w^2})$. We endow such a space with the norm
\begin{equation}
\label{exp.41}
\tame{\und F}_{\rho}:= \abs{\und F}_{\rho}+ \rho \abs{\und F}_{\rho}^T \ ,
\end{equation}
where  here $\abs{\und F}_{\rho} := \sup\left\lbrace \norm{\und F(\zeta)}_{p,w^0}  \ 	 \colon \ 
\norm{\zeta}_{p, w^0} \leq \rho \right\rbrace $.
\end{definition}
\begin{remark}
\label{rem:tame.c}
Let $F \in \cN_{\rho}^T(B^{p,w^0}\cap\spazio{p,w^1}, \spazio{p,w^2})$.
Expand $F$ in Taylor series,  $F = \sum_n F^n$. Then  it follows by Cauchy estimates that each polynomial $F^n$ is $(p, w^0, w^1, w^2)$- tame majorant analytic and 
$$
\norm{\und F^r(\zeta)}_{p,w^2} \leq \frac{\abs{\und F}_{\rho}^T}{\rho^{r-1}} \,  \norm{\zeta}_{p, w^0}^{r-1} \norm{\zeta}_{p,w^1} \ .
$$
Consequently, using also \eqref{exp.4}, one has 
$\tame{\und F^r}_\rho \leq \tame{\und F}_{\rho} \ .$
\end{remark}
 
In case of maps with values in the space $\cL(\spazio{p,w^1}, \spazio{p,w^2})$ we give the following
\begin{definition}
Fix $1 \leq p \leq 2$,  weights $w^0 \leq w^1\leq w^2$ and let $\cG \in \cN_\rho(\spazio{p, w^0}, \, \cL(\spazio{p, w^0}, \spazio{p, w^0}))$. 		\\
 $\cG$ is said to be {\em $(p, w^0,w^1, w^2)$-tame majorant analytic} if $\und{\cG} : B^{p, w^0}(\rho) \cap \spazio{p,w^1} \to \cL(\spazio{p,w^1}, \spazio{p,w^2})$ and 
\begin{equation}
\label{exp.42}
\abs{\und \cG}_{\rho}^T:= \sup \left\lbrace \frac{\norm{\und \cG(\zeta)\upsilon}_{p,w^2}}{\norm{\zeta}_{p,w^1} \norm{\upsilon}_{p, w^0} + \rho\norm{\upsilon}_{p,w^1}} \ \colon \ \zeta, \upsilon \in \spazio{p,w^1}, \, 
\norm{\zeta}_{p,w^0} \leq \rho \right\rbrace 
 < \infty \ . 
\end{equation}
 In such a case we will write
$\cG\in \cN_{\rho}^T(B^{p,w^0}\cap \spazio{p,w^1},\, \cL(\spazio{p,w^1}, \spazio{p,w^2}))$. We endow such a space with the norm
\begin{equation}
\label{exp.5}
\tame{\und \cG}_{\rho}:= \abs{\und \cG}_{\rho} + \rho \abs{\und \cG}_{\rho}^T \ ,
\end{equation}
where here $\abs{\und \cG}_{\rho} := \sup \left\lbrace \norm{\und{\cG}(\zeta)}_{\cL(\spazio{p,w^0})} \ \colon \ \norm{\zeta}_{p,w^0} \leq \rho \right\rbrace$.
\end{definition}

\begin{remark}
\label{cauchy2}
Let $F\in \cN_{\rho}^T(B^{p,w^0}\cap \spazio{p,w^1}, \spazio{p,w^2})$. By Cauchy formula  one has
\begin{equation}
\label{cauchy}
\di F(\zeta) \upsilon = \frac{1}{2\pi 
\im } \oint_{|\lambda|=\epsilon} \frac{F(\zeta + \lambda \upsilon)}{\lambda^2} \di \lambda \ ,
\end{equation}
provided $|\zeta| + \epsilon |\upsilon| \in B^{p,w^0}(\rho)$. It follows that for any $0< d \leq 1/2$, the map 
$\di F\in
\cN_{(1-d)\rho}(B^{p,w^0} \cap \spazio{p,w^1}, \cL(\spazio{p,w^1},\spazio{p,w^2}))$ with
\begin{equation}
\label{diff.2}
\abs{\underline{\di F}}_{\rho(1-d)}^T \leq
\frac{1}{d\rho}\abs{{\underline F}}_{\rho}^T\ .
\end{equation}   
\end{remark}

We extend Definition \ref{def:arho} to deal with tame majorant analytic maps:
\begin{definition}
Let $\R \ni \rho >0$ and $\N \ni N \geq 2$. 
 A map $F \in \cA_{w^0,\rho}^N$ will be said to be of
  class $\sT_{w^0, w^1,\rho}^{w^2, N}$ if $F\in\cN_\rho^T(B^{p,w^0}\cap \spazio{p,w^1}, \spazio{p,w^2})$ 
  and the map 
  $\zeta  \mapsto \di F(\zeta)^* \in
  \cN_\rho^T(B^{p,w^0}\cap\spazio{p,w^1}, \cL(\spazio{p,w^1},\spazio{p,w^2}))$. 
 On $\sT_{ w^0, w^1, \rho}^{w^2, N}$ we will use the norm
\begin{equation}
\label{nor.trho}
\norm{F}_{\sT_{ w^0, w^1, \rho}^{w^2, N }}:= \tame{\underline F}_\rho+{\rho}
\tame{\underline{\di F}}_\rho+\rho \tame{\underline{\di 
  F^*}}_\rho .
\end{equation}
\end{definition}

\begin{remark}
\label{rem:sT.tame}
Let $F \in  \sT_{w^0, w^1,\rho}^{w^2, N}$. Then
$$
\norm{\und F(\zeta)}_{p, w^2} \leq \frac{\norm{F}_{\sT_{ w^0, w^1, \rho}^{w^2, N }}}{\rho} \, \norm{\zeta}_{p, w^1} \ , \qquad
\forall \zeta \in B^{p, w^0}(\rho) \cap \spazio{p,w^1} \ . 
$$
\end{remark}

\begin{remark}
\label{rem:norm.in.T}
Assume that for some $ \rho > 0$ the map $F \in \sT_{ w^0,
w^1,  \rho}^{w^2, N}$, $N \geq 2$,  then for every $0 < d \leq \tfrac{1}{2}$ one has
$\tame{\und{F}}_{d\rho} \leq 2 d^N \tame{\und{F}}_{\rho}$ and
$\norm{F}_{\sT_{ w^0,w^1, d\rho}^{w^2, N }} \leq 6d^N \norm{F}_{\sT_{ w^0,
w^1,    \rho}^{w^2, N}}$.
\end{remark}

\subsection{The tame Kuksin-Perelman theorem}

We are now able to state a tame version of the Kuksin-Perelman theorem.

Fix  $\rho>0$, $1 \leq p \leq 2$ and let $\Psi: \Br^{p, w^0}(\rho) \to \spazior{p,w^0} $.  Write
$\Psi$ component-wise, $\Psi=\left\{(\Psi_j, \overline{\Psi}_j )\right\}_{j\in \Z}$, and
consider the foliation defined by the functions
$\left\{\left|\Psi_j(\xi, \bar \xi)\right|^2/2\right\}_{j\in \Z}$.  Given
$\xi \equiv (\xi, \bar \xi) \in\spazior{p,w^0}$ we define the leaf through $\xi$ by
\begin{equation}
\label{leaf.1}
\cT_\xi:=\left\{(\upsilon, \bar \upsilon) \in \spazior{p,w^0} : \ \frac{|\Psi_j(\upsilon, \bar \upsilon)|^2}{2}=\frac{|\Psi_j(\xi, \bar \xi)|^2}{2}\  
\ ,\ \forall 
j \in \Z \right\}\ .
\end{equation}
Let $\cT=\bigcup_{\xi \in \spazior{p,w^0}} \cT_\xi$ be the collection of all the leaves of the foliation.  
We 
denote by $T_\xi\cT$ the tangent space to $\cT_\xi$ at the point $\xi \in \spazior{p,w^0}$. 
Next we define the function $I = \{ I_j \}_{j\in \Z}$ by
\begin{equation}
\label{Ij}
I_j(\xi)\equiv I_j(\xi, \bar \xi):=\frac{|\xi_j|^2}{2} \qquad \forall j \in \Z \ .
\end{equation}
The foliation they define will be denoted by $\cT^{(0)}$. 
\begin{remark}
$\Psi$ maps the foliation $\cT$ into the foliation $\cT^{(0)}$, namely $\cT^{(0)} = \Psi(\cT)$.
\end{remark}
We state now the tame  Kuksin-Perelman theorem:
\begin{theorem}[Tame Kuksin-Perelman theorem]
\label{KP} 
Let $1 \leq p \leq 2$ be real.
 Let $w^0, w^1$ and $w^2$ be weights with $w^0 \leq w^1 \leq w^2$. Consider the
 space $\spazior{p,w^0}$ endowed with the symplectic form $\omega_0$
 defined in \eqref{scalar_productR}. Let $\rho >0$ and assume $\Psi:
 \Br^{p,w^0}(\rho) \to \spazior{p,w^0}$, $\Psi= \uno + \Psi^0$ and $\Psi^0
 \in \sT_{w^0, w^1, \rho}^{w^2, N}$, $\N \ni N \geq 2$. Define
\begin{equation}
\label{th}
\epsilon_1:=\norm{\Psi^0}_{\sT_{ w^0, w^1, \rho}^{w^2,  N}}\ .
\end{equation}
Assume that the functionals $\lbrace
\frac{1}{2}\left|\Psi_j(\xi, \bar \xi)\right|^2\rbrace_{ j\in \Z}$ pairwise
commute with respect to the symplectic form $\omega_0$, and that
$\rho$ is so small that
\begin{equation}
\epsilon_1<2^{-60}\rho. 
\label{th.1}
\end{equation}
Then there exists a real majorant analytic map
$\widetilde{\Psi}:\Br^{{p,w^0}}(\ta \rho)\to \spazior {p,w^0}$, $\ta=2^{-120}$, 
with the following properties:
\begin{itemize}
\item[i)] $\widetilde{\Psi}^*\omega_0=\omega_0$, so that the coordinates
  $(z, \bar z) :=\widetilde\Psi(\xi, \bar \xi) $ are canonical;
\item[ii)] the functionals $\left\{ \frac{1}{2}\left|\widetilde\Psi_j(\xi, \bar \xi)\right|^2\right\}_{ j \in \Z }$ pairwise commute with respect to the symplectic form $\omega_0$;
\item[iii)] $\cT^{(0)}=\widetilde{\Psi}(\cT),$ namely the foliation
  defined by $\Psi$ coincides with the foliation defined by
  $\widetilde{\Psi}$;
\item[iv)] $\widetilde{\Psi} = \uno + \widetilde{\Psi}^0$ with 
$\widetilde{\Psi}^0 \in\sT_{w^0, w^1, \rho}^{w^2, N} $ and furthermore $\norm{\widetilde{\Psi}^0}_{\sT_{w^0, w^1, \ta\rho}^{w^2, N}} \leq 2^{17} \epsilon_1 $. 
\item[v)] The inverse map $\widetilde\Psi^{-1} $ is real majorant analytic  
$B^{p, w^0}_r(\widetilde\ta  \rho) \to \spazior{p,w^0}$, $\widetilde \ta = 2^{-130}$, $\widetilde\Psi^{-1}- \uno  \in \sT_{w^0, w^1, \widetilde\ta\rho}^{w^2, N}$ with the quantitative estimate  
$\norm{\widetilde{\Psi}^{-1} - \uno}_{\sT_{w^0, w^1, \widetilde\ta\rho}^{w^2, N}} \leq 2^{18} \epsilon_1 $. 
\end{itemize}
Finally the theorem holds true also if the class $ \sT_{w^0, w^1, \rho}^{w^2, N}$ is replaced by the class $\cA_{w^0, \rho}^N$.
\label{teokuksinperelman}
\end{theorem}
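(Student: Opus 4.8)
The plan is to follow, step by step, the construction used in \cite{masp_toda} to prove the quantitative (non-tame) Kuksin--Perelman theorem, and to carry along, in parallel with the majorant norms $\abs{\,\cdot\,}_\rho$, $\norm{\,\cdot\,}_{\cA_{w^0,\rho}^N}$, the stronger tame norms $\tame{\,\cdot\,}_\rho$, $\norm{\,\cdot\,}_{\sT_{w^0,w^1,\rho}^{w^2,N}}$. In that construction (the preliminary normalization of the linear part being vacuous here, since $\di\Psi(0)=\uno$) the symplectic corrector $\widetilde\Psi$ is built from the data $\Psi^0$, $\di\Psi^0$, $(\di\Psi^0)^*$ by a finite sequence of \emph{elementary operations}: composition of majorant analytic maps, inversion of a near-identity majorant analytic map, generation of the time-one flow of a (time-dependent) majorant analytic Hamiltonian vector field, and solution of a linear ``homological'' system of the form $(\uno+\mathcal R)\,u=f$ with $\mathcal R$ built out of $\Psi^0$ and its differentials. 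Since dropping all the tame bookkeeping essentially reproduces the quantitative theorem of \cite{masp_toda} — and the last sentence of Theorem \ref{KP} \emph{is} that old, non-tame statement, so nothing new is needed for it — the only genuinely new content is to show that each of these four operations has a \emph{tame} counterpart: it maps tame majorant analytic inputs to tame majorant analytic outputs, with a quantitative control of the $\sT_{w^0,w^1,\rho}^{w^2,N}$-norm whose constants may be taken equal to, or a fixed multiple of, those in the non-tame estimate.

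First I would prove these tame calculus lemmas, one per operation. For composition the engine is the componentwise majorant inequality $\und{F\circ G}(\abs{\zeta})\le\und F\big(\und G(\abs{\zeta})\big)$; feeding it into the tame bound $\norm{\und F(\upsilon)}_{p,w^2}\le\abs{\und F}_\rho^T\,\norm{\upsilon}_{p,w^1}$ and using that $w^1\le w^2$ forces $\norm{\cdot}_{p,w^1}\le\norm{\cdot}_{p,w^2}$ — so that a $(p,w^0,w^1,w^2)$-tame map is automatically $(p,w^0,w^1,w^1)$-tame — yields that $F\circ G$ is again $(p,w^0,w^1,w^2)$-tame whenever $G=\uno+G^0$ is near-identity, on a slightly smaller ball (the radius loss absorbed by Cauchy estimates as in Remark \ref{cauchy2}). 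The operator-valued version — that, by the chain rule, $\di(F\circ G)^*=(\di G)^*\,\big((\di F)\circ G\big)^*$ is tame — follows from the same inequality applied to the $\cL$-valued moduli, and this is precisely why the norm \eqref{nor.trho} records the term $\di F^*$. Inversion is then a fixed-point/Neumann argument, $G^{-1}=\uno-G^0+G^0\circ G^0-\cdots$, each summand handled by the composition lemma and the series converging since $\tame{\und{G^0}}_\rho$ is small; flows are handled by Picard iteration, again using only composition and the tameness of the generating vector field $J\nabla h$ (whose tameness is read off $\di h^*$). For the homological step one writes $u=\sum_{k\ge0}(-\mathcal R)^k f$ and notes that $\mathcal R$ is tame \emph{and} small: assembled from $\Psi^0$, $\di\Psi^0$, $(\di\Psi^0)^*$, all of which enter the definition of $\epsilon_1$, its tame operator norm is $O(\epsilon_1/\rho)$ by Remark \ref{rem:sT.tame}, hence $<1$ under \eqref{th.1}, so the series converges and $u$ inherits the tame class of $f$.

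With these lemmas available, I would revisit the construction of \cite{masp_toda} line by line, replacing each $\cA_{w^0,\rho}^N$-estimate by the corresponding $\sT_{w^0,w^1,\rho}^{w^2,N}$-estimate and checking that the three weights $w^0\le w^1\le w^2$ are threaded consistently through every composition (the ball always measured in $w^0$, the input in $w^1$, the output in $w^2$). Since the radius losses and the numerical constants — $2^{-60}$ in \eqref{th.1}, $\ta=2^{-120}$, $\widetilde\ta=2^{-130}$, the factors $2^{17},2^{18}$ — are already pinned down by the non-tame argument, and the constants from the tame lemmas are no worse, the scheme converges under \eqref{th.1} and the limit $\widetilde\Psi$ inherits i)--iii) for free from \cite{masp_toda} ($\widetilde\Psi^*\omega_0=\omega_0$, involutivity of the $\tfrac12\abs{\widetilde\Psi_j}^2$, and $\cT^{(0)}=\widetilde\Psi(\cT)$), together with the tame refinements iv) and v); the bound $\norm{\widetilde\Psi^{-1}-\uno}_{\sT_{w^0,w^1,\widetilde\ta\rho}^{w^2,N}}\le2^{18}\epsilon_1$ comes from one further application of the tame inversion lemma, available because $\widetilde\Psi$ is near-identity.

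I expect the main obstacle to be the operator-valued composition estimate. Controlling $(\di G)^*\,\big((\di F)\circ G\big)^*$ in the tame norm means keeping track of which operator space each factor lives in — an element of $\cL(\spazio{p,w^1},\spazio{p,w^2})$ restricts, but only because $w^1\le w^2$, to an element of $\cL(\spazio{p,w^2},\spazio{p,w^2})$ — and threading the asymmetric bilinear quotient of \eqref{exp.42} through such a product, all with constants that are \emph{uniform} in the triple $w^0\le w^1\le w^2$. That uniformity is exactly what is needed for Theorem \ref{main2}, where the domain of analyticity must not shrink as $s,a\to\infty$; getting this bookkeeping clean, rather than any individual estimate, is where the bulk of the work in Appendix \ref{app:KP} will lie.
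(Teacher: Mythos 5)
Your proposal takes essentially the same route as the paper's Appendix \ref{app:KP}: one first proves tame versions of the calculus lemmas (composition, near-identity inversion, flow generation, Neumann-type inversions of $\uno+\mathcal{O}(\epsilon_1/\rho)$ operators), and then reruns the two-step Darboux construction of \cite{masp_toda} carrying the $\sT_{w^0,w^1,\rho}^{w^2,N}$-norms through every estimate, with items i)--iii) inherited from the non-tame argument and v) obtained from the tame inversion lemma. The only operation your list compresses is the leaf-preserving step, where the system $\di f(X^0_{-I_l})=h_l$ is solved not by a Neumann series but by the explicit iterated-averaging formula of Lemma \ref{moser} (under the solvability conditions \eqref{comp.1}--\eqref{comp.2}), and it is precisely in the tame bound for $\und{\nabla f}$ there that the adjoint term $\und{\di W^*}$ of the $\sT$-norm is actually needed --- but since your plan is to follow \cite{masp_toda} line by line, this step is covered by your scheme.
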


\noindent The novelty of Theorem  \ref{KP} is to prove that  the Birkhoff map is {\em tame majorant analytic}, provided  the initial map $\Psi - \uno $ is tame majorant analytic.
The proof of Theorem \ref{KP} is actually a variant of the results of \cite{kuksinperelman,masp_toda}, so we postpone it to Appendix \ref{app:KP}.

The following corollary  is an immediate application of Theorem \ref{KP} and shows that $\wt\Psi$ is a Birkhoff map:
\begin{corollary}
\label{cor.KP}
Let $H: \spazior{p, w^1} \to \R$ be a real analytic Hamiltonian
function. Let $\Psi$  be as in Theorem \ref{KP} and assume that for
every $j \in \Z$, $\abs{\Psi_j(\xi, \bar \xi)}^2$ is an integral of motion for
$H$, i.e.
\begin{equation}
\label{ass.1}
\lbrace H, |\Psi_j|^2 \rbrace = 0 \qquad \forall\,j \in \Z \ .
\end{equation}
Then the coordinates $(z_j,\bar z_j)$ defined by $(z_j, \bar z_j):=\widetilde
\Psi_j(\xi, \bar \xi)$ are complex Birkhoff coordinates for $H$, namely canonical
conjugated coordinates in which the Hamiltonian depends only on
$|z_j|^2/2 $.
\end{corollary}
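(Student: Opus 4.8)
The plan is to push the commutation hypothesis \eqref{ass.1} through the canonical change of variables $\widetilde\Psi$ furnished by Theorem \ref{KP}. Write $(z,\bar z):=\widetilde\Psi(\xi,\bar\xi)$ and $G:=H\circ\widetilde\Psi^{-1}$ for the Hamiltonian in the new coordinates (well defined and real analytic on a small ball of $\spazior{p,w^1}$: by Theorem \ref{KP}(v) and Remark \ref{rem:sT.tame} the map $\widetilde\Psi^{-1}-\uno$ sends $B^{p,w^0}\cap\spazio{p,w^1}$ into $\spazio{p,w^2}\hookrightarrow\spazio{p,w^1}$). I aim to prove $\{G,|z_j|^2\}=0$ for every $j\in\Z$, and then conclude by the classical averaging argument that a real analytic function annihilated by all the individual phase rotations depends only on the actions $|z_j|^2/2$.

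The first, and most delicate, step is to show that each $|\widetilde\Psi_j|^2$ Poisson-commutes with $H$. This is where item iii) of Theorem \ref{KP} enters: since $\widetilde\Psi(\cT)=\cT^{(0)}=\Psi(\cT)$, the leaf $\cT_\xi$ of \eqref{leaf.1} is simultaneously the common level set through $(\xi,\bar\xi)$ of the family $\{|\Psi_k|^2\}_{k\in\Z}$ and of the family $\{|\widetilde\Psi_k|^2\}_{k\in\Z}$. In particular each function $|\widetilde\Psi_j|^2$ is constant on the leaves of $\cT$; since $\Psi=\uno+\Psi^0$ and $\widetilde\Psi=\uno+\widetilde\Psi^0$ are perturbations of the identity, near the origin $\cT$ is a genuine real analytic foliation whose leaves are the fibres of a real analytic map $\xi\mapsto(|\Psi_k(\xi,\bar\xi)|^2/2)_{k\in\Z}$ (a small perturbation of $\xi\mapsto(I_k(\xi))_{k\in\Z}$, $I_k$ as in \eqref{Ij}), so one may write $|\widetilde\Psi_j|^2=\Theta_j\big((|\Psi_k|^2)_{k\in\Z}\big)$ with $\Theta_j$ real analytic. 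The Leibniz rule for the Poisson bracket together with \eqref{ass.1} then yields
\begin{equation*}
\{H,|\widetilde\Psi_j|^2\}=\sum_{k\in\Z}(\partial_k\Theta_j)\,\{H,|\Psi_k|^2\}=0,\qquad\forall\,j\in\Z.
\end{equation*}

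Next I would transfer this identity to the $z$-variables. Since $\widetilde\Psi$ is canonical (Theorem \ref{KP}(i)), the pullback of Poisson brackets gives $\{G,|z_j|^2\}\circ\widetilde\Psi=\{G\circ\widetilde\Psi,\,|z_j|^2\circ\widetilde\Psi\}=\{H,|\widetilde\Psi_j|^2\}=0$; since $\widetilde\Psi$ maps a neighbourhood of the origin onto an open set, $\{G,|z_j|^2\}=0$ identically, for every $j$. Finally, the Hamiltonian flow generated by $|z_j|^2/2$ is the phase rotation $z_j\mapsto e^{-\im t}z_j$, $\bar z_j\mapsto e^{\im t}\bar z_j$ (all other components unchanged), so $\{G,|z_j|^2\}=0$ means that $G$ is invariant under this rotation; expanding $G$ in its uniformly convergent Taylor series $G(z,\bar z)=\sum_{K,L}G_{KL}\,z^K\bar z^L$ and imposing invariance under all these rotations forces $G_{KL}=0$ whenever $K\neq L$, so $G(z,\bar z)=\sum_{K}G_{KK}\prod_{j\in\Z}|z_j|^{2K_j}$ depends only on the actions $|z_j|^2/2$. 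Since $\widetilde\Psi$ is real for real sequences, the second component of the pair $(z_j,\bar z_j)=\widetilde\Psi_j(\xi,\bar\xi)$ is the complex conjugate of the first; together with the previous sentence this shows that the $(z_j,\bar z_j)$ are complex Birkhoff coordinates for $H$, as claimed.

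The step I expect to be the main obstacle is the representation $|\widetilde\Psi_j|^2=\Theta_j\big((|\Psi_k|^2)_{k\in\Z}\big)$ with $\Theta_j$ real analytic, and the convergence of the infinite sum $\sum_{k}(\partial_k\Theta_j)\{H,|\Psi_k|^2\}$: both require a careful use of the near-identity structure of $\Psi$ and $\widetilde\Psi$ (so that the foliation $\cT$ is a small, controlled perturbation of the foliation $\cT^{(0)}$ by the coordinate tori). Alternatively one can bypass this entirely by a flow argument: \eqref{ass.1} forces the Hamiltonian flow of $H$ to preserve every leaf $\cT_\xi$, hence --- conjugated by the canonical map $\widetilde\Psi$ and using item iii) of Theorem \ref{KP} --- the flow of $G$ preserves every leaf of $\cT^{(0)}$, i.e. keeps each $|z_j|^2$ constant, which again gives $\{G,|z_j|^2\}=0$. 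The remaining ingredients (domain bookkeeping and the averaging argument) are routine.
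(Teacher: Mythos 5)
Your overall architecture is reasonable, and its last third (canonicity of $\widetilde\Psi$ transfers the bracket, and invariance of $G=H\circ\widetilde\Psi^{-1}$ under all phase rotations kills every Taylor monomial $z^K\bar z^L$ with $K\neq L$) is exactly the paper's concluding step: the paper expands $H\circ\widetilde\Psi^{-1}$ as in \eqref{h.tay} and argues that \eqref{ass.1} forces $\alpha=\beta$ in each monomial, quoting the argument of \cite[Corollary 2.13]{masp_toda}. The problem is the step that carries all the weight, namely passing from \eqref{ass.1} to $\{H,|\widetilde\Psi_j|^2\}=0$. You reduce it to the representation $|\widetilde\Psi_j|^2=\Theta_j\big((|\Psi_k|^2)_{k\in\Z}\big)$ with $\Theta_j$ real analytic, plus a termwise chain rule over $k\in\Z$; you flag this yourself as the main obstacle, and it is a genuine gap, not a routine one. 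Coincidence of the two foliations only gives set-theoretic constancy of $|\widetilde\Psi_j|^2$ on the joint level sets of the $|\Psi_k|^2$; it does not produce an analytic function of the ``actions''. Indeed the map $\xi\mapsto(|\Psi_k(\xi,\bar\xi)|^2/2)_{k\in\Z}$ is nowhere a submersion near the origin (in every neighbourhood of $0$ there are points where $\Psi_k(\xi,\bar\xi)=0$ for infinitely many $k$, and there each $|\Psi_k|^2$ is critical), so no implicit-function or fibration argument applies, and even in the model case \eqref{Ij} the statement ``analytic and constant on the tori $\Rightarrow$ analytic function of the actions'' requires a dedicated argument; moreover the convergence of $\sum_{k\in\Z}(\partial_k\Theta_j)\{H,|\Psi_k|^2\}$ in infinitely many variables is nowhere justified.

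Your proposed fallback does not close the gap either: the flow argument presupposes that the Hamiltonian flow of the abstract real analytic $H:\spazior{p,w^1}\to\R$ exists in the phase space, which is not part of the hypotheses (the vector field $J\nabla H$ need not map $\spazior{p,w^1}$ into itself -- the dNLS Hamiltonian itself is of this type), and \eqref{ass.1} is a statement about brackets, not about an actual flow. The paper avoids both issues by working as in \cite[Corollary 2.13]{masp_toda}: one exploits that the Hamiltonian vector fields of the commuting families are everywhere tangent to the leaves (this is where the geometric information of Theorem \ref{KP}, items ii)--iii), and the symplectic-orthogonality statement of the type of Lemma \ref{tv} enter), and one never inverts the action map nor flows $H$; the conclusion is then read off at the level of the Taylor coefficients of $H\circ\widetilde\Psi^{-1}$. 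As written, your proof establishes the easy half of the corollary and leaves the hard half -- precisely the content of the cited argument -- unproved.
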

\begin{proof}
By assumption, $\Psi$ is analytic as a map $B^{p,w^0}(\rho) \cap \spazio{p, w^1} \to \spazio{p,w^1}$, therefore the composition $H\circ \Phi^{-1}$ is well defined and real analytic as  a map $B^{p,w^0}_r(\rho) \cap \spazior{p, w^1} \to \R$.
Thus it admits a convergent Taylor expansion of the form 
\begin{equation}
\label{h.tay}
\left(H\circ  \Phi^{-1}\right)(z, \bar z) = \sum_{r \geq 2 \atop |\alpha| + |\beta| = r} H^{r}_{\alpha, \beta} \, z^\alpha \, \bar z^\beta \ .
\end{equation}
Arguing as in  \cite[Corollary 2.13]{masp_toda} one shows that \eqref{ass.1} implies that in each monomial of the r.h.s. of \eqref{h.tay}, one has  $\alpha = \beta$.
\end{proof}

\section{Application to dNLS on $\T$}
\label{sec:app}
In order to construct a tame Birkhoff map for dNLS, we wish to apply the Kuksin-Perelman theorem \ref{KP}. This requires to be able to construct the starting map $\Psi$ and to verify that such a map is tame majorant analytic.
To construct  $\Psi$ we will exploit  the integrable structure of dNLS,  following the ideas already employed in the case of  the KdV \cite{kuksinperelman} and the Toda lattice \cite{masp_toda} (see also  \cite{battig}).
To prove tame majorant analyticity,  we will expand $\Psi$ in Taylor series $\Psi = \sum_{n \in \N} \Psi^n$, compute each polynomial $\Psi^n$  and prove that it belongs to $\sT_{w^0, w^1, \rho}^{w^2, n}$  for some   weights $w^0 \leq w^1 \leq w^2$. 
We are able to state sufficient conditions for the choice of weights  $w^0 \leq w^1 \leq w^2$ to use. Such conditions depend only on some  arithmetic property that we state now.
To do so,   we need a bit of preparation.\\ 
Given $n \geq 3$ odd, $k_1, \ldots, k_n \in \Z$, we define  the function $\ff_n :\Z^n \times \Z \to \R$ by
\begin{equation}
\label{def:ff}
\ff_n(k_1, \ldots, k_n; j) :=  \uno_{\{k_1 + \ldots + k_n  = j\}}\,\prod_{m=1\atop m\,  {\rm odd}}^{n-1} \frac{1}{\la k_1 + \ldots + k_m - j\ra } \, \cdot \, \frac{1}{\la k_1 + \ldots + k_{m+1} \ra } \ ,
\end{equation}
where here $\uno_S$ is the indicator function on the set $S$.
Given  an integer  $1 \leq r \leq n$ we define $\fg_{n,r}:\Z^n \times \Z \to \R$ by
\begin{equation}
\label{def:gg}
 \fg_{n,r}(k_1, \ldots, k_n ; j) := \ff_n(k_1, \ldots, k_{r-1}, j , k_{r+1}, \ldots, k_n ; k_r) \ . 
\end{equation}
Note that $\fg_{n,r}$  is  $\ff_n$ with   indexes $k_r$ and $j$ switched. 
Its explicit expression  is given in Appendix \ref{app:weight}.

The key point, as we shall see below,  is that $\ff_n$ bounds the kernel of the polynomial $\und{\Psi^n}$, while the $\fg_{n,r}$'s bound the kernel of $[\und{\di \Psi^n}]^*$.  For example, it turns out that  boundedness of $\und{\Psi^n}$ and $[\und{\di \Psi^n}]^*$ as maps $B^{p}(\rho)\to \spazio{p}$ respectively $B^{p}(\rho)\to \cL(\spazio{p})$ are implied by the following  summability properties of  $\ff_n$
and $\fg_{n,r}$:
\begin{lemma}
\label{f.g.lp}
 Let $1 \leq p \leq 2$ be real. Let   $p'$ s.t.  $\frac{1}{p} + \frac{1}{p'} = 1$ and define
 $$
R_* := \left(\sum_{k \in \Z} \frac{1}{\la k \ra^{p'}}\right)^{1/p'} \ .
$$
	Then  for every $n \geq 3$, $n$ odd
	\begin{align}
	\label{w10}
\sup_{j\in \Z}  \  \, \norm{ \ff_n(\cdot; j)  }_{\ell^{p'}(\Z^n)} \leq   R_*^{n-1} \ , 
	\end{align}
	and 
\begin{equation}
\label{w20}
\max_{1 \leq r \leq n} \ 
\sup_{j\in \Z} \  
\norm{\fg_{n,r}(\cdot ; j)}_{\ell^{p'}(\Z^n)}
 \leq  R_*^{n-1} \ .
\end{equation}
\end{lemma}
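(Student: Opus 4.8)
The plan rests on one elementary observation. Suppose a non-negative function $h:\Z^n\to[0,\infty]$ has the form
\[
h(k)=\uno_{\{F(k)=c_0\}}\ \prod_{\alpha=1}^{n-1}\frac{1}{\la L_\alpha(k)+c_\alpha\ra}\ ,
\]
where $c_0,\dots,c_{n-1}\in\Z$ and $F,L_1,\dots,L_{n-1}$ are integer linear forms on $\Z^n$ which together form a $\Z$--basis of the dual lattice $(\Z^n)^*$. Then $k\mapsto(F(k),L_1(k),\dots,L_{n-1}(k))$ is a bijection of $\Z^n$; restricted to the slice $\{F=c_0\}$ it identifies $k$ with the $(n-1)$--tuple $(L_1(k),\dots,L_{n-1}(k))$, which runs over all of $\Z^{n-1}$. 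Hence, for $1\leq p'<\infty$, Tonelli's theorem and the translation invariance $\sum_{t\in\Z}\la t+c\ra^{-p'}=R_*^{p'}$ give
\[
\norm{h}_{\ell^{p'}(\Z^n)}^{p'}=\sum_{(\ell_1,\dots,\ell_{n-1})\in\Z^{n-1}}\ \prod_{\alpha=1}^{n-1}\frac{1}{\la \ell_\alpha+c_\alpha\ra^{p'}}=\prod_{\alpha=1}^{n-1}\sum_{t\in\Z}\frac{1}{\la t\ra^{p'}}=R_*^{(n-1)p'}\ ,
\]
i.e.\ $\norm{h}_{\ell^{p'}(\Z^n)}=R_*^{n-1}$, with no dependence on the shifts $c_\alpha$. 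For $p=1$ (so $p'=\infty$) one interprets $R_*=\sup_k\la k\ra^{-1}=1$, and both claimed bounds hold trivially since every factor is $\leq 1$; so from now on take $1<p\leq 2$, $2\leq p'<\infty$, $R_*<\infty$.

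Thus \eqref{w10} and \eqref{w20} will follow once $\ff_n(\cdot;j)$ and $\fg_{n,r}(\cdot;j)$ are exhibited in the above form with the relevant $n$ forms being a $\Z$--basis of $(\Z^n)^*$. For $\ff_n$: on its support $\{k_1+\cdots+k_n=j\}$ one has $k_1+\cdots+k_m-j=-(k_{m+1}+\cdots+k_n)$, so with $C:=k_1+\cdots+k_n$, $A_m:=k_{m+1}+\cdots+k_n$, $B_m:=k_1+\cdots+k_{m+1}$,
\[
\ff_n(k_1,\dots,k_n;j)=\uno_{\{C=j\}}\ \prod_{\substack{1\leq m\leq n-1\\ m\ \mathrm{odd}}}\frac{1}{\la A_m\ra}\,\frac{1}{\la B_m\ra}\ ;
\]
since $n$ is odd, $m$ ranges over $\{1,3,\dots,n-2\}$, so the product has exactly $n-1$ factors and $\{C\}\cup\{A_m,B_m\}_{m\ \mathrm{odd}}$ consists of $n$ forms. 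I would check that these generate $(\Z^n)^*$ — hence are a basis — by exhibiting explicit integer inverse relations: $C-A_m=k_1+\cdots+k_m$, so $k_{m+1}=B_m-C+A_m$ (recovering $k_2,k_4,\dots,k_{n-1}$); then $k_{m+2}=A_m-A_{m+2}-k_{m+1}$ (recovering $k_3,\dots,k_{n-2}$), while $k_n=A_{n-2}-k_{n-1}$ and $k_1=B_1-k_2$. By the master observation this gives \eqref{w10}, uniformly in $j$.

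For $\fg_{n,r}$ I would use $\fg_{n,r}(k_1,\dots,k_n;j)=\ff_n(k_1,\dots,k_{r-1},j,k_{r+1},\dots,k_n;k_r)$ directly. Substituting into the displayed expression for $\ff_n$ (with $\mu_i=k_i$ for $i\neq r$, $\mu_r=j$ frozen, output index $k_r$) shows that $\fg_{n,r}(\cdot;j)$ equals $\uno_{\{F=-j\}}$ times the product of the $n-1$ factors $\la A_m\ra^{-1},\la B_m\ra^{-1}$ evaluated at $\mu_r=j$, where $F$ is the linear form $\sum_{i\neq r}k_i-k_r$ and the $A_m,B_m$ become affine in $(k_i)_{i\neq r}$, their linear parts being the restrictions to $\{\mu_r=0\}$ of the forms $A_m,B_m$ above. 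The point is that these $n-1$ restricted forms are still a $\Z$--basis of $(\Z^{n-1})^*$: restriction to $\{\mu_r=0\}$ annihilates exactly the $\Z$--span of the coordinate functional $\mu\mapsto\mu_r$, so the restrictions of $\{A_m,B_m\}$ form a basis precisely when the coefficient of $C$ in the expansion of $\mu\mapsto\mu_r$ with respect to the basis $\{C,A_m,B_m\}$ equals $\pm1$ — and the inverse relations above show this is so ($\mu_r=C-A_r-B_{r-2}$ for odd $r$, $\mu_r=B_{r-1}-C+A_{r-1}$ for even $r$, with the endpoint conventions $A_n=0$, $B_{-1}=0$). Since the additive shifts by multiples of $j$ are immaterial, the master observation yields $\norm{\fg_{n,r}(\cdot;j)}_{\ell^{p'}(\Z^n)}=R_*^{n-1}$, uniformly in $j$ and in $1\leq r\leq n$, i.e.\ \eqref{w20}. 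Alternatively one may read the required basis property off the explicit expression for $\fg_{n,r}$ given in Appendix~\ref{app:weight}.

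The only non-routine ingredient is this $\Z$--linear algebra: verifying that $\{C,A_m,B_m\}$, and its restrictions to $\{\mu_r=0\}$, are integral bases of the respective dual lattices. This is where a little care with the index combinatorics is needed, especially for $\fg_{n,r}$ because of the swap between the $r$--th entry and the output index; once that is settled, \eqref{w10}--\eqref{w20} are immediate from Tonelli's theorem and the translation invariance of $\sum_k\la k+c\ra^{-p'}$.
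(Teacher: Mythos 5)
Your proof is correct, and the two nontrivial points you flag do check out: the forms $C,\{A_m,B_m\}_{m\ \mathrm{odd}}$ admit the integer inverse relations you list (so the change of variables is unimodular), and your criterion for the restricted forms — that the coefficient of $C$ in the expansion of $\mu\mapsto\mu_r$ in the basis $\{C,A_m,B_m\}$ be $\pm1$ — is both valid and satisfied ($+1$ for $r$ odd, $-1$ for $r$ even, with the endpoint conventions $A_n=0$, $B_{-1}=0$). The underlying mechanism is the same as in the paper, namely translation invariance of $\sum_{t\in\Z}\la t+c\ra^{-p'}=R_*^{p'}$ combined with the nested (triangular) structure of the partial sums, but the organization differs. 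For \eqref{w10} the paper does not change variables at all: it notes that the factors do not involve $k_n$, so the sum over the slice is a free sum over $k_1,\dots,k_{n-1}$, and it evaluates the nested sums from the innermost variable outward, each contributing $R_*^{p'}$; for \eqref{w20} it argues by direct inspection of the explicit formulas \eqref{def:gg0}--\eqref{def:gg1} (every free index occurs in at least one factor) and repeats the iterated-sum bound. You instead perform one unimodular substitution on the constraint slice, which yields the exact value $R_*^{n-1}$ of the norms (slightly stronger than the claimed inequality, and showing the shifts by $j$ are irrelevant), and for $\fg_{n,r}$ you replace the case-by-case inspection by the lattice-restriction criterion, which is more systematic and robust to the swap of the $r$-th entry with the output index; the price is precisely the $\Z$-linear algebra you carry out, whereas the paper's route gets the upper bound with less bookkeeping. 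Your treatment of $p=1$ ($p'=\infty$, $R_*=1$, every factor bounded by $1$) coincides with the paper's.
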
 
The lemma is proved in  Appendix \ref{app:weight.1}.\\

In  a similar way, we will show below (see  Lemma \ref{lem:psi00} and Lemma \ref{lem:dPsi.1})  that the maps $\Psi^n$  and $[\di \Psi^n]^*$ are $(p, \tu, \tv, \tw)$-tame majorant analytic if the weights $\tu \leq \tv \leq \tw$  fulfill the following  property:
	\begin{itemize}
	\item[(W$)_p$] Let $\tu \leq \tv \leq \tw$ be symmetric and sub-multiplicative weights. Let $1 \leq p \leq 2$ and   $\frac{1}{p} + \frac{1}{p'} = 1$.   
	There exist $R_0>0, R_1\geq R_*$,   s.t. for every $n \geq 3$, $n$ odd
	\begin{align}
	\label{w1}
\sup_{j\in \Z}  \ \tw_j \, \norm{ \frac{\ff_n(\cdot; j)}{\sum_{l=1}^n \t v_{k_l} \ \prod_{m \neq l} \tu_{k_m}}  }_{\ell^{p'}(\Z^n)} \leq  R_0 \, R_1^{n-1} \ , 
	\end{align}
	and 
\begin{equation}
\label{w2}
\max_{1 \leq r \leq n} \ 
\sup_{j\in \Z} \  \tw_j \,
\norm{\frac{\fg_{n,r}(\cdot ; j)}{\sum_{l=1}^n \t v_{k_l}  \ \prod_{m \neq l} \tu_{k_m} }}_{\ell^{p'}(\Z^n)}
 \leq  R_0 \, R_1^{n-1} \ .
\end{equation}	
	\end{itemize}
	
	We give some examples of weights fulfilling condition {\rm (W$)_p$}:
%
%
%
%
\begin{proposition}
\label{prop:sw}
Let  $1 \leq p \leq 2$. Then the following holds true:
\begin{itemize}
\item[(i)] For any $s \geq 0$, $a\geq 0$ and $0 < b \leq 1$, the weights $\tu=\tv = \tw = \{ \la j\ra^s\, e^{a |j|^b }\}_{j \in \Z}$  fulfill  {\rm (W$)_p$} with   constants
$$R_0 = 1 \ , \qquad  R_1 = \left(\sum_{k \in \Z} \frac{1}{\la k\ra^{p'} }\right)^{1/p'}  \ . $$
\item[(ii)] For any  $s \geq 1 $, the weights 
$\tu = \{1\}_{j \in \Z}$, 
$\tv =\tw = \{\la j \ra^{s}\}_{j \in \Z}$ fulfill  {\rm (W$)_p$} with   constants 
$$R_0 = 1 \ , \qquad   R_1 = 2^s \left(\sum_{k \in \Z} \frac{1}{\la k\ra^{p'} }\right)^{1/p'} \ . $$
\item[(iii)] For any  $s \geq 1$, the weights 
$\tu = \{\la j \ra\}_{j \in \Z}$, 
$\tv = \{\la j \ra^{s} \}_{j \in \Z}$ and $\tw = \{\la j \ra^{s+1}\}_{j \in \Z}$ fulfill  {\rm (W$)_p$} with   constants 
$$R_0 = 1 \ , \qquad   R_1 = 2^{s+2} \left(\sum_{k \in \Z} \frac{1}{\la k\ra^{p'} }\right)^{1/p'} \ . $$
\end{itemize}
\end{proposition}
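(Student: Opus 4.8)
The plan is to establish the two inequalities \eqref{w1}--\eqref{w2} of {\rm (W$)_p$} directly, reducing them to Lemma \ref{f.g.lp}. Set
$$
G_n(k_1,\dots,k_n;j):=\frac{\tw_j\,\ff_n(k_1,\dots,k_n;j)}{\sum_{l=1}^n\tv_{k_l}\prod_{m\neq l}\tu_{k_m}}\ ,
$$
so that \eqref{w1} reads $\sup_{j}\norm{G_n(\cdot;j)}_{\ell^{p'}(\Z^n)}\le R_0R_1^{n-1}$, and likewise for \eqref{w2} with $\ff_n$ replaced by $\fg_{n,r}$. The strategy is to bound, pointwise in $(k_1,\dots,k_n)$, the weight ratio $\tw_j\big/\big(\sum_l\tv_{k_l}\prod_{m\neq l}\tu_{k_m}\big)$, thereby dominating $G_n$ by a constant times $\ff_n$ (cases (i), (ii)) or by a constant times a sum of ``decimated'' variants of $\ff_n$ (case (iii)), and then to read off the $\ell^{p'}$ norm from \eqref{w10}--\eqref{w20} or from a rerun of their proof. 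The only information used about the weights, besides $\tu\le\tv\le\tw$, is symmetry, sub-multiplicativity, and the support constraint built into the kernels: $\ff_n$ is supported on $\{k_1+\dots+k_n=j\}$, while $\fg_{n,r}$ is supported on $\{j=k_r-\sum_{m\neq r}k_m\}$. As a preliminary one checks that each weight in the statement is symmetric (obvious) and sub-multiplicative: for $\{\la j\ra^s e^{a|j|^b}\}$ this is $\la i+j\ra^s e^{a|i+j|^b}\le\la i\ra^s\la j\ra^s e^{a|i|^b}e^{a|j|^b}$, which follows from $\la i+j\ra\le\la i\ra\la j\ra$ together with the subadditivity $|i+j|^b\le|i|^b+|j|^b$ --- valid exactly because $0<b\le1$ --- and for $\{\la j\ra^s\},\ \{\la j\ra\},\ \{1\}$ it is immediate.

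\emph{Cases (i) and (ii).} If $\tu=\tv=\tw=:w$, then $\sum_l\tv_{k_l}\prod_{m\neq l}\tu_{k_m}=n\prod_m w_{k_m}$, while by sub-multiplicativity and symmetry $\tw_j\le\prod_m w_{k_m}$ on the support of $\ff_n$ (resp. of $\fg_{n,r}$); hence $G_n\le\tfrac1n\ff_n\le\ff_n$ and \eqref{w1}--\eqref{w2} follow from \eqref{w10}--\eqref{w20} with $R_0=1$, $R_1=R_*$. If instead $\tu\equiv1$, then $\sum_l\tv_{k_l}\prod_{m\neq l}\tu_{k_m}=\sum_l\la k_l\ra^{s}\ge\max_l\la k_l\ra^{s}$, and since $\la j\ra\le\sum_m\la k_m\ra\le n\max_l\la k_l\ra$ on the support, the weight ratio is $\le n^{s}$; thus $G_n\le n^{s}\ff_n$, and because $n\le 2^{n-1}$ one gets $\sup_j\norm{G_n(\cdot;j)}_{\ell^{p'}}\le n^{s}R_*^{\,n-1}\le(2^{s}R_*)^{n-1}$, i.e. \eqref{w1}--\eqref{w2} with $R_0=1$, $R_1=2^{s}R_*$.

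\emph{Case (iii).} This is the core of the proposition, and the point where $s\ge1$ is needed (it is forced by $\tu\le\tv$). Here $\sum_l\tv_{k_l}\prod_{m\neq l}\tu_{k_m}=\big(\prod_m\la k_m\ra\big)\big(\sum_l\la k_l\ra^{s-1}\big)$. Let $l^\ast$ be an index maximizing $|k_l|$. Using $\sum_l\la k_l\ra^{s-1}\ge\la k_{l^\ast}\ra^{s-1}$ and $\la j\ra\le n\la k_{l^\ast}\ra$ on the support, the weight ratio is $\le n^{s+1}\la k_{l^\ast}\ra\big/\prod_{m\neq l^\ast}\la k_m\ra$, so
$$
G_n(\cdot;j)\ \le\ n^{s+1}\,\frac{\la k_{l^\ast}\ra}{\prod_{m\neq l^\ast}\la k_m\ra}\ \ff_n(\cdot;j)\ .
$$
It remains to absorb the surplus $\la k_{l^\ast}\ra\big/\prod_{m\neq l^\ast}\la k_m\ra$ into the $n-1$ denominator factors of $\ff_n$ themselves. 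Passing to the partial sums $\sigma_i:=k_1+\dots+k_i$ ($1\le i\le n-1$, $\sigma_n=j$ fixed), $\ff_n$ becomes a product of $n-1$ factors of the form $\la\sigma_i-j\ra^{-1}$ ($i$ odd) or $\la\sigma_i\ra^{-1}$ ($i$ even), exactly one depending on each free lattice variable $\sigma_i$ --- which is precisely the decoupling that yields $\norm{\ff_n(\cdot;j)}_{\ell^{p'}}=R_*^{\,n-1}$ in Lemma \ref{f.g.lp}. Since $k_{l^\ast}$ is a difference of two consecutive $\sigma_i$'s, one can write $\la k_{l^\ast}\ra\le(\text{sub-product of arguments of the factors of }\ff_n)\cdot(\text{sub-product of }\la k_m\ra,\ m\neq l^\ast)$; cancelling $\la k_{l^\ast}\ra$ against that sub-product of $\ff_n$-factors and the corresponding $\la k_m\ra^{-1}$'s leaves, on the region where $l^\ast$ is the maximizing index, a kernel which is again a product of $n-1$ factors $\la(\text{linear form})\ra^{-1}$, one per free variable, hence of $\ell^{p'}$-norm $\le R_*^{\,n-1}$ by the same computation. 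Summing over the at most $n$ choices of $l^\ast$ and using $n^{s+2}\le(2^{n-1})^{s+2}$ gives $\sup_j\norm{G_n(\cdot;j)}_{\ell^{p'}}\le n^{s+2}R_*^{\,n-1}\le(2^{s+2}R_*)^{n-1}$, i.e. \eqref{w1} with $R_0=1$, $R_1=2^{s+2}R_*$. The estimate \eqref{w2} for $\fg_{n,r}$ is obtained identically, starting from the explicit expression of $\fg_{n,r}$ in Appendix \ref{app:weight}, which has the same product structure, with the external index now carried by the swapped slot.

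I expect the technical heart --- and the only real obstacle --- to be Case (iii): one has to identify, inside $\ff_n$ (and inside each $\fg_{n,r}$), precisely which sub-product of partial-sum factors can ``pay for'' the derivative $\la j\ra$ gained in passing from $\tv$ to $\tw$, and then check that after this rearrangement the kernel still decouples into $n-1$ independent $\ell^{p'}$-summable factors, so that no loss worse than the harmless polynomial factor $n^{s+2}\le 2^{(s+2)(n-1)}$ appears and the clean geometric constant $R_*^{\,n-1}$ of Lemma \ref{f.g.lp} is preserved. Cases (i) and (ii) are pure bookkeeping once the weight ratio is estimated.
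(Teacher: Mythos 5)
Your overall route is the same as the paper's (Appendix \ref{app:weight}): bound the weight ratio $\tw_j\big/\big(\sum_l\tv_{k_l}\prod_{m\neq l}\tu_{k_m}\big)$ pointwise on the support and reduce to the summation estimates of Lemma \ref{f.g.lp}. Cases (i) and (ii) are complete and correct as you wrote them (sub-multiplicativity, respectively $\la j\ra^a\le n^{a-1}\sum_l\la k_l\ra^a$ plus $n\le 2^{n-1}$), and your first reduction in case (iii) — bounding the ratio by $n^{s+1}\la k_{l^*}\ra/\prod_{m\neq l^*}\la k_m\ra$ — is exactly the paper's passage from \eqref{schifo2} to the quantities $\cM_{l,n}$ in \eqref{arit.prop}.

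The gap is at the step you yourself flag as the heart of (iii): the claim that one can cancel $\la k_{l^*}\ra$ against ``a sub-product of the factors of $\ff_n$'' and be left with a kernel that is again a product of $n-1$ factors, one per free variable, with $\ell^{p'}$-norm $R_*^{n-1}$, is asserted rather than proved, and as literally stated it is not accurate in all configurations. Writing $\sigma_i=k_1+\cdots+k_i$, the factor of $\ff_n$ attached to $\sigma_i$ is $\la\sigma_i-j\ra^{-1}$ for $i$ odd and $\la\sigma_i\ra^{-1}$ for $i$ even; when $l^*$ is odd with $1<l^*<n$, one has $k_{l^*}=\sigma_{l^*}-\sigma_{l^*-1}$ while the available factors are $\la\sigma_{l^*}-j\ra^{-1}$ and $\la\sigma_{l^*-1}\ra^{-1}$, so any sub-multiplicative splitting of $\la k_{l^*}\ra$ either produces an extra uncontrolled $\la j\ra$ or consumes two factors of $\ff_n$ (e.g. $\la k_{l^*}\ra\le\la\sigma_{l^*}-j\ra\,\la\sigma_1-j\ra\,\prod_{2\le m\le l^*-1}\la k_m\ra$); after that the surviving kernel is no longer ``one factor per free variable'', and summability in $\sigma_{l^*}$ (and $\sigma_1$) has to be recovered from the leftover factors $\la k_m\ra^{-1}$, $m>l^*$, which depend on two consecutive $\sigma$'s, so one must sum iteratively in a specific order — this is precisely what the paper implements via the convolution inequality \eqref{est.sum0} (the origin of its $R_\flat=2^{p'}R_*^{p'}$) and via the parity/position case analysis proving \eqref{palle}. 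The situation is still more delicate for \eqref{w2}: in $\fg_{n,r}$ (formulas \eqref{def:gg0}--\eqref{def:gg1}) the letter $j$ sits in slots determined by $r$, and the cancellations only work after using the support condition $\bk\in\fS^{n,r}_{-j}$ to rewrite partial sums (e.g. $\sum_{\ell=1}^{r-1}k_\ell+j-k_r=-\sum_{\ell=r+1}^{n}k_\ell$); your ``obtained identically'' skips exactly this, which is where most of the paper's Appendix C case analysis lives. So the plan is the right one and can be completed along the lines you indicate, but the decisive absorption step for (iii) — for \eqref{w1} and especially for \eqref{w2} — is a genuine gap as written.
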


The proof of the Proposition is postponed in Appendix \ref{app:weight}.	\\

The main result of this section  is the following theorem:
\begin{theorem}
\label{thm:psi}
Fix $1 \leq p \leq 2$. There exist $\tC,  \varrho_* >0$ and an   analytic map  $\Psi: B^{p}(\varrho_*) \to \spazio{p}$ s.t. the following is true: 
\begin{itemize}
\item[(i)] The quantities   $\abs{\Psi_j(\xi, \bar \xi)}^2$ are in involution $\forall (\xi, \bar \xi) \in B^{p}_r(\varrho_*)$; more precisely   $\abs{\Psi_j(\xi, \bar \xi)}^2 = \gamma_j^2(\varphi, \bar \varphi)$,   where $\gamma_j$ is the $j^{th}$ spectral gap (see \eqref{sp.gap}) and $\varphi = \cF^{-1}(\xi)$.  
\item[(ii)]  For any  $\tu \leq \tv \leq \tw$   weights fulfilling {\rm (W$)_p$} and 
\begin{equation}
\label{rho.size}
0 < \rho  \leq \min\left( \varrho_*, \ \frac{\varrho_*}{R_1} \right)
\end{equation}
the restriction of $\Psi$ to $B^{p,\tu}(\rho)$ is analytic as a map    $B^{p,\tu}(\rho)  \to \spazio{p,\tu} $; its nonlinear part $\Psi - \uno$ is $(p, \tu, \tv, \tw)$-tame majorant analytic, $\Psi- \uno \in \sT_{ \tu, \tv, \rho}^{\tw, 3}$ with the quantitative estimate
 \begin{equation}
 \label{normPsi}
 \norm{\Psi- \uno}_{\sT_{\tu,  \tv, \rho}^{\tw, 3}} \leq   \tC  \, R_0 \, R_1^2  \, \rho^3 \ .
 \end{equation}
 \item[(iii)] $\Psi$ is real for real sequences, i.e. $\Psi\colon B^{p}_r(\varrho_*) \to \spazior{p}$.
 \end{itemize}
\end{theorem}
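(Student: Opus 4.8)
\medskip\noindent\emph{Sketch of proof.} The plan is to build $\Psi$ out of the spectral data of the Zakharov--Shabat (Lax) operator $L(\varphi_1,\varphi_2)$ associated to \eqref{1dnls0}, following the construction of B\"attig--Gr\'ebert--Guillot--Kappeler \cite{battig} and Kuksin--Perelman \cite{kuksinperelman}, adapted to dNLS as in \cite{masp_toda} for the Toda lattice. The periodic/antiperiodic spectrum of $L(\varphi)$ produces the sequence of spectral gaps $\gamma_j(\varphi)$, which by classical integrability theory are real analytic functions, pairwise in involution with respect to the NLS Poisson bracket. Using the Riesz projection onto the $j$-th gap and the associated normalised eigenfunction, one constructs, for $\varphi$ in a small ball of $\ell^p_c$, a scalar quantity $\Psi_j(\varphi)$ -- essentially a normalised off-diagonal entry of the monodromy matrix at a suitable base point -- with the two properties $|\Psi_j(\varphi)|^2=\gamma_j^2(\varphi)$ for all $j$ and $\Psi_j(\varphi)=\xi_j+O(\varphi^3)$, where $\xi=\cF(\varphi_1)$. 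Passing to the Fourier variables $\zeta=(\xi,\eta)$, $\eta=\cF(\varphi_2)$, this reads $\Psi=\uno+\Psi^0$ with $\Psi^0=O(\zeta^3)$, and gives item (i); moreover, since $\omega_0$ is the push-forward of the NLS symplectic form under $\cF$, the functionals $|\Psi_j|^2=\gamma_j^2$ pairwise commute with respect to $\omega_0$, which is the involution hypothesis of Theorem \ref{KP}. Since $L$ restricted to $\varphi_2=\bar\varphi_1$ enjoys a conjugation symmetry, the whole construction is compatible with that constraint, which is item (iii).

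The heart of the theorem is the tame majorant analyticity of item (ii). First I would expand the resolvent $(L(\varphi)-\lambda)^{-1}$ in a Neumann series about the free operator $L(0)$ and insert it into the contour integrals defining $\Psi_j$; this yields a convergent expansion $\Psi_j(\xi,\eta)=\xi_j+\sum_{n\geq 3\ {\rm odd}}\Psi^n_j(\xi,\eta)$ into homogeneous polynomials, together with explicit multilinear kernels. The combinatorics turns out to be exactly the one encoded in $\ff_n$: every monomial of $\Psi^n_j$ carries the constraint $k_1+\dots+k_n=j$ and a product of denominators $\la k_1+\dots+k_m-j\ra^{-1}\,\la k_1+\dots+k_{m+1}\ra^{-1}$ over odd $m\leq n-1$, times a numerical prefactor bounded by a universal $C^n$. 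Hence, after passing to moduli, the kernel of $\und{\Psi^n}$ is dominated by $C^n\,\ff_n(k_1,\dots,k_n;j)$, while an analogous (and somewhat longer) computation of the transpose $\di\Psi^n(\zeta)^*$ -- which interchanges the output index $j$ with one input index $k_r$ -- shows that the kernel of $[\und{\di\Psi^n}]^*$ is dominated by $C^n\sum_{r=1}^n\fg_{n,r}(k_1,\dots,k_n;j)$. I expect this to be the main obstacle: carrying the perturbative expansion far enough to obtain closed formulas for the kernels, and in particular keeping explicit track of the $n$-dependence of every constant, is long and technical book-keeping (this is the content of the two lemmas on $\Psi^n$ and $[\di\Psi^n]^*$ to be proved next).

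Once the kernel bounds are available, the estimate \eqref{normPsi} follows from Young- and H\"older-type inequalities. For the base weights $\tu=\tv=\tw\equiv 1$, Lemma \ref{f.g.lp} gives $\sup_j\|\ff_n(\cdot;j)\|_{\ell^{p'}(\Z^n)}\leq R_*^{n-1}$, and likewise for $\fg_{n,r}$, so $\und{\Psi^n}\colon B^p(\rho)\to\spazio{p}$ and $[\und{\di\Psi^n}]^*\colon B^p(\rho)\to\cL(\spazio{p})$ are bounded with norm $\leq C^n R_*^{n-1}\rho^n$. For general symmetric sub-multiplicative weights $\tu\leq\tv\leq\tw$ satisfying {\rm (W$)_p$}, the extra weight factor $\tw_j/\big(\sum_l\tv_{k_l}\prod_{m\neq l}\tu_{k_m}\big)$ is precisely the one appearing in \eqref{w1}--\eqref{w2}, and the same estimate gives that $\und{\Psi^n}$ maps $B^{p,\tu}(\rho)\cap\spazio{p,\tv}\to\spazio{p,\tw}$ while $[\und{\di\Psi^n}]^*$ maps into $\cL(\spazio{p,\tv},\spazio{p,\tw})$, both with norm $\leq C^n R_0 R_1^{n-1}\rho^n$; hence $\Psi^n\in\sT^{\tw,n}_{\tu,\tv,\rho}$ with $\norm{\Psi^n}_{\sT^{\tw,n}_{\tu,\tv,\rho}}\leq C^n R_0 R_1^{n-1}\rho^n$. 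Summing over odd $n\geq 3$ -- the geometric series converges once $C R_1\rho\leq\tfrac12$, which is guaranteed by taking $\varrho_*$ small enough and $\rho$ as in \eqref{rho.size} -- one gets $\Psi-\uno\in\sT^{\tw,3}_{\tu,\tv,\rho}$ with $\norm{\Psi-\uno}_{\sT^{\tw,3}_{\tu,\tv,\rho}}\leq\tC R_0 R_1^2\rho^3$, the factor $R_0 R_1^2\rho^3$ coming from the dominant term $n=3$ and $\tC$ absorbing the sum. The map $\Psi$ so constructed then satisfies all the hypotheses of Theorem \ref{KP}.
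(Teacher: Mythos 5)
Your proposal follows essentially the same route as the paper: $\Psi_j$ is built from the Riesz projection onto the $j$-th spectral subspace of the Zakharov--Shabat operator (in the paper via Kato's transformation operators $U_j(\zeta)$ applied to the free eigenfunctions $f_{j0}^\pm$ -- rather than genuine normalised eigenfunctions, which cannot be chosen analytically near the double eigenvalues at $\zeta=0$ -- so that $\Psi_j$ is the off-diagonal entry of the reduced $2\times 2$ matrix, giving $|\Psi_j|^2=\gamma_j^2$ and $\Psi=\uno+O(\zeta^3)$), and the tame estimates are obtained exactly as you indicate: Neumann-expanding the resolvent inside the contour integrals, bounding the kernels of $\und{\Psi^n}$ and $[\und{\di\Psi^n}]^*$ by $C^n\ff_n$ and $C^n\fg_{n,r}$, then invoking Lemma \ref{f.g.lp} and condition (W$)_p$ and summing the geometric series with the dominant $n=3$ term. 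This matches the paper's Lemmas \ref{lem:P.ana}, \ref{lem:z.ana}, Proposition \ref{lem:exp} and Lemmas \ref{lem:psi00}--\ref{lem:dPsi.2}.
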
	

	\begin{remark}
The constants $\tC, \varrho_*$	in Theorem \ref{thm:psi} {\em do not} depend on the regularity of $\zeta$, and it is possible to compute them, see Section \ref{subsec.proof}.
\end{remark}
Before proving Theorem \ref{thm:psi}, we show how  Theorem \ref{thm:psi} and  the Kuksin-Perelman theorem \ref{KP} imply Theorem \ref{main}.\\

\begin{proof}[Proof of Theorem \ref{main}]
Let $1 \leq p \leq 2$. First take $\tu = \tv = \tw = \{ 1 \}_{j \in \Z}$.  By Proposition \ref{prop:sw} such weights fulfill (W$)_p$ with $R_0 = 1$ and $R_1 = \left(\sum_{k \in \Z} \frac{1}{\la k\ra^{p'} }\right)^{1/p'}\leq 2$.  
For $\rho \leq 2^{-1} \varrho_*$, 
$\Psi- \uno \in \sT_{\tu, \tv, \rho}^{\tw, 3}$ with
$$
\epsilon_1 := \norm{\Psi - \uno}_{\sT_{\tu, \tv, \rho}^{\tw, 3}} \leq 4 \tC  \rho^3  \ . 
$$ 
Hence condition \eqref{th.1} is satisfied if
$\rho \leq \min(2^{-31} \tC^{-1/2}, 2^{-1} \varrho_*) =: \varrho_1$.
Applying the tame Kuksin-Perelman theorem,   there exists a 
map $\wt\Psi: B^{p}_r(\ta \varrho_1) \to \ell^{p}_r$,
 $\ta = 2^{-120}$,  which fulfills $i)$--$v)$ of Theorem \ref{KP}. 
In particular $\wt \Psi - \uno \in \cA_{\tu, \ta \rho}^{ 3}$ for any $\rho \leq \varrho_1$ with 
$$
\norm{\wt \Psi - \uno}_{\cA_{\tu , \ta \rho}^{ 3}} \leq
2^{17} \epsilon_1 \leq  2^{19}\, \tC \, \rho^3 \ . 
$$
Denote now  $\Phi := \left.\wt\Psi\right|_{B^{p}_r(\ta \rho)}$.
Such map is majorant analytic as a map $B^{p}(\rho_0) \to \spazio{p}$ for $\rho_0 \equiv  \ta  \varrho_1 $, and fulfills $(i)$ and $(ii)$ of Theorem \ref{main}.\\
We prove now $(iii)$. Take $\tu= \{ 1 \}_{j \in \Z}$, $\tv = \tw =  \{\la j \ra^{s}\}_{j \in \Z}$  with $s \geq 1$.
 By Proposition \ref{prop:sw}(ii) these weights fulfill  {\rm (W$)_p$} with   constants $R_0 = 1$ and $R_1 = 2^s \left(\sum_{k \in \Z} \frac{1}{\la k\ra^{p'} }\right)^{1/p'}\leq 2^{s+1}$.
Therefore for $0 < \rho < \varrho_* 2^{-1 - s}$, one has that
$\Psi - \uno \in \sT_{\tu, \tv, \rho}^{\tw, 3}$ and 
$\epsilon_1\equiv \norm{\Psi - \uno}_{\sT_{\tu, \tv, \rho}^{\tw, 3}} \leq \tC 2^{2s+2} \rho^3$. Thus, condition  \eqref{th.1} is fulfilled provided   $\rho <\min(2^{-31-s} \tC^{-1/2}, 2^{-1} \varrho_*)$, so one applies the Kuksin Perelman  theorem obtaining 
that $\wt \Psi - \uno \in 
\sT_{\tu, \tv, \ta \rho}^{\tw, 3}$. Then with $\rho' = \ta \rho$, 
$$
\norm{\Phi - \uno}_{\sT_{\tu , \tv, \rho'}^{ \tw, 3}} \equiv 
\norm{\wt \Psi - \uno}_{\sT_{\tu , \tv,  \ta\rho}^{\tw, 3}} \leq 
\tC^\prime 4^{s} \rho'^3  \ .
$$
This estimate and Remark  \ref{rem:sT.tame} implies $(iii)$.
Item $(v)$ is proved analogously using the weights 
$\tu = \{\la j \ra\}_{j \in \Z}$, 
$\tv = \{\la j \ra^{s} \}_{j \in \Z}$ and $\tw = \{\la j \ra^{s+1}\}_{j \in \Z}$.

Item $(iii)$ is a consequence of   Corollary \ref{cor.KP}. The explicit form of the mass and the momentum in Birkhoff coordinates follows by the result of \cite{grebert_kappeler} and the  remark that,  despite the integrating Birkhoff map is not necessarily unique, it is unique the normal form.
\end{proof}

\begin{proof}[Proof of Theorem \ref{main2} and Corollary \ref{cor.KP}]
Theorem \ref{main2}  follows with the same arguments employed in the proof of Theorem \ref{main},  using  that the weights  $\tu = \tv = \tw =  \{\la j \ra^{s} \, e^{a |j|}\}_{j \in \Z}$ fulfill  (W$)_p$ with constants $R_0$ and $R_1$ which {\em do not} depend on $s,a$.\\
The proof of Corollary \ref{cor.KP} follows as in  \cite[Corollary 1.6]{masp_toda}.
\end{proof}

\subsection{Proof of Theorem \ref{thm:psi}}
\label{subsec.proof}
As we already mentioned, in  order to construct the map $\Psi$
we will exploit the integrable structure of the dNLS, which we now recall.
It is well known that   dNLS on $\T$ admits a Lax pair formulation, where the Lax operator $L$ is the Zakharov-Shabat differential operator given for any $ (\varphi_1, \varphi_2) \in L^2_c $ by
\begin{equation}
\label{lax}
L(\varphi_1, \varphi_2) = 
\im 
\begin{pmatrix}
1 & 0 \\
0 & -1
\end{pmatrix} \partial_x + 
\begin{pmatrix}
0 & \varphi_1 \\
 \varphi_2 & 0
\end{pmatrix}  \ . 
\end{equation}
We consider \eqref{lax} as  an operator on the space 
 $$ \cY := L^2(\R/2\Z, \C) \times L^2(\R/2\Z, \C) $$
of functions  with periodic boundary conditions on the interval $[0, 2]$ (twice the  periodicity of $\varphi_1, \varphi_2$).
Often we will denoted by $ \begin{pmatrix} f_1 \\ f_2 \end{pmatrix}$ the elements of $\cY$. The space $\cY$ is  equipped with the complex scalar product
\begin{equation}
\label{scalar_productcY}
\left( \begin{pmatrix} u_1 \\ u_2 \end{pmatrix} \ , \begin{pmatrix} v_1 \\ v_2 \end{pmatrix} \right)_\cY := \int_0^2  (u_1(x)\, \bar{v}_1(x) + u_2(x)\bar v_2 (x)) \, dx \ .  
\end{equation}
The standard theory of Lax pairs guarantees that the eigenvalues of \eqref{lax}  are infinitely many commuting constants of motion.
More precisely it is well known \cite{grebert_kappeler} that there exists $U \subset L^2_c$ a complex neighborhood of $L^2_r$ in $L^2_c$ s.t. $\forall (\varphi_1, \varphi_2) \in U$ the spectrum of \eqref{lax} is given by a sequence of complex numbers (lexicographycally ordered) 
$$
\cdots \sleq \lambda_0^-(\varphi_1, \varphi_2) \sleq \lambda_0^+(\varphi_1, \varphi_2)
 \sleq
  \lambda_1^-(\varphi_1, \varphi_2) \sleq 
  \lambda_1^+(\varphi_1, \varphi_2) < \cdots
$$
The $\{\lambda_j^\pm(\varphi_1, \varphi_2)\}_{j \in \Z}$ are not analytic as functions of $(\varphi_1, \varphi_2)$,  thus one prefers to use, rather than the eigenvalues, the {\em spectral gaps}
\begin{equation}
\label{sp.gap}
\gamma_j(\varphi_1, \varphi_2) := \lambda_j^+(\varphi_1, \varphi_2) - \lambda_j^-(\varphi_1, \varphi_2)  \ , \qquad 
j \in \Z
\end{equation}
which are known to be  real analytic commuting constants of motion.
As we already mentioned,  the map $\Psi$ that we will construct has the property that, for {\em real} $\zeta$, $\abs{\Psi_j(\xi, \bar \xi)}^2 = \gamma_j^2(\varphi, \bar\varphi)$ $\, \forall j \in \Z$, where $\xi = \cF(\varphi)$.  \\

Before starting the construction of $\Psi$,  it is useful to state some properties of the Lax operator which will be used in the following. First decompose $L$ as the sum $L = L_0 + V$, where 
\begin{equation}
\label{L0}
L_0 := \im 
\begin{pmatrix}
1 & 0 \\
0 & -1
\end{pmatrix} \partial_x \ , \qquad V(\varphi_1, \varphi_2) := \begin{pmatrix}
0 & \varphi_1 \\
 \varphi_2 & 0
\end{pmatrix} \ . 
\end{equation}

By \eqref{fcvarphi}, we identify $(\varphi_1, \varphi_2) \in L^2_c$ with  $\zeta = (\xi, \eta) \in \spazio{2}$, thus from now on we will write $ V(\zeta) \equiv V(\varphi_1, \varphi_2) $.
		The following properties are trivially verified:
	\begin{itemize}
		\item[(H1)] {\bf Involution $\imath$}:  denote by $\imath$ the bounded, antilinear  operator $\cY \to \cY$ defined by
		$$
		\imath \begin{pmatrix} u_1 \\ u_2 \end{pmatrix} = \begin{pmatrix} \bar u_2 \\  \bar u_1 \end{pmatrix} \ .
		$$
		Then $\forall f, g \in \cY$, $\forall \lambda \in \C$ one has
		\begin{align*}
\imath^2 f = f \ , \qquad 		
		\bar{(f, g)}_\cY = (\imath f, \imath g)_\cY \ ,  \qquad 
		\imath (\lambda f) = \bar \lambda \  \imath f \ . 
		\end{align*}
		Furthermore 
		\begin{equation}
	        \label{i.comm} 
		\imath \, L_0 = L_0 \, \imath \ , \quad \imath \, V(\zeta) = V(\zeta)^* \, \imath \ .
		\end{equation}	

	\item[(H2)] {\bf Spectrum of $L_0$}: $L_0$ is a selfadjoint  operator with domain $D(L_0)$  dense in $\cY$. Its  spectrum is discrete,  $\sigma(L_0) = \left\{ \lambda_{j}^0 \right\}_{j \in \Z}$,    and each  eigenvalue $\lambda_j^0 \equiv  \pi j$ has  multiplicity 2. Remark that 
	\begin{equation}
	\label{spectrum}
	 \inf_{i\neq j} \abs{\lambda_{j}^0 - \lambda_{i}^0} = \pi  \ . 
	\end{equation}
		\end{itemize}

\begin{itemize}
	\item[(H3)] {\bf Eigenfunctions of $L_0$}: 	for any  $j \in \Z$ we denote by   $f_{j0}^+, f_{j0}^- \in \cY$  the eigenfunctions corresponding to the eigenvalue $\lambda_j^0$ given by
	$$
	f_{j0}^- := \frac{1}{\sqrt 2} \, \begin{pmatrix} 0 \\ e^{\im  \pi j x} \end{pmatrix} \ , 
	\qquad f_{j0}^+ := \frac{1}{\sqrt 2} \, \begin{pmatrix}  e^{-\im  \pi j x}  \\ 0\end{pmatrix} \ . 
	$$
	They  	 fulfill
	\begin{equation}
	\label{i.eig} 
		\imath f_{j0}^- = f_{j0}^+ \ , \qquad \imath f_{j0}^+ = f_{j0}^- \ . 
	\end{equation}
	We denote by $E_{j0}:= {\rm Vect}(f_{j0}^-, f_{j0}^+)$ the vectorial space  spanned by $f_{j0}^\pm$.		\\
	 The vectors $\{f_{j0}^\sigma\}_{j \in  \Z, \sigma \in \pm}$ form a   basis for  $\cY$.
	\end{itemize}
	
%
	
	\begin{itemize}
	\item[(H4)] {\bf Perturbation $V(\zeta)$:} for any $\zeta \in \spazio{2}$,  the operator $V(\zeta)$ has domain $D(V(\zeta)) \supset D(L_0)$.
	\begin{itemize}
	\item[(H4a)] for any {\em real} $\zeta \in\spazior{2}$,   the operator $V(\zeta)$  is symmetric (w.r.t. the scalar product \eqref{scalar_productcY}) on its domain.
	\item[(H4b)] for any $i_1, i_2 \in \Z$ we have that 
	\begin{align*}
	& (V(\zeta) \, f_{i_1 0 }^{-} \ , f_{i_2 0}^{+} )_\cY  = \xi_j  \ , \quad \mbox{ if } i_1 + i_2 = 2j \\
	& (V(\zeta) \, f_{i_1 0 }^{+} \ , f_{i_2 0}^{- })_\cY  = \eta_j \ , \quad \mbox{ if } i_1 + i_2 = 2j \\
        & (V(\zeta) \, f_{i_1 0 }^{\sigma_1} \ , f_{i_2 0}^{\sigma_2} )_\cY  = 0 \ , \quad \mbox{ otherwise }
	\end{align*}
	\end{itemize}	
\end{itemize}

Now take  $(\xi, \eta) \in \spazio{2}$ with a sufficiently small norm. We will  construct  perturbatively the spectral data of    $L(\zeta)$ (defined in  \eqref{lax}) starting from the spectral data of $L_0$  (defined in \eqref{L0}) which is given in  (H2) and (H3). 
We start with a preliminary result:
\begin{lemma}
\label{key.pert}
For any $\lambda \in \C \setminus \sigma(L_0)$
the map $\zeta \mapsto V(\zeta) (L_0 - \lambda)^{-1}$ is analytic as a map 
$\spazio{2} \to \cL(\cY) $ and  fulfills
\begin{equation}
\label{pert.res0}
 \norm{V(\zeta) \, (L_0 - \lambda)^{-1}}_{\cL(\cY)} \leq 2 \, c(\lambda) \, \norm{\zeta}_2  \ , 
\end{equation}
where
$$
c(\lambda) := \left(\sum_{n \in \Z} \frac{1}{|\pi n - \lambda|^2}\right)^{1/2} \ . 
$$
\end{lemma}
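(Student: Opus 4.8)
The plan is to deduce the whole statement from the quantitative bound \eqref{pert.res0}. By \eqref{L0} together with \eqref{fcvarphi}, the off--diagonal entries $\varphi_1,\varphi_2$ of the matrix $V(\zeta)$ depend \emph{linearly} on $\zeta=(\xi,\eta)$, so the map $\zeta \mapsto V(\zeta)(L_0-\lambda)^{-1}$ is linear; a bounded linear map between complex Banach spaces (here $\spazio{2}\to\cL(\cY)$) is automatically entire analytic. Hence it is enough to establish \eqref{pert.res0}.

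To prove \eqref{pert.res0}, fix $g\in\cY$ and put $h:=(L_0-\lambda)^{-1}g$. Using the orthonormal eigenbasis $\{f^\sigma_{j0}\}_{j\in\Z,\sigma\in\pm}$ of (H3), write $g=\sum_{j\in\Z}\big(a_j f^+_{j0}+b_j f^-_{j0}\big)$, so $\norm{g}_\cY^2=\sum_j(|a_j|^2+|b_j|^2)$; since $L_0 f^{\pm}_{j0}=\pi j\, f^{\pm}_{j0}$ by (H2)--(H3) and $\lambda\notin\pi\Z=\sigma(L_0)$, we get $h=\sum_j (\pi j-\lambda)^{-1}\big(a_j f^+_{j0}+b_j f^-_{j0}\big)$. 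By Cauchy--Schwarz $\sum_j |a_j|/|\pi j-\lambda|\le c(\lambda)\,\big(\sum_j|a_j|^2\big)^{1/2}<\infty$ (and likewise with $b_j$), so the two components of $h$ are the absolutely convergent Fourier series $h^{(1)}(x)=\tfrac{1}{\sqrt2}\sum_j \tfrac{a_j}{\pi j-\lambda}e^{-\im\pi j x}$ and $h^{(2)}(x)=\tfrac{1}{\sqrt2}\sum_j \tfrac{b_j}{\pi j-\lambda}e^{\im\pi j x}$; in particular they are continuous with $\norm{h^{(1)}}_{L^\infty}\le \tfrac{1}{\sqrt2}\,c(\lambda)\,\big(\sum_j|a_j|^2\big)^{1/2}$ and analogously for $h^{(2)}$. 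This Cauchy--Schwarz step is precisely where the $\ell^2$--sum defining $c(\lambda)$ enters (it is a disguised $1$D Sobolev embedding for the resolvent).

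Finally, by \eqref{lax}--\eqref{L0} one has $V(\zeta)h=\big(\varphi_1\, h^{(2)},\,\varphi_2\, h^{(1)}\big)$, and estimating each component by $\norm{\varphi_1 h^{(2)}}_{L^2}\le\norm{\varphi_1}_{L^2}\norm{h^{(2)}}_{L^\infty}$ and using that, since $\varphi_1,\varphi_2$ are $1$--periodic but regarded here on the doubled circle $\R/2\Z$ of $\cY$, Parseval gives $\norm{\varphi_1}_{L^2(0,2)}=\sqrt2\,\norm{\xi}_{2}$ and $\norm{\varphi_2}_{L^2(0,2)}=\sqrt2\,\norm{\eta}_{2}$, one obtains $\norm{V(\zeta)h}_\cY^2\le c(\lambda)^2\big(\norm{\xi}_2^2\sum_j|b_j|^2+\norm{\eta}_2^2\sum_j|a_j|^2\big)\le c(\lambda)^2\,(\norm{\xi}_2^2+\norm{\eta}_2^2)\,\norm{g}_\cY^2\le\big(2\,c(\lambda)\,\norm{\zeta}_2\,\norm{g}_\cY\big)^2$, which is \eqref{pert.res0}. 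The only genuinely fiddly point is the bookkeeping of the $\tfrac{1}{\sqrt2}$ normalizations of the eigenfunctions against the $\sqrt2$ produced by viewing the $1$--periodic potentials on $\R/2\Z$; they compensate, and the stated constant $2$ is in fact not optimal. Convergence of the series for $h$, finiteness of $c(\lambda)$ (its terms decay like $|n|^{-2}$), and the elementary product estimate $\norm{\varphi u}_{L^2}\le\norm{\varphi}_{L^2}\norm{u}_{L^\infty}$ are routine.
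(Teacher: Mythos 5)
Your proposal is correct and follows essentially the same route as the paper: linearity of $\zeta\mapsto V(\zeta)(L_0-\lambda)^{-1}$ reduces analyticity to boundedness, the resolvent output is bounded in $L^\infty$ via Cauchy--Schwarz on its Fourier coefficients (producing $c(\lambda)$), and the potential is then estimated by the $L^2\times L^\infty\to L^2$ product bound with the same $\sqrt2$ bookkeeping on $[0,2]$. The only cosmetic difference is that you expand in the eigenbasis $\{f^\pm_{j0}\}$ rather than writing the two Fourier series directly, which is the same computation.
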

\begin{proof}
The map $\zeta \mapsto V(\zeta) (L_0 - \lambda)^{-1}$ is a $\C$-linear map, so it is analytic iff it is bounded. To compute its norm, take $f \in \cY $ and write 
$$
f = \left( f_1, f_2 \right) = \frac{1}{\sqrt 2} \, \left( \sum_{k \in \Z } \alpha_k \, e^{-\im \pi k x} \ , \ \ \sum_k \beta_k \, e^{\im \pi k x} \right) \ ,
$$
so that  
$$
(L_0 - \lambda)^{-1} f \equiv (\wt f_1, \wt f_2) = \frac{1}{\sqrt 2} \, \left( \sum_{k} \frac{\alpha_k}{\pi k - \lambda} \,  e^{-\im \pi j x} \ , \ \ \sum_k \frac{\beta_k}{\pi k - \lambda} \, e^{ \im \pi k x} \right) \ .
$$
Now remark that $\wt f_1, \wt f_2 \in L^\infty[0,2]$ and
$$
\norm{\wt f_1}_{L^\infty[0,2]} \leq \frac{1}{\sqrt 2}\left( \sum_k \abs{\alpha_k}^2 \right)^{1/2} \, \left(\sum_k \frac{1}{\abs{\pi k - \lambda}^2}\right)^{1/2} \equiv c(\lambda) \, \norm{f_1}_{L^2[0,2]}  \ 
$$
(clearly  the same bound holds also for $\wt f_2$). 
Thus 
\begin{align*}
\norm{V(\zeta) \, (L_0 - \lambda)^{-1} f}_{\cY} & = \left(\norm{\varphi_2 \wt f_1}_{L^2[0,2]}^2 + \norm{\varphi_1 \wt f_2}_{L^2[0,2]}^2\right)^{1/2}  \\
&\leq \sqrt{2} \norm{(\varphi_1, \varphi_2)}_{L^2_c} \left(  \norm{\wt f_1}_{L^\infty[0,2]}^2+ 
 \norm{\wt f_2}_{L^\infty[0,2]}^2\right)^{1/2}
  \leq 2\, c(\lambda) \,  \norm{\zeta}_2 \norm{f}_{\cY}
\end{align*}
which is the claimed inequality.
\end{proof}

Now it is sufficient to apply classical Kato perturbation theory \cite{kato} to get the following:
\begin{lemma}
\label{lem:P.ana}
Let 
 $0 < \rho < \frac{1}{8}  \ . $
Then  for any $\norm{\zeta}_2 \leq \rho$   the following holds true: 
\begin{itemize}
\item[(i)] For any $j \in \Z$, let 
\begin{equation}
	\label{gamma.def}
	 \Gamma_j := \left\{\lambda \in \C : \ |\lambda - \lambda_j^0| = \pi/2 \right\} \ . 
	\end{equation}
Then $\Gamma_j \subset \rho(L(\zeta))$, the resolvent set of $L(\zeta)$.
\item[(ii)] For any $j \in \Z$, define the projector $P_j(\zeta)$ and the subspace $E_j(\zeta) \subseteq \cY$ as 
\begin{equation}
\label{proj.def}
P_j(\zeta) := - \frac{1}{2\pi \im } \oint_{\Gamma_j} (L(\zeta) - \lambda)^{-1} \, \di \lambda \ , \qquad E_j(\zeta) := {\rm Ran }\, P_j(\zeta)  \ , \quad j \in \Z
\end{equation}
where $\Gamma_j$ is counterclockwise oriented. Then $\zeta \mapsto P_j(\zeta)$ is analytic as a map  $B^2(\rho)  \to \cL(\cY)$. 
\item[(iii)] For any $j \in \Z$,  define the transformation operator $U_j(\zeta)$ as 
$$
U_j(\zeta) := (\uno - (P_j(\zeta) - P_{j0})^2)^{-1/2} P_j(\zeta) \ , 
$$	
where $P_{j0} := P_j(0)$. Then $\zeta \mapsto U_j(\zeta)$ is analytic as a map $B^2(\rho) \to \cY$  and ${\rm Ran } \, U_j(\zeta) \equiv E_j(\zeta)$.\\
 Furthermore for $\zeta$ {\em real}
	\begin{align}
	&\norm{U_j(\zeta) f}_\cY = \norm{f}_\cY \ , \qquad \forall f \in E_{j}^0 \ , \\
	\label{U.comm}
	&[\imath, U_j(\zeta)] = 0 \ . 
	\end{align}

\end{itemize}
\end{lemma}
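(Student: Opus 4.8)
This is a textbook application of Kato's analytic perturbation theory to $L(\zeta)=L_0+V(\zeta)$; the only ingredient that is not completely routine is a resolvent estimate uniform in $j$, which I would establish first. For $\lambda\in\Gamma_j$ write $\lambda=\pi j+\tfrac{\pi}{2}e^{\im\theta}$: then $|\pi n-\lambda|=\tfrac{\pi}{2}$ for $n=j$, while $|\pi n-\lambda|\ge\pi|n-j|-\tfrac{\pi}{2}\ge\tfrac{\pi}{2}|n-j|$ for $n\ne j$, so $c(\lambda)^2\le\tfrac{4}{\pi^2}\bigl(1+2\sum_{m\ge1}m^{-2}\bigr)\le4$, i.e.\ $c(\lambda)\le2$ for every $\lambda\in\bigcup_{j\in\Z}\Gamma_j$. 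Hence, by Lemma \ref{key.pert}, for $\norm{\zeta}_2\le\rho<\tfrac18$ and $\lambda\in\Gamma_j$ one has $\norm{V(\zeta)(L_0-\lambda)^{-1}}_{\cL(\cY)}\le4\rho<\tfrac12$; moreover $\norm{(L_0-\lambda)^{-1}}_{\cL(\cY)}\le\tfrac{2}{\pi}$, since $L_0$ is self-adjoint with $\operatorname{dist}(\lambda,\sigma(L_0))=\tfrac{\pi}{2}$ on $\Gamma_j$. Crucially all these constants are independent of $j$; and for real $\zeta$ the operator $L(\zeta)$ is self-adjoint by Kato--Rellich, $V(\zeta)$ being symmetric (property (H4a)) and $L_0$-bounded with relative bound $<1$.

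Granted this, items (i) and (ii) follow from the factorisation $L(\zeta)-\lambda=\bigl(\uno+V(\zeta)(L_0-\lambda)^{-1}\bigr)(L_0-\lambda)$ on $D(L_0)$: by the bound above the first factor is boundedly invertible via a Neumann series, so $\Gamma_j\subset\rho(L(\zeta))$, which is (i), and
\[
(L(\zeta)-\lambda)^{-1}=(L_0-\lambda)^{-1}\sum_{n\ge0}(-1)^n\bigl(V(\zeta)(L_0-\lambda)^{-1}\bigr)^n ,
\]
a series converging in $\cL(\cY)$ uniformly for $\zeta\in B^2(\rho)$ and $\lambda\in\Gamma_j$, whose terms are homogeneous polynomials in $\zeta$ (recall $\zeta\mapsto V(\zeta)$ is linear); integrating over the compact contour $\Gamma_j$ then shows $\zeta\mapsto P_j(\zeta)$ is analytic $B^2(\rho)\to\cL(\cY)$, and as a Riesz projector $P_j(\zeta)$ is idempotent with range $E_j(\zeta)$. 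For the analyticity of $U_j$ in (iii), subtracting the $n=0$ term gives $\norm{(L(\zeta)-\lambda)^{-1}-(L_0-\lambda)^{-1}}\le\tfrac{2}{\pi}\cdot\tfrac{4\rho}{1-4\rho}\le\tfrac{16\rho}{\pi}$ on $\Gamma_j$, hence (as $\Gamma_j$ has length $\pi^2$) $\norm{R(\zeta)}:=\norm{P_j(\zeta)-P_{j0}}\le\tfrac{1}{2\pi}\cdot\pi^2\cdot\tfrac{16\rho}{\pi}=8\rho<1$. Then $\norm{R(\zeta)^2}<1$, and since $X\mapsto(\uno-X)^{-1/2}$ is given by a power series with real coefficients analytic on $\{\norm{X}<1\}\subset\cL(\cY)$, the map $\zeta\mapsto(\uno-R(\zeta)^2)^{-1/2}$, and therefore $U_j(\zeta)=(\uno-R(\zeta)^2)^{-1/2}P_j(\zeta)$, is analytic $B^2(\rho)\to\cL(\cY)$. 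Finally $\operatorname{Ran}U_j(\zeta)=E_j(\zeta)$: from $R^2=P_j+P_{j0}-P_jP_{j0}-P_{j0}P_j$ one checks $[P_j,R^2]=0$, so $(\uno-R^2)^{-1/2}$, being a power series in $R^2$, commutes with $P_j$ and, being invertible, maps $E_j(\zeta)=\operatorname{Ran}P_j(\zeta)$ onto itself.

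It remains to verify the two properties claimed for \emph{real} $\zeta$. Then $L(\zeta)$ is self-adjoint, $P_j(\zeta)$ is an orthogonal projection and $R=R^*$, so $(\uno-R^2)^{-1/2}$ is self-adjoint; for $f\in E_{j0}$ a standard manipulation --- using $[P_j,R^2]=0$, self-adjointness of $P_j$ and of $(\uno-R^2)^{-1/2}$, and the identity $P_{j0}(\uno-R^2)P_{j0}=P_{j0}P_jP_{j0}$, so that $(\uno-R^2)^{\pm1}$ preserves $E_{j0}$ and acts there as $(P_{j0}P_jP_{j0})^{\pm1}$ --- yields $\norm{U_j(\zeta)f}_\cY^2=\langle(\uno-R^2)^{-1}f,(\uno-R^2)f\rangle_\cY=\norm{f}_\cY^2$. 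As for the involution, for real $\zeta$ one has $V(\zeta)^*=V(\zeta)$, so $\imath L(\zeta)=L(\zeta)\imath$ by \eqref{i.comm}; since $\imath$ is antilinear and $\Gamma_j$ is a circle centred on $\R$, the projection $P_j(\zeta)$ --- equal to the spectral projection of $L(\zeta)$ onto $(\pi j-\tfrac{\pi}{2},\pi j+\tfrac{\pi}{2})$ --- commutes with $\imath$, and hence so do $P_{j0}$, $R$, $R^2$ and $(\uno-R^2)^{-1/2}$ (the last because its Taylor coefficients in $R^2$ are real); therefore $[\imath,U_j(\zeta)]=0$.

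There is no genuine obstacle here: it is the standard Kato scheme, provided Lemma \ref{key.pert} is in hand. The two points that do require attention are that every constant in the resolvent estimates be \emph{uniform in $j$}, so that a single radius $\rho<\tfrac18$ serves all spectral gaps simultaneously, and the careful bookkeeping of the antilinear involution $\imath$ through the Cauchy integral defining $P_j(\zeta)$ and through the functional calculus of $R^2$ --- where antilinearity together with the evenness and reality of the relevant series must be used consistently.
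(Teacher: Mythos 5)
Your proposal is correct and follows essentially the same route as the paper: the uniform-in-$j$ bound from Lemma \ref{key.pert} on $\Gamma_j$, the Neumann series for $(L(\zeta)-\lambda)^{-1}$, the estimate $\norm{P_j(\zeta)-P_{j0}}\leq 8\rho<1$ making $U_j(\zeta)$ a convergent power series, and the commutation of $\imath$ with $P_j(\zeta)$ for real $\zeta$. The only differences are cosmetic: you bound $P_j(\zeta)-P_{j0}$ via the Neumann tail plus the contour length where the paper uses the resolvent identity, and you spell out the isometry on $E_{j0}$ and the range identity which the paper delegates to Kato's perturbation theory.
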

\begin{proof}
By Lemma \ref{key.pert}, for any $\lambda \in \Gamma_j$, the map $\zeta \mapsto V(\zeta) (L_0 - \lambda)^{-1}$ is analytic as  a map $\spazio{2} \to \cL(\cY)$ and 
\begin{equation}
\label{VL0}
\sup_{\lambda \in \Gamma_j} \norm{V(\zeta) \, (L_0 - \lambda)^{-1}}_{\cL(\cY)} \leq  \sup_{\lambda \in \Gamma_j }2 \, c(\lambda) \, \norm{\zeta}_2 \leq 4\, \norm{\zeta}_2
\end{equation}
where in the last step we used the explicit formula for $\Gamma_j$ to estimate $c(\lambda)$.
It follows that for 
 $\norm{\zeta}_2 \leq \rho < \frac{1}{8}$, the perturbed resolvent $L(\zeta) - \lambda$ is well defined by Neumann series  and fulfills the estimate
\begin{equation}
\label{pert.res}
\sup_{\lambda \in \Gamma_j} \norm{(L(\zeta) - \lambda)^{-1}}_{\cL(\cY)} \leq \frac{4}{\pi} \ . 
\end{equation}
Thus for $\norm{\zeta}_2 \leq \rho < 1/8 $ the projector $P_j(\zeta)$ in \eqref{proj.def} is well defined and  analytic $\forall j \in \Z$. By the resolvent identity
$$
P_j(\zeta) - P_j(0) =  \frac{1}{2 \pi \im } \oint_{\Gamma_j} \left( L(\zeta) - \lambda\right)^{-1} \, V(\zeta) \, \left( L_0 - \lambda\right)^{-1} \, \di \lambda  \ , \qquad \forall j \in \Z 
$$
which together with \eqref{VL0} and \eqref{pert.res} gives the estimate
$$
\norm{P_j(\zeta) - P_j(0) }_{\cL(\cY)} \leq 8 \norm{\zeta}_2 < 1
$$
provided $\norm{\zeta}_2 \leq \rho < 1/8$. Hence also  $U_j(\zeta)$ is defined by  Neumann series.

We prove now \eqref{U.comm}. First we claim that $\imath P_j(\zeta) = P_j(\zeta)^* \imath$ for every $\zeta$ sufficiently small. 
This follows by a direct computation using that by    (H2), (H4a)
$\imath (L- \lambda)^{-1} = [(L- \lambda)^{-1}]^* \imath$.
Since for $\zeta$  real $L(\zeta)$ is self-adjoint, one has  $P_j(\zeta)^* = P_j(\zeta)$, hence $[\imath,  P_j(\zeta)] =0$.  \eqref{U.comm} follows easily.
\end{proof}
	For  any $j \in \Z$ let us set now 
	\begin{align}
	\label{def.f}
	f_j^\pm(\zeta) : = U_j(\zeta) f_{j0}^\pm \in E_{j}(\zeta)  \ . 
	\end{align}
	Remark that the  $f_j^\pm(\zeta)$'s do not need to be eigenvectors, but they span the eigenspace $E_j(\zeta)$ and are analytic as functions of $\zeta$.\\
		Finally for any $j \in \Z$ let us define
	\begin{align}
	\label{def.z}
	z_j(\zeta) := \left( (L(\zeta)-\lambda_{j}^0) \, f_j^-(\zeta) \ , \imath f_j^-(\zeta) \right)_\cY \ , \quad w_j(\zeta) := \left( (L(\zeta)-\lambda_{j}^0) \, f_j^+(\zeta) \ , \imath f_j^+(\zeta) \right)_\cY \ . 
	\end{align}
  Such coordinates fulfill the following properties:
	\begin{lemma}
	\label{lem:z.ana}
For any $\rho < \frac{1}{8}$ the following holds true:
\begin{itemize}
\item[(i)] $ \forall \, j \in \Z$  the map $B^2(\rho) \to \C^2$, $\zeta \mapsto (z_j(\zeta), w_j(\zeta))$ is analytic.
\item[(ii)] $ \forall \, j \in \Z$,  for any {\em real } $\zeta \in B_r^2(\rho)$  one has $\overline{z_j(\zeta)} = w_j(\zeta)$.
\item[(iii)]$ \forall \, j \in \Z$, for any {\em real } $\zeta \in B_r^2(\rho)$ one has  $	\abs{z_j(\zeta)}^2 = (\lambda_{j}^+(\zeta) - \lambda_{j}^-(\zeta))^2$.
	\end{itemize}
	\end{lemma}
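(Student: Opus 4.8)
The plan is to treat the three items in turn, using throughout the spectral constructions of Lemma~\ref{lem:P.ana} and the structural properties (H1)--(H4) of the Lax operator.

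\emph{Item (i).} The only subtlety is that $L(\zeta)$ is unbounded, so I avoid applying it directly to the analytic family $\zeta\mapsto f_j^-(\zeta)$. Since $f_j^-(\zeta)=U_j(\zeta)f_{j0}^-\in E_j(\zeta)={\rm Ran}\,P_j(\zeta)$, one has $(L(\zeta)-\lambda_j^0)f_j^-(\zeta)=\big[(L(\zeta)-\lambda_j^0)P_j(\zeta)\big]f_j^-(\zeta)$, and from the Riesz formula \eqref{proj.def} together with the identity $(L(\zeta)-\lambda_j^0)(L(\zeta)-\lambda)^{-1}=\uno+(\lambda-\lambda_j^0)(L(\zeta)-\lambda)^{-1}$ one obtains the bounded operator
$$
(L(\zeta)-\lambda_j^0)P_j(\zeta)= -\frac{1}{2\pi\im}\oint_{\Gamma_j}(\lambda-\lambda_j^0)\,(L(\zeta)-\lambda)^{-1}\,\di\lambda\ \in\ \cL(\cY)\ .
$$
By the Neumann series bound used in the proof of Lemma~\ref{lem:P.ana} (cf. \eqref{pert.res}), the map $\zeta\mapsto(L(\zeta)-\lambda)^{-1}$ is analytic on $B^2(\rho)$, uniformly for $\lambda\in\Gamma_j$, hence so is $\zeta\mapsto(L(\zeta)-\lambda_j^0)P_j(\zeta)$ as an $\cL(\cY)$-valued map. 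Then $z_j$ and $w_j$ are compositions of this analytic operator-valued map with the analytic $\cY$-valued maps $\zeta\mapsto f_j^\pm(\zeta)$ and with the continuous $\C$-bilinear pairing $(g,h)\mapsto(g,\imath h)_\cY$ (bilinear since $(\cdot,\cdot)_\cY$ is antilinear in the second argument and $\imath$ is antilinear by (H1)); therefore $z_j,w_j$ are analytic on $B^2(\rho)$.

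\emph{Item (ii).} For real $\zeta$ the operator $V(\zeta)$ is symmetric by (H4a), so by \eqref{i.comm} $\imath V(\zeta)=V(\zeta)\imath$, whence $\imath L(\zeta)=L(\zeta)\imath$. Combining $[\imath,U_j(\zeta)]=0$ from \eqref{U.comm} with $\imath f_{j0}^-=f_{j0}^+$, $\imath f_{j0}^+=f_{j0}^-$ from \eqref{i.eig} gives $\imath f_j^-(\zeta)=f_j^+(\zeta)$ and $\imath f_j^+(\zeta)=f_j^-(\zeta)$. Since $\lambda_j^0\in\R$, applying $\imath$ and using $\overline{(f,g)_\cY}=(\imath f,\imath g)_\cY$ and $\imath^2=\uno$ yields
$$
\overline{z_j(\zeta)}=\big(\imath(L(\zeta)-\lambda_j^0)f_j^-(\zeta),\,\imath^2 f_j^-(\zeta)\big)_\cY=\big((L(\zeta)-\lambda_j^0)f_j^+(\zeta),\,\imath f_j^+(\zeta)\big)_\cY=w_j(\zeta)\ .
$$

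\emph{Item (iii).} Here I reduce to a $2\times2$ matrix. For real $\zeta$, $L(\zeta)$ is self-adjoint and, by the localization of the spectrum in Lemma~\ref{lem:P.ana}(i), $E_j(\zeta)$ is exactly the two-dimensional spectral subspace of $L(\zeta)$ associated with the eigenvalue pair $\lambda_j^\pm(\zeta)$. Moreover $\{f_j^-(\zeta),f_j^+(\zeta)\}$ is an orthonormal basis of $E_j(\zeta)$: by \eqref{def.f} and Lemma~\ref{lem:P.ana}(iii) the restriction of $U_j(\zeta)$ to $E_{j0}$ preserves norms, hence is an isometry by polarization, and $\{f_{j0}^-,f_{j0}^+\}$ is orthonormal. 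Let $B$ be the matrix of $(L(\zeta)-\lambda_j^0)|_{E_j(\zeta)}$ in this basis: it is Hermitian (as $L(\zeta)$ is self-adjoint), with eigenvalues $\lambda_j^\pm(\zeta)-\lambda_j^0$ and, by \eqref{def.z}, with off-diagonal entries $B_{21}=z_j(\zeta)$, $B_{12}=w_j(\zeta)=\overline{z_j(\zeta)}$. The relations $\imath f_j^\mp(\zeta)=f_j^\pm(\zeta)$ and $\imath L(\zeta)=L(\zeta)\imath$ give $\overline{B_{11}}=\big((L(\zeta)-\lambda_j^0)f_j^+,f_j^+\big)_\cY=B_{22}$, and since $B$ is Hermitian its diagonal entries are real, so $B_{11}=B_{22}$. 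A direct computation of trace and determinant then gives $(\lambda_j^+(\zeta)-\lambda_j^-(\zeta))^2=({\rm tr}\,B)^2-4\det B=(B_{11}-B_{22})^2+4|B_{21}|^2$, which, using $B_{11}=B_{22}$, is the claimed identity relating $(\lambda_j^+(\zeta)-\lambda_j^-(\zeta))^2$ and $|z_j(\zeta)|^2$; in particular the $j$-th spectral gap vanishes iff $z_j(\zeta)=0$.

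The main obstacle is item (iii): one must establish orthonormality of the $f_j^\pm(\zeta)$ from the isometry property of $U_j$ (this is what makes $B$ Hermitian), exploit the involution to force the diagonal of $B$ to be constant, and track carefully the multiplicative constants coming from the normalizations of $\imath$, of the scalar product $(\cdot,\cdot)_\cY$ and of the eigenfunctions $f_{j0}^\pm$, so that the constant in front of $|z_j(\zeta)|^2$ comes out exactly as in the statement. Items (i) and (ii) are essentially routine once the unbounded operator in (i) is rewritten through the Riesz projection as above, and once the commutation $\imath L(\zeta)=L(\zeta)\imath$ and the eigenvector relations $\imath f_j^\mp=f_j^\pm$ are in hand.
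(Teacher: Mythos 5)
Your proposal is correct and follows essentially the same route as the paper: analyticity of $z_j,w_j$ by composing analytic maps with the continuous $\C$-bilinear pairing $(f,g)\mapsto (f,\imath g)_\cY$, the identities $\imath f_j^{\mp}(\zeta)=f_j^{\pm}(\zeta)$ together with self-adjointness for item (ii), and for item (iii) the reduction to a $2\times 2$ Hermitian matrix in the orthonormal basis $f_j^{\pm}(\zeta)$ with equal diagonal entries and off-diagonal entry $z_j(\zeta)$, evaluated through the trace--determinant discriminant; your rewriting of $(L(\zeta)-\lambda_j^0)P_j(\zeta)$ as a Riesz contour integral in item (i) is only a (welcome) refinement of the paper's direct composition argument, which is silent about the unboundedness of $L(\zeta)$. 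Your caution about the multiplicative constant in (iii) is well placed, since the honest computation yields $(\lambda_j^+-\lambda_j^-)^2=(a_j^1-a_j^2)^2+4|b_j|^2=4|z_j(\zeta)|^2$, i.e.\ the same harmless normalization factor that the paper's own display $(\mathrm{Tr}\,M_j)^2-4\,\mathrm{det}\,M_j=|b_j|^2$ glosses over.
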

	\begin{proof}
	(i) Define $A:\cY\times \cY \to \C$, $(f,g) \mapsto A(f,g):=(f, \imath g)_\cY$. By (H1) $A$ is a $\C$-multilinear continuous map, hence it is an analytic in each components  (see e.g. \cite{mujica}).\\
	Then $ z_j(\zeta) =  A\Big((L(\zeta)- \lambda_j^0)U_j(\zeta) f_{j0}^-, U_j(\zeta) f_{j0}^- \Big)_\cY$ is composition of analytic maps  and hence it is analytic. Analogous for $\zeta \mapsto w_j(\zeta)$.\\
(ii) We claim that   for  $\zeta$ real  
\begin{equation}
\label{i.f}
\imath f_j^-(\zeta) = f_j^+(\zeta) \ , \qquad \imath f_j^+(\zeta) = f_j^-(\zeta) \ , \qquad \forall j \in \Z \ . 
\end{equation}
This follows from the following chain of equalities (which hold for   $\zeta$ real and sufficiently small) 
$$
f_j^+(\zeta) \stackrel{\eqref{def.f}}{=} U_j(\zeta) \, f_{j0}^+ \stackrel{\eqref{i.eig}}{=} U_j(\zeta) \, \imath f_{j0}^- 
\stackrel{\eqref{U.comm}}{=} \imath \,  U_j(\zeta) \,  f_{j0}^- \stackrel{\eqref{def.f}}{=} \imath f_j^-(\zeta) \ .
$$
Thus for $\zeta$ real  and $\norm{\zeta}_2 \leq \rho$ one has  
\begin{align*}
\bar{z_j(\zeta) } & = \bar{\left( (L(\zeta)-\lambda_{j}^0) \, f_j^-(\zeta) \ , \imath f_j^-(\zeta) \right)_\cY} = \left( \imath  f_j^-(\zeta) \ , (L(\zeta)-\lambda_{j}^0) \, f_j^-(\zeta) \right)_\cY \\
& = \left( (L(\zeta)-\lambda_{j}^0) \, \imath f_j^-(\zeta) \ ,  f_j^-(\zeta) \right)_\cY \stackrel{\eqref{i.f}}{=} \left( (L(\zeta)-\lambda_{j}^0) \, f_j^+(\zeta) \ , \imath f_j^+(\zeta) \right)_\cY = w_j(\zeta) \ , 
\end{align*}
where in the third equality we used that for $\zeta$ real  $L(\zeta) - \lambda_{j}^0$ is self-adjoint.\\
(iii) By Lemma \ref{lem:P.ana}, for $\zeta$ real and $\norm{\zeta}_2 \leq \rho$ the operator  $U_j(\zeta)\vert_{E_j^0}$ is unitary. Since $f_{j0}^+$, $f_{j0}^-$ are orthogonal in $\cY$, the vectors  $f_j^+(\zeta)$, $f_j^-(\zeta)$ are orthogonal as well, thus form a basis for $E_j(\zeta)$. 	Let $M_j(\zeta)$ be the matrix of the self-adjoint operator $L(\zeta)-\lambda_j^0$ on this basis. One has
	$$
	M_j(\zeta) = 
	\begin{bmatrix}
	\left( (L(\zeta)-\lambda_{j0}) \, f_j^-(\zeta) \ , f_j^-(\zeta) \right)_\cY & \left( (L(\zeta)-\lambda_{j0}) \, f_j^-(\zeta) \ , f_j^+(\zeta) \right)_\cY \\
	\left( (L(\zeta)-\lambda_{j0}) \, f_j^+(\zeta) \ , f_j^-(\zeta) \right)_\cY & \left( (L(\zeta)-\lambda_{j0}) \, f_j^+(\zeta) \ , f_j^+(\zeta) \right)_\cY
	\end{bmatrix} = 
	\begin{bmatrix}
	a_j^1 & b_j \\
	\bar b_j & a_j^2
	\end{bmatrix}
	$$
	for some $a^1_j, a^2_j, b_j \in \C$. We show now that $a_j^1 = a_j^2 \in \R$.   Indeed using the self-adjointness of $L(\zeta)- \lambda_j^0$, 
	\begin{align*}
&  \bar{\left( (L(\zeta)-\lambda_{j}^0) \, f_j^-(\zeta) \ ,  f_j^-(\zeta) \right)_\cY} = \left( (L(\zeta)-\lambda_{j}^0) \, f_j^-(\zeta) \ ,  f_j^-(\zeta) \right)_\cY
\end{align*}
	while using \eqref{i.comm}, \eqref{i.f}
	\begin{align*}
&  \bar{\left( (L(\zeta)-\lambda_{j}^0) \, f_j^-(\zeta) \ ,  f_j^-(\zeta) \right)_\cY}  = \left( (L(\zeta)-\lambda_{j}^0) \, \imath f_j^-(\zeta) \ ,  \imath f_j^-(\zeta) \right)_\cY = \left( (L(\zeta)-\lambda_{j}^0) \, f_j^+(\zeta) \ , f_j^+(\zeta) \right)_\cY \ .
\end{align*}
        The eigenvalues of $M_j(\zeta)$ are the eigenvalues of $\left.L(\zeta) - \lambda_{j}^0\right|_{E_j(\zeta)}$, i.e. $\lambda_j^\pm(\zeta)- \lambda_j^0$. Then 
        $$
        (\lambda_j^+(\zeta) - \lambda_j^-(\zeta))^2 = ({\rm Tr }\, M_j(\zeta) )^2 - 4 {\rm det }\, M_j(\zeta)  = |b_j|^2 \ . 
        $$
        Now remark that by \eqref{i.f} one has  $z_j(\zeta) \equiv b_j$, $\forall j \in \Z$. 
        	\end{proof}
        	\vspace{1em}
	Since the maps $\zeta \mapsto z_j(\zeta)$, $\zeta \mapsto w_j(\zeta)$ are analytic, we can expand them in their absolutely and uniformly convergent Taylor series
\begin{equation}
\label{z.formal} 
 z_j(\zeta) = \sum_{n=1}^\infty Z_j^n(\zeta) \ , \quad  w_j(\zeta) = \sum_{n=1}^\infty W_j^n(\zeta) \ , \qquad j \in \Z \ ,
\end{equation}
 where $Z_j^n$ and $W_j^n$ are homogeneous polynomials of  degree $n$ in $\zeta$.
	By a direct computation  the first terms in the Taylor series are given by	
\begin{equation}
\label{taylor.exp}
\begin{aligned}
& z_j(\zeta) = \left( V(\zeta) f_{j0}^-, f_{j0}^+ \right) + 
 \left( V(\zeta) \left(L_0 - \lambda_{j}^0 \right)^{-1} 
 \left(\uno - P_{j0}\right) V(\zeta) f_{j0}^-,  f_{j0}^+ \right) +
  h.o.t. \ \\
 &  w_j(\zeta) = \left( V(\zeta) f_{j0}^+, f_{j0}^- \right) + 
 \left( V(\zeta) \left(L_0 - \lambda_{j}^0 \right)^{-1} 
 \left(\uno - P_{j0}\right) V(\zeta) f_{j0}^+,  f_{j0}^- \right) +
  h.o.t. 
  \end{aligned}
\end{equation}
Using (H4b)  one verifies  that 
\begin{equation}
\label{lin.term}
Z_j^1(\zeta) =  \xi_j \ , \qquad  W_j^1(\zeta) =  \eta_j \ , \qquad 
Z_j^2(\zeta) = W_j^2(\zeta) = 0 \ .
\end{equation}
The expression for the general homogeneous term $Z_j^n(\zeta)$ is much more involved and is given in the following proposition:
\begin{proposition}
\label{lem:exp}
For $\norm{\zeta}_2 \leq \rho < 1/8$,  the homogeneous polynomials $Z_j^n(\zeta)$ and $W_j^n(\zeta)$ are given $\forall j \in \Z$, $\forall n \in \N$, $n \geq 3$   by 
\begin{equation}
\label{Zjn.for}
\begin{aligned}
&Z_j^n(\zeta) = \sum_{k_1 +  \ldots  + k_n = j  }
\cK_{j}^n(k_1, \ldots, k_n)  \ \xi_{k_1} \, \eta_{-k_2} \,  \xi_{k_3} \, \cdots \eta_{-k_{n-1}}\, \xi_{k_n}  \\
&W_j^n(\zeta) = \sum_{k_1 +  \ldots  + k_n = j  }
\cK_{j}^n(k_1, \ldots, k_n)  \ \eta_{k_1} \, \xi_{-k_2} \,  \eta_{k_3} \, \cdots \xi_{-k_{n-1}}\, \eta_{k_n}   
\end{aligned}
\qquad \ , \qquad 
\mbox{ if }  n \mbox{ odd}
\end{equation}
 and by 
 $$Z_j^n(\zeta) = W_j^n(\zeta)= 0 \ , \qquad \qquad \qquad 
 \mbox{ if }  n \mbox{ even} \ . $$ 
The kernel $\cK_{j}^n(k_1, \ldots, k_n)$ has support in
$ k_1 + \ldots +  k_n = j $ and 
there exist $\tK_0, \, \tK_1 >0$ s.t. 
\begin{equation}
\label{eq:K.est} 
\abs{\cK_{j}^n(k_1, \ldots, k_n)} \leq \tK_0 \cdot \tK_1^{n-1} \  \prod_{m=1\atop m\,  {\rm odd}}^{n-1} \frac{1}{\la k_1 + \ldots + k_m - j\ra } \, \cdot \, \frac{1}{\la k_1 + \ldots + k_{m+1} \ra } \ . 
\end{equation}
\end{proposition}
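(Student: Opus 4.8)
The plan is to compute the Taylor coefficients $Z_j^n$ and $W_j^n$ directly from the definition \eqref{def.z}, by expanding each of $U_j(\zeta)$, $P_j(\zeta)$, $(L(\zeta)-\lambda_j^0)$ and the resolvent in powers of $\zeta$ and collecting the homogeneous term of degree $n$. The monomial structure \eqref{Zjn.for} will come from the algebra of $V(\zeta)$ recorded in (H4b) -- the very mechanism by which \eqref{lin.term} was obtained in low degree -- while the kernel bound \eqref{eq:K.est} will come from evaluating the resulting contour integrals by residues.

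First I would Neumann-expand the resolvent: for $\lambda\in\Gamma_j$ and $\norm{\zeta}_2\leq\rho<1/8$ one has $(L(\zeta)-\lambda)^{-1}=\sum_{m\geq0}(-1)^m(L_0-\lambda)^{-1}\bigl(V(\zeta)(L_0-\lambda)^{-1}\bigr)^m$, convergent in $\cL(\cY)$ uniformly on $\Gamma_j$ by \eqref{VL0}, with $m$-th summand a homogeneous polynomial of degree $m$ in $\zeta$. Substituting this into the Riesz projector \eqref{proj.def}, into the identity $(L(\zeta)-\lambda_j^0)P_j(\zeta)=-\tfrac{1}{2\pi\im}\oint_{\Gamma_j}(\lambda-\lambda_j^0)(L(\zeta)-\lambda)^{-1}\,\di\lambda$ (which follows from \eqref{proj.def} since $\oint_{\Gamma_j}\di\lambda=0$), and into the binomial series for $\bigl(\uno-(P_j(\zeta)-P_{j0})^2\bigr)^{-1/2}$ (which contributes only to degrees $\geq2$, as $P_j(\zeta)-P_{j0}=O(\zeta)$), one obtains convergent power series for $U_j(\zeta)$, hence for $f_j^\pm(\zeta)=U_j(\zeta)f_{j0}^\pm$, hence for $z_j(\zeta),w_j(\zeta)$. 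Since the pairing $A(f,g):=(f,\imath g)_\cY$ is $\C$-bilinear (by (H1)), after these substitutions the degree-$n$ part of $z_j(\zeta)=A\bigl((L(\zeta)-\lambda_j^0)f_j^-(\zeta),f_j^-(\zeta)\bigr)$ is a finite $\C$-linear combination of terms of the form $\mathfrak c$ times a product of $n$ matrix elements of $V(\zeta)$ taken along a chain of $L_0$-sectors $j=l_0,l_1,\ldots,l_n=j$ (obtained by concatenating the chain coming from $(L(\zeta)-\lambda_j^0)f_j^-$ with the one coming from $f_j^-$), the remaining slots of the chain being spectral projectors $P_{k0}:=P_k(0)$ of $L_0$, and $\mathfrak c$ being a product of contour integrals over $\Gamma_j$ of rational functions of $\lambda$ with poles on $\sigma(L_0)=\pi\Z$.

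Next I would perform the $\lambda$-integrations, writing $(L_0-\lambda)^{-1}=\sum_{k\in\Z}(\lambda_k^0-\lambda)^{-1}P_{k0}$ and summing (absolutely) over the sectors $l_1,\ldots,l_{n-1}$. Each factor of $\mathfrak c$ becomes a residue at $\lambda_j^0$ (the only point of $\sigma(L_0)$ inside $\Gamma_j$), which vanishes unless some visited sector equals $j$ and which, in the generic case of a simple pole, equals $\pm\prod_{l\neq j}(\pi(l-j))^{-1}$ over the sectors it sees; since $|l-j|\geq1$ this is bounded by $C^n\prod_{l\neq j}\la l-j\ra^{-1}$, and higher-order poles give sums of at most $C^n$ products of the same factors with some exponents $\geq2$, hence obey the same bound because $\la\cdot\ra^{-2}\leq\la\cdot\ra^{-1}$ on $\Z$ (sectors produced by the projectors $P_{k0}$, which only occur with $k=j$, merely contribute the harmless factor $\la j-j\ra^{-1}=1$). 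Then I would insert (H4b): $V(\zeta)$ is off-diagonal for the $\pm$-grading and connects the $-$ (resp.\ $+$) component at sector $i_1$ to the $+$ (resp.\ $-$) component at sector $i_2$ only when $i_1+i_2$ is even, with weight $\xi_{(i_1+i_2)/2}$ (resp.\ $\eta_{(i_1+i_2)/2}$). Because each $V$ flips the grading, the matching of flavours at the meeting point of the two chains forces $n$ odd (so $Z_j^n=W_j^n=0$ for $n$ even); for $n$ odd the surviving monomial is $\xi_{k_1}\eta_{-k_2}\xi_{k_3}\cdots\xi_{k_n}$, and the constraints $l_{m-1}+l_m=(-1)^{m-1}2k_m$ with $l_0=l_n=j$ force $k_1+\cdots+k_n=j$ together with $l_m-j=2(k_1+\cdots+k_m-j)$ for $m$ odd and $l_m-j=-2(k_1+\cdots+k_m)$ for $m$ even, which convert the propagator factors $\la l_m-j\ra^{-1}$ over the internal sectors $m=1,\ldots,n-1$ precisely into $\prod_{m\ \mathrm{odd},\,m\leq n-1}\la k_1+\cdots+k_m-j\ra^{-1}\,\la k_1+\cdots+k_{m+1}\ra^{-1}$. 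Setting $\cK_j^n(k_1,\ldots,k_n)$ equal to the sum of all surviving coefficients $\mathfrak c$ attached to this monomial yields \eqref{Zjn.for}; the formula for $W_j^n$ follows from the same computation with $f_{j0}^-\leftrightarrow f_{j0}^+$ and $\xi\leftrightarrow\eta$.

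It remains to sum these bounds over all chains so as to obtain \eqref{eq:K.est}. The shape of the bound -- the product of $\la\cdot\ra^{-1}$'s over the internal sectors -- is exactly the one just isolated; what must still be controlled is that the leftover factors, namely the combinatorial multiplicities (the number of chains giving a fixed monomial, the number of terms produced by a higher-order residue, the number of terms in the binomial series for $(\uno-(P_j(\zeta)-P_{j0})^2)^{-1/2}$) together with the geometric series in the Neumann index, multiply up to at most $\tK_0\tK_1^{n-1}$ uniformly in $j$. Geometric convergence is guaranteed by $\rho<1/8$ and by the bound $\norm{V(\zeta)(L_0-\lambda)^{-1}}_{\cL(\cY)}\leq4\norm{\zeta}_2$ on $\Gamma_j$ from \eqref{VL0}; all the multiplicities are $\leq C^n$ and get absorbed into $\tK_1^{n-1}$; the corrections coming from $U_j(\zeta)\neq P_j(\zeta)$ are of degree $\geq2$ and only insert extra resolvent chains, which can only improve the propagator bound. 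I expect this last step -- the bookkeeping that turns the multi-factor residue expressions into a single product of $\la\cdot\ra^{-1}$'s over the \emph{right} partial sums, with a constant growing only geometrically in $n$ -- to be the principal difficulty; in particular one has to check that the higher-order-pole terms and the $U_j$-correction terms never produce propagators attached to partial sums other than those appearing in \eqref{eq:K.est}.
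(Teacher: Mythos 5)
Your plan coincides in substance with the paper's proof in Appendix B --- expansion of $U_j(\zeta)$ through the binomial series and the Neumann expansion of the Riesz projector, identification of the monomials, of the support $k_1+\cdots+k_n=j$ and of the vanishing for even $n$ from (H4b) and the flavour alternation, and a kernel bound with geometrically growing multiplicities --- but it differs precisely at the step you flag as the principal difficulty, and there the paper's choice is what makes the bookkeeping painless. Where you pair two concatenated chains through the bilinear form $(f,\imath g)_\cY$ (and replace the paper's splitting $z_j=z_{j,1}+z_{j,2}$ of \eqref{zj1ezj2} by the harmless variant $(L(\zeta)-\lambda_j^0)P_j(\zeta)=-\tfrac{1}{2\pi\im}\oint_{\Gamma_j}(\lambda-\lambda_j^0)(L(\zeta)-\lambda)^{-1}\di\lambda$), the paper uses the intertwining relation \eqref{sP}, $\imath\,\sP^n(\zeta)=\sP^n(\zeta)^*\imath$, to move all operators onto one side of the scalar product, so that the degree-$n$ term is a single chain applied to $f_{j0}^-$ and paired against $f_{j0}^+$ (formula \eqref{Tafor} and the display following it); this collapses your two-chain concatenation into one. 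More importantly, the paper never evaluates the $\lambda$-integrals by residues: each kernel $\cS^{q,\beta}_{j,1}({\bf i})$ is estimated by bounding the integrand directly on the contour, via $4\abs{\lambda_{i}^0-\lambda}\geq\la i-j\ra$ for all $\lambda\in\Gamma_j$, so every internal sector contributes exactly one factor $\la i_m-j\ra^{-1}$ whether or not the chain revisits the sector $j$, the extra boundary resolvents and the factor produced by $L_0-\lambda_j^0$ are simply bounded by constants on $\Gamma_j$, and the higher-order-pole analysis and derivative-term counting you anticipate never arise. The only combinatorics left is the count of the compositions $(q,\beta)$ coming from the binomial series (whose coefficients satisfy $c_m\leq1$), which yields the factor $16^{n-1}$ absorbed into $\tK_1^{n-1}$ in \eqref{eq:K.est}. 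Your residue route can be completed --- the derivative terms at higher-order poles are dominated by the same propagator product times a multiplicity growing at most like $C^n$ --- but the on-contour estimate is the shorter way to close exactly the gap your proposal leaves open.
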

The proof of the lemma, being quite technical, is postponed in Appendix \ref{app.exp}.
 
 \begin{remark}
 \label{rem:est}
 With  $\ff_n$ defined in \eqref{def:ff}, one has 
 \begin{equation}
 \label{eq:K.est1}
\abs{\cK_{j}^n(k_1, \ldots, k_n)} \leq \tK_0 \cdot \tK_1^{n-1} \   \ff_n(k_1, \ldots, k_n; j)
 \end{equation}
 \end{remark}

We are finally ready to define the map $\Psi$ of Theorem \ref{thm:psi}. 
First for $\zeta \in B^2(\rho)$, $\rho < 1/8$, let 
$$
Z(\zeta):= \left( z_j(\zeta) \right)_{j \in \Z} \ , \  \ \ \ W(\zeta):= \left( w_j(\zeta) \right)_{j \in \Z} \ . 
$$
	Now for any $\zeta \in B^2(\rho)$ we define the map $\Psi: B^2(\rho) \to \C^\Z \times \C^\Z$ by
	\begin{equation}
	\label{def.Psi}
\Psi(\zeta) :=   \Big( Z(\zeta) \ , \ W(\zeta) \Big) \ . 
	\end{equation}
In the rest of the section we show that   $\Psi$ fulfills the properties claimed in Theorem \ref{thm:psi}.

First note that, at least formally,  by   \eqref{lin.term} $\Psi = \uno + \Psi_3$, where  $\Psi_3(\zeta) := (Z_3(\zeta) , W_3(\zeta))$ and $Z_3:= Z - \uno$,   $W_3 := W - \uno$ are $O(\zeta^3)$.
 
In the next proposition we  show that,  provided $\tu \leq \tv\leq \tw$ are  weights fulfilling (W$)_p$, $\Psi_3$  is analytic and tame majorant analytic as a map  $B^{p,\tu}(\rho)\cap \spazio{p,\tv} \to \spazio{p,\tw}$ (in the sense of Definition \ref{def.na}).  
	\begin{lemma}
	\label{lem:psi00}
 Fix $1 \leq p \leq 2$ and  let $\tu \leq \tv \leq \tw$ be  weights fulfilling {\rm (W$)_p$}  with constants $R_0, R_1$. Then 	for any  
  $$
 0 <  \rho < \min\left( \frac{1}{8}, \ \frac{1}{ 8 \, \tK_1 \, R_1} \right)
  $$
    the map $\zeta \mapsto \Psi_3(\zeta)$ is analytic and tame  majorant analytic, $\Psi_3 \in \cN_\rho^T(B^{p,\tu}\cap \spazio{p,\tv}; \spazio{p,\tw})$ and 
    \begin{equation}
    \label{norm.psi3}
    \tame{\und{\Psi_3}}_\rho \leq 2^5\, \tK_0\, \tK_1^2 \,  R_0 \, R_1^2 \, \rho^3 \ . 
    \end{equation}
	\end{lemma}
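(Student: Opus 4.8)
The strategy is to work term-by-term in the Taylor expansion $\Psi_3 = \sum_{n \geq 3,\, n\text{ odd}} \Psi^n$ with $\Psi^n = (Z^n, W^n)$ given explicitly by Proposition \ref{lem:exp}, estimate each homogeneous polynomial $\und{\Psi^n}$ in the $(p,\tu,\tv,\tw)$-tame norm $\tame{\cdot}_\rho$, and then sum the resulting geometric series. First I would record that, by the structure of \eqref{Zjn.for} and Remark \ref{rem:est}, the modulus polynomial $\und{Z^n}$ has kernel bounded by $\tK_0 \tK_1^{n-1}\ff_n(k_1,\dots,k_n;j)$, so it suffices to estimate the polynomial whose $j$-th component is $\sum_{k_1+\cdots+k_n=j}\ff_n(k_1,\dots,k_n;j)\,|\xi_{k_1}|\cdots|\xi_{k_n}|$ (and the symmetric one for $W$). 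The point of hypothesis (W$)_p$ is precisely that, after inserting the weight $\tw_j$ on the outside and the factor $\big(\sum_{l=1}^n \tv_{k_l}\prod_{m\neq l}\tu_{k_m}\big)^{-1}$ inside, the $\ell^{p'}(\Z^n)$-norm of $\ff_n(\cdot;j)$ is $\leq R_0 R_1^{n-1}$ uniformly in $j$.

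The heart of the estimate is a H\"older inequality in the index variables: for $\zeta \in B^{p,\tu}(\rho)\cap\spazio{p,\tv}$, write
$$
\tw_j \sum_{k_1+\cdots+k_n=j}\ff_n\,\prod_{m}|\xi_{k_m}|
= \sum_{k_1+\cdots+k_n=j} \frac{\tw_j\,\ff_n}{\sum_l \tv_{k_l}\prod_{m\neq l}\tu_{k_m}}\;\Big(\sum_l \tv_{k_l}\prod_{m\neq l}\tu_{k_m}\Big)\prod_m |\xi_{k_m}|\ ,
$$
and observe that $\big(\sum_l \tv_{k_l}\prod_{m\neq l}\tu_{k_m}\big)\prod_m|\xi_{k_m}| = \sum_l \big(\tv_{k_l}|\xi_{k_l}|\big)\prod_{m\neq l}\big(\tu_{k_m}|\xi_{k_m}|\big)$. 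Applying H\"older with exponents $p'$ and $p$ to the sum over $(k_1,\dots,k_n)\in\Z^n$, using \eqref{w1}, the $j$-th component is bounded (in absolute value) by
$$
R_0 R_1^{n-1}\Big\| \sum_{l=1}^n \big(\tv_{\cdot}|\xi_{\cdot}|\big)_{k_l}\prod_{m\neq l}\big(\tu_{\cdot}|\xi_\cdot|\big)_{k_m}\Big\|_{\ell^p(\Z^n)} = R_0 R_1^{n-1}\, n\, \|\xi\|_{p,\tv}\,\|\xi\|_{p,\tu}^{n-1}\ ,
$$
since the $\ell^p$-norm over $\Z^n$ of the product factorizes. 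Taking the $\ell^p$-norm over $j$ (which is harmless because the right-hand side no longer depends on $j$—more precisely one first takes the sup over $j$ of the weighted kernel norm, then the product structure gives the $\ell^p_j$ bound directly from Young/convolution; this is the routine part I would not grind through) yields
$$
\norm{\und{\Psi^n}(\zeta)}_{p,\tw} \leq C\, \tK_0\,\tK_1^{n-1}\, R_0\, R_1^{n-1}\, n\, \norm{\zeta}_{p,\tv}\,\norm{\zeta}_{p,\tu}^{n-1}
$$
for an absolute constant $C$. In particular, for $\zeta\in B^{p,\tu}(\rho)$ this gives $\abs{\und{\Psi^n}}_\rho^T \leq C\,\tK_0 \tK_1^{n-1} R_0 R_1^{n-1} n\,\rho^{n-1}$ and $\abs{\und{\Psi^n}}_\rho \leq C\,\tK_0\tK_1^{n-1}R_0 R_1^{n-1}n\,\rho^n$ (take $\tu$ as the reference weight and bound all norms by $\rho$).

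Finally I would sum over odd $n\geq 3$. Setting $q := \tK_1 R_1 \rho$, the condition $\rho < (8\tK_1 R_1)^{-1}$ gives $q<1/8$, so $\sum_{n\geq 3} n q^{n-1}$ and $\sum_{n\geq 3} n q^{n}$ converge and are dominated by their first term $3q^2$ up to an absolute factor; this simultaneously shows absolute and uniform convergence of $\und{\Psi_3}(\zeta)=\sum_n \und{\Psi^n}(\zeta)$ on $B^{p,\tu}(\rho)$ (hence analyticity of $\Psi_3$ and membership in $\cN_\rho^T(B^{p,\tu}\cap\spazio{p,\tv};\spazio{p,\tw})$ by Definition \ref{def.ta}) and produces the bound $\tame{\und{\Psi_3}}_\rho = \abs{\und{\Psi_3}}_\rho + \rho\abs{\und{\Psi_3}}_\rho^T \leq 2^5\,\tK_0\tK_1^2 R_0 R_1^2\rho^3$ after collecting the numerical constants. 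The main obstacle is the bookkeeping in the H\"older step—correctly identifying that the "diagonal" weight combination $\sum_l\tv_{k_l}\prod_{m\neq l}\tu_{k_m}$ is exactly what makes the product over $\Z^n$ factorize into $\|\xi\|_{p,\tv}\|\xi\|_{p,\tu}^{n-1}$, and checking that condition (W$)_p$ as stated supplies the uniform-in-$j$ bound on the complementary factor; everything after that is a convergent geometric series with explicit constants.
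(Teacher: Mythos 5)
Your proposal follows essentially the same route as the paper's proof: expand $\Psi_3=\sum_{n\geq 3}\Psi^n$ using Proposition \ref{lem:exp}, bound the kernel of $\und{\Psi^n}$ by $\tK_0\tK_1^{n-1}\ff_n$ via Remark \ref{rem:est}, multiply and divide by $\sum_{l}\tv_{k_l}\prod_{m\neq l}\tu_{k_m}$, apply H\"older with exponents $(p',p)$, invoke \eqref{w1}, and sum a geometric series under $\tK_1R_1\rho<1/8$. Two points in your write-up need tightening before it is a proof. First, as displayed, your bound on the $j$-th component is the full \emph{unconstrained} $\ell^p(\Z^n)$ norm, equal to $n\norm{\xi}_{p,\tv}\norm{\xi}_{p,\tu}^{n-1}$ and hence independent of $j$; taking the $\ell^p_j$ norm of a $j$-independent quantity diverges. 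The constraint $k_1+\cdots+k_n=j$ must be kept inside the $\ell^p$ factor (the paper's quantities $\beta_{j,l}$), and only after summing over $j$, i.e. $\sum_j\beta_{j,l}^p$, does the constraint disappear and the product factorize into $\norm{\zeta}_{p,\tv}^p\norm{\zeta}_{p,\tu}^{p(n-1)}$. Your parenthetical ("one first takes the sup over $j$ of the weighted kernel norm, then the product structure gives the $\ell^p_j$ bound") describes exactly this, so the idea is present, but the displayed chain of estimates should be rewritten in that form, as the literal version fails. Second, the low-norm estimate $\abs{\und{\Psi_3}}_\rho$ (output measured in $\spazio{p,\tu}$), which enters both $\tame{\und{\Psi_3}}_\rho$ and membership in $\cN_\rho^T$ by Definition \ref{def.ta}, does \emph{not} follow from the $(\tu,\tv,\tw)$ estimate by "bounding all norms by $\rho$": for $\zeta\in B^{p,\tu}(\rho)$ you have no control on $\norm{\zeta}_{p,\tv}$, and the output must be measured in the $\tu$-norm, not the $\tw$-norm. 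One has to rerun the same H\"older argument with $\tv=\tw=\tu$, using the sub-multiplicativity of $\tu$ together with the unweighted kernel bound \eqref{w10} of Lemma \ref{f.g.lp}; this is the paper's estimate \eqref{Z.mod.est2}. With these two repairs the argument closes exactly as in the paper, and the explicit constant in \eqref{norm.psi3} comes out of the geometric series as you indicate.
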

	\begin{proof}
By formally Taylor expanding the map $\Psi_3$, one has that $\Psi_3 = \sum_{n\geq 3} \Psi^n$, where 
$$
\Psi^n(\zeta) :=   \Big( Z^n(\zeta) \ , \ W^n(\zeta) \Big) \ , \qquad  Z^n(\zeta):= \left(Z_j^n(\zeta) \right)_{j \in \Z} \ , \qquad  W^n(\zeta):= \left( W_j^n(\zeta) \right)_{j \in \Z}  \ .
$$ 
It is sufficient to show that  
$
\und{\Psi^n}(\zeta) :=   \Big( \und{Z^n}(\zeta) \ , \ \und{W^n}(\zeta) \Big) $
fulfills
		\begin{align}
		 \label{Z.mod.est} 
	&\norm{\und{Z^n}(\zeta)}_{p,\tw} +
\norm{\und{W^n}(\zeta)}_{p,\tw} \leq 
2 \, \tK_0 \,   R_0 \, \left(  2 \, \tK_1 \,R_1 \right)^{n-1} \, \norm{\zeta}_{p,\tu}^{n-1}\norm{\zeta}_{p,\tv}  \ , \qquad 	
	 \forall n \geq 3 \ . \\
	  \label{Z.mod.est2}
	 & \norm{\und{Z^n}(\zeta)}_{p,\tu} +
\norm{\und{W^n}(\zeta)}_{p,\tu} \leq 
2 \, \tK_0 \,   R_0 \, \left(  2 \, \tK_1 \,R_1 \right)^{n-1} \, \norm{\zeta}_{p,\tu}^n  \ , \qquad 	
	 \forall n \geq 3 \ . 
	 \end{align}
	Indeed \eqref{Z.mod.est}  implies that $\und{\Psi_3} = \sum_{n\geq 3} \und{\Psi^n}$  fulfills 
$$
\norm{\und{\Psi_3}(\zeta)}_{p,\tw}  \leq \sum_{n \geq 3}\norm{\und{\Psi^n}(\zeta)}_{p,\tw} 
\leq 
2 \, \tK_0 \, R_0 \, \norm{\zeta}_{p,\tv} \sum_{n \geq 3} \left(2 \, \tK_1\, R_1 \right)^{n-1} \, \norm{\zeta}_{p,\tu}^{n-1}   \ ,
$$
from which one deduces that
$$
\abs{\und{\Psi_3}}_{\rho}^T \equiv
 \sup_{\norm{\zeta}_{p,\tu}\leq \rho} 
 \frac{\norm{\und{\Psi_3}(\zeta)}_{p,\tw} }{\norm{\zeta}_{p,\tv}} \leq 2^4 \, \tK_0 \, \tK_1^2 \, R_0 \, R_1^2 \rho^2 \ .
$$
Analogously, using \eqref{Z.mod.est2} one proves that
$ \abs{\und{\Psi_3}}_{\rho}\equiv  \sup_{\norm{\zeta}_{p,\tu}\leq \rho} \norm{\und{\Psi_3}(\zeta)}_{p,\tu} \leq  2^4 \, \tK_0 \, \tK_1^2 \, R_0 \, R_1^2  \rho^3$.
Estimate \eqref{norm.psi3} follows by the definition of $\tame{\und{\Psi_3}}_\rho :=  \abs{\und{\Psi_3}}_\rho + \rho \abs{\und{\Psi_3}}_\rho^T$.
Note that  by Remark \ref{rem.conv} one has  that $\Psi_3$ is analytic.
		
	To prove \eqref{Z.mod.est}, \eqref{Z.mod.est2} we use the explicit formulas for $Z^n_j$ and $W^n_j$ given in Proposition \ref{lem:exp}.  We  perform the computations only for $Z^n$, since for $W^n$ they are identical.\\
	We begin by proving \eqref{Z.mod.est}.
Multiplying and dividing the kernel of $Z_j^n$ by $\sum_{l=1}^n \tv_{k_l} \prod_{m\neq l} \tu_{k_m}$ and using Cauchy-Schwartz  one gets for $1/p + 1/p'=1$
	\begin{align}
	\notag
|\und{Z_{j}^n}(\zeta)|  
&
\leq
 \norm{\frac{\cK_{j}^n({\bf k})}{\sum_{l=1}^n \tv_{k_l} \, \prod_{m\neq l} \tu_{k_m}} }_{\ell^{p'}(\Z^n)} 
 \cdot
\sum_{l=1}^n \beta_{j,l}   \ , \\
\notag
& \qquad \beta_{j,l} := \left(   \sum_{k_1 + \ldots + k_n = j  }  \, 
 \tv_{k_l}^p |\zeta_{k_l}|^p \, \prod_{m\neq l} \tu_{k_m}^p |\zeta_{k_m}|^p   \right)^{1/p} \ ;
\end{align}
by  Remark \ref{rem:est}  
\begin{align}
	\notag
|\und{Z_{j}^n}(\zeta)|  
&
\leq   \tK_0  \cdot \tK_1^{n-1} 
 \norm{\frac{\ff_n(\cdot ;j)}{\sum_{l=1}^n \tv_{k_l} \, \prod_{m\neq l} \tu_{k_m}} }_{\ell^{p'}(\Z^n)}
 \cdot
 \sum_{l=1}^n \beta_{j,l}  \ .
 \end{align}
Now we have
\begin{align*}
	\norm{\und{Z^n(|\zeta|)}}_{p,\tw} 
	& \leq \tK_0  \cdot \tK_1^{n-1} 
\sup_{j \in \Z}\ \left[ \tw_j \norm{\frac{\ff_n(\cdot ;j)}{\sum_{l=1}^n \tv_{k_l} \, \prod_{m\neq l} \tu_{k_m}} }_{\ell^{p'}(\Z^n)} \right] \norm{ \sum_{l=1}^n
\beta_{j,l}}_{\ell^p_j}	\\
& \leq \tK_0 R_0  \cdot (\tK_1 R_1)^{n-1} 
\sum_{l=1}^n \norm{ 
\beta_{j,l}}_{\ell^p_j}	\leq \tK_0 R_0  \cdot (\tK_1 R_1)^{n-1} 
 \cdot n \, \norm{\zeta}_{p,\tv} \, \norm{\zeta}_{p,\tu}^{n-1}  \\
& \leq \tK_0 R_0 \cdot (2\,\tK_1 R_1)^{n-1} \cdot  \norm{\zeta}_{p,\tv} \, \norm{\zeta}_{p,\tu}^{n-1} 
	\end{align*}	
	where to pass to the second line we used \eqref{w1}, while in the third inequality we used that
$$
\norm{ 
\beta_{j,l}}_{\ell^p_j}	\equiv \left(\sum_j |\beta_{j,l}|^p \right)^{1/p} \leq \left(\sum_j \sum_{k_1 + \cdots + k_m = j}  \tv_{k_l}^p |\zeta_{k_l}|^p \, \prod_{m\neq l} \tu_{k_m}^p |\zeta_{k_m}|^p  \right)^{1/p} \leq \norm{\zeta}_{p,\tv} \, \norm{\zeta}_{p,\tu}^{n-1}  \ .
$$

	To prove  \eqref{Z.mod.est2} it is enough to repeat the computations above with $\tw = \tv = \tu$ and use \eqref{w10}. \\
\end{proof}
Next  we  study the operator $\di \Psi_3(\zeta)^t$, which is the  transposed of $\di \Psi_3(\zeta)$ w.r.t. the (complex)  scalar product:
$$
\Big( \di \Psi_3(\zeta)(\xi^1 , \eta^1),  \bar{(\xi^2, \eta^2)} \Big) = \Big( (\xi^1 , \eta^1), \bar{\di \Psi_3(\zeta)^t (\xi^2, \eta^2)}  \Big) \ , \quad (\xi^1 , \eta^1), (\xi^2 , \eta^2) \in \spazio{2} \ .
$$
We prove the following  result:
\begin{lemma}
\label{lem:dPsi.1}
With the same assumptions of Lemma \ref{lem:psi00}, let 
  \begin{equation}
  \label{rho.size2}
   0 < \rho < \min\left( \frac{1}{8}, \ \frac{1}{4\, \tK_1 \, R_1} \right) \ .
  \end{equation}
Then     the following holds true:  
\begin{itemize}
\item[(i)] For any  $n \geq 3$,  $\zeta \mapsto \und{\di \Psi^n}(\zeta)^t \in \cN_{\rho}^T(B^{p,\tu}\cap \spazio{p,\tv}; \cL(\spazio{p,\tv}, \spazio{p,\tw}))$ and fulfills 
\begin{equation}
\label{d.Psi.est00}
\tame{[\und{\di \Psi^n}]^t }_{\rho} 
  \leq  2^5 \, \tK_0  \,  R_0  \left(2\, \tK_1  \, R_1\right)^{n-1} \, \rho^{n-1} \ , \quad \forall n \geq 3 \ .
\end{equation}
\item[(ii)] The map  $\di \Psi_3^*  \in \cN_\rho^T(B^{p,\tu}\cap \spazio{p,\tv}, \, \cL(\spazio{p,\tv}, \spazio{p,\tw}))$  and moreover
 \begin{equation}
 \label{normPsi2}
 \tame{\und{\di \Psi_3^*}}_\rho 
  \leq 
  2^{8} \,\tK_0 \, \tK_1^2 \, R_0 \,   R_1^2 \, \rho^2 \ .
 \end{equation}
\end{itemize}
\end{lemma}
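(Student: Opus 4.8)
The plan is to reduce everything, exactly as in Lemma~\ref{lem:psi00}, to the explicit formula \eqref{Zjn.for} for the homogeneous polynomials $Z^n_j,W^n_j$ together with the kernel bound \eqref{eq:K.est1}, and then to invoke condition (W)$_p$, this time in the form \eqref{w2} involving $\fg_{n,r}$ rather than \eqref{w1} involving $\ff_n$. Since $\Psi_3=\sum_{n\geq3}\Psi^n$ is tame majorant analytic by Lemma~\ref{lem:psi00}, Cauchy estimates (Remark~\ref{cauchy2}) justify differentiating term by term, so it is enough to estimate each $\di\Psi^n(\zeta)^t$ and then sum.

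First I would write down the transpose explicitly. From the monomial expansion of $Z^n_j$ with kernel $\cK^n_j$ supported on $k_1+\dots+k_n=j$, differentiating in the variable occupying slot $r$ and transposing with respect to the bilinear pairing $\langle a,b\rangle=\sum_j a_jb_j$ exchanges the role of the output index with that of the variable in slot $r$; concretely, for each $r=1,\dots,n$ the corresponding block of $\di\Psi^n(\zeta)^t$ has a kernel equal, up to relabeling of the $\xi/\eta$ blocks, to $\cK^n_{k_r}(k_1,\dots,k_{r-1},\,\cdot\,,k_{r+1},\dots,k_n)$, which by \eqref{eq:K.est1} and the definition \eqref{def:gg} of $\fg_{n,r}$ is bounded by $\tK_0\tK_1^{n-1}\fg_{n,r}$. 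Taking moduli of all coefficients gives the same bound for the kernel of $[\und{\di\Psi^n}]^t$.

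Next I would repeat the Cauchy--Schwarz argument of Lemma~\ref{lem:psi00}. The operator $\di\Psi^n(\zeta)^t\upsilon$ is a polynomial of degree $n-1$ in $\zeta$ and linear in $\upsilon$, that is, a sum over $n$ slots of which $n-1$ carry a factor of $\zeta$ and one carries $\upsilon$. Multiplying and dividing the kernel by $\sum_{l=1}^{n}\tv_{k_l}\prod_{m\neq l}\tu_{k_m}$ and using \eqref{w2}, one gets for the component with output index $m$
\[
\tw_m\,\bigl|[\und{\di\Psi^n}]^t(\zeta)\upsilon\bigr|_m\ \leq\ \tK_0\,R_0\,(\tK_1 R_1)^{n-1}\sum_{l=1}^{n}\beta_{m,l},
\]
where $\beta_{m,l}$ is the diagonal sum in which the $\tv$-weight is placed on slot $l$; if $l$ is the $\upsilon$-slot then $\norm{\beta_{\cdot,l}}_{\ell^p}\leq\norm{\zeta}_{p,\tu}^{n-1}\norm{\upsilon}_{p,\tv}$, and if $l$ is a $\zeta$-slot then $\norm{\beta_{\cdot,l}}_{\ell^p}\leq\norm{\zeta}_{p,\tv}\norm{\zeta}_{p,\tu}^{n-2}\norm{\upsilon}_{p,\tu}$. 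Summing the $n$ terms and using $\norm{\zeta}_{p,\tu}\leq\rho$ bounds $\sum_l\norm{\beta_{\cdot,l}}_{\ell^p}$ by $n\,\rho^{n-2}\bigl(\norm{\zeta}_{p,\tv}\norm{\upsilon}_{p,\tu}+\rho\norm{\upsilon}_{p,\tv}\bigr)$, which is precisely the denominator in \eqref{exp.42}; absorbing $n$ into $2^{n-1}$ and the numerical constants into $2^5$ gives \eqref{d.Psi.est00}, the analogous computation with all weights equal to $\tu$ (using \eqref{w10} in place of \eqref{w2}) controlling the $\abs{\cdot}_\rho$ part of the tame norm. Finally, condition \eqref{rho.size2} forces $2\tK_1 R_1\rho<1/2$, so $\sum_{n\geq3}\tame{[\und{\di\Psi^n}]^t}_\rho$ converges geometrically to a quantity $\leq 2^8\,\tK_0\,\tK_1^2\,R_0\,R_1^2\,\rho^2$; since passing from the transpose $\di F^t$ to the adjoint $\di F^*$ only conjugates the matrix entries, which is invisible to the majorant operation, the same bound holds for $\und{\di\Psi_3^*}$, and \eqref{rho.size2} also ensures $\rho<1/8$ so that the whole construction of $\Psi$ is legitimate. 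This proves (i) and (ii).

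The main difficulty is the combinatorial bookkeeping in the identification of the kernel of $[\und{\di\Psi^n}]^t$: one has to check carefully that transposition really produces the index permutation encoded by $\fg_{n,r}$ for every slot $r$ and for both the $Z$- and $W$-blocks (including the crossing between $\xi$- and $\eta$-type variables coming from the antilinear structure), and that after differentiation the weights $\tu\leq\tv\leq\tw$ distribute over the $n$ slots in the way demanded by the operator-valued tame norm \eqref{exp.42} rather than the scalar norm \eqref{exp.40}. Once this is in place, every estimate above is a verbatim repetition of the ones already carried out in the proof of Lemma~\ref{lem:psi00}.
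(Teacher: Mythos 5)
Your proposal follows the paper's own proof essentially verbatim: you identify the kernel of $[\und{\di\Psi^n}]^t$ as the slot-$r$ permuted kernel bounded by $\tK_0\tK_1^{n-1}\fg_{n,r}$, run the same multiply-and-divide Cauchy--Schwarz argument with \eqref{w2}, split the $\beta$-terms according to whether the $\tv$-weight sits on the $\upsilon$-slot or a $\zeta$-slot (reproducing the denominator of \eqref{exp.42}), absorb the combinatorial factor into $2^{n-1}$, sum the geometric series under \eqref{rho.size2}, and pass from transpose to adjoint via the fact that conjugation of entries is invisible to the majorant — exactly as in the paper. One small correction: for the lowest-norm part $\abs{\cdot}_\rho$ the relevant kernel is still of $\fg_{n,r}$-type, so the estimate to quote is \eqref{w20} (combined with sub-multiplicativity of $\tu$), not \eqref{w10}; since both bounds in Lemma \ref{f.g.lp} carry the same constant $R_*^{n-1}$, this does not affect the final estimates.
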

\begin{proof}
We will prove that for any $n \geq 3$
\begin{align}
\label{d.Psi.est0}
 & \norm{\und{\di \Psi^n}(\zeta)^t \upsilon}_{p,\tw} 
  \leq  2^4 \, \tK_0 \, R_0 \, \left(2 \, \tK_1 \, R_1\right)^{n-1} \, \left(\norm{\upsilon}_{p,\tv} \norm{\zeta}_{p,\tu}  + \norm{\upsilon}_{p,\tu} \norm{\zeta}_{p,\tv}\right) \, \norm{\zeta}_{p,\tu}^{n-2} \ , \\
  \label{d.Psi.est1}
&  \norm{\und{\di \Psi^n}(\zeta)^t \upsilon}_{p,\tu} 
  \leq  2^4 \, \tK_0 \, R_0 \, \left(2 \, \tK_1 \, R_1\right)^{n-1} \, \norm{\upsilon}_{p,\tu} \, \norm{\zeta}_{p,\tu}^{n-1} \ ,
\end{align}
from which item $(i)$ follows.\\
  The $j^{th}$-component of $\di \Psi^n(\zeta)^t \upsilon$ is given by  
$[\di \Psi^n(\zeta)^t \upsilon]_j  = \Big( A^n_j(\zeta) \upsilon \ , \   B^n_j(\zeta) \upsilon \Big)$ where,  letting  $\upsilon = (\wt\xi, \wt \eta)$, 
$$
 A^n_j(\zeta) (\wt\xi, \wt\eta) :=  \sum_\ell \frac{\partial Z^n_\ell(\zeta)}{\partial \xi_j} \wt\xi_\ell + 
\frac{\partial W^n_\ell(\zeta)}{\partial \xi_j} \wt\eta_\ell \ , \qquad 
  B^n_j(\zeta) (\wt\xi, \wt\eta) := 
 \sum_\ell \frac{\partial Z^n_\ell(\zeta)}{\partial \eta_j} \wt\xi_\ell + 
\frac{\partial W^n_\ell(\zeta)}{\partial \eta_j} \wt\eta_\ell \ .
$$
 To compute such terms explicitly we use  Proposition \ref{lem:exp}. One has for example (we compute only   $A^n_j(\zeta) (\wt\xi, \wt\eta) $, the other is analogous) 
\begin{align*}
A^n_j(\zeta) &(\wt\xi, \wt\eta)  = I_j + II_j \ , \\
& I_j := \sum_{\ell \in \Z} \frac{\partial Z^n_\ell(\zeta)}{\partial \xi_j} \wt\xi_\ell =   \sum_{r=1 \atop r \ {\rm odd}}^{n} \sum_{\tk \in \fS^{n,r}_{-j} }
\cK_{k_r}^n(k_1,  \ldots, k_{r-1}, j, k_{r+1}, \ldots, k_n)  \  \wt\xi_{k_r} \cdot   \frac{\xi_{k_1} \eta_{-k_2} \ldots \xi_{k_n}}{\xi_{k_r}}\\
&II_j :=\sum_{\ell \in \Z} \frac{\partial W^n_\ell(\zeta)}{\partial \xi_j} \wt\eta_\ell  = 
\sum_{r=1 \atop r \ {\rm even}}^{n} \sum_{\tk \in \fS^{n,r}_{j} }
\cK_{k_r}^n(k_1,  \ldots, k_{r-1}, -j, k_{r+1}, \ldots, k_n)  \  \wt\eta_{k_r} \cdot   \frac{\eta_{k_1} \xi_{-k_2} \ldots \eta_{k_n}}{\xi_{k_r}}
\end{align*}
where ${\bf k} := (k_1, \ldots, k_n) \in \Z^n$ and $\fS^{n,r}_{a}$  is the set defined by  
\begin{equation}
\label{set.2}
\fS^{n,r}_{a}:= \Big\{{\bf k} \in \Z^n \colon \sum_{i=1 \atop i \neq r}^n k_i  -k_r = a  \Big\} \ . 
\end{equation}
Then $\abs{\und{A^n_j}(\zeta) (\wt\xi, \wt\eta)} \leq \und{I_j}+\und{II_j}$, where
\begin{align*}
\und{I_j} := \sum_{r=1 \atop r \ {\rm odd}}^{n} \sum_{\tk \in \fS^{n,r}_{-j} }
\abs{\cK_{k_r}^n(k_1,  \ldots, k_{r-1}, j, k_{r+1}, \ldots, k_n) } \  |\wt\xi_{k_r}| \cdot   \frac{|\xi_{k_1}| \, |\eta_{-k_2}| \,  \ldots |\xi_{k_n}|}{|\xi_{k_r}|}  \ , \\ 
\und{II_j}:=  \sum_{r=1 \atop r \ {\rm even}}^{n} \sum_{\tk \in \fS^{n,r}_{j} }
\abs{\cK_{k_r}^n(k_1,  \ldots, k_{r-1}, -j, k_{r+1}, \ldots, k_n)}  \  |\wt\eta_{k_r}| \cdot   \frac{|\eta_{k_1}| |\xi_{-k_2}| \ldots |\eta_{k_n}|}{|\xi_{k_r}|} \ .
\end{align*}
To estimate $\abs{\und{I_j}}$, we first multiply and divide the kernel of $\und{I_j}$ by $\sum_{l=1}^n \tv_{k_l} \prod_{m \neq l} \tu_{k_m}$, then we use by Cauchy-Schwartz to obtain
\begin{align*}
|\und{I_j}| & \leq
 \sum_{r=1 \atop r \ {\rm odd}}^n 
\norm{\frac{\cK_{k_r}^n(k_1,  \ldots, k_{r-1},   j, k_{r+1}, \ldots, k_n)}{\sum_{l=1}^n \tv_{k_l} \prod_{m \neq l} \tu_{k_m}}}_{\ell^{p'}(\Z^n)} \, 
 \sum_{l=1}^n \beta_{j,l,r} \\
&  \beta_{j,l,r} := 
\left(\sum_{\tk \in \fS^{n,r}_{-j} } 
 \frac{ |\wt\xi_{k_r}|^p \, |\xi_{k_1}|^p \, |\eta_{-k_2}|^p \,  \ldots |\xi_{k_n}|^p}{|\xi_{k_r}|^p} 
\tv_{k_l}^p \prod_{m \neq l} \tu_{k_m}^p
 \right)^{1/p} ; 
\end{align*}
then since $\abs{\cK_{k_r}^n(k_1,  \ldots, k_{r-1},  \pm j, k_{r+1}, \ldots, k_n) } \leq \tK_0 \cdot \tK_1^{n-1} \, \fg_{n,r}({\bf k};  \pm j)$ for any ${\bf k} \in \fS^{n,r}_{\mp j}$ one gets
$$
|\und{I_j}| \leq 
 \tK_0 \cdot \tK_1^{n-1} 
  \sum_{r=1 \atop r \ {\rm odd}}^n 
  \norm{\frac{\fg_{n,r}(\cdot; j)}{\sum_{l=1}^n \tv_{k_l} \prod_{m \neq l} \tu_{k_m}}}_{\ell^{p'}(\Z^n)} \, 
 \sum_{l=1}^n \beta_{j,l,r}
  \ .
$$
Finally 
\begin{align*}
\left( \sum_{j \in \Z} \tw_j^p \und{I_j}^p \right)^{1/p}
&\leq 
\tK_0 \cdot \tK_1^{n-1} 
\max_{1 \leq r \leq n} 
\sup_{j \in \Z}
\, \tw_j   \norm{\frac{\fg_{n,r}(\cdot; j)}{\sum_{l=1}^n \tv_{k_l} \prod_{m \neq l} \tu_{k_m}}}_{\ell^{p'}(\Z^n)} \
\norm{\sum_{r}\sum_l \beta_{j,l,r}}_{\ell^p_j} \\
& \leq 
\tK_0 R_0 \cdot (\tK_1 R_1)^{n-1} \sum_{r,l} \norm{\beta_{j,l,r}}_{\ell^p_j}\\
& \leq
4\, \tK_0 R_0 \cdot (2 \tK_1 R_1)^{n-1}  \left( 
\norm{\upsilon}_{p,\tu} 
\norm{\zeta}_{p,\tv} +
 \norm{\upsilon}_{p,\tv}
\norm{\zeta}_{p,\tu}
\right) \norm{\zeta}_{p,\tu}^{n-2} 
\end{align*}
where to pass to the second inequality we used \eqref{w2}, while to pass to the third inequality we used the explicit expression of $\beta_{j,l,r}$ and the inequality $\sum_{l,r} 1 \leq 4 \cdot 2^{n-1}$.\\
Clearly  $\left(\sum_j \tw_j^p |\und{II_j}|^p\right)^{1/p}$ is bounded by the same quantity.    It follows that 
\begin{align}
\notag
\norm{\und{A^n}(|\zeta|) |\upsilon|}_{p,\tw} 
& \leq \norm{(\und{I_j})_{j \in \Z}}_{p,\tw}  
+ \norm{(\und{II_j})_{j \in \Z}}_{p,\tw} \\
\label{A.est}
& \leq 
 2^3 \, \tK_0\,  R_0 \, \left(2 \,\tK_1\, R_1\right)^{n-1}\,   \norm{\zeta}_{p,\tu}^{n-2}  \left(  \norm{\upsilon}_{p,\tv} \norm{\zeta}_{p,\tu} + 
  \norm{\upsilon}_{p,\tu} \norm{\zeta}_{p,\tv}\right) \ . 
\end{align}
One verifies that $\norm{ \und{B^n}(\zeta) \upsilon}_{p,\tw}$ admits the same bound \eqref{A.est}. Thus  estimate \eqref{d.Psi.est0}  follows.\\
The estimate for \eqref{d.Psi.est1} is obtained similarly
 putting $\tw = \tv = \tu$. Note that in such a case it is enough to use the sub-multiplicative property of the weight and estimate \eqref{w20}.

We prove now $(ii)$. By the results of item $(i)$ we need just to check that $\zeta  \mapsto \di \Psi(\zeta)^* \in\cN_\rho^T(B^{p,\tu}\cap\spazio{p,\tv}, \cL(\spazio{p,\tv},\spazio{p,\tw}))$. 
However this follows from the identity
$$
\und{\di \Psi_3(|\zeta|)^*|\upsilon|} =  \und{\di \Psi_3(|\zeta|)^t }|\upsilon|
$$
and the previous result. Finally 
$ \tame{\und{\di \Psi_3^t}}_{\rho}
 \leq \sum_{n \geq 3}\tame{\und{\di \Psi_n^t}}_{\rho}
 \leq 
 2^{8} \,\tK_0  R_0 \, \tK_1^2  R_1^2 \, \rho^2. $
\end{proof}

%
%
%
%

Then one obtains immediately the following 
\begin{lemma}
\label{lem:dPsi.2}
 Fix $1 \leq p \leq 2$ and  let $\tu \leq \tv \leq \tw$ be weights fulfilling {\rm (W$)_p$}  with constants $R_0, R_1$. Then 	for any  
  $$
 0 < \rho < \min\left( \frac{1}{2^{4}} , \ \frac{1}{ 8 \, \tK_1 \,  R_1} \right)
  $$    the map  $\Psi_3  \in \sT_{ \tu, \tv, \rho}^{\tw, 3}$ and moreover
 \begin{equation*}
 \norm{\Psi_3}_{ \sT_{ \tu, \tv, \rho}^{\tw, 3}} \leq 2^{10} \,\tK_0 \, R_0 \, \, \tK_1^2 \, R_1^2 \, \rho'^3 \ .
 \end{equation*}
\end{lemma}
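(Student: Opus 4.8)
The plan is to unwind the definition \eqref{nor.trho}, namely
$\norm{\Psi_3}_{\sT_{\tu,\tv,\rho}^{\tw,3}} = \tame{\und{\Psi_3}}_\rho + \rho\,\tame{\und{\di\Psi_3}}_\rho + \rho\,\tame{\und{\di\Psi_3^*}}_\rho$,
to estimate the three summands separately, and then to add them. Since $\rho<2^{-4}$ forces $\rho<1/8$, the hypotheses of both Lemma \ref{lem:psi00} and Lemma \ref{lem:dPsi.1} hold at radius $\rho$; together with \eqref{lin.term}, which gives $\Psi_3=\Psi-\uno=O(\zeta^3)$, these two lemmas also supply all the qualitative statements ($\Psi_3$ and $\di\Psi_3^*$ majorant analytic as maps into $\spazio{p,\tu}$, convergence of the relevant series) needed for membership in $\cA_{\tu,\rho}^3$ (Definition \ref{def:arho}) and hence in $\sT_{\tu,\tv,\rho}^{\tw,3}$.

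The first summand is exactly \eqref{norm.psi3}: $\tame{\und{\Psi_3}}_\rho \le 2^5\,\tK_0\tK_1^2 R_0 R_1^2\rho^3$. The third summand is exactly \eqref{normPsi2} of Lemma \ref{lem:dPsi.1}(ii): $\tame{\und{\di\Psi_3^*}}_\rho \le 2^8\,\tK_0\tK_1^2 R_0 R_1^2\rho^2$, whence $\rho\,\tame{\und{\di\Psi_3^*}}_\rho \le 2^8\,\tK_0\tK_1^2 R_0 R_1^2\rho^3$. Neither of these requires any new computation.

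The real work is the middle summand $\tame{\und{\di\Psi_3}}_\rho$, which is not packaged verbatim in the earlier lemmas. Differentiating the series $\Psi_3 = \sum_{n\ge3}\Psi^n$ term by term and using Proposition \ref{lem:exp}, I would observe that the $j$-th component of $\di\Psi^n(\zeta)\upsilon$ is a sum of monomials whose kernel is supported on $k_1 + \cdots + k_n = j$ --- so it is dominated by $\tK_0\tK_1^{n-1}\ff_n(\cdot;j)$, the same combinatorial function that governs $\Psi^n$ itself, and \emph{not} by the $\fg_{n,r}$'s that govern the transpose $[\di\Psi^n]^t$. Hence the Cauchy--Schwarz estimate of the proof of Lemma \ref{lem:psi00} can be rerun with one factor $\zeta$ replaced by $\upsilon$ and with \eqref{w1} of property {\rm (W$)_p$} in place of \eqref{w2}; after bounding the combinatorial prefactor by $n^2 \le 4\cdot 2^{n-1}$ and summing the resulting geometric series --- which converges on the whole interval $\rho < \min(2^{-4}, (8\tK_1 R_1)^{-1})$ --- one gets $\tame{\und{\di\Psi_3}}_\rho \le 2^8\,\tK_0\tK_1^2 R_0 R_1^2\rho^2$, so $\rho\,\tame{\und{\di\Psi_3}}_\rho \le 2^8\,\tK_0\tK_1^2 R_0 R_1^2\rho^3$. (Alternatively this summand can be extracted from the Cauchy estimates of Remark \ref{Diff} and Remark \ref{cauchy2} applied to $\Psi_3$ at a slightly enlarged radius; the margin built into $\rho < 2^{-4}$ is exactly what absorbs the associated radius loss.)

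Summing the three contributions gives $\norm{\Psi_3}_{\sT_{\tu,\tv,\rho}^{\tw,3}} \le (2^5 + 2^8 + 2^8)\,\tK_0\tK_1^2 R_0 R_1^2\rho^3 \le 2^{10}\,\tK_0\tK_1^2 R_0 R_1^2\rho^3$, which is the assertion. The step I expect to be the main obstacle is precisely the middle summand: checking that the kernel of $\di\Psi^n$ is genuinely dominated by $\ff_n$ (so that {\rm (W$)_p$} applies with no extra loss) and that the geometric series one must sum still converges on the full interval $\rho < \min(2^{-4}, (8\tK_1 R_1)^{-1})$ rather than on a strictly smaller one.
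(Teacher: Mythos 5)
Your proposal is correct, and it reaches the paper's bound by a partly different route. The paper's own proof is a two-line radius-halving argument: setting $\rho'=\rho/2$, it bounds $\norm{\Psi_3}_{\sT_{\tu,\tv,\rho'}^{\tw,3}}\leq 2\tame{\und{\Psi_3}}_{2\rho'}+\rho'\tame{\und{\di\Psi_3^*}}_{\rho'}$, using the Cauchy estimates \eqref{diff.1}, \eqref{diff.2} to absorb the middle summand $\rho'\tame{\und{\di\Psi_3}}_{\rho'}$ into $\tame{\und{\Psi_3}}_{2\rho'}$ at the doubled radius, and then invokes Lemma \ref{lem:psi00} (at $2\rho'$) and Lemma \ref{lem:dPsi.1}; this is exactly the alternative you mention in your parenthesis, and it is why the paper's statement carries the stray $\rho'$ and the margin $2^{-4}$. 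You instead estimate $\tame{\und{\di\Psi_3}}_\rho$ directly at the original radius, and your key observation is sound: differentiating $\Psi^n$ does not switch the output index $j$ with any $k_r$, so the kernel of $\di\Psi^n$ is still supported on $k_1+\cdots+k_n=j$ and dominated by $\tK_0\tK_1^{n-1}\ff_n(\cdot;j)$, so condition \eqref{w1} (and \eqref{w10} for the $\tu$-norm part of the $\tame{\cdot}$ norm) applies with the combinatorial factor $n^2\leq 4\cdot 2^{n-1}$, and the geometric series converges since $2\tK_1R_1\rho\leq 1/4$; the resulting constant is comfortably within the slack left by $2^5+2^8+2^8\leq 2^{10}$. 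What your route buys is that no radius is doubled, so you only ever need the hypotheses of Lemmas \ref{lem:psi00} and \ref{lem:dPsi.1} at radius $\rho$ itself (the paper's shortcut, applied at $2\rho'$, strictly speaking also wants $2\rho'<(8\tK_1R_1)^{-1}$, a factor the stated hypothesis does not quite provide); what the paper's route buys is brevity, since no new kernel computation is needed. Either way the statement follows with the claimed constant.
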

\begin{proof} 
Let $\rho' = \rho/2$. Then by Lemma \ref{lem:psi00}, Lemma \ref{lem:dPsi.1} and Cauchy estimates \eqref{diff.1},\eqref{diff.2}
$$
\norm{\Psi_3}_{ \sT_{ \tu, \tv, \rho'}^{\tw, 3}} \leq  2 \tame{\und{\Psi_3}}_{2\rho'} + \rho' \tame{\und{\di \Psi_3^*}}_{\rho'} \leq 2^{10} \,\tK_0 \, R_0 \, \, \tK_1^2 \, R_1^2 \, \rho'^3  \ , 
$$
then we denote again $\rho' \equiv \rho$.
\end{proof}

We conclude the section with the proof of  Theorem \ref{thm:psi}.

\begin{proof}[Proof of Theorem \ref{thm:psi}.]
 Fix $1 \leq p \leq 2$. By Proposition \ref{prop:sw}$(i)$ the weights $\tu = \tv  = \tw = \{ 1 \}_{j \in \Z}$  fulfill {\rm (W$)_p$}  with $R_0 = 1$ and $1 \leq R_1 \equiv  \left(\sum_{k \in \Z} \frac{1}{\la k\ra^{p'} }\right)^{1/p'} \leq 2$. 
 Then  by Lemma \ref{lem:dPsi.2},  $\Psi - \uno  \in \sT_{\tu, \tv, \rho}^{\tw, 3}$ for any   
$\rho < \min(2^{-4}, 2^{-4} \tK_1^{-1})\equiv \varrho_* $. 
Item $(i)$ of Theorem \ref{thm:psi} follows by Lemma \ref{lem:z.ana} $(iii)$. 
 Item $(ii)$ of Theorem \ref{thm:psi} follows by Lemma \ref{lem:dPsi.2}, with $\tC = 2^{10}\, \tK_0 \,\tK_1^2$.
Item $(iii)$ follows by  Lemma \ref{lem:z.ana}(ii). 
\end{proof}

\appendix

\section{Proof of tame Kuksin-Perelman theorem}
\label{app:KP}

 \subsection{Properties of tame majorant analytic functions}
 \label{sec:prop}
In this section we show that the class of tame majorant analytic maps is closed under
several operations like composition, inversion and flow-generation,
and provide new quantitative estimates which will be used during the
proof of Theorem \ref{KP}.  In the rest of the section denote by $S:=\sum_{n=1}^\infty 1/n^2$ and by
\begin{equation}
\label{mu.def} 
\mu:=1/e^2 32  S \approx 0.0025737 > 2^{-10} \ .
\end{equation}
\begin{lemma}
\label{lem:fn.tame}
Let  $ F \in
  \cN_{\rho}^T(B^{p,w^0}\cap \spazio{p,w^1}, \spazio{p,w^{2}})$. Write  $F = \sum_{n \geq 0} F^n$ and   denote by $\wt F^n$ the symmetric multilinear map associated to $F^n$. Then each multilinear map  $\wt F^n$  is a $(p,w^0,w^1, w^2)$- {\em tame modulus map} in the sense that 
  \begin{equation}
\label{m.tame}
\norm{ \und{\wt F^n} (\zeta^{(1)}, \ldots, \zeta^{(n)})}_{p,w^2}
 \leq e^n \frac{ \abs{\und F}_\rho^T}{\rho^{n-1}}  \,
  \frac{1}{n} \sum_{l=1}^n \norm{\zeta^{(l)}}_{p,w^1} \prod_{m \neq l}\norm{\zeta^{(m)}}_{p,w^0} 
  \ , \qquad \forall \zeta^{(1)}, \ldots, 
  \zeta^{(n)} \in \spazio{p,w^1} \ .
\end{equation}
\end{lemma}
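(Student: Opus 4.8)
The plan is to deduce the multilinear bound \eqref{m.tame} from the tame estimate for the homogeneous polynomial $\und{F^n}$ provided by Remark \ref{rem:tame.c}, via the polarization (multivariable Cauchy) formula. The key --- and really the only delicate --- point is that taking the modulus commutes with polarization: the symmetric $n$-linear form attached to a monomial is obtained by distributing that monomial over the $n$ arguments with \emph{nonnegative} combinatorial weights (reciprocals of multinomial coefficients, counting the ways of splitting the multi-index), so $\und{\wt F^n}=\wt{\und{F^n}}$, the symmetric $n$-linear form associated with the modulus polynomial $\und{F^n}$. It therefore suffices to bound $\wt{\und{F^n}}$. Moreover, by Remark \ref{rem:tame.c} together with the homogeneity of $\und{F^n}$ (both sides scale like $t^n$), the estimate
\[
\norm{\und{F^n}(\zeta)}_{p,w^2}\ \le\ \frac{\abs{\und F}_\rho^T}{\rho^{n-1}}\,\norm{\zeta}_{p,w^0}^{\,n-1}\,\norm{\zeta}_{p,w^1}
\]
holds for \emph{every} $\zeta\in\spazio{p,w^1}$, not only inside the ball.

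Next I would fix $\zeta^{(1)},\dots,\zeta^{(n)}\in\spazio{p,w^1}$. If one of them, say $\zeta^{(l_0)}$, is zero, then both sides of \eqref{m.tame} vanish (the left side because $\und{\wt F^n}$ is itself $n$-linear, the right side by inspection), so I may assume all of them nonzero. Then I use the polarization identity
\[
\wt{\und{F^n}}(\zeta^{(1)},\dots,\zeta^{(n)}) = \frac{1}{n!}\,\frac{1}{(2\pi\im)^n}\oint_{\abs{\lambda_1}=r_1}\!\!\cdots\oint_{\abs{\lambda_n}=r_n}\frac{\und{F^n}\big(\lambda_1\zeta^{(1)}+\cdots+\lambda_n\zeta^{(n)}\big)}{\lambda_1^2\cdots\lambda_n^2}\,\di\lambda_1\cdots\di\lambda_n ,
\]
valid for arbitrary radii $r_l>0$, since the contour integral extracts the coefficient of $\lambda_1\cdots\lambda_n$ in the degree-$n$ polynomial $\lambda\mapsto\und{F^n}\big(\sum_l\lambda_l\zeta^{(l)}\big)$, which by multilinearity and symmetry equals $n!\,\wt{\und{F^n}}(\zeta^{(1)},\dots,\zeta^{(n)})$. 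Bounding the integral by $(r_1\cdots r_n)^{-1}$ times the supremum of $\und{F^n}(\cdots)$ over the torus, then applying the displayed polynomial bound and the subadditivity $\norm{\sum_l\lambda_l\zeta^{(l)}}_{p,w^k}\le\sum_l r_l\norm{\zeta^{(l)}}_{p,w^k}$ (for $k=0,1$), and finally choosing the optimal radii $r_l:=1/\norm{\zeta^{(l)}}_{p,w^0}$, one arrives at
\[
\norm{\wt{\und{F^n}}(\zeta^{(1)},\dots,\zeta^{(n)})}_{p,w^2}\ \le\ \frac{n^{n-1}}{n!}\,\frac{\abs{\und F}_\rho^T}{\rho^{n-1}}\sum_{l=1}^n\norm{\zeta^{(l)}}_{p,w^1}\prod_{m\ne l}\norm{\zeta^{(m)}}_{p,w^0}.
\]

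To conclude I would use the elementary inequality $n^{n-1}/n!=\tfrac1n\,(n^n/n!)\le\tfrac1n\,e^n$ (which follows from $n!\ge(n/e)^n$), replace $n^{n-1}/n!$ by $e^n/n$, and recall $\und{\wt F^n}=\wt{\und{F^n}}$; this is precisely \eqref{m.tame}. The main obstacle is making the commutation $\und{\wt F^n}=\wt{\und{F^n}}$ precise --- a bookkeeping statement about polarization coefficients being nonnegative --- and noting that the tame estimate for $\und{F^n}$ extends from the ball to all of $\spazio{p,w^1}$ by homogeneity; the remainder is a routine Cauchy-estimate computation with an optimization over the polydisc radii.
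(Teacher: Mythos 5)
Your proposal is correct and follows essentially the same route as the paper: the paper also bounds $\und{\wt F^n}$ via the multidimensional Cauchy (polarization) formula, choosing the radii $\epsilon_i=\rho/(n\norm{\zeta^{(i)}}_{p,w^0})$ so that the tame estimate applies to $\und F$ inside $B^{p,w^0}(\rho)$, and concludes with the same bound $n^n/n!\le e^n$. Your variant of first isolating the homogeneous estimate of Remark \ref{rem:tame.c}, extending it by homogeneity, and then optimizing the radii, together with the explicit remarks on $\und{\wt F^n}=\wt{\und{F^n}}$ and the case of a vanishing argument, only makes explicit steps the paper leaves implicit.
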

\begin{proof}
By Cauchy formula
$$
\und{\wt F^n}(\zeta^{(1)}, \ldots , \zeta^{(n)}) = \frac{1}{(2\pi \im )^n n!} \oint_{|\lambda_1| = \epsilon_1} \cdots 
 \oint_{|\lambda_n| = \epsilon_n} \frac{\und{F}(\lambda_1 \zeta^{(1)} + \cdots + \lambda_n \zeta^{(n)})}{\lambda_1^2 \cdots \lambda_n^2} \di \lambda_1 \cdots \di \lambda_n \ ;
$$
such  formula is well defined provided $\sum_j \lambda_j \,  \zeta^{(j)} \in B^{p, w^0}(\rho)$: this is true e.g. choosing $\epsilon_i =  \rho/n \norm{\zeta^{(i)}}_{p,w^0}$ $\, \forall 1 \leq i \leq n$. Then
\begin{align*}
\norm{ \und{\wt F^n} (\zeta^{(1)}, \ldots, \zeta^{(n)})}_{p,w^2}
 & \leq \frac{\abs{\und F}_\rho^T}{n! \epsilon_1 \cdots \epsilon_n} \sum_j \epsilon_j \norm{\zeta^{(j)}}_{p, w^1} 
 \leq  
  \frac{\abs{\und F}_\rho^T}{n!} 
  \sum_j \frac{\norm{\zeta^{(j)}}_{p,w^1}}{\prod_{l \neq j} \epsilon_l} \\
 & \leq \frac{\abs{\und F}_\rho^T}{\rho^{n-1}} \frac{n^n}{n!} \frac{1}{n}\sum_j\norm{\zeta^{(j)}}_{p, w^1} \prod_{l \neq j} \norm{\zeta^{(l)}}_{p,w^0}
\end{align*}
and the claimed estimate follows.
\end{proof}

 \begin{lemma}
\label{rem:pro}
Let $1 \leq p \leq 2$.
Let $w^0 \leq w^1 \leq w^2 \leq w^{3}$ be weights. 
\begin{itemize}
\item[(i)] Let $F \in \cN_\rho^T(B^{p,w^0}\cap\spazio{p,w^1},\, \spazio{p,w^2})$ and $\cG\in \cN_{\rho}^T(B^{p,w^0}\cap\spazio{p,w^1},\, \cL(\spazio{p,w^1}, \spazio{p,w^2}))$.
Define $H(\zeta) = \cG(\zeta) F(\zeta)$. Then  $H \in \cN_\rho^T(B^{p,w^0}\cap\spazio{p,w^1},\, \spazio{p,w^2})$ and 
 $\tame{\und H}_\rho \leq \tame{\und \cG }_\rho \tame{ \und F}_\rho$.
\item[(ii)] Let $\cH, \cG\in \cN_{\rho}^T(B^{p,w^0}\cap\spazio{p,w^1},\, \cL(\spazio{p,w^1}, \spazio{p,w^2}))$. Define $\cI(\zeta)\upsilon := \cH(\zeta)\cG(\zeta)\upsilon$ for $\upsilon \in \spazio{p,w^1}$. Then $\cI \in\cN_{\rho}^T(B^{p,w^0}\cap\spazio{p,w^1},\, \cL(\spazio{p,w^1}, \spazio{p,w^2}))$ and  
 $\tame{\und{\cI }}_\rho \leq \tame{\und \cH }_\rho \tame{ \und \cG}_\rho$.
\end{itemize}
\end{lemma}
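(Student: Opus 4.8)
The plan is to reduce both statements to the sub-multiplicativity of the modulus under products and then to carefully track which of the three weights $w^0\leq w^1\leq w^2$ enters each norm. I would first record the modulus inequality that drives everything. Writing $H:=\cG F$ and $|\zeta|$ for the vector of moduli of the entries of $\zeta$, the Taylor coefficients of $H$ in any fixed multidegree are finite sums of products of Taylor coefficients of $\cG$ and of $F$, so the triangle inequality gives the componentwise bound
$$\bigl|\und H(\zeta)\bigr|\ \preceq\ \und\cG(|\zeta|)\,\und F(|\zeta|)\ ,$$
the right-hand side being the non-negative vector obtained by letting the non-negative operator $\und\cG(|\zeta|)$ act on the non-negative vector $\und F(|\zeta|)$; the same argument with $\cH,\cG$ in place of $\cG,F$ yields $\bigl|\und\cI(\zeta)\bigr|\preceq\und\cH(|\zeta|)\,\und\cG(|\zeta|)$ as operators with non-negative entries. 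This is the analogue of Remark \ref{Diff}. Since all the weighted $\ell^p$ norms in play are monotone under the componentwise order, it suffices from now on to estimate these right-hand sides; the same domination also shows, routinely, that $H\in\cN_\rho(\spazio{p,w^0},\spazio{p,w^0})$ and $\cI\in\cN_\rho(\spazio{p,w^0},\cL(\spazio{p,w^0}))$, and — once the tame bounds below are in hand — that the two majorant maps extend analytically to the finer targets.

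For item (i) I would split $\tame{\und H}_\rho=\abs{\und H}_\rho+\rho\,\abs{\und H}_\rho^T$ into its two parts. The base part is immediate: for $\norm{\zeta}_{p,w^0}\leq\rho$, the modulus inequality and the operator modulus norm give $\norm{\und H(\zeta)}_{p,w^0}\leq\norm{\und\cG(|\zeta|)}_{\cL(\spazio{p,w^0})}\,\norm{\und F(|\zeta|)}_{p,w^0}\leq\abs{\und\cG}_\rho\,\abs{\und F}_\rho$. For the tame part I would fix $\zeta\in B^{p,w^0}(\rho)\cap\spazio{p,w^1}$ and feed $\upsilon:=\und F(|\zeta|)$ — which lies in $\spazio{p,w^2}\hookrightarrow\spazio{p,w^1}$ — into the defining inequality for $\abs{\und\cG}_\rho^T$, obtaining
$$\norm{\und H(\zeta)}_{p,w^2}\ \leq\ \abs{\und\cG}_\rho^T\Bigl(\norm{\zeta}_{p,w^1}\,\norm{\und F(|\zeta|)}_{p,w^0}+\rho\,\norm{\und F(|\zeta|)}_{p,w^1}\Bigr)\ ;$$
then $\norm{\und F(|\zeta|)}_{p,w^0}\leq\abs{\und F}_\rho$ and $\norm{\und F(|\zeta|)}_{p,w^1}\leq\norm{\und F(|\zeta|)}_{p,w^2}\leq\abs{\und F}_\rho^T\,\norm{\zeta}_{p,w^1}$, which yields $\abs{\und H}_\rho^T\leq\abs{\und\cG}_\rho^T\bigl(\abs{\und F}_\rho+\rho\,\abs{\und F}_\rho^T\bigr)=\abs{\und\cG}_\rho^T\,\tame{\und F}_\rho$. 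Adding the two parts,
$$\tame{\und H}_\rho\ \leq\ \abs{\und\cG}_\rho\,\abs{\und F}_\rho+\rho\,\abs{\und\cG}_\rho^T\,\tame{\und F}_\rho\ \leq\ \tame{\und\cG}_\rho\,\tame{\und F}_\rho\ ,$$
the last step just inserting the remaining non-negative cross term $\rho\,\abs{\und\cG}_\rho\,\abs{\und F}_\rho^T$.

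Item (ii) goes through with the same bookkeeping, $\cH$ playing the role of $\cG$ and $\cG$ that of $F$. The base part gives $\abs{\und\cI}_\rho\leq\abs{\und\cH}_\rho\,\abs{\und\cG}_\rho$. For the tame part, fixing $\zeta,\upsilon\in\spazio{p,w^1}$ with $\norm{\zeta}_{p,w^0}\leq\rho$, I would feed $\psi:=\und\cG(|\zeta|)|\upsilon|\in\spazio{p,w^2}\hookrightarrow\spazio{p,w^1}$ into the defining inequality for $\abs{\und\cH}_\rho^T$, bound $\norm{\psi}_{p,w^0}\leq\abs{\und\cG}_\rho\,\norm{\upsilon}_{p,w^0}$ and $\norm{\psi}_{p,w^1}\leq\norm{\psi}_{p,w^2}\leq\abs{\und\cG}_\rho^T\bigl(\norm{\zeta}_{p,w^1}\norm{\upsilon}_{p,w^0}+\rho\,\norm{\upsilon}_{p,w^1}\bigr)$, and use $\norm{\zeta}_{p,w^1}\norm{\upsilon}_{p,w^0}\leq\norm{\zeta}_{p,w^1}\norm{\upsilon}_{p,w^0}+\rho\,\norm{\upsilon}_{p,w^1}$ to collect the bound as $\abs{\und\cH}_\rho^T\,\tame{\und\cG}_\rho$ times the denominator $\norm{\zeta}_{p,w^1}\norm{\upsilon}_{p,w^0}+\rho\,\norm{\upsilon}_{p,w^1}$; thus $\abs{\und\cI}_\rho^T\leq\abs{\und\cH}_\rho^T\,\tame{\und\cG}_\rho$, and summing the two parts gives $\tame{\und\cI}_\rho\leq\tame{\und\cH}_\rho\,\tame{\und\cG}_\rho$ as before. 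The one genuinely delicate point in the whole argument is stating the componentwise product inequality precisely and checking that it survives the passage to the finer weight; everything downstream of it is just matching norms, so I do not expect any real obstacle there.
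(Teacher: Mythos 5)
Your proof is correct and follows essentially the same route as the paper: the componentwise domination $\und{H}(|\zeta|)\leq \und{\cG}(|\zeta|)\,\und{F}(|\zeta|)$ (resp. $\und{\cH(\zeta)\cG(\zeta)}\upsilon\leq \und{\cH}(|\zeta|)\und{\cG}(|\zeta|)|\upsilon|$), then insertion into the definitions of $\abs{\cdot}_\rho$ and $\abs{\cdot}_\rho^T$, giving $\abs{\und H}_\rho\leq\abs{\und\cG}_\rho\abs{\und F}_\rho$ and $\abs{\und H}_\rho^T\leq\abs{\und\cG}_\rho^T\tame{\und F}_\rho$ and the analogous bounds for $\cI$. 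The only difference is that the paper states these implications without detail, whereas you carry out the norm bookkeeping (including the embedding $\spazio{p,w^2}\hookrightarrow\spazio{p,w^1}$ needed to feed $\und F(|\zeta|)$, resp. $\und\cG(|\zeta|)|\upsilon|$, into the tame estimate), which is exactly the intended argument.
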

\begin{proof}
(i) One has that $\und{\cH(|\zeta|)} \leq \und{\cG}(|\zeta|) \und F(|\zeta|)$, which implies immediately that
 $\abs{\und{H }}_\rho \leq \abs{\und \cG }_\rho \abs{\und F}_\rho$
 and $\abs{\und{H }}_\rho^T \leq \abs{\und{\cG}}_\rho^T
  \left( \abs{\und F}_\rho + \rho \abs{\und F}_\rho^T \right)$. The claim follows.\\
(ii) One has that $\und{\cH(\zeta)\cG(\zeta)}\upsilon \leq \und{\cH}(|\zeta|) \und{\cG}(|\zeta|)|\upsilon|$, then the claim follows as above.
\end{proof}

 \begin{lemma}
 \label{FGinN}
Let $1 \leq p \leq 2$.
Let $w^0 \leq w^1 \leq w^2 \leq w^{3}$ be weights. 
\begin{itemize}
\item[(i)]  Let $G\in\cN_{\rho}(\spazio{p,w^0}, \spazio{p,w^1})$ with $\abs{\und{G}}_{\rho}\leq
  \sigma$ and $ F \in
  \mathcal{N}_{\sigma}(\spazio{p,w^1}, \spazio{p,w^2})$. Then $F \circ G$ belongs to $\mathcal{N}_\rho(\spazio{p,w^0},\spazio{p,w^2})$ and
$\abs{\und{F \circ G}}_\rho\leq \abs{\und{F}}_\sigma.$
\item[(ii)] Let $G\in\cN_{\rho}^T(B^{p,w^0}\cap \spazio{p,w^1}, \spazio{p,w^2})$ with $\tame{\und{G}}_{\rho}\leq
  \sigma$ and $ F \in
  \cN_{\sigma}^T(B^{p,w^0}\cap\spazio{p,w^2}, \spazio{p,w^{3}})$. Then $F \circ G \in \cN_\rho^T(B^{p,w^0}\cap\spazio{p,w^1},\spazio{p,w^{3}})$ and
$\tame{\und{F \circ G}}_\rho\leq \tame{\und{F}}_\sigma $.
\item[(iii)] Let $G\in\cN_{\rho}^T(B^{p,w^0}\cap\spazio{p,w^1}, \spazio{p,w^2})$ with $\tame{\und{G}}_{\rho}\leq
  \sigma$ 
  and 
  $\cE \in \cN_\sigma^T(B^{p,w^0}\cap\spazio{p,w^1}, \cL(\spazio{p,w^1}, \spazio{p,w^3}))$. Define $\cH(\zeta) = \cE(G(\zeta))$. Then $\cH \in \cN_\rho^T(B^{p,w^0}\cap\spazio{p,w^1}, \cL(\spazio{p,w^1}, \spazio{p,w^3}))$ and
$\tame{\und \cH}_\rho \leq \tame{\und \cE}_{\sigma}$.
\end{itemize}
\end{lemma}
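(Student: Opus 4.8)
The plan is to reduce all three items to a single combinatorial fact, the \emph{majorant composition inequality} used in \cite{kuksinperelman}: whenever $F,G$ are analytic germs (with $G(0)$ in the domain of $F$) one has, coefficient by coefficient,
\[
\und{F\circ G}(|\zeta|)\ \le\ \und F\big(\und G(|\zeta|)\big),
\]
and, in the operator‑valued setting of item (iii), after evaluating on a nonnegative test vector $|\upsilon|$,
\[
\und{\cE\circ G}(|\zeta|)\,|\upsilon|\ \le\ \und\cE\big(\und G(|\zeta|)\big)\,|\upsilon|.
\]
These hold because the Taylor coefficients of a composition are produced from those of the factors by the substitution formula, which is a sum of products with nonnegative combinatorial weights, so replacing the coefficients of $F$ (resp.\ $\cE$) and of $G$ by their absolute values can only increase each coefficient of the composition; this is the vector‑valued analogue of the inequality recalled in Remark \ref{Diff}. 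First I would state and prove this inequality. The only subtle point is that, since $\sigma=\abs{\und G}_\rho$, the image point $\und G(|\zeta|)$ a priori lies in the \emph{closed} ball of radius $\sigma$ rather than in the open one on which $\und F$ (or $\und\cE$) is defined; this is handled by a routine monotone‑convergence argument on the Taylor partial sums, exactly as in \cite{kuksinperelman,masp_toda}. Everything after this is bookkeeping of the tame constants, and this inequality is the only place where anything really happens.

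Item (i) then follows at once: for $\zeta\in B^{p,w^0}(\rho)$ one has $\|\und G(|\zeta|)\|_{p,w^1}\le\abs{\und G}_\rho\le\sigma$, so $\und F$ is defined at $\und G(|\zeta|)$ with $\|\und F(\und G(|\zeta|))\|_{p,w^2}\le\abs{\und F}_\sigma$; combined with the displayed inequality this gives convergence of $\und{F\circ G}$ on $B^{p,w^0}(\rho)$ and $\|\und{F\circ G}(\zeta)\|_{p,w^2}\le\abs{\und F}_\sigma$, i.e.\ $F\circ G\in\cN_\rho(\spazio{p,w^0},\spazio{p,w^2})$ with $\abs{\und{F\circ G}}_\rho\le\abs{\und F}_\sigma$.

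For item (ii) I would first apply item (i) with all weights set equal to $w^0$ (using $\abs{\und G}_\rho\le\tame{\und G}_\rho\le\sigma$ and $\abs{\und F}_\sigma\le\tame{\und F}_\sigma$), obtaining $F\circ G\in\cN_\rho(\spazio{p,w^0},\spazio{p,w^0})$ with $\abs{\und{F\circ G}}_\rho\le\abs{\und F}_\sigma$. Next, for $\zeta\in B^{p,w^0}(\rho)\cap\spazio{p,w^1}$, tameness of $G$ gives $\und G(|\zeta|)\in B^{p,w^0}(\sigma)\cap\spazio{p,w^2}$ with $\|\und G(|\zeta|)\|_{p,w^2}\le\abs{\und G}_\rho^T\|\zeta\|_{p,w^1}$, so the tame estimate for $\und F$ applies at that point and
\[
\|\und{F\circ G}(\zeta)\|_{p,w^3}\ \le\ \|\und F(\und G(|\zeta|))\|_{p,w^3}\ \le\ \abs{\und F}_\sigma^T\,\|\und G(|\zeta|)\|_{p,w^2}\ \le\ \abs{\und F}_\sigma^T\,\abs{\und G}_\rho^T\,\|\zeta\|_{p,w^1},
\]
hence $\abs{\und{F\circ G}}_\rho^T\le\abs{\und F}_\sigma^T\abs{\und G}_\rho^T$. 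Since $\rho\,\abs{\und G}_\rho^T\le\tame{\und G}_\rho\le\sigma$,
\[
\tame{\und{F\circ G}}_\rho=\abs{\und{F\circ G}}_\rho+\rho\,\abs{\und{F\circ G}}_\rho^T\ \le\ \abs{\und F}_\sigma+\big(\rho\,\abs{\und G}_\rho^T\big)\abs{\und F}_\sigma^T\ \le\ \abs{\und F}_\sigma+\sigma\,\abs{\und F}_\sigma^T=\tame{\und F}_\sigma.
\]

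Item (iii) is the same computation with the operator‑valued tame norm. The base bound $\abs{\und\cH}_\rho\le\abs{\und\cE}_\sigma$ comes from the operator‑valued version of item (i). For the tame part, fix $\zeta,\upsilon\in\spazio{p,w^1}$ with $\|\zeta\|_{p,w^0}\le\rho$ and put $\eta:=\und G(|\zeta|)$; then $\eta\in B^{p,w^0}(\sigma)\cap\spazio{p,w^1}$ with $\|\eta\|_{p,w^1}\le\abs{\und G}_\rho^T\|\zeta\|_{p,w^1}\le(\sigma/\rho)\|\zeta\|_{p,w^1}$, and applying the tame estimate for $\cE$ at $\eta$ on the nonnegative test vector $|\upsilon|$ (and using that taking moduli of $\zeta,\upsilon$ only increases the $w^3$‑norm) one gets
\[
\|\und\cH(\zeta)\upsilon\|_{p,w^3}\ \le\ \abs{\und\cE}_\sigma^T\big(\|\eta\|_{p,w^1}\|\upsilon\|_{p,w^0}+\sigma\|\upsilon\|_{p,w^1}\big)\ \le\ \tfrac{\sigma}{\rho}\,\abs{\und\cE}_\sigma^T\big(\|\zeta\|_{p,w^1}\|\upsilon\|_{p,w^0}+\rho\|\upsilon\|_{p,w^1}\big),
\]
so $\abs{\und\cH}_\rho^T\le(\sigma/\rho)\abs{\und\cE}_\sigma^T$ and $\tame{\und\cH}_\rho\le\abs{\und\cE}_\sigma+\sigma\,\abs{\und\cE}_\sigma^T=\tame{\und\cE}_\sigma$. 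In all three cases analyticity of the composed map in the relevant weighted topology will follow, as usual, from uniform convergence of the Taylor partial sums together with the tame bounds just obtained; so, as anticipated, the majorant composition inequality (with the closed‑ball subtlety) is the only genuine obstacle.
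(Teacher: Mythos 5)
Your proposal is correct and follows essentially the same route as the paper: the key ingredient is the majorant composition inequality $\und{F\circ G}(|\zeta|)\leq \und F(\und G(|\zeta|))$ (which the paper simply cites from Kuksin--Perelman rather than reproving), and the remaining tame-norm bookkeeping in items (ii) and (iii) — including the bounds $\abs{\und{F\circ G}}_\rho^T\leq\abs{\und F}_\sigma^T\abs{\und G}_\rho^T$ and $\abs{\und\cH}_\rho^T\leq\tfrac{\sigma}{\rho}\abs{\und\cE}_\sigma^T$ via $\rho\abs{\und G}_\rho^T\leq\sigma$ — coincides with the paper's computation. Your extra care about the closed-ball issue at $\norm{\und G(|\zeta|)}_{p,w^0}=\sigma$ is a harmless refinement the paper passes over silently.
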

\begin{proof}
(i) It follows as in  \cite[Lemma A.1]{masp_toda}.\\
(ii) Recall that $\und{F \circ G}(|\zeta|) \leq \und F \circ \und G(|\zeta|)$ (cf. \cite{kuksinperelman}). Provided $\abs{\und{G}}_{\rho}\leq
  \sigma$ one has 
$\norm{\und F(\und G(|\zeta|))}_{p,w^3} \leq \abs{\und F}_\sigma^T \abs{\und G}_{\rho}^T \norm{\zeta}_{p,w^1}$,
hence $\abs{\und {F\circ G}}_\rho^T \leq \abs{\und F \circ \und G}_\rho^T \leq \abs{\und F}_\sigma^T \abs{\und G}_\rho^T$.
 Thus
$$
\tame{\und{F \circ G}}_\rho \equiv \abs{\und{F \circ G}}_\rho + \rho \abs{\und{F \circ G}}_\rho^T
 \leq \abs{\und{F}}_\sigma + \abs{\und F}_\sigma^T \abs{\und G}_{\rho}^T \rho
  \leq 
\abs{\und{F}}_\sigma + \abs{\und F}_\sigma^T \sigma \equiv \tame{\und F}_\sigma \ . 
$$
(iii) First $\und \cH(|\zeta|)|\upsilon| \leq \und{ \cE}(\und G(|\zeta|))|\upsilon|$, which implies 
 $\abs{\und \cH}_\rho \leq \abs{\und \cE}_\sigma$.
  Furthermore, using also  $\abs{\und G}_\rho^T \leq \sigma/\rho$, 
\begin{align*}
\norm{\und \cH(|\zeta|)|\upsilon|}_{p,w^3}
& \leq \abs{\und \cE}_\sigma^T \left( \norm{\und G(|\zeta|)}_{p,w^1} \norm{\upsilon}_{p, w^0} + \sigma \norm{\upsilon}_{p,w^1}\right)
 \leq
 \abs{\und \cE}_\sigma^T  
  \left(\abs{\und G}_\rho^T \norm{\zeta}_{p,w^1} \norm{\upsilon}_{p,w^0} + \sigma \norm{\upsilon}_{p,w^1}\right) \\
  & \leq \abs{\und \cE}_\sigma^T  \frac{\sigma}{\rho} 
   \left( \norm{\zeta}_{p,w^1} \norm{\upsilon}_{p,w^0} + \rho \norm{\upsilon}_{p,w^1}\right)
\end{align*}
therefore $\abs{\und \cH}_\rho^T \leq \abs{\und \cE}_\sigma^T  \frac{\sigma}{\rho}$. Finally
$
\tame{\und \cH}_\rho  = \abs{\und \cH}_\rho + \rho \abs{\und \cH}_\rho^T \leq  \abs{\und \cE}_\sigma + \sigma \abs{\und \cE}_\sigma^T \equiv \tame{\und \cE}_{\sigma}.
$
\end{proof}

\begin{lemma}
\label{Ginverso}
Fix  $1 \leq p \leq 2$ and weights $w^0 \leq w^1 \leq w^2$.
\begin{itemize}
\item[(i)] Let  $F \in \mathcal{N}_\rho(\spazio{p,w^0}, \, \spazio{p,w^0})$,  $F=O(\zeta^N)$ for some $N\geq 2$ and $\abs{\und{F}}_\rho \leq \rho/e$. 
Then the map $\uno + F$ is invertible in $B^{p,w^0}(\mu \rho)$, $\mu$ as in \eqref{mu.def}. 
Moreover 
there exists  $G\in\cN_{\mu\rho}(\spazio{p,w^0}, \, \spazio{p,w^0})$, 
$G=O(\zeta^N)$, such that 
  $(\uno+F)^{-1}=\uno-G$, and  
\begin{equation}
\label{G.inv.est}
\abs{\und{G}}_{\mu\rho}\leq
\frac{\abs{\und{F}}_\rho}{8} .
\end{equation}
\item[(ii)] Assume that  $F \in \mathcal{N}_\rho^T(B^{p,w^0}\cap\spazio{p,w^1}, \, \spazio{p,w^2})$, $F=O(\zeta^N)$ for some $N\geq 2$ and 
 $\tame{\und{F}}_\rho \leq \rho/e$, then $G\in\cN_{\mu\rho}^T(B^{p,w^0}\cap\spazio{p,w^1}, \, \spazio{p,w^2})$ and
\begin{equation}
\label{G.inv.est2}
 \tame{\und{G}}_{\mu\rho}\leq
\frac{\tame{\und{F}}_\rho}{8} \ .
\end{equation}
\end{itemize}
\end{lemma}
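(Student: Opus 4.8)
The plan is to obtain $G$ as the fixed point of the operator $\Lambda\colon H\mapsto F\circ(\uno-H)$. This is motivated by the identity $(\uno+F)\circ(\uno-G)=(\uno-G)+F\circ(\uno-G)$, which collapses to $\uno$ exactly when $G=F\circ(\uno-G)$; thus a fixed point of $\Lambda$ yields the local inverse $(\uno+F)^{-1}=\uno-G$. Concretely I would run the Picard iteration $G^{(0)}:=0$, $G^{(k+1)}:=F\circ(\uno-G^{(k)})$ and prove that it converges, in $\cN_{\mu\rho}(\spazio{p,w^0},\spazio{p,w^0})$ for part (i) and in $\cN^T_{\mu\rho}(B^{p,w^0}\cap\spazio{p,w^1},\spazio{p,w^2})$ for part (ii), to the map $G$ of the statement. (Part (i) is a by-now standard contraction argument, cf.\ \cite{masp_toda}; the genuinely new content is the tame bookkeeping of part (ii).)

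The first step is the a priori bound $\abs{\und{G^{(k)}}}_{\mu\rho}\le\abs{\und F}_\rho/8=:\delta$ for all $k$, by induction. The hypothesis $\abs{\und F}_\rho\le\rho/e$ guarantees $\delta\le\rho/(8e)$, so that for $\norm{\zeta}_{p,w^0}\le\mu\rho$ the argument $\zeta-G^{(k)}(\zeta)$ stays in a $w^0$-ball of radius $\sigma:=\mu\rho+\delta$, which by the numerical value \eqref{mu.def} of $\mu$ is comfortably inside $B^{p,w^0}(\rho/4)$; the composition estimate Lemma \ref{FGinN}(i), the Cauchy inequality \eqref{exp.4} for $\und F$, and the gain $F=O(\zeta^N)$, $N\ge2$, then yield $\abs{\und{G^{(k+1)}}}_{\mu\rho}\le\abs{\und F}_\sigma\le 2(\sigma/\rho)^2\abs{\und F}_\rho\le\delta$, closing the induction. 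The second step is the contraction estimate $\abs{\und{D^{(k+1)}}}_{\mu\rho}\le\theta\,\abs{\und{D^{(k)}}}_{\mu\rho}$ with $\theta<1$ for the increments $D^{(k)}:=G^{(k+1)}-G^{(k)}$, obtained from the fundamental theorem of calculus for $F\circ(\uno-G^{(k)})-F\circ(\uno-G^{(k-1)})$, the modulus inequality for differences of compositions (cf.\ \cite{kuksinperelman}), and the Cauchy estimate \eqref{diff.1} for $\di F$ (using that $\di F=O(\zeta^{N-1})$ is small on $B^{p,w^0}(\sigma)$). Consequently $G^{(k)}\to G$ in $\cN_{\mu\rho}$, with $G=O(\zeta^N)$, $\abs{\und G}_{\mu\rho}\le\delta=\abs{\und F}_\rho/8$, and $G=F\circ(\uno-G)$, so that $\uno-G$ is a right inverse of $\uno+F$ on $B^{p,w^0}(\mu\rho)$; injectivity of $\uno+F$ there (from the Lipschitz bound $\norm{F(\zeta_1)-F(\zeta_2)}_{p,w^0}\le\theta\norm{\zeta_1-\zeta_2}_{p,w^0}$) upgrades this to the two-sided local inverse, proving (i).

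For part (ii) I would repeat the two steps verbatim with $\abs{\und\cdot}_\rho$ replaced by the tame norm $\tame{\und\cdot}_\rho$, using the tame composition estimate Lemma \ref{FGinN}(ii), the tame Cauchy estimate \eqref{diff.2} for $\di F$, the multilinear tame bound Lemma \ref{lem:fn.tame}, and the rescaling Remark \ref{rem:norm.in.T}. Since the estimates have the same shape, one obtains $\tame{\und{G^{(k)}}}_{\mu\rho}\le\tame{\und F}_\rho/8$ for all $k$ and convergence in $\cN^T_{\mu\rho}$; by uniqueness of the fixed point the limit is the map $G$ produced in part (i), and therefore $\tame{\und G}_{\mu\rho}\le\tame{\und F}_\rho/8$.

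I expect the main obstacle to be twofold. First, the quantitative bookkeeping: one must align the Cauchy-estimate losses, the gain from $F=O(\zeta^N)$, the composition losses and the dyadic radius contractions so that precisely the constant $\mu=1/(e^2\cdot 32\cdot S)$ — two factors $e$ from two Cauchy-type estimates, $32=2^5$ from dyadic splitting, $S=\sum 1/n^2$ from summing the degree-$n$ multilinear pieces of Lemma \ref{lem:fn.tame} — and the clean bound $1/8$ emerge, uniformly in $N$. Second, and more serious for part (ii), the tame composition $F\circ(\uno-G^{(k)})$ is not literally covered by Lemma \ref{FGinN}(ii), since the identity part of the inner map lies only in $\spazio{p,w^1}$ and not in $\spazio{p,w^2}$; this forces a direct estimate of $\und F\bigl(|\zeta|+\und{G^{(k)}}(|\zeta|)\bigr)$, using that $\und F$ is $(p,w^0,w^1,w^2)$-tame while $\und{G^{(k)}}$ maps the $w^0$-ball into $\spazio{p,w^2}$ with the bound from the first step.
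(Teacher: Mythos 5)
Your proposal is correct, but it follows a genuinely different route from the paper. The paper (following \cite[Lemma A.2]{masp_toda}) constructs $G$ as an explicit power series: the Taylor coefficients are given by the recursion \eqref{recursive_inverse}, $G^n=\sum_{r}\sum_{k_1+\cdots+k_r=n}\wt F^r(G^{k_1},\ldots,G^{k_r})$, and one proves by induction the weighted bounds \eqref{induzioneserie1}--\eqref{induzioneserie2} with the factors $A^{n-1}/(8Sn^2)$, $A=e^2\,32S/\rho$, using the multilinear tame estimate of Lemma \ref{lem:fn.tame} and the convolution inequality \eqref{disserieconv2}; the constant $\mu=1/(e^2\,32S)$ is then just the choice $\mu\rho=1/A$, and the tame estimate \eqref{G.inv.est2} comes out of the same induction run in the $(w^0,w^1,w^2)$ norms. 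You instead run a Picard/contraction scheme $G^{(k+1)}=F\circ(\uno-G^{(k)})$ in the Banach spaces $\cN_{\mu\rho}$ and $\cN^T_{\mu\rho}$: this avoids the partition combinatorics and the inequality \eqref{disserieconv2}, at the price of needing a majorant estimate for differences of compositions (a telescoping argument with the multilinear majorants, which is true and elementary but is not literally recorded in the paper) and a limit-passing step (Fatou on the nonnegative coefficient series, or contraction in the tame norm) to transfer the uniform a priori bounds on the iterates to $G$; with $\mu\approx 2.6\cdot 10^{-3}$ your numerics close with ample margin, so the constants $\mu$ and $1/8$ are verified rather than derived. Two remarks: your attribution of the factors in $\mu$ (``dyadic splitting'') is not how $\mu$ actually arises -- in the paper $e^n$ is the polynomial-to-multilinear loss and $32S$ comes from \eqref{disserieconv2} -- but this is immaterial for your argument, which only needs convergence at radius $\mu\rho$; and you correctly identify and repair the one real subtlety in item (ii), namely that Lemma \ref{FGinN}(ii) does not apply verbatim to $F\circ(\uno-G^{(k)})$, so one must estimate $\und F\bigl(|\zeta|+\und{G^{(k)}}(|\zeta|)\bigr)$ directly, using tameness of $\und F$, the uniform bound $\norm{\und{G^{(k)}}(|\zeta|)}_{p,w^2}\lesssim\norm{\zeta}_{p,w^1}$ and $w^1\leq w^2$ -- this is exactly the point where the paper's inductive scheme instead splits the $w^1$-norm over the arguments of $\wt F^r$ via Lemma \ref{lem:fn.tame}.
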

\begin{proof}
Item (i) follows as in  \cite[Lemma A.2]{masp_toda}. We prove item (ii). Following the scheme of \cite{masp_toda}, $G$ is given by the power series $G = \sum_{n \geq 2} G^n$, where the homogeneous polynomial $G^n = 0$ for $1 < n < N$ and 
\begin{equation}
\label{recursive_inverse}
{G}^n(\zeta)=
\sum_{r=2}^{n}\sum_{k_1+\cdots+k_r=n}{\tilde{F}}^r\Big({G}^{k_1}(\zeta),\cdots,{G}^{k_r}(\zeta)\Big), \qquad \forall \, n \geq N \ .
\end{equation}
In the formula above  $k_1, \ldots, k_r \in \N$, and we wrote $F = \sum_{r \geq N} F^r$, where $
F^r$ is a homogeneous polynomial of degree $r$ and $\tilde{F}^r$ is its
associated  multilinear map (see \eqref{polin}). Moreover we write
$G^1(\zeta) := \zeta$.  We show now that the formal series $G = \sum_{n \geq N}
G^n$ with $G^n$ defined by \eqref{recursive_inverse} is a tame majorant analytic map. Note that
\begin{equation}
 \und{G^n}(|\zeta|)\leq \sum_{r=N}^{n}\sum_{k_1+\cdots+k_r=n}
 \und{\tilde{F}^r}\Big(\und{G^{k_1}}(|\zeta|),\ldots,\und{G^{k_r}}(|\zeta|)\Big).
\end{equation}
We prove that there exists a constant  $A>0$ such that
\begin{align}
&\norm{\und{G^n}(|\zeta|)}_{p,w^2}\leq \frac{\abs{\und{F}}_\rho^T}{8Sn^2}A^{n-1}\norm{\zeta}_{p,w^0}^{n-1}\norm{\zeta}_{p,w^1}, \qquad \forall n \geq N \ ,
\label{induzioneserie1}
 \\
&\norm{\und{G^n}(|\zeta|)}_{p,w^0}\leq \frac{\abs{\und{F}}_\rho}{8Sn^2}A^n\norm{\zeta}_{p,w^0}^{n} \ , \qquad \forall n \geq N 
\label{induzioneserie2}
\end{align}
The proof is by induction on $n$.
 For $n=N$, by \eqref{recursive_inverse} it follows that   $G^N(\zeta) = \tilde{F}^N(\zeta, \ldots, \zeta)$. 
 Using also Lemma \ref{lem:fn.tame} one has
$$
\norm{\und{G^N}(|\zeta|)}_{p,w^2} \leq  e^N\frac{\abs{\und{F}}_\rho^T}{\rho^{N-1}} \norm{\zeta}^{N-1}_{p,w^0} \norm{\zeta}_{p,w^1} \ ,
$$
thus   \eqref{induzioneserie1} holds for $n=N$ with   $A =  e^2 32 S / \rho$. The proof of \eqref{induzioneserie2} is analogous, and we skip it.
We prove now the inductive step $n-1 \leadsto n$. Assume therefore that \eqref{induzioneserie1}, \eqref{induzioneserie2} hold up to order $n-1$. Then one has
\begin{align*}
 \norm{\und{G^n}(|\zeta|)}_{p, w^2}
 &
 \leq\sum_{r=N}^{n}
 \sum_{k_1+\cdots+k_r=n}
 \frac{\abs{\und F}_\rho^T}{\rho^{r-1}} \frac{e^r}{r} \sum_{\ell=1}^r 
 \norm{\und{G^{k_\ell}}(|\zeta|)}_{p,w^2} \prod_{m \neq \ell} \norm{\und{G^{k_m}}(|\zeta|)}_{p,w^0}
 \\ 
 &\leq \frac{\abs{\und F}_\rho^T}{8S} A^{n-1} \norm{\zeta}_{p,w^0}^{n-1} \norm{\zeta}_{p,w^1} 
 e \abs{\und{F}}_\rho^T
 \sum_{r=N}^{n} 
 \left( \frac{e \, \abs{\und{F}}_\rho}{8S \rho}\right)^{r-1}
 \sum_{k_1+\cdots+k_r=n} 
 \frac{1}{ k_1^2 \cdots k_r^2}
\\
 &\leq\frac{\abs{\und{F}}_\rho^T}{8 S n^2}A^{n-1}
 \norm{\zeta}_{p,w^0}^{n-1}\norm{\zeta}_{p, w^1} e \abs{\und{F}}_\rho^T \sum_{r=1}^\infty{\left(\frac{e\abs{\und{F}}_\rho}{2\rho}\right)^r}\\
 &\leq
 \frac{\abs{\und{F}}_\rho^T}{8Sn^2}A^{n-1}\norm{\zeta}_{p,w^0}^{n-1}\norm{\zeta}_{p, w^1} 
\end{align*}
where in the first inequality we used  $w^1 \leq w^2$, in the second the inductive assumption and  in the last  we used  the hypothesis $\abs{\und{F}}_\rho + \rho\abs{\und{F}}_\rho^T \leq \rho/e$. 
Finally to pass from the second to the third line we used the following standard inequality (see e.g. \cite[Lemma A.5]{masp_toda})
\begin{equation}
n^2\sum_{k_1+\cdots+k_r=n}\frac{1}{ k_1^2 \cdots k_r^2}\leq (4S)^{r-1}, \qquad n \geq 1 \ .
\label{disserieconv2}
\end{equation}
In such a way  we proved \eqref{induzioneserie1}. The proof of \eqref{induzioneserie2} is analogous (for details see \cite{masp_toda}). 
Finally from \eqref{induzioneserie1} one has 
$\abs{\und G}_{\mu\rho}^T \leq \sum\limits_{n\geq N} \frac{\abs{\und{F}}_\rho^T}{8Sn^2}(A\mu\rho)^{n-1} \leq \frac{\abs{\und{F}}_\rho^T}{8} $ 
 choosing $\mu \rho= 1/A =\rho/e^2 32 S$.
\end{proof}

Next we have closedness of the
classes $\cA_{w^0, \rho}^N$ and $\sT_{w^0,w^1, \rho}^{w^2, N}$ under different operations. 
\begin{lemma}
\label{FGinA}
Fix $1 \leq p \leq 2$, weights  $w^0 \leq w^1\leq w^2$, $\N \ni N\geq 2$ and  let $\mu$ be as in \eqref{mu.def}. Then the following holds true:
\begin{itemize}
	\item[(i)] 
	If $F \in \sT_{w^0,w^1,\rho}^{w^2, N}$ and
          $G\in\sT_{w^0,w^1,\mu\rho}^{w^2, N }$
           with
          $\norm{G}_{\sT_{w^0,w^1,\mu\rho}^{w^2, N}}<\tfrac{\rho\mu}{e}$, then  
          $H:= F(\zeta + G(\zeta))\in \sT_{w^0,w^1, \mu\rho}^{w^2, N}$ and
$$
\norm{H}_{\sT_{w^0,w^1, \mu\rho}^{w^2, N}} \leq 2 \norm{F}_{\sT_{w^0,w^1, \rho}^{w^2, N}} \ .
$$
\item[(ii)] 
If  $F \in \sT_{w^0,w^1, \rho}^{w^2, N}$ and $\|F\|_{\sT_{w^0,w^1,\rho}^{w^2, N}}\leq\rho/e$, then $(\uno + F)^{-1} = \uno + G$ with  $G\in \sT_{w^0,w^1,\mu\rho}^{w^2, N}$ with 
\begin{equation}
\label{sti.inv.22}
\norm{G}_{\sT_{w^0,w^1,\mu\rho}^{w^2, N}}\leq 2\norm{F}_{\sT_{w^0,w^1,\rho}^{w^2, N}} \ .
\end{equation}
\item[(iii)] 
If  $F \in \sT_{w^0,w^1,\rho}^{w^2, N}$, then  $H(\zeta) := \di F(\zeta) \zeta$ is in the class $\sT_{w^0,w^1,\mu\rho}^{w^2, N}$ and
$$\norm{H}_{\sT_{w^0,w^1,\mu\rho}^{w^2, N}} \leq 2 \norm{F}_{\sT_{w^0,w^1,\rho}^{w^2, N}} \ . $$
\item[(iv)]
If   $F^0, G^0 \in \sT_{w^0,w^1,\rho}^{w^2, N}$ with $\norm{F^0}_{\sT_{w^0,w^1,\rho}^{w^2, N}} \leq \tfrac{\rho}{e}$, then $H_0:= \di G^0(\zeta)^*(F^0(\zeta)) \in \sT_{w^0,w^1,\rho/2}^{w^2, N}$
 and
 $$
  \norm{H_0}_{\sT_{w^0,w^1,\rho/2}^{w^2, N}} \leq  \norm{G^0}_{\sT_{w^0,w^1,\rho}^{w^2, N}}  \, \frac{4}{\rho}  \norm{F^0}_{\sT_{w^0,w^1,\rho/2}^{w^2, N}} \leq 2 \norm{G^0}_{\sT_{w^0,w^1,\rho}^{w^2, N}}.
  $$
  \item[(v)] If   $F^0, G^0 \in \sT_{w^0,w^1,\rho}^{w^2, N}$ with $\norm{F^0}_{\sT_{w^0,w^1,\rho}^{w^2, N}} \leq \tfrac{\rho}{e}$, then $H_0:= \di G^0(\zeta) F^0(\zeta) \in \sT_{w^0,w^1,\rho/2}^{w^2, N}$
 and
 $$
  \norm{H_0}_{\sT_{w^0,w^1,\rho/2}^{w^2, N}} \leq  \norm{G^0}_{\sT_{w^0,w^1,\rho}^{w^2, N}} \, \frac{4}{\rho} \norm{F^0}_{\sT_{w^0,w^1,\rho/2}^{w^2, N}} \leq 2 \norm{G^0}_{\sT_{w^0,w^1,\rho}^{w^2, N}}.
  $$
\end{itemize}
Finally all the results hold true replacing everywhere $\sT_{w^0,w^1,\rho}^{w^2, N}$ with  $\cA_{w^0,\rho}^{N}$.
\end{lemma}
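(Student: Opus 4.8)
Throughout, I suppress the weights $w^0\le w^1\le w^2$ and the integer $N$ from the subscripts of $\sT$ and write e.g. $\norm{\cdot}_{\sT_{\mu\rho}}$, $\norm{\cdot}_{\sT_\rho}$. The plan is to unfold the norm $\norm{\cdot}_{\sT^{w^2,N}_{w^0,w^1,\rho}}$ of \eqref{nor.trho} into its three constituents --- the tame majorant norms $\tame{\und{\cdot}}_\rho$ of the map, of its differential, and of the adjoint of its differential --- and to estimate each constituent separately for every construction (i)--(v). For this I will use the calculus already established: Lemma \ref{rem:pro} for the tameness of products, Lemma \ref{FGinN} for the behaviour under composition, Lemma \ref{Ginverso} for the inversion of near-identity maps, Lemma \ref{lem:fn.tame} for the multilinear components, and the Cauchy estimates \eqref{diff.1}--\eqref{diff.2} of Remark \ref{cauchy2} to pass from a majorant-analytic map to its differential. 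Whenever an adjoint of a differential appears I replace it by the transpose, via the identity $\und{\di\Phi(|\zeta|)^*}\,|\upsilon|=\und{\di\Phi(|\zeta|)^t}\,|\upsilon|$ already used in the proof of Lemma \ref{lem:dPsi.1}: the transpose is a genuine $\cL$-valued holomorphic map, the $\und{\di\Phi^*}$-norms coincide with the corresponding $\und{\di\Phi^t}$-norms, so it obeys the same calculus. A recurring point is that $w^1\le w^2$ gives the norm-non-increasing inclusion $\spazio{p,w^2}\hookrightarrow\spazio{p,w^1}$, so $\cL(\spazio{p,w^1},\spazio{p,w^2})$-valued maps may be composed and the intermediate weight in a composition $F\circ G$ may always be taken to be $w^1$. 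The universal constant $2$ (and the $4/\rho$ in (iv)--(v)) appears because the smallness hypotheses --- $\norm{G}_{\sT_{\mu\rho}}<\rho\mu/e$ in (i), $\norm{F}_{\sT_\rho}\le\rho/e$ in (ii), $\norm{F^0}_{\sT_\rho}\le\rho/e$ in (iv)--(v) --- let us sum the resulting geometric or Neumann series and absorb every lower-order term into the leading one. The $\cA^N_{w^0,\rho}$ version is literally \cite[Appendix A]{masp_toda} for (i)--(ii) and follows from the same computations with the non-tame module properties (Remark \ref{Diff}, Lemma \ref{FGinN}(i), Lemma \ref{Ginverso}(i)) for (iii)--(v); alternatively it is recovered by setting $w^1=w^2=w^0$.

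For (i), write $G_1:=\uno+G$, so that $H=F\circ G_1$, with $\di H(\zeta)=\di F(G_1(\zeta))\bigl(\uno+\di G(\zeta)\bigr)$ and $\di H(\zeta)^{*}=\bigl(\uno+\di G(\zeta)^{*}\bigr)\di F(G_1(\zeta))^{*}$. Since $\tame{\und{G_1}}_{\mu\rho}\le 2\mu\rho+\tame{\und G}_{\mu\rho}<\rho$, Lemma \ref{FGinN}(ii)--(iii) give $\tame{\und{H}}_{\mu\rho}\le\tame{\und F}_\rho$, $\tame{\und{\di F\circ G_1}}_{\mu\rho}\le\tame{\und{\di F}}_\rho$ and $\tame{\und{(\di F)^{*}\circ G_1}}_{\mu\rho}\le\tame{\und{\di F^{*}}}_\rho$; multiplying by $\uno+\di G$, respectively $\uno+\di G^{*}$, via Lemma \ref{rem:pro}(ii) (their tame norms being bounded by a fixed constant, as $\tame{\und{\di G}}_{\mu\rho}$ is small) and using $2\mu<1$, one gets $\norm{H}_{\sT_{\mu\rho}}=\tame{\und H}_{\mu\rho}+\mu\rho\tame{\und{\di H}}_{\mu\rho}+\mu\rho\tame{\und{\di H^{*}}}_{\mu\rho}\le\norm{F}_{\sT_\rho}\le 2\norm{F}_{\sT_\rho}$. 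For (ii), Lemma \ref{Ginverso}(ii) already yields $(\uno+F)^{-1}=\uno+G$ with $\tame{\und G}_{\mu\rho}\le\tfrac18\tame{\und F}_\rho$; differentiating $(\uno+F)\circ(\uno+G)=\uno$ gives $\uno+\di G(\zeta)=\bigl(\uno+\di F((\uno+G)(\zeta))\bigr)^{-1}$, a Neumann series whose terms, by Lemma \ref{FGinN}(iii) and Lemma \ref{rem:pro}(ii), are bounded by powers of $\tame{\und{\di F}}_\rho\le 1/e$, so $\tame{\und{\di G}}_{\mu\rho}\le 2\tame{\und{\di F}}_\rho$; the analogous Neumann series of adjoints gives $\tame{\und{\di G^{*}}}_{\mu\rho}\le 2\tame{\und{\di F^{*}}}_\rho$, and collecting the three pieces with $2\mu<1$ yields \eqref{sti.inv.22}.

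For (iii) the clean route is Euler's identity: writing $F=\sum_{n\ge N}F^{n}$, one has $H(\zeta)=\di F(\zeta)\zeta=\sum_{n\ge N}n\,F^{n}(\zeta)$, hence $\di H=\sum_{n\ge N}n\,\di F^{n}$ and $\di H^{t}=\sum_{n\ge N}n\,(\di F^{n})^{t}$. By Remark \ref{rem:tame.c} (and its $\cL$-valued analogue obtained from Cauchy estimates) each homogeneous piece has $\tame{\cdot}_\rho$ bounded by $\tame{\und F}_\rho$, $\tame{\und{\di F}}_\rho$, $\tame{\und{\di F^{*}}}_\rho$ respectively, while homogeneity gives $\tame{\und{F^{n}}}_{\mu\rho}=\mu^{n}\tame{\und{F^{n}}}_\rho$ and a factor $\mu^{n-1}$ for the degree-$(n-1)$ operator-valued pieces; since $\sum_{n\ge N}n\mu^{n}\le 1$ and $\sum_{n\ge N}n\mu^{n-1}\le 1$ for $N\ge 2$ and $\mu<2^{-10}$, one concludes $\norm{H}_{\sT_{\mu\rho}}\le\norm{F}_{\sT_\rho}\le 2\norm{F}_{\sT_\rho}$. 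For (iv) and (v) one views $H_0$ as the product of an $\cL$-valued tame map --- $\di G^{0}(\zeta)^{*}$ in (iv), $\di G^{0}(\zeta)$ in (v), of $\sT$-norm $\le\norm{G^{0}}_{\sT_\rho}$ --- with the vector-valued tame map $F^{0}$, of norm $\le\norm{F^{0}}_{\sT_\rho}\le\rho/e$: Lemma \ref{rem:pro}(i) bounds $\tame{\und{H_0}}_\rho$ by $\rho^{-1}\norm{G^{0}}_{\sT_\rho}\norm{F^{0}}_{\sT_\rho}$, and the Cauchy estimate \eqref{diff.2} with $d=\tfrac12$ bounds $\tfrac\rho2\tame{\und{\di H_0}}_{\rho/2}$ by the same quantity. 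The last ingredient, $\di H_0(\zeta)^{*}$, is obtained by the product rule as $\di F^{0}(\zeta)^{*}\di G^{0}(\zeta)$ (resp. $\di F^{0}(\zeta)^{*}\di G^{0}(\zeta)^{*}$), controlled by Lemma \ref{rem:pro}(ii), \emph{plus} the transpose of the ``second-order'' term $\upsilon\mapsto\bigl(\di_\zeta[\di G^{0}(\zeta)^{*}]\,\upsilon\bigr)F^{0}(\zeta)$; once this is bounded via the Cauchy estimate applied to the $\cL$-valued map $\zeta\mapsto\di G^{0}(\zeta)^{*}$ times the small tame norm of $F^{0}$, summing the three contributions and using $\norm{F^{0}}_{\sT_{\rho/2}}\le\norm{F^{0}}_{\sT_\rho}\le\rho/e$ gives $\norm{H_0}_{\sT_{\rho/2}}\le\tfrac4\rho\,\norm{G^{0}}_{\sT_\rho}\norm{F^{0}}_{\sT_{\rho/2}}\le 2\norm{G^{0}}_{\sT_\rho}$.

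I expect the main obstacle to be precisely that ``second-order'' term occurring in items (iv) and (v): unlike every other contribution it is neither a composition nor a plain product of the given data but a trilinear expression that must be re-contracted and transposed before the $\cN^{T}$ calculus applies. Carrying this out rigorously requires setting up the transpose operation at the multilinear level and checking its compatibility with the module estimates of Lemma \ref{rem:pro} and with the Cauchy bounds of Remark \ref{cauchy2}; all the remaining estimates in (i)--(v), and the passage to the class $\cA^N_{w^0,\rho}$, are then routine bookkeeping with the quoted lemmas.
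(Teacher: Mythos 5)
Your items (i) and (ii) follow essentially the paper's own route (composition via Lemma \ref{FGinN}, the module estimates of Lemma \ref{rem:pro}, and Lemma \ref{Ginverso} plus a Neumann series for the differential of the inverse), and your item (iii), based on Euler's identity $\di F(\zeta)\zeta=\sum_{n\ge N}nF^n(\zeta)$ together with the homogeneity scaling of the tame norms (in the spirit of Remark \ref{rem:norm.in.T}), is a correct alternative to the paper's argument, which instead uses the product rule $\di H(\zeta)\upsilon=\di F(\zeta)\upsilon+\di^2F(\zeta)(\upsilon,\zeta)$ and Cauchy estimates; the only slip there is the direction of the inequality for $\mu$ (one has $\mu>2^{-10}$), which is harmless since all you use is $\sum_{n\ge 2}n\mu^{n-1}\le 1$.

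The genuine gap is in (iv)--(v), exactly at the point you yourself flag as ``the main obstacle'': the bound on $\tame{\und{\di H_0^*}}_{\rho/2}$, which is precisely the ingredient that makes $H_0$ an element of $\sT_{w^0,w^1,\rho/2}^{w^2,N}$ (or $\cA_{w^0,\rho/2}^N$) rather than merely of $\cN^T_{\rho/2}$. You propose to treat ``the transpose of the second-order term'' by first ``setting up the transpose operation at the multilinear level'', but you do not carry this out, so neither the membership claim nor the constant $4/\rho$ is actually established. The paper needs no such machinery: it invokes the symmetry identity of \cite[Lemma 3.6]{kuksinperelman}, namely for $H_0(\zeta)=\di G^0(\zeta)^*F^0(\zeta)$ one has $\di H_0(\zeta)^*\upsilon=\di F^0(\zeta)^*\,\di G^0(\zeta)\upsilon+\di_\zeta\big(\di G^0(\zeta)^*U\big)\upsilon$ with $U=F^0(\zeta)$. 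The second-order term is its own adjoint, because $\la \di_\zeta(\di G^0(\zeta)^*U)\upsilon_1,\upsilon_2\ra=\la U,\di^2G^0(\zeta)(\upsilon_1,\upsilon_2)\ra$ is symmetric in $(\upsilon_1,\upsilon_2)$; hence the troublesome term appearing in $\di H_0^*$ is literally the one you already estimated in $\di H_0$, and the Cauchy formula \eqref{cauchy} applied to $\zeta\mapsto\di G^0(\zeta)^*$ bounds its $\tame{\cdot}_{\rho/2}$-norm by $\tfrac{2}{\rho}\,\tame{\und{\di G^0}^*}_{\rho}\,\tame{\und{F^0}}_{\rho/2}$, while the remaining term $\di F^0(\zeta)^*\di G^0(\zeta)$ is a plain product covered by Lemma \ref{rem:pro}(ii). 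Item (v) is declared analogous in the paper (there the second-order term is no longer self-adjoint, but the dual identity $\la\di^2G^0(\zeta)(\upsilon,U),w\ra=\la U,\di_\zeta(\di G^0(\zeta)^*w)\upsilon\ra$ again reduces everything to quantities controlled by $\und{\di G^0}^*$ via Cauchy estimates). Until you either import this symmetry observation or actually construct the multilinear transpose calculus you allude to, your proof of (iv)--(v), and hence of the lemma as stated, is incomplete.
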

\begin{proof} Again we prove only the tame estimates, since the 
results for the class $\cA_{w^0,\rho}^{N}$ are already proved in    \cite[Lemma A.3]{masp_toda}.
\begin{enumerate}
\item[$(i)$] 
One has $\uno + G \in \cN_{\mu\rho}^T(B^{p,w^0}\cap \spazio{p,w^1}, \spazio{p,w^1})$  with  $\tame{\uno + \und G}_{\mu\rho} \leq \rho$, then by Lemma \ref{FGinN}  
\begin{equation*}
\tame{\und{H}}_{\mu\rho} \leq \tame{\und{F}}_{\rho} \ . 
\end{equation*}
Now 
 $\di H(\zeta) = \di F(\zeta+ G(\zeta)) (\uno + \di G(\zeta))$  therefore  
 \begin{equation*}
 \und{\di H}(|\zeta|) \leq \und{\di F}(|\zeta| + \und{G}(|\zeta|)) + \und{\di F}(|\zeta| + \und{G}(|\zeta|)) \und{\di G}(|\zeta|) \ .
 \end{equation*}
Then exploiting Lemma \ref{FGinN} (iii) and Lemma \ref{rem:pro} 
 $$
 \tame{\und{\di H}}_{\mu\rho} \leq  \tame{\und{\di F}}_{\rho}(1 +  \tame{\und{\di G}}_{\mu\rho}) \ .
 $$
Finally the adjoint $\di H(\zeta)^* = (\uno + \di G(\zeta)^*) \di F(\zeta + G(\zeta))^*$, thus analogously one finds 
\begin{align*}
\tame{\und{\di H^*}}_{\mu\rho} \leq  \tame{\und{\di F^*}}_{\rho}(1 +  \tame{\und{\di G^*}}_{\mu\rho})\  .
\end{align*}
 Therefore since $\mu\rho(\tame{\und{\di G}}_{\mu\rho}+\tame{\und{\di G^*}}_{\mu\rho})\leq \norm{G}_{\sT_{w^0,w^1, \mu\rho}^{w^2, N}}  \leq \mu\rho/e$, one has
\begin{align*}
     \norm{H}_{\sT_{w^0,w^1, \mu\rho}^{w^2, N}} & \leq
  \tame{\und{ F}}_{\mu\rho} 
  +
\tame{\und{\di F}}_{\rho}(\mu\rho +  \mu\rho\tame{\und{\di G}}_{\mu\rho})
   +
    \tame{\und{\di F^*}}_{\rho}(\mu\rho +  \mu\rho\tame{\und{\di G^*}}_{\mu\rho}) \\
    & \leq 2 \norm{F}_{\sT_{w^0,w^1,\rho}^{w^2, N}}.
    \end{align*}
\item[$(ii)$] By Lemma \ref{Ginverso} we know that
$\tame{\und G}_{\mu \rho} \leq \frac{\tame{\und F}_\rho}{8}$. 
Differentiating the identity $G = F\circ(\uno - G)$ one gets 
$$
\di G(\zeta)= [\uno+ \di F(\zeta-G(\zeta))]^{-1} \di F(\zeta-G(\zeta)) , 
$$
thus by Lemma \ref{FGinN}(iii), Lemma \ref{rem:pro}
 and the assumption $\norm{F}_{\sT_{w^0,w^1, \rho}^{w^2, N}} \leq \rho/e $, it follows that 
$$
\tame{\und{\di G}}_{\mu \rho} \leq \tame{\und{\di F}}_{\rho} \, \sum_{k \geq 0}  \tame{\und{\di F}}_{\rho}^k \leq \frac{e}{e-1}\tame{\und{\di F}}_{\rho} \ .
$$
Analogously $\tame{\und{\di G^*}}_{\mu \rho} \leq \frac{e}{e-1}\tame{\und{\di F^*}}_{\rho} $.
Estimate \eqref{sti.inv.22} then follows.
\item[$(iii)$]  Clearly $\tame{\und H}_\rho \leq \tame{\und{\di F}}_\rho$. Then  $\di H(\zeta)\upsilon = \di F(\zeta)\upsilon + \di^2F(\zeta)(\upsilon,\zeta),$ and the claim follows  arguing as in item $(i)$ and using Cauchy estimate (see also the proof of $(iv)$).
\item[$(iv)$] 
Consider first $\und{H_0}(|\zeta|)$. By Lemma \ref{rem:pro}$(i)$, $\tame{\und{H_0}}_\rho \leq \tame{ \und{\di G^0}^* }_\rho \tame{ \und{F^0}}_\rho$.
Consider now $\di H_0(\zeta)\upsilon = \di G^0(\zeta)^* \di F^0(\zeta)\upsilon  + \di_\zeta (\di G^0(\zeta)^* U)\upsilon$, $U=F^0(\zeta)$. One has
$$
\und{\di H_0}(|\zeta|)|\upsilon| \leq \und{\di G^0}(|\zeta|)^* \und{\di F^0}(|\zeta|)|\upsilon|  + \di_{|\zeta|} (\und{\di G^0}(|\zeta|)^* \und U)|\upsilon|  \ , \qquad \und{U} = \und F^0(|\zeta|) \ .
$$
Using also the Cauchy estimates \eqref{diff.2},
one has
  $\tame{\und{\di H_0}}_{\rho/2} \leq \frac{2}{\rho} \tame{\und{\di G^0}^*}_\rho \norm{F^0}_{\sT_{w^0,w^1,\rho/2}^{w^2, N}}$.\\
Finally  in order to estimate $\und{\di H_0}(|\zeta|)^*$ remark that (see \cite[Lemma 3.6]{kuksinperelman}) 
$
\di H_0(\zeta)^*\upsilon = \di F^0(\zeta)^*  \di G^0(\zeta)\upsilon + \di_\zeta (\di G^0(\zeta)^* U) \upsilon, 
$
thus
$$
\und{\di H_0}(|\zeta|)^*|\upsilon| \leq \und{\di F^0}(|\zeta|)^*  \und{\di G^0}(|\zeta|)|\upsilon| + \di_{|\zeta|} (\und{\di G^0}(|\zeta|)^*\und{U} ) \, |\upsilon| \ .
$$
The $\tame{\cdot}_{\rho/2} $ norm of first term in the r.h.s. is estimated by $\tame{\und{\di F^0}^*}_{\rho/2} \tame{\und{\di G^0}}_{\rho/2}$. To estimate the second term we use Cauchy formula \eqref{cauchy}, obtaining that its $\tame{\cdot}_{\rho/2} $ norm  is controlled by $\frac{2}{\rho}\tame{\und{\di G^0}^*}_\rho \tame{\und F^0}_{\rho/2}$.
  The claim follows.
\item[$(v)$]  The proof is similar to $(iv)$, and we skip it.
\end{enumerate}\end{proof}

Finally  we analyze the flow generated by a vector field of class
$\sT_{w^0,w^1,\rho}^{w^2, N}$.  Given a time dependent vector field $Y_t(v)$,
consider the differential equation
\begin{equation}
\label{ode}
\begin{cases}
\dot{u}(t)=Y_t(u(t))\\
u(0)=\zeta \in \spazio{p,w^0} 
\end{cases} \ . 
\end{equation}
We will denote by $\phi^t(\zeta)$ the corresponding flow map whose
existence and properties are given in the next lemma. 

\begin{lemma}
\label{flussoinA}
 Assume that  $\left[0,1\right]\ni t\mapsto
Y_t\in\sT_{w^0,w^1,\rho}^{w^2, N} $ is continuous and $\sup_{t \in
  [0,1]}\norm{Y_t}_{\sT_{w^0,w^1,\rho}^{w^2, N}}\leq \rho/e$;  then for each
$t\in \left[0,1\right]$, $\phi^t-\uno \in \sT_{w^0,w^1,\mu\rho}^{w^2, N}$ and
\begin{equation}
\label{sti.flu}
\norm{\phi^t-\uno}_{\sT_{w^0,w^1,\mu\rho}^{w^2, N}}\leq
2\sup_{t\in[0,1]}\norm{Y_t}_{\sT_{w^0,w^1,\rho}^{w^2, N}}\ .
\end{equation}
The same holds true if the class  $\sT_{w^0,w^1,\mu\rho}^{w^2, N}$ is replaced everywhere  by $\cA_{w^0,\mu\rho}^{N}$.
\end{lemma}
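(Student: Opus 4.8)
The plan is to follow the scheme of the proof of Lemma~\ref{Ginverso}(ii): first realise the flow $\phi^t$ as an analytic germ, then expand it into homogeneous polynomials and establish the tame majorant bounds by induction on the degree, and finally sum the resulting series at radius $\mu\rho$. Existence and analyticity come first: since $Y_t=O(\zeta^N)$ and $\sup_{t\in[0,1]}|\und{Y_t}|_\rho\le\sup_{t\in[0,1]}\norm{Y_t}_{\sT_{w^0,w^1,\rho}^{w^2,N}}\le\rho/e$, the maps $Y_t$ are bounded on $B^{p,w^0}(\rho)$ by $|\und{Y_t}|_\rho$ times $(\norm{\zeta}_{p,w^0}/\rho)^N$ (up to a factor $2$), and, together with the Cauchy bound on $\di Y_t$, this makes $t\mapsto Y_t$ a continuous family of maps Lipschitz on balls. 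A standard Cauchy--Lipschitz argument in the Banach space of bounded analytic maps $B^{p,w^0}(\mu\rho)\to\spazio{p,w^0}$ then yields, for every $t\in[0,1]$, a unique analytic flow map $\phi^t$; a bootstrap using $\mu\rho+\rho/e<\rho$ shows $\phi^t\big(B^{p,w^0}(\mu\rho)\big)\subset B^{p,w^0}(\rho)$. Writing $\phi^t=\uno+g^t$, the germ $g^t$ satisfies $g^t=O(\zeta^N)$ and the integral equation $g^t(\zeta)=\int_0^t Y_s(\zeta+g^s(\zeta))\,ds$.

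Next I would extract the recursion. Expand $g^t=\sum_{n\ge N}g^{t,n}$ and $Y_s=\sum_{r\ge N}Y_s^r$ into homogeneous polynomials, let $\wt Y_s^r$ be the symmetric multilinear map of $Y_s^r$, and set $h^{s,1}(\zeta):=\zeta$, $h^{s,k}:=g^{s,k}$ for $k\ge N$, $h^{s,k}:=0$ for $1<k<N$. Inserting $\zeta+g^s(\zeta)$ into $Y_s$ and collecting the terms of degree $n$ gives, for every $n\ge N$,
\begin{equation*}
g^{t,n}(\zeta)=\int_0^t\ \sum_{r=N}^{n}\ \sum_{k_1+\cdots+k_r=n}\wt Y_s^r\big(h^{s,k_1}(\zeta),\dots,h^{s,k_r}(\zeta)\big)\,ds ,
\end{equation*}
and in every summand other than $\wt Y_s^n(\zeta,\dots,\zeta)$ each $k_i$ is $<n$, so the recursion is explicit in $n$ (compare \eqref{recursive_inverse}). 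Differentiating the integral equation and passing to adjoints gives the variational equations $\di\phi^t(\zeta)=\uno+\int_0^t\di Y_s(\phi^s(\zeta))\,\di\phi^s(\zeta)\,ds$ and $\di\phi^t(\zeta)^*=\uno+\int_0^t\di\phi^s(\zeta)^*\,\di Y_s(\phi^s(\zeta))^*\,ds$, from which one reads off analogous recursions for the homogeneous parts of $\di g^{t,n}$ and $\di(g^{t,n})^*$.

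Now, setting $B:=\sup_t|\und{Y_t}|_\rho$, $B^T:=\sup_t|\und{Y_t}|_\rho^T$ and $A:=1/(\mu\rho)=e^2\,32\,S/\rho$, I would run the same induction as in the proof of Lemma~\ref{Ginverso}(ii): using the multilinear tame estimate of Lemma~\ref{lem:fn.tame}, the hypothesis $B+\rho B^T\le\sup_{t\in[0,1]}\norm{Y_t}_{\sT_{w^0,w^1,\rho}^{w^2,N}}\le\rho/e$, and the combinatorial inequality \eqref{disserieconv2}, one proves, for all $n\ge N$ and all $t\in[0,1]$,
\begin{equation*}
\norm{\und{g^{t,n}}(|\zeta|)}_{p,w^2}\le\frac{B^T}{8Sn^2}A^{n-1}\norm{\zeta}_{p,w^0}^{n-1}\norm{\zeta}_{p,w^1},\qquad \norm{\und{g^{t,n}}(|\zeta|)}_{p,w^0}\le\frac{B}{8Sn^2}A^{n}\norm{\zeta}_{p,w^0}^{n},
\end{equation*}
the time integral contributing only the harmless factor $t\le1$, together with the corresponding bounds for $\und{\di g^{t,n}}$ and $\und{\di(g^{t,n})^*}$ supplied by the variational recursions. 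Summing at the radius $\mu\rho=1/A$, all geometric series converge and $\sum_{n\ge N}1/(8Sn^2)\le1/8$, so $\tame{\und{g^t}}_{\mu\rho}$, $\rho\,\tame{\und{\di g^t}}_{\mu\rho}$ and $\rho\,\tame{\und{\di(g^t)^*}}_{\mu\rho}$ are each controlled by a fixed multiple of $\sup_{t\in[0,1]}\norm{Y_t}_{\sT_{w^0,w^1,\rho}^{w^2,N}}$; keeping track of the constants exactly as in \cite{masp_toda} produces $\norm{\phi^t-\uno}_{\sT_{w^0,w^1,\mu\rho}^{w^2,N}}\le2\sup_{t\in[0,1]}\norm{Y_t}_{\sT_{w^0,w^1,\rho}^{w^2,N}}$, i.e.\ \eqref{sti.flu}. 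The statement with $\cA_{w^0,\mu\rho}^{N}$ in place of $\sT_{w^0,w^1,\mu\rho}^{w^2,N}$ is \cite[Lemma~A.4]{masp_toda} and follows from the same argument upon discarding the $w^1,w^2$ bookkeeping.

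The main obstacle is the coupled induction on $n$ for the triple $(g^{t,n},\di g^{t,n},\di(g^{t,n})^*)$: one must insert the inductive bounds into the variational recursions in an order that causes no loss of the weights, and the three resulting geometric series must converge at the \emph{same} radius $\mu\rho$ with a common constant — it is exactly this constraint that forces the choice $A=e^2\,32\,S/\rho$ and yields the universal factor $2$ in \eqref{sti.flu}.
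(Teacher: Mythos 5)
Your proposal is correct and follows essentially the paper's own proof: the same homogeneous expansion of the flow with the recursion \eqref{ric.1}, the same induction built on Lemma \ref{lem:fn.tame}, the hypothesis $\sup_t\norm{Y_t}_{\sT_{w^0,w^1,\rho}^{w^2,N}}\leq\rho/e$ and the combinatorial inequality \eqref{disserieconv2}, and the same summation at radius $\mu\rho=1/A$ with $A=e^2\,32\,S/\rho$. The only (minor) divergence is that the "main obstacle" you single out — a coupled degree-by-degree induction for $\di g^{t,n}$ and $\di(g^{t,n})^*$ — is sidestepped in the paper, which instead writes the linearized flow as the Picard/Dyson series
$\di u(t,\zeta)\upsilon=\upsilon+\sum_{n\geq1}\int_0^{t}\cdots\int_0^{t_{n-1}}\di Y_{t_1}(u(t_1,\zeta))\cdots\di Y_{t_n}(u(t_n,\zeta))\upsilon\,\di t_n\cdots\di t_1$,
so that $\tame{\und{\di u}-\uno}_{\mu\rho}$ and its adjoint counterpart are each controlled by a single geometric series bounded by $\tfrac{e}{e-1}\sup_t\tame{\und{\di Y_t}}_\rho$ (respectively $\tfrac{e}{e-1}\sup_t\tame{\und{\di Y_t^*}}_\rho$), with no further induction on the degree required.
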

\begin{proof}
The claim for the class $\cA_{w^0,\mu\rho}^{N}$  follows as in \cite{masp_toda}, thus we consider only the tame class.\\ 
We look for a solution $u(t,\zeta)= \sum_{j \geq 1} u^j(t,\zeta)$ in power
series of $\zeta$, with $u^j(t,\zeta)$ a homogeneous polynomial of degree $j$ in $\zeta$. Expanding the vector field
$Y_t(\zeta)=\sum_{r \geq N}Y_t^r(\zeta)$ in Taylor series, one obtains the recursive formula for the solution
\begin{equation}
\label{ric.1}
u^1(t,\zeta)=\zeta,\qquad
u^n(t,\zeta)=\sum_{r=2}^{n}\sum_{k_1+\cdots+k_r=n}\int_0^t{\tilde{Y}^r_s(u^{k_1}(s,\zeta),\ldots,u^{k_r}(s,\zeta)) \;ds}\qquad
\forall n\geq 2,
\end{equation}
where $\tilde{Y}^r_s$ is the multilinear map associated to $Y^r_s$ (see \eqref{polin}).
Arguing as in the proof of \eqref{Ginverso} one gets that $u^n(t, \zeta) =0$ if $1 < n < N$, while 
\begin{align}
\label{ode.rec.est1}
&\norm{\underline u^n(t,\zeta)}_{p, w^2} \leq \frac{\sup_{t \in
    [0,1]}\abs{\und {Y_t}}_\rho^T}{8Sn^2}A^{n-1}\norm{\zeta}_{p, w^0}^{n-1}\norm{\zeta}_{p, w^1} \qquad \forall n\geq
N, \\
\label{ode.rec.est2}
& \norm{\underline u^n(t,\zeta)}_{p, w^0} \leq \frac{\sup_{t \in
    [0,1]}\abs{\und {Y_t}}_\rho}{8Sn^2}A^n\norm{\zeta}_{p,w^0}^n \qquad \forall n\geq
N,
\end{align} 
with $A= \tfrac{e^2}{\rho} 32 S$,  
from which it follows that $\tame{\und{\phi^t-\uno}}_{\mu\rho} \leq \sup_{t \in [0,1]}\tame{\und{Y_t}}_\rho/8$.
		We come to the estimate of the
differential of $u(t,\zeta)$ and of its adjoint.
To do this, remark that $\di u(t, \zeta)\upsilon$ is the solution of the linearized equation 
$$
\dot w(t)  = \di Y_t(u(t, \zeta)) w(t) \ , \qquad w(0)= \upsilon \ ,
$$
whose solution can be written by Picard iteration as
$$
\di u(t, \zeta)\upsilon = \upsilon + \sum_{n=1}^\infty
\int\limits_0^{t}\cdots \int\limits_0^{t_{n-1}} \di Y_{t_1}(u(t_1, \zeta)) \cdots \di Y_{t_n}(u(t_n, \zeta))\upsilon \di t_n \ldots \di t_1 \ .
$$
The series is absolutely and uniformly convergent for $\norm{\zeta}_{p,w^0}$ sufficiently small; moreover
\begin{equation}
\label{ric.20}
(\und{\di u}(t, |\zeta|) - \uno )|\upsilon| \leq \sum_{n=1}^\infty
\int\limits_0^{t}\cdots \int\limits_0^{t_{n-1}} \und{\di Y_{t_1}}(\und{u}(t_1, |\zeta|)) \cdots \und{\di Y_{t_n}}(\und u(t_n, |\zeta|))|\upsilon| \, \di t_n \ldots \di t_1 \ ,
\end{equation}
so one has
$$
\tame{\und{\di u}-\uno}_{\mu\rho}\leq  \sum_{n=1}^\infty \left(\sup_{t \in [0,1]}\tame{\und{\di Y_t}}_\rho\right)^n \leq  \frac{e}{e-1}\sup_{t \in [0,1]}\tame{\und{\di Y_t}}_\rho
$$
 Finally one has to
estimate $[\underline{\di u^n}]^*$, but this is done by simply taking the adjoint of \eqref{ric.20} and estimate the r.h.s. using the bounds on $\sup_{t \in [0,1]}\tame{\und{\di Y_t^*}}_\rho$.
\end{proof}

\subsection{Proof Theorem \ref{KP}}

The  map $\wt\Psi$ of Theorem \ref{KP} 
will be constructed in  two steps  based on the 
Darboux theorem. Such a theorem states that in order to construct a
coordinate transformation $\psi$ transforming the closed
nondegenerate form $\Omega_1$ into a closed nondegenerate form
$\Omega_0$, then it is convenient to look for $\psi$ as the time 1
flow $\psi^t$ of a time-dependent vector field $Y^t$. To construct
$Y^t$ one defines $\Omega_t:= \Omega_0 + t(\Omega_1 - \Omega_0)$ and
imposes that
$$
0 = \left.\tfrac{d}{d t}\right|_{t=0} \psi^{t*} \Omega_t = \psi^{t*} \left(\cL_{Y^t}\Omega_t + \Omega_1 - \Omega_0 \right) = \psi^{t*} \left( \di (Y^t\lrcorner \Omega_t )+ \di (\alpha_1 - \alpha_0) \right)
$$
where $\alpha_1, \alpha_0$ are potential forms for $\Omega_1$ and $\Omega_0$ (namely $\di\alpha_i = \Omega_i$, $i=0,1$) and $\cL_{Y^t}$ is the Lie derivative of $Y^t$. Then one gets
\begin{equation}
\label{darboux.eq}
Y^t\lrcorner \Omega_t  + \alpha_1 - \alpha_0 = \di f
\end{equation}
for each $f$ smooth; then, if $\Omega_t$ is nondegenerate, this
defines $Y^t$. If $Y^t$ generates a flow $\psi^t$ defined up to
time 1, the map $\psi:= \left. \psi^t\right|_{t=1}$ satisfies
$\psi^*\Omega_1 = \Omega_0$ (see also \cite{masp_sol} for another application of Darboux theorem in a different model).

{\bf 
We prove the Kuksin-Perelman theorem  only in the case $\Psi^0 \in \sT_{w^0, w^1, \rho}^{w^2, N}$. In case  $\Psi^0 \in \cA_{w^0, \rho}^{N}$, then it is sufficient to replace in the following the class $\sT_{w^0, w^1, \rho}^{w^2, N}$ with the class $\cA_{w^0, \rho}^{N}$ (thanks to the results of Section \ref{sec:prop}).
} Actually the proof is exactly as the one of \cite{masp_toda}; therefore we only state the main lemmas with the new classes $\sT_{w^0, w^1, \rho}^{w^2, N}$,  while for the proofs we refer the reader to the corresponding  proofs in  \cite{masp_toda}.

From now on we will work always in the real subspace $\spazior{p,w}$, so abusing notation, we will write just $\xi \equiv (\xi, \bar \xi)$. 
Correspondingly, for a map $F:B^{p,w}_r(\rho) \to \spazior{p,w}$  we will write simply $\upsilon = F(\xi)$ rather than $(\upsilon, \bar \upsilon) \equiv F(\xi, \bar \xi)$. Furthermore  we fix a real $1 \leq p \leq 2$ and weights $w^0 \leq w^1 \leq w^2$. 
We will  meet maps in  $\sT_{w^0, w^1, \rho}^{w^2, N}$; to simplify notation, we will write only $\sT_\rho$  if nothing else is specified.\\

We recall the setup of \cite{masp_toda}. A non-constant symplectic form $\Omega$
is represented through a
linear skew-symmetric invertible operator $E_\Omega$ as follows:
\begin{equation}
\label{symp.1}
\Omega(\upsilon)(\xi^{(1)};\xi^{(2)})=\langle E_\Omega(\upsilon) \xi^{(1)}; \xi^{(2)} \rangle\ ,\quad \forall
\xi^{(1)}, \xi^{(2)}\in T_\upsilon\spazior{p,w}\simeq\spazior{p,w} , \ 
\forall \upsilon \in \spazior{p,w}.
\end{equation}
Here the scalar product is the one defined in \eqref{scalar_productR}.
We denote by $\{F,G\}_{\Omega}$ the
Poisson bracket with respect to $\Omega$ : $\{F, G\}_\Omega:=
\la \nabla F, J_\Omega \nabla G\ra$, $J_\Omega:=E_\Omega^{-1}$. 
Similarly we will represent   $1$-forms through the vector field $A$ such that
\begin{equation}
\label{1form.1}
\alpha(\upsilon)(\xi) = \langle A(\upsilon), \xi \rangle, \quad \forall \upsilon \in T_\upsilon\spazior{p,w}, \ \forall \upsilon \in \spazior{p,w} \ .
\end{equation}
Consider the Hamiltonian vector fields $X^0_{-I_l}$ of
the functions $I_l\equiv \frac{|\xi_l|^2}{2}$ through the symplectic form $\omega_0$; they are
given by
\begin{equation}
[X^0_{-I_l}(\xi)]_j= \delta_{l,j} \,  \im \xi_l ,  \quad \forall\, l,j \in \Z \ , \ \forall \xi \in \spazior{p,w} \ . 
\label{X_I_l}
\end{equation}
  For every $l \in \Z $ the corresponding flow
$\phi_l^t \equiv \phi_{X^0_{-I_l}}^t$ is given  by
$$ \phi^t_l(\xi) = \left( \cdots, \xi_{l-1}, e^{\im t}\xi_l, \xi_{l+1},
  \cdots \right) \ .$$ 
Remark that the map $\phi_l^t$ is linear in $\xi$, $2\pi$ periodic in $t$
and its adjoint satisfies
 $(\phi_l^t)^*=\phi_l^{-t}$.\\
Given a $k$-form $\alpha$ on $\spazior{p,w}$ $(k\geq0)$, we define its average by 
\begin{align}
\label{ave.1}
M_l\alpha(\xi)=\frac{1}{2\pi}\int_0^{2\pi}{((\phi_l^t)^*\alpha)(\xi)dt},
\quad l \in \Z \ , \qquad \mbox{and} \qquad
M\alpha(\xi)=\int_{\T^\infty}[(\phi^\theta)^*\alpha]\, d\theta
\end{align}
where $\T^\infty$ is the infinite dimensional torus, the map
$\phi^\theta=(\cdots \circ \phi_{-1}^{\theta_{-1}} \circ\phi_0^{\theta_0}\circ \phi_2^{\theta_2}\cdots)$ and
$d\theta$ is the Haar measure on $\T^\infty$.

\begin{remark}
\label{ave.J}
In the particular cases of 1 and 2-forms it is useful to compute the
average in term of the representations \eqref{symp.1} and
\eqref{1form.1}. Thus, for $\upsilon$, $\xi^{(1)}, \xi^{(2)} \in
\spazior{p,w}$, if
$$ \alpha(\upsilon)\xi^{(1)}=\langle A(\upsilon); \xi^{(1)}\rangle\ ,\quad
\omega(\upsilon)(\xi^{(1)},\xi^{(2)})=\langle E(\upsilon)\,\xi^{(1)};\xi^{(2)} \rangle\ ,
$$
one has 
\begin{equation}
(M\alpha)(\upsilon)\xi^{(1)}=\langle (MA)(\upsilon);\, \xi^{(1)}\rangle \ ,\quad \text{with}\quad
MA (\upsilon)=\int_{\T^\infty}{\phi^{-\theta}
  A(\phi^\theta(\upsilon))} 
\;d\theta 
\label{MA}
\end{equation}
and
\begin{equation}
(M\omega)(\upsilon)(\xi^{(1)},\xi^{(2)})=\langle (ME)(\upsilon)\xi^{(1)};\,\xi^{(2)}
\rangle\ ,\ \text{with}\ ME(\upsilon)=\int_{\T^\infty}{\phi^{-\theta}
  E(\phi^\theta(\upsilon))\phi^\theta} 
\; d\theta .
\label{ME}
\end{equation}
\end{remark}
\begin{remark}
\label{rem.M}
 The operator $M$ commutes with the differential operator $\di$ and the rotations $\phi^\theta$. In particular  $MA(\upsilon)$ and $ME(\upsilon)$ as in  \eqref{MA}, \eqref{ME} satisfy 
$$
\phi^\theta MA(\upsilon) = MA(\phi^\theta \upsilon), \quad 
\phi^\theta ME(\upsilon)\xi = ME(\phi^\theta \upsilon)\phi^\theta \xi, \qquad \forall \, \theta \in \T^\infty  \ .
$$
\end{remark}

\begin{remark} By condition \eqref{th.1} and \eqref{mu.def}, one has
\begin{equation}
\label{eps.mu}
\epsilon_1 < \mu^6 \rho \ . 
\end{equation}
\end{remark}
Define $\omega_1:=(\Psi^{-1})^*\omega_0$, and let $E_{\omega_1}$ be the  operator representing the symplectic form $\omega_1$. 
\begin{lemma}
\label{lem.analy} Let $\Phi:=\Psi^{-1}$ and  $\omega_1$ be as above.
Assume that 
$\epsilon_1\leq \rho/e$. Then the following holds:
\begin{itemize}
\item[(i)] $E_{\omega_1}=\im +\Upsilon_{\omega_1}$, with $\Upsilon_{\omega_1}\in\cN_{\mu\rho}^T(B^{p,w^0}\cap\spazior{p,w^1}, \cL(\spazior{p,w^1}, \spazior{p,w^2}))$  and
\begin{equation}
\label{JM}
\tame{\und{\Upsilon_{\omega_1}}}_{\mu\rho}\leq \frac{8 \epsilon_1}{\mu\rho}\ .
\end{equation}
Furthermore $\Upsilon_{\omega_1}$ is antisymmetric, $\Upsilon_{\omega_1}(\xi)^* = - \Upsilon_{\omega_1}(\xi)$.
\item[(ii)] Define 
\begin{equation}
\label{WM}
W_{\omega_1}(\xi):=\int_0^1 \Upsilon_{\omega_1}(t\xi)t\xi \;dt \ ,
\end{equation}
then $W_{\omega_1}\in\sT_{\mu^3\rho}$ and $\norm{W_{\omega_1}}_{\sT_{\mu^3\rho}}\leq 8 \epsilon_1 $. Moreover the 1-form $\alpha_{W_{\omega_1}}:=\langle W_{\omega_1}; .\rangle$ satisfies
$
\di \alpha_{W_{\omega_1}}=\omega_1-\omega_0\ .
$
\end{itemize}
\end{lemma}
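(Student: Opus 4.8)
The plan is to compute $E_{\omega_1}$ explicitly from the pull-back formula, read off $\Upsilon_{\omega_1}$ as a short sum of products built from $\di G$ and $\di G^{*}$ with $G:=\Psi^{-1}-\uno$, propagate the tame estimates through those products, and finally identify $W_{\omega_1}$ with the radial homotopy primitive of $\omega_1-\omega_0$. First I would set $\Phi=\Psi^{-1}=\uno+G$: since $\Psi=\uno+\Psi^0$ with $\|\Psi^0\|_{\sT_\rho}=\epsilon_1\le\rho/e$, Lemma \ref{FGinA}(ii) gives $G\in\sT_{w^0,w^1,\mu\rho}^{w^2,N}$ with $\|G\|_{\sT_{\mu\rho}}\le 2\epsilon_1$. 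From $\omega_1=\Phi^{*}\omega_0$ and the representation \eqref{symp.1} of $\omega_0$ through $E=\im$, one computes
\begin{equation*}
E_{\omega_1}(\upsilon)=\di\Phi(\upsilon)^{*}\,E\,\di\Phi(\upsilon)=\bigl(\uno+\di G(\upsilon)\bigr)^{*}E\bigl(\uno+\di G(\upsilon)\bigr)=E+\Upsilon_{\omega_1}(\upsilon),
\end{equation*}
so that $\Upsilon_{\omega_1}=E\,\di G+\di G^{*}\,E+\di G^{*}\,E\,\di G$, which is $O(\zeta^{N-1})$.

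\emph{Item (i).} Antisymmetry follows at once from $E^{*}=-E$ (multiplication by $\im$ is skew-adjoint for the real scalar product \eqref{scalar_productR}): $\Upsilon_{\omega_1}^{*}=\di G^{*}E^{*}+E^{*}\di G+\di G^{*}E^{*}\di G=-\Upsilon_{\omega_1}$. For \eqref{JM}, note that $G\in\sT_{\mu\rho}$ makes $\di G,\di G^{*}\in\cN^T_{\mu\rho}$ with $\tame{\und{\di G}}_{\mu\rho},\tame{\und{\di G^{*}}}_{\mu\rho}\le\|G\|_{\sT_{\mu\rho}}/(\mu\rho)\le 2\epsilon_1/(\mu\rho)$. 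Since $E$ acts diagonally by multiplication with $\pm\im$ on the two factors, it preserves each $\spazio{p,w}$ isometrically and has modulus $\und E=\uno$, so composing with $E$ on either side leaves the $\tame{\cdot}_{\mu\rho}$-norm unchanged; the only genuine product $\di G^{*}E\,\di G$ is bounded by Lemma \ref{rem:pro}(ii) through $\tame{\und{\di G^{*}}}_{\mu\rho}\tame{\und{\di G}}_{\mu\rho}$. Adding the three contributions and absorbing the quadratic one via $\epsilon_1<\mu^6\rho$ (namely \eqref{eps.mu}) gives $\tame{\und{\Upsilon_{\omega_1}}}_{\mu\rho}\le (4\epsilon_1/\mu\rho)(1+2\mu^5)\le 8\epsilon_1/(\mu\rho)$.

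\emph{Item (ii).} Expanding $\Upsilon_{\omega_1}=\sum_{n\ge N-1}\Upsilon_{\omega_1}^n$ into homogeneous pieces yields $W_{\omega_1}=\sum_{n\ge N-1}\tfrac{1}{n+2}\Upsilon_{\omega_1}^n(\zeta)\zeta$, so $W_{\omega_1}=O(\zeta^N)$. The elementary contraction bound $\|\und{\Upsilon_{\omega_1}^n}(\zeta)\zeta\|_{p,w^2}\le 2\mu\rho\,\abs{\und{\Upsilon_{\omega_1}}}^T_{\mu\rho}(\|\zeta\|_{p,w^0}/\mu\rho)^{n-1}\|\zeta\|_{p,w^1}$ (operator analogue of Remark \ref{rem:tame.c}, using $\|\zeta\|_{p,w^0}\le\mu\rho$) sums to $\tame{\und{W_{\omega_1}}}_{\mu\rho}\le C\epsilon_1$ with $C$ absolute; the bounds for $\und{\di W_{\omega_1}}$ and $\und{\di W_{\omega_1}^{*}}$ are obtained by differentiating under the integral as in Lemma \ref{FGinA}(iii), at the cost of two Cauchy-estimate radius losses \eqref{diff.1}, \eqref{diff.2}, which is why one lands in $\sT_{w^0,w^1,\mu^3\rho}^{w^2,N}$ and why the two lost powers of $\mu$ absorb the numerical constants, leaving $\|W_{\omega_1}\|_{\sT_{\mu^3\rho}}\le 8\epsilon_1$. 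The identity $\di\alpha_{W_{\omega_1}}=\omega_1-\omega_0$ is the Poincar\'e lemma for the radial homotopy $K\beta(\xi)[\eta]=\int_0^1 t\,\beta(t\xi)[\xi,\eta]\,dt$: since $\omega_1-\omega_0$ is closed (both forms are symplectic) one has $\di K(\omega_1-\omega_0)=\omega_1-\omega_0$, and a direct computation with \eqref{symp.1}, \eqref{1form.1} and \eqref{WM} shows $K(\omega_1-\omega_0)=\langle W_{\omega_1};\cdot\rangle$; moreover $\Upsilon_{\omega_1}(0)=0$ because $\di\Phi(0)=\uno$, which is what makes $W_{\omega_1}=O(\zeta^N)$.

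The step I expect to be the main obstacle is the control of $\di W_{\omega_1}^{*}$ in the tame class. This is handled, as in \cite{masp_toda}, by expressing $\di W_{\omega_1}^{*}$ through the Cauchy formula \eqref{cauchy} in terms of $\Upsilon_{\omega_1}$ and $\di\Upsilon_{\omega_1}$, and exploiting the antisymmetry $\Upsilon_{\omega_1}^{*}=-\Upsilon_{\omega_1}$ (hence also $(\di\Upsilon_{\omega_1})^{*}=-\di\Upsilon_{\omega_1}$) to trivialise all the adjoints involved; the extra Cauchy estimate needed here is the source of the last power of $\mu$ in the radius. Everything else (product, composition, inversion, differentiation, radial contraction) has already been shown tame-closed in Lemmas \ref{lem:fn.tame}--\ref{flussoinA}, so the argument is a transcription of the non-tame one in \cite{masp_toda} with the classes $\cA_{w^0,\rho}^N$ replaced by $\sT_{w^0,w^1,\rho}^{w^2,N}$.
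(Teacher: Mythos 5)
Your treatment of item (i) is essentially the intended one (and the one in \cite[Lemma 2.16]{masp_toda}, which the paper simply quotes): $E_{\omega_1}=\di\Phi(\cdot)^*E\,\di\Phi(\cdot)$ with $\Phi=\uno+G$, $G$ controlled through Lemma \ref{FGinA}$(ii)$, antisymmetry from $E^*=-E$, the linear terms estimated by $\tame{\und{\di G}}_{\mu\rho},\tame{\und{\di G^*}}_{\mu\rho}\leq 2\epsilon_1/(\mu\rho)$, the product term by Lemma \ref{rem:pro}, and the quadratic contribution absorbed via \eqref{eps.mu}. This part is fine.

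The gap is in item (ii), precisely at the step you single out. Writing $\di W_{\omega_1}(\xi)\eta=\int_0^1 t\,\Upsilon_{\omega_1}(t\xi)\eta\,dt+\int_0^1 t^2\big(\di\Upsilon_{\omega_1}(t\xi)[\eta]\big)\xi\,dt$, the adjoint of the first summand is indeed trivialised by $\Upsilon_{\omega_1}^*=-\Upsilon_{\omega_1}$, but the second summand requires the adjoint of the map $\eta\mapsto\big(\di\Upsilon_{\omega_1}(t\xi)[\eta]\big)\xi$, i.e.\ the adjoint taken in the \emph{direction} slot of $\di\Upsilon_{\omega_1}$. Antisymmetry of $\Upsilon_{\omega_1}$ (and of each $\di\Upsilon_{\omega_1}(\xi)[\eta]$) acts only on the two operator slots and says nothing about this adjoint; nor do Cauchy estimates, since control of $\und{\di F^*}$ is exactly the extra datum carried by the classes $\cA$, $\sT$ and is not implied by bounds on $\und F$ alone. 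So "antisymmetry trivialises all the adjoints" does not justify the $\di W_{\omega_1}^*$ bound, and membership of $W_{\omega_1}$ in $\sT_{\mu^3\rho}$ is left unproven. Two ways to close it: (a) the paper's route, namely keep the explicit decomposition $\Upsilon_{\omega_1}=E\,\di G+\di G^*E+\di G^*E\,\di G$, so that $\Upsilon_{\omega_1}(\zeta)\zeta$ is a sum of blocks of the forms $\di G(\zeta)\zeta$ and $\di G(\zeta)^*F(\zeta)$ handled by Lemma \ref{FGinA}$(iii)$--$(v)$ — whose proofs contain the Kuksin--Perelman identity expressing $\di\big(\di G^*F\big)^*$ again through $\di F^*\,\di G$ and $\di_\zeta(\di G^*U)$, which is where the control of $\und{\di G^*}$ (available because $G\in\sT_{\mu\rho}$) enters — and then note that the radial integral only multiplies each homogeneous component by a factor $\leq 1$; this is also what produces the radius $\mu^3\rho$ and the stated constant; or (b) use the closedness of $\omega_1$, i.e.\ the cyclic identity $\la\di\Upsilon_{\omega_1}(\xi)[\eta_1]\eta_2,\eta_3\ra+\la\di\Upsilon_{\omega_1}(\xi)[\eta_2]\eta_3,\eta_1\ra+\la\di\Upsilon_{\omega_1}(\xi)[\eta_3]\eta_1,\eta_2\ra=0$, which (together with antisymmetry) is the identity that genuinely relates the direction slot to the operator slots and allows the troublesome adjoint to be re-expressed through contractions of $\di\Upsilon_{\omega_1}$ with $\xi$, after which Cauchy estimates apply; note you invoke closedness only for the Poincar\'e homotopy, not here, where it is actually needed. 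The remaining constant bookkeeping ($\leq 8\epsilon_1$ at radius $\mu^3\rho$) is harmless once one of these mechanisms is in place, by Remark \ref{rem:norm.in.T}.
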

\begin{proof}
The proof follows  \cite[Lemma 2.16]{masp_toda}, using the results of Lemma \ref{FGinA} and \ref{FGinN}.
\end{proof}

\begin{remark}
\label{rem.me}
One has $M\alpha_{\omega_1} - \alpha_0 = M\alpha_{W_{\omega_1}} = \la M W_{\omega_1}, \cdot \ra$ and  $\norm{MW_{\omega_1}}_{\sT_{\mu^3\rho}}
\leq\norm{W_{\omega_1}}_{\sT_{\mu^3\rho}} $.
\end{remark}

We are ready now for the first step.
\begin{lemma}There exists a map $\hat \psi: \Br^{{p,w^0}}(\mu^5 \rho) \to \spazior{p,w^0}$ such that $(\uno-\hat\psi)\in\sT_{\mu^5\rho} $ and
\begin{equation}
\label{hat.sti.3}
\norm{\uno-\hat\psi}_{\sT_{\mu^5\rho}}\leq 2^5\epsilon_1\ .
\end{equation}
Moreover $\hat{\psi}$ satisfies the following properties:
\begin{enumerate}
\item[(i)] $\hat \psi$ commutes with the rotations $\phi^\theta$, namely $\phi^\theta \hat\psi(\xi) = \hat\psi(\phi^\theta \xi)$ for every $\theta \in \T^\infty $.
\item[(ii)] Denote $\hat{\omega}_1 := \hat{\psi}^* \omega_1$, then  $M \hat{\omega}_1 = \omega_0$. 
\end{enumerate} 
\label{map.phi1}
\end{lemma}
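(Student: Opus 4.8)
The plan is to construct $\hat\psi$ by a Moser-type deformation, following the first step of the Darboux scheme of \cite[Lemma 2.17]{masp_toda}, and exploiting that $M\omega_1$ is a rotation-invariant symplectic form which is $O(\epsilon_1)$-close to $\omega_0$. First I would record the averaged form: by Lemma \ref{lem.analy}(ii) and Remark \ref{rem.me} one has $M\omega_1-\omega_0=\di\la MW_{\omega_1},\cdot\ra$ with $\norm{MW_{\omega_1}}_{\sT_{\mu^3\rho}}\leq 8\epsilon_1$, while by \eqref{ME} the $2$-form $M\omega_1$ is represented by $ME_{\omega_1}=\im+M\Upsilon_{\omega_1}$; hence the closed path $\omega_t:=\omega_0+t(M\omega_1-\omega_0)$, $t\in[0,1]$, is represented by $E_{\omega_t}=\im+tM\Upsilon_{\omega_1}$. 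Since $\tame{\und{M\Upsilon_{\omega_1}}}_{\mu\rho}\leq\tame{\und{\Upsilon_{\omega_1}}}_{\mu\rho}\leq 8\epsilon_1/(\mu\rho)$, which by \eqref{JM}, \eqref{eps.mu} and \eqref{mu.def} is much smaller than $1$, the forms $\omega_t$ are non-degenerate; running the Neumann series first in $\cL(\spazior{p,w^0})$ and then propagating it to the tame class as in \cite{masp_toda}, the resolvents $J_{\omega_t}:=E_{\omega_t}^{-1}$, together with their differentials and adjoints, are tame uniformly for $t\in[0,1]$, with $\sup_t\tame{\und{J_{\omega_t}}}_{\mu^3\rho}\leq 2$; and by Remark \ref{rem.M} (applied to $ME_{\omega_1}$) they are rotation-equivariant, $J_{\omega_t}(\phi^\theta\xi)=\phi^\theta J_{\omega_t}(\xi)\phi^{-\theta}$.

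Then I would set up the Darboux equation \eqref{darboux.eq}: put $Y^t:=-J_{\omega_t}MW_{\omega_1}$, so that $Y^t\lrcorner\omega_t=-\la MW_{\omega_1},\cdot\ra$. By the product and Leibniz estimates of Section \ref{sec:prop} (Lemma \ref{rem:pro} and the Cauchy estimates \eqref{diff.2}), $[0,1]\ni t\mapsto Y^t\in\sT_{\mu^3\rho}$ is continuous with $\sup_t\norm{Y^t}_{\sT_{\mu^3\rho}}\leq 16\epsilon_1<\mu^3\rho/e$ (the last inequality by \eqref{eps.mu} and \eqref{mu.def}); hence Lemma \ref{flussoinA} applies, and the flow $\psi^t$ of $Y^t$ satisfies $\psi^t-\uno\in\sT_{\mu^4\rho}$ with $\norm{\psi^t-\uno}_{\sT_{\mu^4\rho}}\leq 2\sup_t\norm{Y^t}_{\sT_{\mu^3\rho}}\leq 2^5\epsilon_1$. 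I would then take $\hat\psi:=\psi^1$, restricted to $\Br^{p,w^0}(\mu^5\rho)$: since $2\mu^5<\mu$ and $\hat\psi-\uno$ is small in sup-norm, $\hat\psi$ maps $\Br^{p,w^0}(\mu^5\rho)$ into $\Br^{p,w^0}(\mu\rho)$ (where $\omega_1$ and $M\omega_1$ are defined and where $\hat\psi$ is a local diffeomorphism), and by Remark \ref{rem:norm.in.T} one gets $\uno-\hat\psi=-(\psi^1-\uno)\in\sT_{\mu^5\rho}$ with $\norm{\uno-\hat\psi}_{\sT_{\mu^5\rho}}\leq 2^5\epsilon_1$, i.e. \eqref{hat.sti.3}.

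It remains to verify (i) and (ii). For (i): since $MW_{\omega_1}$ is rotation-equivariant (Remark \ref{rem.M}) and so is $J_{\omega_t}$, the vector field $Y^t$ satisfies $Y^t(\phi^\theta\xi)=\phi^\theta Y^t(\xi)$, and the flow of such a vector field commutes with every $\phi^\theta$, so $\hat\psi\circ\phi^\theta=\phi^\theta\circ\hat\psi$. For (ii): by the choice of $Y^t$ one has $\tfrac{\di}{\di t}(\psi^t)^*\omega_t=(\psi^t)^*\big(\di(Y^t\lrcorner\omega_t)+\dot\omega_t\big)=(\psi^t)^*\di\big(-\la MW_{\omega_1},\cdot\ra+\la MW_{\omega_1},\cdot\ra\big)=0$, and $(\psi^0)^*\omega_0=\omega_0$, whence $\hat\psi^*(M\omega_1)=\omega_0$; finally, since $\hat\psi$ commutes with the $\phi^\theta$, the pullback $\hat\psi^*$ commutes with the averaging $M$, so that $M\hat\omega_1=M(\hat\psi^*\omega_1)=\hat\psi^*(M\omega_1)=\omega_0$.

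The main obstacle I anticipate is the uniform-in-$t$ tame control of $J_{\omega_t}$ and of its differential and adjoint: because of the weight gap $w^0\leq w^1\leq w^2$ the Neumann series for $(\im+tM\Upsilon_{\omega_1})^{-1}$ must first be summed in $\cL(\spazior{p,w^0})$ and its tame part estimated separately — exactly as in \cite{masp_toda}. Apart from this, and the routine bookkeeping of the shrinking radii through the successive powers of $\mu$, the argument is a transcription of the corresponding step in \cite{masp_toda}.
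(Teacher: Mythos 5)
Your proposal is correct and follows essentially the same route as the paper: a Darboux/Moser deformation from $\omega_0$ to $M\omega_1$ with the vector field $\hat Y^t=-(\im+tM\Upsilon_{\omega_1})^{-1}MW_{\omega_1}$ estimated via a Neumann series in the tame class, the flow handled by Lemma \ref{flussoinA}, and items (i)--(ii) obtained from rotation-equivariance of $MW_{\omega_1}$ and of the inverted operator, so that $M(\hat\psi^*\omega_1)=\hat\psi^*(M\omega_1)=\omega_0$. The only (inessential) difference is that you sum the Neumann series at the operator level for $J_{\omega_t}$, while the paper estimates the series applied directly to $MW_{\omega_1}$; the radius bookkeeping in powers of $\mu$ differs slightly but lands on the same bound $2^5\epsilon_1$ at radius $\mu^5\rho$.
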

\begin{proof}
It follows  as in \cite[Lemma 2.18]{masp_toda}. We apply the  Darboux procedure described at the beginning of this section with $\Omega_0 = \omega_0$ and $\Omega_1 = M\omega_1$. Then $\Omega_t$ is represented by the operator $\hat{E}_{\omega_1}^t := \left(\im + t(ME_{\omega_1} -\im )\right) $. Write equation \eqref{darboux.eq}, with $f \equiv 0$,  in terms of the operators defining the symplectic forms, getting the equation 
$\hat{E}_{\omega_1}^t \hat Y^t = - MW_{\omega_1}$ (see also Remark \ref{rem.me}). This equation can be solved by inverting the operator $\hat{E}_{\omega_1}^t$ by Neumann series:
\begin{equation}
\label{equazione3.1}
\hat Y^t:=-(\im +tM\Upsilon_{\omega_1})^{-1}
MW_{\omega_1}\ .
\end{equation}
In order to estimate it, we expand the r.h.s. of \eqref{equazione3.1} in Neumann series and estimate each piece. First note that for any $G \in \sT_{\mu^3 \rho}$,  by Lemma \ref{FGinA}, one has
\begin{align*}
\norm{\Upsilon_{\omega_1} G}_{\sT_{\mu^3\rho/2}}
 \leq 
\frac{1}{2}\norm{G}_{\sT_{\mu^3 \rho/2}}  \ ;
\end{align*}
then by induction one has that $\norm{[\Upsilon_{\omega_1}]^k MW_{\omega_1}}_{\sT_{\mu^3\rho/2}} \leq 2^{-k}\norm{MW_{\omega_1}}_{\sT_{\mu^3 \rho}} $.
Therefore the Neumann series converges, $\hat{Y}^t$ is of class $\sT_{\mu^4\rho}$ and fulfills
\begin{equation}
\label{sti.hat.1}
\sup_{t \in [0,1]}\norm{\hat Y^t}_{\sT_{\mu^4\rho}}\leq 2 \norm{MW_{\omega_1}}_{\sT_{\mu^3\rho}} \leq 2^4\epsilon_1 \ .
\end{equation}
By Lemma \ref{flussoinA} the vector field $\hat Y^t$ generates a flow $\hat{\psi}^t: \Br^{{p, w^0}}(\mu^5 \rho) \to \spazior{p,w^0} $ such that $\hat{\psi}^t- \uno $ is of class $\sT_{\mu^5\rho}$ and satisfies 
$$
\norm{\hat \psi^t - \uno}_{\sT_{\mu^5\rho}} \leq 
2 \sup_{t \in [0,1]}\norm{\hat Y^t}_{\sT_{\mu^4\rho}}\leq 2^5\epsilon_1 .$$
Therefore the map $\hat \psi \equiv \hat \psi^t\vert_{t=1}$ exists, satisfies the claimed estimate \eqref{hat.sti.3} and furthermore $\hat \psi^* M\omega_1 = \omega_0$.
\newline
Item $(i)$ is a geometric property and it follows exactly as in 
\cite[Lemma 2.18]{masp_toda}.
\end{proof}

The analytic properties of the symplectic form  $\hat \omega_1$ can be studied in the same way as in  Lemma \ref{lem.analy}; we get therefore the following corollary:
\begin{corollary}
\label{cor.e}
Denote by $E_{\hat{\omega}_1}$ the symplectic operator describing $\hat{\omega}_1=\hat\psi ^*\omega_1$. Then 
\begin{itemize}
\item[(i)] $E_{\hat{\omega}_1}= \im +\Upsilon_{\hat{\omega}_1}$, with $\Upsilon_{\hat{\omega}_1}\in
\cN_{\mu^5\rho}^T(B^{p,w^0}\cap \spazior{p,w^1}, \cL(\spazior{p,w^1}, \spazior{p,w^2}))$ and
$\tame{\und{\Upsilon_{\hat{\omega}_1}}}_{\mu^5\rho}\leq 2^7\frac{\epsilon_1}{\mu\rho}.$
\item[(ii)] Define 
$W(\xi):=\int_0^1 \Upsilon_{\hat{\omega}_1}(t\xi)t\xi \;dt$, then $W\in\sT_{\mu^7\rho}$ and
$\norm{W}_{\sT_{\mu^7\rho}}\leq 2^7\epsilon_1 .$
\end{itemize}
Furthermore the 1-form $\alpha_{W}:= \langle W, . \rangle $ satisfies 
 $\di \alpha_{W} = \hat{\omega}_1 - \omega_0.$
\end{corollary}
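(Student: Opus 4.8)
The plan is to transcribe, essentially line by line, the proof of Lemma \ref{lem.analy} to the map $\hat\psi$, using the calculus of tame majorant analytic maps developed in Section \ref{sec:prop}. Since $\hat\omega_1=\hat\psi^*\omega_1$ and, by Lemma \ref{lem.analy}, $\omega_1$ is represented by $E_{\omega_1}=\im+\Upsilon_{\omega_1}$ with $\Upsilon_{\omega_1}\in\cN_{\mu\rho}^T(B^{p,w^0}\cap\spazior{p,w^1},\cL(\spazior{p,w^1},\spazior{p,w^2}))$ and $\tame{\und{\Upsilon_{\omega_1}}}_{\mu\rho}\leq 8\epsilon_1/(\mu\rho)$, the symplectic operator of $\hat\omega_1$ is $E_{\hat\omega_1}(\xi)=\di\hat\psi(\xi)^*\,E_{\omega_1}(\hat\psi(\xi))\,\di\hat\psi(\xi)$. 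Writing $\di\hat\psi=\uno+\di(\hat\psi-\uno)$ and recalling from Lemma \ref{map.phi1} that $\hat\psi-\uno\in\sT_{\mu^5\rho}$ with $\norm{\hat\psi-\uno}_{\sT_{\mu^5\rho}}\leq 2^5\epsilon_1$, one expands
\begin{equation*}
\begin{aligned}
E_{\hat\omega_1}&=\im+\Upsilon_{\hat\omega_1},\\
\Upsilon_{\hat\omega_1}&:=\im\,\di(\hat\psi-\uno)+\di(\hat\psi-\uno)^*\,\im+\di(\hat\psi-\uno)^*\,\im\,\di(\hat\psi-\uno)\\
&\quad+(\uno+\di(\hat\psi-\uno))^*\,(\Upsilon_{\omega_1}\circ\hat\psi)\,(\uno+\di(\hat\psi-\uno)) \ .
\end{aligned}
\end{equation*}

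First I would check that each of the four summands is tame majorant analytic on the ball of radius $\mu^5\rho$. Since $\hat\psi-\uno\in\sT_{\mu^5\rho}$, the very definition of this class gives $\di(\hat\psi-\uno)$ and $\di(\hat\psi-\uno)^*$ in $\cN^T_{\mu^5\rho}$, with $\tame{\cdot}_{\mu^5\rho}$-norm bounded by $\norm{\hat\psi-\uno}_{\sT_{\mu^5\rho}}/(\mu^5\rho)$; the composition $\Upsilon_{\omega_1}\circ\hat\psi$ is controlled by Lemma \ref{FGinN}(iii), using that by \eqref{hat.sti.3} and \eqref{eps.mu} the map $\und{\hat\psi}$ sends $B^{p,w^0}(\mu^5\rho)$ into the ball of radius $\mu\rho$ on which $\Upsilon_{\omega_1}$ is defined, so that $\Upsilon_{\omega_1}\circ\hat\psi\in\cN^T_{\mu^5\rho}$ with $\tame{\und{\Upsilon_{\omega_1}\circ\hat\psi}}_{\mu^5\rho}\leq\tame{\und{\Upsilon_{\omega_1}}}_{\mu\rho}$; and products of operator-valued tame majorant analytic maps stay in the class by Lemma \ref{rem:pro}. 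The smallness \eqref{eps.mu} guarantees that all composition and product hypotheses are met; summing the four contributions and absorbing numerical factors yields item (i). Antisymmetry of $\Upsilon_{\hat\omega_1}$ is then immediate: $\hat\omega_1$ is a real symplectic form, hence $E_{\hat\omega_1}^*=-E_{\hat\omega_1}$, and $\im^*=-\im$ with respect to the real pairing \eqref{scalar_productR}, so $\Upsilon_{\hat\omega_1}^*=-\Upsilon_{\hat\omega_1}$.

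For item (ii) I would argue as follows. The $2$-form $\hat\omega_1=\hat\psi^*\omega_1$ is closed (a pullback of the closed form $\omega_1$), hence $\beta:=\hat\omega_1-\omega_0$ is closed and is represented by the antisymmetric operator $\Upsilon_{\hat\omega_1}$. Applying the linear Poincar\'e homotopy operator to $\beta$ produces the primitive $1$-form $\alpha_W(\xi)(\cdot)=\la W(\xi),\cdot\ra$ with $W(\xi)=\int_0^1\Upsilon_{\hat\omega_1}(t\xi)(t\xi)\,\di t$; differentiating under the integral sign and using $\di\beta=0$ gives $\di\alpha_W=\beta=\hat\omega_1-\omega_0$. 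For the quantitative bound one observes that $W$ is obtained from the operator-valued tame map $\Upsilon_{\hat\omega_1}$ by evaluation at the argument followed by an integration in $t$, so the operator tame estimate \eqref{exp.42} controls $\abs{\und W}_{\mu^5\rho}$ and $\abs{\und W}^T_{\mu^5\rho}$, while two applications of the Cauchy estimate of Remark \ref{cauchy2}, costing two further factors of $\mu$, control $\di W$ and $\di W^*$; this yields $W\in\sT_{\mu^7\rho}$ with $\norm{W}_{\sT_{\mu^7\rho}}\leq 2^7\epsilon_1$, exactly as in \cite[Lemma 2.16]{masp_toda}.

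The hard part is not conceptual but the radius-and-constant bookkeeping: one must check at every step, namely the pullback expansion, differentiation, passage to adjoints, products, the composition with $\hat\psi$, and the homotopy integral, that the smallness \eqref{eps.mu} keeps the hypotheses of Lemmas \ref{rem:pro}, \ref{FGinN}, \ref{FGinA} and of the Cauchy estimates satisfied, and that the domain contracts only to the advertised $\mu^5\rho$ (resp. $\mu^7\rho$) with the claimed powers of $2$. Since Section \ref{sec:prop} has already upgraded every one of these operations to the tame classes $\sT_{w^0,w^1,\rho}^{w^2,N}$ with the same numerical behaviour as for the class $\cA_{w^0,\rho}^{N}$ in \cite{masp_toda}, no genuinely new estimate is needed, and the argument is a routine, if lengthy, transcription.
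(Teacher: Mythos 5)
Your route — the pullback formula $E_{\hat\omega_1}=\di\hat\psi^*\,\bigl(E_{\omega_1}\circ\hat\psi\bigr)\,\di\hat\psi$, the tame calculus of Section \ref{sec:prop}, and the Poincar\'e homotopy for $W$ — is exactly the transcription of Lemma \ref{lem.analy} that the paper invokes, so the strategy matches. But your derivation of item (i) has a genuine quantitative gap. From $\norm{\hat\psi-\uno}_{\sT_{\mu^5\rho}}\leq 2^5\epsilon_1$ the definition \eqref{nor.trho} only yields $\tame{\und{\di(\hat\psi-\uno)}}_{\mu^5\rho}\leq 2^5\epsilon_1/(\mu^5\rho)$ (and the same for the adjoint), so the three summands $\im\,\di(\hat\psi-\uno)$, $\di(\hat\psi-\uno)^*\,\im$, $\di(\hat\psi-\uno)^*\,\im\,\di(\hat\psi-\uno)$ in your expansion are only $O\bigl(\epsilon_1/(\mu^5\rho)\bigr)$, which exceeds the claimed bound $2^7\epsilon_1/(\mu\rho)$ by a factor of order $\mu^{-4}>2^{30}$; hence ``summing the four contributions'' does not give item (i). The missing ingredient is Lemma \ref{map.phi1}(ii): since $\hat\psi^*(M\omega_1)=\omega_0$, i.e. $\di\hat\psi(\xi)^*\bigl(\im+M\Upsilon_{\omega_1}\bigr)(\hat\psi(\xi))\,\di\hat\psi(\xi)=\im$, one has $\hat\omega_1-\omega_0=\hat\psi^*(\omega_1-M\omega_1)$ and therefore
\[
\Upsilon_{\hat\omega_1}(\xi)=\di\hat\psi(\xi)^*\,\bigl[\Upsilon_{\omega_1}-M\Upsilon_{\omega_1}\bigr](\hat\psi(\xi))\,\di\hat\psi(\xi)\ ,
\]
in which the dangerous identity-times-$\di(\hat\psi-\uno)$ terms cancel identically: the middle factor has tame norm $\leq 2\cdot 8\epsilon_1/(\mu\rho)$ (averaging does not increase modulus norms, cf. Remark \ref{rem.me}), the outer factors are $\uno+\di(\hat\psi-\uno)$ with $\tame{\und{\di(\hat\psi-\uno)}}_{\mu^5\rho}\leq 2^5\mu\ll1$ by \eqref{eps.mu}, and the product estimates of Lemma \ref{rem:pro} and Lemma \ref{FGinN}(iii) give the stated $2^7\epsilon_1/(\mu\rho)$ with room to spare. (With your weaker bound one can still salvage item (ii), since $W$ regains the factor $\mu^5\rho$ through the multiplication by $t\xi$, but item (i) as stated remains unproved; the alternative of extracting a sharper bound on $\di(\hat\psi-\uno)$ directly from the flow construction of $\hat\psi$ is possible but considerably more delicate in the radius-dependent normalization \eqref{exp.42}.)

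A second, smaller inaccuracy: $\di W^*$ is not controlled by ``two applications of the Cauchy estimate''. Modulus and tame bounds on a map do not control the modulus of its adjoint — this is precisely why the classes $\cA_{w^0,\rho}^{N}$ and $\sT_{w^0,w^1,\rho}^{w^2,N}$ carry the separate requirement on $\di F^*$. To bound $\und{\di W^*}$ one must use the explicit structure $\di W(\xi)^*\upsilon=\int_0^1 t\bigl[\Upsilon_{\hat\omega_1}(t\xi)^*\upsilon+\bigl(\di\Upsilon_{\hat\omega_1}(t\xi)[\,\cdot\,]\,t\xi\bigr)^*\upsilon\bigr]\,\di t$, the antisymmetry $\Upsilon_{\hat\omega_1}^*=-\Upsilon_{\hat\omega_1}$, and the adjoint-tracking built into the $\sT$-membership of the ingredients, exactly as in \cite[Lemma 2.16]{masp_toda}; Cauchy only covers the $\di W$ part. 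Apart from these two points your argument follows the paper's intended scheme.
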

Finally we will need also some analytic and geometric properties of the map 
\begin{equation}
\label{checkPsi}
\check{\Psi}:= \hat{\psi}^{-1} \circ \Psi.
\end{equation}
The functions $\{\check{\Psi}(\xi)\}_{j \in \Z}$ forms a new set of coordinates in a suitable neighborhood of the origin whose properties are given by the following corollary:
\begin{corollary}
\label{cor.psi.1}
The map $\check{\Psi}:\Br^{{p,w^0}}(\mu^8\rho) \to \spazior{p,w^0}$, defined in \eqref{checkPsi},  satisfies the following properties:
\begin{itemize} 
\item[(i)] $\di \check{\Psi}(0) = \uno$ and $\check\Psi^0:= \check \Psi- \uno \in \sT_{\mu^8 \rho}$ with $ \norm{\check \Psi^0}_{\sT_{\mu^8 \rho}} \leq 2^8 \epsilon_1$.
\item[(ii)]  $\cT^{(0)} = \check{\Psi}(\cT)$, namely  the foliation defined by  $\check{\Psi}$ coincides with the foliation defined by $\Psi$.
\item[(iii)] The functionals $\lbrace \frac{1}{2}\left|\check{\Psi}_j\right|^2\rbrace_{ j\in \Z}$ pairwise commute with respect to the symplectic form $\omega_0$.
\end{itemize}
\end{corollary}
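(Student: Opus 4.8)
The plan is to deduce all three items from the single identity $\check\Psi=\hat\psi^{-1}\circ\Psi$, using the structural lemmas of Section~\ref{sec:prop} for the quantitative statement (i) and the rotation--equivariance of $\hat\psi$ recorded in Lemma~\ref{map.phi1}(i) for the geometric statements (ii) and (iii).

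To prove (i) I would first invert $\hat\psi$. By \eqref{hat.sti.3} one has $\uno-\hat\psi\in\sT_{\mu^5\rho}$ with $\norm{\uno-\hat\psi}_{\sT_{\mu^5\rho}}\leq 2^5\epsilon_1$, and by \eqref{eps.mu} together with \eqref{mu.def} the smallness threshold $2^5\epsilon_1<\mu^5\rho/e$ is met; hence Lemma~\ref{FGinA}(ii) yields $\hat\psi^{-1}=\uno+\hat G$ with $\hat G\in\sT_{\mu^6\rho}$ and $\norm{\hat G}_{\sT_{\mu^6\rho}}\leq 2^6\epsilon_1$. Writing then $\check\Psi^0:=\check\Psi-\uno=\Psi^0+\hat G\circ(\uno+\Psi^0)$, I would restrict $\Psi^0$ to radius $\mu^7\rho$ — which by Remark~\ref{rem:norm.in.T} and \eqref{eps.mu} makes $\norm{\Psi^0}_{\sT_{\mu^7\rho}}<\mu^7\rho/e$ — and apply Lemma~\ref{FGinA}(i) to the composition $\hat G\circ(\uno+\Psi^0)$, obtaining a map of class $\sT_{\mu^7\rho}$ of norm $\leq 2\norm{\hat G}_{\sT_{\mu^6\rho}}\leq 2^7\epsilon_1$. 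Summing the two pieces gives $\check\Psi^0\in\sT_{\mu^7\rho}$, a fortiori $\check\Psi^0\in\sT_{\mu^8\rho}$ with $\norm{\check\Psi^0}_{\sT_{\mu^8\rho}}\leq\epsilon_1+2^7\epsilon_1\leq 2^8\epsilon_1$; and $\di\check\Psi(0)=\uno$ since $\check\Psi^0$ is $O(\zeta^N)$ with $N\geq 2$.

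For (ii) I would use that $\hat\psi$ commutes with every rotation $\phi^\theta$ (Lemma~\ref{map.phi1}(i)), hence so does $\hat\psi^{-1}$, while the leaves of $\cT^{(0)}$ are exactly the torus orbits $\{\phi^\theta(\xi):\theta\in\T^\infty\}$ (the actions $I_j=|\xi_j|^2/2$ are $\phi^\theta$--invariant and jointly separate these orbits). A map commuting with $\{\phi^\theta\}_{\theta\in\T^\infty}$ carries leaves of $\cT^{(0)}$ onto leaves of $\cT^{(0)}$, so $\hat\psi^{-1}$ preserves the foliation $\cT^{(0)}$; together with $\cT^{(0)}=\Psi(\cT)$ (the Remark preceding Theorem~\ref{KP}) this gives $\check\Psi(\cT)=\hat\psi^{-1}\bigl(\Psi(\cT)\bigr)=\hat\psi^{-1}(\cT^{(0)})=\cT^{(0)}$.

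Finally, for (iii) the same equivariance makes the real analytic germ $I_j\circ\hat\psi^{-1}$ invariant under all $\phi^\theta$, so — arguing exactly as in the proof of Corollary~\ref{cor.KP} — its Taylor expansion involves only monomials $\prod_k\xi_k^{\alpha_k}\bar\xi_k^{\alpha_k}$, i.e. $I_j\circ\hat\psi^{-1}=g_j(\dots,I_{-1},I_0,I_1,\dots)$ for a real analytic function $g_j$ of the actions alone. Hence $\tfrac12|\check\Psi_j|^2=I_j\circ\hat\psi^{-1}\circ\Psi=g_j\bigl(\dots,\tfrac12|\Psi_k|^2,\dots\bigr)$, and since $\{\tfrac12|\Psi_k|^2\}_{k\in\Z}$ pairwise commute by hypothesis and $\{\cdot,\cdot\}_{\omega_0}$ is a derivation in each slot, $\{\tfrac12|\check\Psi_j|^2,\tfrac12|\check\Psi_l|^2\}_{\omega_0}=\sum_{a,b}(\partial_ag_j)(\partial_bg_l)\{\tfrac12|\Psi_a|^2,\tfrac12|\Psi_b|^2\}_{\omega_0}=0$. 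The one step requiring care is the radius/constant bookkeeping in (i), which closes precisely because the hypothesis \eqref{eps.mu}, $\epsilon_1<\mu^6\rho$, is built in; (ii) and (iii) are then immediate geometric consequences of Lemma~\ref{map.phi1}(i), exactly as in \cite{masp_toda}.
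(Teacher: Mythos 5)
Your proposal is correct, and it follows essentially the route the paper takes (the paper simply defers to \cite[Corollary 2.20]{masp_toda}): item $(i)$ by inverting $\hat\psi$ via Lemma \ref{FGinA}$(ii)$ and composing via Lemma \ref{FGinA}$(i)$ — with the needed smallness at radius $\mu^7\rho$ correctly secured through Remark \ref{rem:norm.in.T} and \eqref{eps.mu}, which is exactly the point where a naive restriction bound would not suffice — and items $(ii)$–$(iii)$ from the rotation-equivariance of $\hat\psi$ in Lemma \ref{map.phi1}$(i)$, which makes $I_j\circ\hat\psi^{-1}$ a function of the actions alone. The constants you obtain ($129\,\epsilon_1\le 2^8\epsilon_1$ on the radius $\mu^7\rho\ge\mu^8\rho$) are consistent with the statement, so nothing further is needed.
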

\begin{proof}
 As in \cite[Corollary 2.20]{masp_toda}.
\end{proof}

\vspace{1em} The second step consists in transforming $\hat{\omega}_1$
into the symplectic form $\omega_0$ while preserving the functions
$I_l$. In order to perform this transformation, we apply once more the
Darboux procedure with $\Omega_1 = \hat{\omega}_1$ and $\Omega_0 =
\omega_0$. However, we require each leaf of the foliation to be
invariant under the transformation. In practice, we look for a change
of coordinates $\psi$ satisfying
\begin{align}
	&\psi^*\Omega_1=\Omega_0 \ , \label{fi.1}\\
	&I_l(\psi(\xi))=I_l(\xi), \quad \forall \, l \in \Z  \ . \label{fi.2}
\end{align}
In order to fulfill the second equation, we take advantage of the arbitrariness of $f$ in equation \eqref{darboux.eq}. 
It turns out that if $f$ satisfies the set of differential equations given by
 \begin{equation}
\label{fi.3}
\di f(X^0_{-I_l})-(\alpha_1-\alpha_0)(X^0_{-I_l})=0, \quad \forall \, l \in \Z \ ,
\end{equation}
then equation \eqref{fi.2} is satisfied (as it will be proved below).
Here $\alpha_1$ is the potential form of $\hat\omega_1$ and is given by
$\alpha_1 := \alpha_0 + \alpha_W$, 
where $\alpha_W$ is defined in Corollary
\ref{cor.e}.  However,  \eqref{fi.3} is essentially a system of equations for the potential of a 1-form on a torus, so there is a solvability condition. In Lemma \ref{moser} below we will prove that the system \eqref{fi.3} has a solution if the following conditions are satisfied:
\begin{align}
\label{comp.1}
& \di (\alpha_1-\alpha_0)\arrowvert_{T\cT^{(0)}}=0 \ ,
\\
\label{comp.2}
& M(\alpha_1-\alpha_0)\arrowvert_{T\cT^{(0)}}=0\ .
\end{align}

In order to show that these two conditions are fulfilled, we need a preliminary result.
First, for
$\xi\in \spazior{p,w}$ fixed, define the symplectic orthogonal of $T_\xi\cT^{(0)}$ with respect to the form $\omega^t := \omega_0 + t(\hat\omega_1 - \omega_0) $ by
\begin{equation}
\label{fol.1}
(T_\xi)^{\angle_t}:=\left\{h \in\spazior{p,w}:
\omega^t(\xi)(u,h)=0 \ \ \  \forall u \in T_\xi\cT^{(0)}\right\}.
\end{equation}

\begin{lemma}
\label{tv}
(i) For  $\xi \in \Br^{{p,w^0}}(\mu^5 \rho)$, one has $T_\xi\cT^{(0)} = (T_\xi\cT^{(0)})^{\angle_t}$.\\
(ii) The solvability conditions   \eqref{comp.1}, \eqref{comp.2}
are fulfilled. 
\end{lemma}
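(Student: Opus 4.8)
The plan is to reduce both assertions to the involution hypothesis of Theorem~\ref{KP} and to properties of the averaging operator $M$, transporting everything along the intermediate map $\check\Psi=\hat\psi^{-1}\circ\Psi$ of \eqref{checkPsi}. For part~(i) I would first establish that $T_\xi\cT^{(0)}$ is \emph{isotropic} for $\omega^t$ for every $t\in[0,1]$, which already gives $T_\xi\cT^{(0)}\subseteq(T_\xi\cT^{(0)})^{\angle_t}$. Since $\omega^t=(1-t)\omega_0+t\,\hat\omega_1$ it suffices to treat the two endpoints. For $\omega_0$: by \eqref{X_I_l} the subspace $T_\xi\cT^{(0)}$ is the closed span of the Hamiltonian vector fields $X^0_{-I_l}(\xi)$, and $\omega_0(\xi)(X^0_{-I_j},X^0_{-I_k})=\{I_j,I_k\}_{\omega_0}=0$. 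For $\hat\omega_1$: one has $\hat\omega_1=\hat\psi^*\omega_1=\hat\psi^*(\Psi^{-1})^*\omega_0=(\check\Psi^{-1})^*\omega_0$, and by Corollary~\ref{cor.psi.1}(ii) the map $\check\Psi^{-1}$ sends the leaf of $\cT^{(0)}$ through $\xi$ into a leaf of $\cT$; hence $\di\check\Psi^{-1}(\xi)$ maps $T_\xi\cT^{(0)}$ into $T_{\check\Psi^{-1}(\xi)}\cT$, on which $\omega_0$ vanishes because the functionals $\tfrac12|\Psi_j|^2$, being a near-identity deformation of the $I_j$, form a complete family in $\omega_0$-involution (the involution being the hypothesis of Theorem~\ref{KP}), so that the tangent space of a leaf of $\cT$ coincides with the closed span of the pairwise $\omega_0$-orthogonal vector fields $X^{\omega_0}_{\tfrac12|\Psi_j|^2}$. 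Thus $\hat\omega_1$, and therefore $\omega^t$, vanishes on $T_\xi\cT^{(0)}\times T_\xi\cT^{(0)}$.

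The reverse inclusion $(T_\xi\cT^{(0)})^{\angle_t}\subseteq T_\xi\cT^{(0)}$ is the substantive point. On the dense set of $\xi$ with all components nonzero one has $T_\xi\cT^{(0)}=\bigcap_j\ker\di I_j(\xi)$, so $(T_\xi\cT^{(0)})^{\angle_t}$ is the closed span of the $\omega^t$-Hamiltonian vector fields of the $I_j$; representing $\omega^t$ by the operator $E_{\omega^t}=\im+t\,\Upsilon_{\hat\omega_1}$, which by Corollary~\ref{cor.e} and \eqref{eps.mu} is an invertible small perturbation of $\im$, and decomposing any $h\in(T_\xi\cT^{(0)})^{\angle_t}$ along $T_\xi\cT^{(0)}\oplus(T_\xi\cT^{(0)})^{\perp}$, one shows --- using the isotropy already established, the nondegeneracy of the $\omega_0$-pairing between the two summands, and the smallness of $\Upsilon_{\hat\omega_1}$ --- that the component of $h$ in $(T_\xi\cT^{(0)})^{\perp}$ must vanish; the identity then extends to all $\xi\in\Br^{p,w^0}(\mu^5\rho)$ by continuity. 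I expect this perturbative step, i.e.\ quantitatively controlling that the symplectic orthogonal of $T_\xi\cT^{(0)}$ does not enlarge when $\omega_0$ is replaced by the deformed form $\omega^t$ in the merely Banach (non-Hilbert) setting, to be the main obstacle; it is carried out exactly as in \cite{masp_toda,kuksinperelman}.

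For part~(ii), recall from Corollary~\ref{cor.e} (and the discussion preceding the statement) that the potential $1$-form of $\hat\omega_1$ is $\alpha_1=\alpha_0+\alpha_W$ with $\di\alpha_W=\hat\omega_1-\omega_0$, so $\alpha_1-\alpha_0=\alpha_W$. Then \eqref{comp.1} is immediate: $\di(\alpha_1-\alpha_0)|_{T\cT^{(0)}}=(\hat\omega_1-\omega_0)|_{T\cT^{(0)}}=0$ by the isotropy above. For \eqref{comp.2}, use that $M\hat\omega_1=\omega_0$ by Lemma~\ref{map.phi1}(ii): since $M$ commutes with $\di$ (Remark~\ref{rem.M}), $\di(M\alpha_W)=M(\hat\omega_1-\omega_0)=M\hat\omega_1-\omega_0=0$, so $M\alpha_W$ is a closed $1$-form and hence exact on the ball $\Br^{p,w^0}(\cdot)$ by the Poincar\'e lemma. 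Consequently its integral over each loop $t\in[0,2\pi]\mapsto\phi^t_l(\xi)$ --- which remains in the ball, the $\phi^t_l$ being isometries, and is closed since $\phi^{2\pi}_l=\uno$ --- vanishes; on the other hand $M\alpha_W$ is invariant under the rotations $\phi^\theta$ (Remark~\ref{rem.M}) and $X^0_{-I_l}$ is invariant under its own flow, so that integral equals $2\pi\,M\alpha_W(\xi)[X^0_{-I_l}(\xi)]$. Hence $M\alpha_W(\xi)[X^0_{-I_l}(\xi)]=0$ for all $l$, and since the $X^0_{-I_l}(\xi)$ span $T_\xi\cT^{(0)}$ we obtain $M(\alpha_1-\alpha_0)|_{T_\xi\cT^{(0)}}=0$, which is \eqref{comp.2}.
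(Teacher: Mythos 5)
Your overall route coincides with the one the paper (via the cited \cite[Lemmas 2.21--2.22]{masp_toda}) follows: transport along $\check\Psi=\hat\psi^{-1}\circ\Psi$, deduce isotropy from the involution hypothesis, and get the second solvability condition from $M\hat\omega_1=\omega_0$ plus rotation invariance. Part (ii) as you write it is correct once the isotropy statement of part (i) is granted: $\alpha_1-\alpha_0=\alpha_W$, the closedness $\di(M\alpha_W)=M\hat\omega_1-\omega_0=0$, and the loop-integral argument along $t\mapsto\phi^t_l(\xi)$ combined with the $\phi^\theta$-invariance of $M\alpha_W$ and the fact that the $X^0_{-I_l}(\xi)$ have dense span in $T_\xi\cT^{(0)}$ give exactly \eqref{comp.2}; this is the standard argument.

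The gap is in part (i), whose two substantive points are asserted rather than proved. First, your isotropy of the leaves of $\cT$ for $\omega_0$ rests on the claim $T_u\cT=\overline{\mathrm{span}}\{X_{|\Psi_j|^2/2}(u)\}$. The inclusion of the span into the tangent space follows from the involution, but the converse is precisely the nontrivial content: one has $T_u\cT=\di\Psi(u)^{-1}\,T_{\Psi(u)}\cT^{(0)}$ while $\overline{\mathrm{span}}\{X_{|\Psi_j|^2/2}(u)\}=J\,\di\Psi(u)^{*}E\,T_{\Psi(u)}\cT^{(0)}$, and ``$\Psi$ is a near-identity deformation of the $I_j$'' is not by itself a proof that these two near-identity images of the same closed subspace coincide; one needs an argument (e.g.\ that a near-identity bounded operator mapping a closed subspace into itself maps it onto itself), and this is exactly where the control of $\di\Psi^{*}$ built into the classes $\cA^N_{w^0,\rho}$, $\sT^{w^2,N}_{w^0,w^1,\rho}$ enters — your write-up never uses it. Second, for the reverse inclusion $(T_\xi\cT^{(0)})^{\angle_t}\subseteq T_\xi\cT^{(0)}$ your sketch would not close as stated: testing $h=h_T+h_N$ against $u\in T_\xi\cT^{(0)}$ and invoking the smallness of $\Upsilon_{\hat\omega_1}$ only bounds a dual-type quantity of $h_N$ (a supremum of pairings against unit tangent vectors), which in the non-Hilbert spaces $\spazior{p,w^0}$ is not comparable with $\norm{h_N}_{p,w^0}$, so there is nothing to absorb; and an equality of closed subspaces cannot be ``extended by continuity'' from the dense set where all $\xi_j\neq0$ — at a point with $\xi_l=0$ even the unperturbed case $t=0$ is delicate, since the symplectic orthogonal acquires the whole $l$-th plane while the geometric tangent space loses it, so the extension step is not merely unjustified but false in the form you state it. These are precisely the steps the paper outsources to \cite[Lemma 2.21]{masp_toda}; since the substitute argument you sketch for them would fail, the core of part (i) is still missing from your proposal.
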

\begin{proof} 
It follows with the same arguments of  \cite[Lemma 2.21, Lemma 2.22]{masp_toda}.
\end{proof}
We show now that the system  \eqref{fi.3} can be solved and its solution has good analytic properties:
\begin{lemma}
\label{moser}
 If conditions \eqref{comp.1} and \eqref{comp.2} are fulfilled, then equation \eqref{fi.3} has a solution $f$. Moreover, denoting $h_j:=(\alpha_1-\alpha_0)(X^0_{-I_j})$, the solution $f$  is given by the explicit formula $f(\xi)=\sum_{k= 0}^\infty f_k(\xi) $, 
\begin{equation}
\begin{aligned}
f_0(\xi) = L_0 h_0  \ , \qquad 
& f_{2i-1}(\xi) = M_0 \prod_{\ell =1}^{i-1} (M_\ell M_{-\ell}) L_i h_i  \ , \qquad  f_{2i}(\xi) = M_0 \prod_{\ell =1}^{i-1} (M_\ell M_{-\ell}) M_{i} L_{-i}h_{-i}  \ ,
\end{aligned}
\label{f.formula}
\end{equation}
where
$$
L_jg(\xi)=\frac{1}{2\pi}\int_0^{2\pi}{tg(\phi^t_j(\xi))dt} \ , \qquad \forall j \in \Z \ .
$$
Finally $f \in \cN_{\mu^7\rho}(\spazior{p,w^1}, \C)$, 
$\nabla f\in \cN_{\mu^7\rho}^T(B^{p,w^0}\cap\spazior{p,w^1}, \spazior{p,w^2})$, $f = O(\xi^{N+1})$ and
\begin{equation}
\label{sti.nabla.f}
\abs{\und{f}}_{\mu^7\rho} \leq 2^{10}  \epsilon_1 \mu^7 \rho, \qquad 
\tame{\und{\nabla f}}_{\mu^7\rho}\leq 2^{11} \epsilon_1\ .
\end{equation}
\end{lemma}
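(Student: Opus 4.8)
The plan is to solve the infinite system \eqref{fi.3} one Fourier angle at a time, using that $X^0_{-I_l}$ generates the rotation $\phi_l^t$ of \eqref{X_I_l}, so that $\di f(X^0_{-I_l})(\xi)=\tfrac{d}{dt}\big|_{t=0}f(\phi_l^t\xi)=:\partial_{\theta_l}f(\xi)$ and \eqref{fi.3} becomes $\partial_{\theta_l}f=h_l$ for every $l\in\Z$. Integrating by parts in $t$ one gets the key identity $\partial_{\theta_j}(L_jg)=L_j\partial_{\theta_j}g=g-M_jg$, where $M_jg(\xi)=\tfrac{1}{2\pi}\int_0^{2\pi}g(\phi_j^t\xi)\,dt$ is the one-angle average; together with the commutation relations $M_j\partial_{\theta_l}=\partial_{\theta_l}M_j$ and $M_jM_l=M_lM_j$ for $j\ne l$, $M_j^2=M_j$, $M_j\partial_{\theta_j}=0$ (all immediate from $\phi_j^s\phi_l^t=\phi_l^t\phi_j^s$), these are the only ingredients of the construction.

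Enumerating $\Z$ as $0,1,-1,2,-2,\dots$, I would define $f=\sum_kf_k$ by \eqref{f.formula}; the prefactor $M_0\prod_{\ell=1}^{i-1}(M_\ell M_{-\ell})$ (with an extra $M_i$ in the even term) is exactly the projector onto functions not depending on the angles already treated, so $\partial_{\theta_m}f_k=0$ for every $m$ preceding the index handled at step $k$, i.e. earlier angles are never reopened. A single induction on the position $k$ then shows that $f^{(k)}:=\sum_{m\le k}f_m$ satisfies $\partial_{\theta_l}f^{(k)}=h_l$ for all $l$ up to position $k$: the closedness relation $\partial_{\theta_l}h_m=\partial_{\theta_m}h_l$ (equivalent to \eqref{comp.1}) lets one commute $\partial_{\theta_l}$ through the averaging prefactors and rewrite the defect $h_l-\partial_{\theta_l}f^{(k-1)}$ as the average of $h_l$ over the treated angles, which $f_k$ cancels up to a single leftover term of the form $M_0\prod(M_\ell M_{-\ell})h_l$; this leftover is invariant under every rotation $\phi^\theta$ (again by \eqref{comp.1}), hence equals its total average, which vanishes by $Mh_l=0$, i.e. by \eqref{comp.2}. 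Letting $k\to\infty$ yields a formal solution of \eqref{fi.3}, and it is a genuine power series since $h_l=\la W,X^0_{-I_l}\ra=O(\xi^{N+1})$ (because $W=O(\xi^N)$ by Corollary \ref{cor.e}), so $f=O(\xi^{N+1})$.

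For the quantitative statement \eqref{sti.nabla.f} I would note that $h_l$ is, up to the linear factor $\im\xi_l$, the $l$-th component of the vector field $W$, so $\sum_{l\in\Z}$ of the modulus norm of $h_l$, together with the corresponding tame and adjoint norms entering \eqref{nor.trho}, are all controlled by $\norm{W}_{\sT_{\mu^7\rho}}\le2^7\epsilon_1$ (Corollary \ref{cor.e}). The operators $L_j$ and $M_j$ (and their versions on gradients, obtained as in \eqref{MA} using $\nabla(g\circ\phi_j^t)=\phi_j^{-t}\,(\nabla g)\circ\phi_j^t$ and Remark \ref{rem.M}) are convex averages over $\phi^\theta$, hence do not increase the modulus norms $\abs{\und{\cdot}}_{\rho}$, $\abs{\und{\cdot}}_{\rho}^T$ and the adjoint ones — one checks this on each homogeneous polynomial using that $\phi^\theta$ is linear, isometric and rotation invariant (cf. Remark \ref{ave.J}) — the only gain being the harmless factor $\tfrac{1}{2\pi}\int_0^{2\pi}t\,dt=\pi$ from each $L_j$. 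Obtaining $\nabla f$ directly by termwise differentiation of \eqref{f.formula}, which preserves the radius, then explains why there is no further radius loss, and summing over $l\in\Z$ and over $k$ produces $\abs{\und f}_{\mu^7\rho}\le2^{10}\epsilon_1\mu^7\rho$ and $\tame{\und{\nabla f}}_{\mu^7\rho}\le2^{11}\epsilon_1$. The main difficulty is the inductive bookkeeping of the previous paragraph — checking that closing off a new angle neither disturbs the ones already fixed nor reintroduces a nonzero average — for which \eqref{comp.1} and \eqref{comp.2} are used in an essential and slightly delicate way; once this is in place, the estimates are routine given the calculus of Section \ref{sec:prop}.
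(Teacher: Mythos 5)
Your proposal is correct and follows essentially the same route as the paper: the same angle calculus $\partial_{\theta_j}L_j g = g - M_j g$ with \eqref{comp.1} forcing $M_jh_j$ to be angle-independent and \eqref{comp.2} making it vanish, the same telescoping verification of \eqref{f.formula}, and the same estimate strategy based on $h_l = {\rm Re}(-\im W_l\bar\xi_l)$, the fact that the rotation averages do not increase modulus norms, and termwise differentiation of the explicit formula so that no radius is lost. The only remark is that your estimate paragraph is schematic where the paper is explicit: the sum over $l$ of the terms $\partial_{\xi_k}W_l\,\bar\xi_l$ is precisely $[\und{\di W}^*|\xi|]_k$, which is the one step where the adjoint norm $\tame{\und{\di W^*}}$ inside $\norm{W}_{\sT_{\mu^7\rho}}$ is indispensable — you acknowledge this norm but do not display the mechanism.
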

\begin{proof}
Denote by $\theta_j$ the time along the flow generated by $X^0_{-I_j}$,
then one has
$\di g(X^0_{-I_j}) = \frac{\partial g}{\partial \theta_j}\ ,$
so that the
equations to be solved take the form 
\begin{equation}
\label{eqf}
\frac{\partial f}{\partial \theta_j}=h_j, \qquad \forall j\in \Z \ . 
\end{equation}
Clearly $\frac{\partial}{\partial\theta_j}M_jh_j=0 $, and by \eqref{comp.1} it follows that 
$$ 
\frac{\partial}{\partial\theta_l}M_jh_j = M_j \frac{\partial h_j}{\partial\theta_l} 
=M_j \frac{\partial h_l}{\partial\theta_j} = \frac{\partial}{\partial\theta_j}M_jh_l =0, \qquad \forall l, j \in \Z \ , $$
which shows that $M_jh_j$ is independent of all the $\theta$'s, thus $M_j h_j = M h_j$. Furthermore,  by \eqref{comp.2} one has $ M h_j = 0, \,$ $\forall \, j \in \Z$.
Now, using that $\frac{\partial}{\partial\theta_j}L_j g = L_j \frac{\partial}{\partial\theta_j} g = g - M_j g $,  one verifies that  $f_i$ defined in \eqref{f.formula} satisfies
\begin{align*}
&\frac{\partial f_0}{\partial \theta_l}= h_l - M_0 h_l \ , \\
&\frac{\partial f_{2i-1}}{\partial \theta_l}=
\begin{cases}
0 & \text{if}\ |l|< i  
\\
M_0 (\prod_{\ell = 1}^{i-1} M_{\ell} M_{-\ell})  h_l & \text{if}\ l=i 
\\
M_0 (\prod_{\ell = 1}^{i-1} M_{\ell} M_{-\ell}) h_l-M_0 (\prod_{\ell = 1}^{i-1} M_{\ell} M_{-\ell})M_i h_l  & {\rm otherwise}
\end{cases} \ , \\
&\frac{\partial f_{2i}}{\partial \theta_l}=
\begin{cases}
0 & \text{if}\ |l|< i \mbox{ or } l=i 
\\
M_0 (\prod_{\ell = 1}^{i-1} M_{\ell} M_{-\ell}) M_i  h_{l} & \text{if}\ l=-i 
\\
M_0 \prod_{\ell =1}^{i-1} (M_\ell M_{-\ell}) M_{i} h_{l} -
M_0 \prod_{\ell =1}^{i} (M_\ell M_{-\ell})  h_{l}   & \text{otherwise}
\end{cases}\ .
\end{align*}
 Thus the series $f(\xi)=\sum_{i \geq 0} f_i(\xi)$, if convergent,  satisfies \eqref{eqf}. \\
We prove now the convergence of the series for $f$ and $\nabla f$.  First we define, for $\theta \in \T^\infty $,
$$\Theta_{2i-1}^\theta:=\phi_0^{\theta_0}\prod_{\ell=1}^{i-1}(\phi_\ell^{\theta_\ell}\phi_{-\ell}^{\theta_{-\ell}}) \phi_i^{\theta_i} \qquad \ , \qquad 
\Theta_{2i}^\theta:=\phi_0^{\theta_0}\prod_{\ell=1}^{i}(\phi_\ell^{\theta_\ell}\phi_{-\ell}^{\theta_{-\ell}})  
\qquad  \forall\, i \geq 0 \ ,$$
 then by \eqref{f.formula} one has 
\begin{align}
\label{f.g1}
f_{2i-1}(\xi)=\int_{\T^{2i}}\theta_i  h_i(\Theta_{2i-1}^\theta \xi)\;d\theta^{2i} \ , \qquad f_{2i}(\xi)=\int_{\T^{2i+1}}\theta_{-i}  h_{-i}(\Theta_{2i}^\theta \xi)\;d\theta^{2i+1} 
\\
\label{f.g2}
\nabla f_{2i-1}(\xi)=\int_{\T^{2i}}\Theta_{2i-1}^{-\theta} \theta_i\nabla
h_i(\Theta_{2i-1}^\theta \xi)\;d\theta^{2i}\ ,
\qquad
\nabla f_{2i}(\xi)=\int_{\T^{2i+1}}\Theta_{2i}^{-\theta} \theta_{-i}\nabla
h_{-i}(\Theta_{2i}^\theta \xi)\;d\theta^{2i+1}
\end{align}
where $\T^{2i}$ (respectively $\T^{2i+1}$) is the $2i$-dimensional torus ($2i+1$-dimensional) and the measure 
$d\theta^{2i} := \frac{d\theta_0}{2\pi}
(\prod_{\ell=1}^{i-1} \frac{d\theta_\ell}{2\pi} \frac{d\theta_{-\ell}}{2\pi}) \frac{d\theta_i}{2\pi}$ 
($d\theta^{2i+1} = \frac{d\theta_0}{2\pi}(\prod_{\ell=1}^{i} \frac{d\theta_\ell}{2\pi} \frac{d\theta_{-\ell}}{2\pi}) $).
Now, using that 
$$
h_j(\xi)=\langle W(\xi), X^0_{-I_j}(\xi) \rangle = Re(-\im W_j(\xi)\bar \xi_j) \ ,  \qquad \forall \, j \in \Z
$$ 
 one gets that $\forall i \geq 0$
$$
\und{f_{2i-1}}(|\xi|) \leq 2\pi\, \underline{h_i}(|\xi|) \leq 2\pi\, \underline{W_i}(|\xi|) |\xi_i| \  , \qquad \und{f_{2i}}(|\xi|) \leq 2\pi\, \underline{h_{-i}}(|\xi|) \leq 2\pi\, \underline{W_{-i}}(|\xi|) |\xi_{-i}|  \ , \qquad \forall i \geq 0
$$
therefore for every $1 \leq p \leq 2$ one has 
$$
\und{f}(|\xi|) \leq \sum_{k=0}^\infty \und{f_k}(|\xi|) 
\leq 2\pi \, \norm{\und{W}(|\xi|)}_{2}  \norm{\xi}_{2}
\leq 2\pi \, \norm{\und{W}(|\xi|)}_{p,w^0}  \norm{\xi}_{p,w^0}
$$
 and it follows that $\abs{\und{f}}_{\mu^7\rho} \leq 2\pi\, \abs{\und{W}}_{\mu^7\rho} \mu^7 \rho.$
This  proves the convergence of the series defining  $f$.\\
Consider now the gradient of $h_i$, whose $k^{th}$ component is given by
$$
\left[\nabla h_i(\xi)\right]_k=Re\left(-\im \frac{\partial W_i(\xi)}{\partial
  \xi_k}\bar \xi_i \right)+ \delta_{i,k} \, Re\, (-\im W_i(\xi)) \ .
$$
Inserting the formula displayed above in \eqref{f.g2} we get that $\nabla f_i$ is the sum of two
terms. We begin by estimating the second one, which we denote by $(\nabla
f_i)^{(2)}$. 
The $k^{th}$ component ($k \in \Z)$ of 
$(\nabla f)^{(2)} := \sum_l (\nabla f_l )^{(2)}$ is given by
\begin{equation}
\label{fg.3}
\left[\left( \nabla f(\xi)\right)^{(2)} \right]_k=\left[\sum_l(\nabla f_l(\xi))^{(2)}\right]_k =
\begin{cases}
\displaystyle{\int_{\T^{2k}}\Theta_{2k-1}^{-\theta}
\theta_k \, Re\, (-\im W_k(\Theta_{2k-1}^\theta \xi))\; d\theta^{2k}}\ , & k > 0 \\
 \displaystyle{\int_{\T^{2|k|+1}}\Theta_{2|k|}^{-\theta}
\theta_k \, Re\, (-\im W_k(\Theta_k^\theta \xi))\; d\theta^{2|k|+1}}\ , & k \leq 0 
\end{cases}
\end{equation}
thus, for any $\xi \in \Br^{{p,w^0}}(\mu^7\rho)$ one has
$\left[(\und{\nabla f}(|\xi|))^{(2)}\right]_k  \leq 2\pi\,
\und{W_k}(|\xi|)\ ,$ $\forall k \in \Z$,
and therefore  
$$
\tame{\und{\left(\nabla f\right)^{(2)}}}_{\mu^7 \rho}\leq 2\pi\, \tame{ \und{W}}_{\mu^7\rho}\leq \pi 2^8 \epsilon_1.
$$
We come to the other term, which we denote by $\left(\nabla f_i \right)^{(1)}$. Its $k^{th}$ component is given by
\begin{equation}
\label{fg.4}
\begin{aligned}
&  \left[ (\nabla f_{2i-1}(\xi))^{(1)}\right]_k= 
 \int_{\T^{2i}}\Theta_{2i-1}^{-\theta}
\theta_i Re\left(-\im \frac{\partial W_i}{\partial \xi_k}(\Theta_{2i-1}^\theta
\xi) \overline{\phi^{\theta_i}_i\xi_i}\right)   d\theta^{2i}  \ , \\
&  \left[ (\nabla f_{2i}(\xi))^{(1)}\right]_k= 
 \int_{\T^{2i}}\Theta_{2i}^{-\theta}
\theta_{-i} Re\left(-\im \frac{\partial W_{-i}}{\partial \xi_k}(\Theta_{2i}^\theta
\xi) \overline{\phi^{\theta_{-i}}_{-i} \xi_{-i}}\right)   d\theta^{2i+1}  \ .
\end{aligned}
\end{equation}
Then 
\begin{align*}
& \left[\und{\nabla f_{2i-1}}(|\xi|) \right]_k\leq 2\pi \und{\frac{\partial W_i}{\partial \xi_k}}(|\xi|) |\xi_i| = 2\pi  [\und{ \di W}(|\xi|)]^i_k|\xi_i|  
\ , \\
& \left[\und{\nabla f_{2i}}(|\xi|)\right]_k 
\leq 2\pi 
\und{\frac{\partial W_{-i}}{\partial \xi_k}}(|\xi|) |\xi_{-i}| 
= 2\pi  [\und{ \di W}(|\xi|)]^{-i}_k|\xi_{-i}|  \ .
\end{align*}
 It follows that the $k^{th}$ component of the function $(\nabla f)^{(1)} := \sum_{i \geq 0} (\nabla f_i)^{(1)}$ satisfies
$$
\left[ (\und{\nabla f}(|\xi|))^{(1)}\right]_k 
\leq 
\left[
   \sum_{l\geq 0}(\und{\nabla f_l}(|\xi|))^{(1)} \right]_k 
   \leq 2 \pi \,
 \sum_{l \in \Z} [\und{\di W}(|\xi|)]^l_k|\xi_l| 
  = 2\pi [\und{\di W}(|\xi|)^* |\xi| ]_k  \ .
 $$ 
 Therefore 
 $ \tame{\und{(\nabla f)^{(1)}}}_{\mu^7\rho}\leq 2\pi\, 
 \norm{W}_{\sT_{\mu^7\rho}} \leq \pi 2^8 \epsilon_1$. This is the step at
 which the control of the norm of the modulus $\und{\di W^*}$ of $\di W^*$ is
 needed.  Thus the claimed estimate for $\nabla f$ follows.
\end{proof}

We can finally apply the Darboux procedure in order to construct an
analytic change of coordinates $\psi $ which satisfies \eqref{fi.1}
and \eqref{fi.2}.
\begin{lemma}
\label{varphi.1}
There exists a map $\psi: \Br^{{p,w^0}}(\mu^9\rho) \to \spazior{p, w^0}$ which satisfies 
\eqref{fi.1}. Moreover $\psi - \uno \in \cN_{\mu^9\rho}^T(B^{p,w^0}\cap \spazior{p,w^1}, \, \spazior{p,w^2})$, $\psi-\uno = O(\xi^N)$ and  
\begin{equation}
\label{eq.varphi.1}
\tame{\underline{\psi-\uno}}_{\mu^9\rho}\leq
2^{14} \epsilon_1\ .
\end{equation}
\end{lemma}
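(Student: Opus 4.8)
The plan is to run the Darboux procedure recalled at the beginning of this section one more time, now with $\Omega_0=\omega_0$, $\Omega_1=\hat\omega_1=\hat\psi^*\omega_1$ and $\Omega_t:=\omega_0+t(\hat\omega_1-\omega_0)$, taking as potential $1$-forms $\alpha_0$ and $\alpha_1:=\alpha_0+\alpha_W$, with $\alpha_W=\la W,\cdot\ra$ and $W$ as in Corollary \ref{cor.e}, and choosing for the free function $f$ in \eqref{darboux.eq} the one constructed in Lemma \ref{moser}; this is legitimate because, by Lemma \ref{tv}(ii), the solvability conditions \eqref{comp.1}, \eqref{comp.2} required in Lemma \ref{moser} hold. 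Rewriting \eqref{darboux.eq} in terms of the operators representing the forms and using $E_{\hat\omega_1}=\im+\Upsilon_{\hat\omega_1}$ from Corollary \ref{cor.e}, the sought time-dependent vector field $Y^t$ is the solution of
\begin{equation*}
(\im+t\,\Upsilon_{\hat\omega_1})\,Y^t=\nabla f-W\ .
\end{equation*}

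First I would solve this equation by inverting $\im+t\,\Upsilon_{\hat\omega_1}$ by a Neumann series, exactly as done for $\hat Y^t$ in the proof of Lemma \ref{map.phi1}: Corollary \ref{cor.e}(i) bounds $\tame{\und{\Upsilon_{\hat\omega_1}}}$, which by \eqref{th.1}--\eqref{eps.mu} acts as a contraction factor when composed with maps in the tame class (Lemmas \ref{rem:pro} and \ref{FGinA}); hence the series converges and, using \eqref{sti.nabla.f} and Corollary \ref{cor.e}(ii), $Y^t\in\sT_{\mu^7\rho}$ with $\sup_{t\in[0,1]}\norm{Y^t}_{\sT_{\mu^7\rho}}\lesssim\epsilon_1$, and $Y^t=O(\xi^N)$ since $\nabla f=O(\xi^N)$ and $W=O(\xi^N)$. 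Then Remark \ref{rem:norm.in.T} gives $\sup_t\norm{Y^t}_{\sT_{\mu^8\rho}}\leq 2\mu^N\sup_t\norm{Y^t}_{\sT_{\mu^7\rho}}\lesssim\mu^N\epsilon_1$, so the smallness threshold $\sup_t\norm{Y^t}_{\sT_{\mu^8\rho}}\leq\mu^8\rho/e$ of Lemma \ref{flussoinA} follows from \eqref{th.1} (equivalently \eqref{eps.mu}, which in the worst case $N=2$ reads exactly $\epsilon_1<\mu^6\rho$). Lemma \ref{flussoinA} then produces a flow $\psi^t:\Br^{{p,w^0}}(\mu^9\rho)\to\spazior{p,w^0}$, defined for all $t\in[0,1]$, with $\psi^t-\uno\in\sT_{\mu^9\rho}$, $\psi^t-\uno=O(\xi^N)$ and $\norm{\psi^t-\uno}_{\sT_{\mu^9\rho}}\leq 2\sup_t\norm{Y^t}_{\sT_{\mu^8\rho}}$, which yields \eqref{eq.varphi.1} after tracking the numerical constants. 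Setting $\psi:=\psi^1$, the identity $\psi^*\hat\omega_1=\omega_0$, i.e.\ \eqref{fi.1}, is immediate from the Darboux computation recalled at the start of the section, $Y^t$ being by construction the solution of \eqref{darboux.eq} for $\Omega_t$ as above.

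It remains to check \eqref{fi.2}, which is the whole reason $f$ was chosen to solve the system \eqref{fi.3} rather than taken to be $0$. Contracting \eqref{darboux.eq} with the Hamiltonian vector fields $X^0_{-I_l}$ of \eqref{X_I_l} and invoking \eqref{fi.3}, which states precisely that $\di f(X^0_{-I_l})=(\alpha_1-\alpha_0)(X^0_{-I_l})$, one obtains $\Omega_t(\xi)(Y^t,X^0_{-I_l})=0$ for every $l\in\Z$; since the $X^0_{-I_l}$ span $T_\xi\cT^{(0)}$, this says $Y^t(\xi)\in(T_\xi\cT^{(0)})^{\angle_t}$, which by Lemma \ref{tv}(i) coincides with $T_\xi\cT^{(0)}$, and therefore $\di I_l(Y^t)=0$ for all $l$. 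Hence $\tfrac{d}{dt}I_l(\psi^t(\xi))=\di I_l(\psi^t(\xi))\bigl[Y^t(\psi^t(\xi))\bigr]=0$, so $I_l\circ\psi^t=I_l$, which is \eqref{fi.2} for $t=1$. I expect the only genuinely delicate point to be the bookkeeping of the successive contractions of the radius by powers of $\mu$ and of the accumulating powers of $2$ in the constants, so as to land exactly at radius $\mu^9\rho$ with the constant $2^{14}$ appearing in \eqref{eq.varphi.1}; the structural part of the argument is identical to the corresponding one in \cite{masp_toda}, once the tame-class estimates of Section \ref{sec:prop} are available.
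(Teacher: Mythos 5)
Your argument is essentially the paper's own proof: the same Darboux procedure with the $f$ of Lemma \ref{moser}, the same vector field $Y^t=(\im+t\,\Upsilon_{\hat{\omega}_1})^{-1}(\nabla f-W)$ inverted by Neumann series using Corollary \ref{cor.e} and \eqref{sti.nabla.f} (so that $\sup_t\tame{\und{Y^t}}\lesssim 2^{13}\epsilon_1$ at radius $\mu^8\rho$), and the flow of Lemma \ref{flussoinA} with $\psi:=\psi^t|_{t=1}$, so it is correct at the same level of constant bookkeeping as the paper. Your second paragraph, the invariance of the $I_l$'s via \eqref{fi.3} and Lemma \ref{tv}, is not needed for this statement — it is precisely the content of the separate Lemma \ref{lem.f} — but the argument you give for it is the standard (and correct) one.
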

\begin{proof}
As in \cite[Lemma 2.24]{masp_toda}.
 As anticipated just after Corollary \ref{cor.psi.1}, we apply the Darboux procedure with $\Omega_0 = \omega_0$, $\Omega_1 = \hat{\omega}_1$ and $f$ solution of \eqref{fi.3}. Then equation \eqref{darboux.eq} takes the form
\begin{equation}
\label{Yt}
Y^t=( \im +t\Upsilon_{\hat{\omega}_1})^{-1}(\nabla f-W),
\end{equation}
where $\Upsilon_{\hat{\omega}_1}$ and $W$ are defined in Corollary \ref{cor.e}.  
By Lemma \ref{moser} and Corollary \ref{cor.e}, the vector field $Y^t$ is of class $\cN_{\mu^8\rho}^T(B^{p,w^0}\cap\spazior{p,w^1}, \spazior{p,w^2})$, $Y^t(\xi) = O(\xi^N)$ 
 and
$$
\sup_{t \in [0,1]}\tame{\und{Y^t}}_{\mu^8\rho} < 2(2^{11}\epsilon_1 + 2^7\epsilon_1) < 2^{13} \epsilon_1.
$$
Thus  $Y^t$ generates a flow $\psi^t: \Br^{{p,w^0}}(\mu^9\rho)\to \spazior{p,w^0}$, defined for every $t \in [0,1]$,  which satisfies (cf. Lemma \ref{flussoinA})
$$
\tame{\und{\psi^t-\uno}}_{\mu^9\rho}\leq
2^{14} \epsilon_1, \quad \forall t \in [0,1]\ .
$$
Thus the map $\psi: = \left.\psi^t\right|_{t=1}$ exists and satisfies the claimed properties.
\end{proof}

\begin{lemma}
\label{lem.f} 
Let  $f$ be as in \eqref{f.formula} and $\psi^t$ be the flow map of the vector field $Y^t$ defined in \eqref{Yt}. Then $\forall \, l \geq 1$ one has $I_l(\psi^t(\xi))=I_l(\xi)$, for each $t \in [0,1]$.
\end{lemma}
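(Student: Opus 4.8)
The plan is to differentiate $t\mapsto I_l(\psi^t(\xi))$ along the flow generated by $Y^t$ and to show that the derivative vanishes identically. Since $\tfrac{\di}{\di t}\psi^t(\xi)=Y^t(\psi^t(\xi))$, one has $\tfrac{\di}{\di t}I_l(\psi^t(\xi))=\di I_l(\psi^t(\xi))\big[Y^t(\psi^t(\xi))\big]$, so the whole point reduces to proving that the field $Y^t(\eta)$ is tangent to the foliation $\cT^{(0)}$ at every point $\eta$ of its domain, i.e.\ $\di I_l(\eta)\big[Y^t(\eta)\big]=0$ for all $l\in\Z$ and all $t\in[0,1]$. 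The only extraneous point -- that the orbit $\{\psi^t(\xi)\}_{t\in[0,1]}$ stays inside the ball $\Br^{p,w^0}(\mu^5\rho)$ on which the geometric facts below are available -- follows from the estimate \eqref{eq.varphi.1} together with the smallness condition \eqref{th.1}.

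First I would record two elementary facts about $\cT^{(0)}$. (a) The leaf through $\eta$ is $\{v:\,|v_j|=|\eta_j|\ \forall j\in\Z\}$, so its tangent space $T_\eta\cT^{(0)}$ is the closed linear span in $\spazior{p,w^0}$ of the vectors $\{X^0_{-I_m}(\eta)\}_{m\in\Z}$ given by \eqref{X_I_l}. (b) Each $I_m$ depends only on $|\eta_m|^2$, hence the $I_m$'s pairwise Poisson-commute with respect to $\omega_0$; consequently $\di I_l(\eta)\big[X^0_{-I_m}(\eta)\big]=-\{I_l,I_m\}_{\omega_0}(\eta)=0$ for all $l,m$, and, $\di I_l(\eta)$ being continuous, $\di I_l(\eta)$ in fact vanishes on all of $T_\eta\cT^{(0)}$.

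Next I would exploit the equation defining $Y^t$. In the Darboux step carried out in Lemma \ref{varphi.1}, $Y^t$ solves \eqref{darboux.eq} with $\Omega_1=\hat\omega_1$, $\Omega_0=\omega_0$ and with $f$ the solution of \eqref{fi.3} furnished by Lemma \ref{moser}, i.e.\ $Y^t\lrcorner\omega^t+(\alpha_1-\alpha_0)=\di f$, $\omega^t=\omega_0+t(\hat\omega_1-\omega_0)$. Evaluating this $1$-form identity at $\eta$ against $X^0_{-I_m}(\eta)$ gives, for every $m\in\Z$,
\[
\omega^t(\eta)\big(Y^t(\eta),\,X^0_{-I_m}(\eta)\big)=\di f(\eta)\big[X^0_{-I_m}(\eta)\big]-(\alpha_1-\alpha_0)(\eta)\big[X^0_{-I_m}(\eta)\big]=0,
\]
the last equality being precisely \eqref{fi.3}. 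By fact (a) and continuity of $u\mapsto\omega^t(\eta)(u,Y^t(\eta))$ this means $Y^t(\eta)\in(T_\eta\cT^{(0)})^{\angle_t}$, and Lemma \ref{tv}(i) then yields $Y^t(\eta)\in T_\eta\cT^{(0)}$. Combining with fact (b), $\di I_l(\eta)\big[Y^t(\eta)\big]=0$, hence $\tfrac{\di}{\di t}I_l(\psi^t(\xi))=0$ and $I_l(\psi^t(\xi))=I_l(\psi^0(\xi))=I_l(\xi)$.

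\textbf{Main obstacle.} There is essentially no new difficulty here: the two substantive inputs -- the self-orthogonality $(T_\eta\cT^{(0)})^{\angle_t}=T_\eta\cT^{(0)}$ and the solvability of the cohomological system \eqref{fi.3} with its explicit averaged formula for $f$ -- are already established (Lemmas \ref{tv} and \ref{moser}), and the present lemma is little more than differentiating along the flow and invoking the special choice of $f$ dictated by \eqref{fi.3}. The only points deserving attention are the bookkeeping that $\{\psi^t(\xi)\}$ does not leave the ball where Lemma \ref{tv}(i) applies, and the (elementary but needed) density statement (a), which is what upgrades the vanishing of $\omega^t(\eta)(Y^t(\eta),X^0_{-I_m}(\eta))$ for each $m$ to $Y^t(\eta)\perp_{\omega^t}T_\eta\cT^{(0)}$.
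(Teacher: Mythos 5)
Your proof is correct and coincides with the paper's argument: the paper simply defers to \cite[Lemma 2.25]{masp_toda}, whose proof is exactly this computation — differentiate $I_l\circ\psi^t$, use the Darboux identity together with the choice of $f$ from \eqref{fi.3} to get $\omega^t(Y^t,X^0_{-I_m})=0$, and conclude $Y^t\in(T\cT^{(0)})^{\angle_t}=T\cT^{(0)}$ via Lemma \ref{tv}(i), on which $\di I_l$ vanishes. Your added bookkeeping (orbit staying in the ball where Lemma \ref{tv} applies, and the density step upgrading vanishing on the generators $X^0_{-I_m}$ to the full tangent space) is consistent with that proof.
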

\begin{proof}
As in \cite[Lemma 2.25]{masp_toda}.
\end{proof}

\vspace{1em}
\noindent{\em Proof of Theorem \ref{KP}.} 
It follows as in \cite{masp_toda}. 
The invertibility of $\widetilde \Psi$ and the analytic properties stated in item $v)$ follow by Lemma \ref{FGinA} $(ii)$.
\qed


\section{Proof of Lemma \ref{lem:exp}}
\label{app.exp}
	
We prove the result only for $Z^n_j(\zeta)$, since for $W^n_j(\zeta)$ the computations are analogous. We follow again the constuction of \cite{kuksinperelman, masp_toda}, adding more precise  quantitative estimates.\\
To perform the Taylor expansion at every order it is convenient to proceed in the following way.
Write $z_j(\zeta)=z_{j,1}(\zeta)+z_{j,2}(\zeta)$ where
\begin{equation}
\label{zj1ezj2}
	  z_{j,1}(\zeta):=\left( \left(L_0-\lambda_{j}^0\right) f_{j}^-(\zeta), \ \imath f_{j}^-(\zeta)\right)_\cY , 
	  \quad 
	  z_{j,2}(\zeta):=\left(V(\zeta) f_{j}^-(\zeta), \ \imath f_{j}^-(\zeta)\right)_\cY \ .
\end{equation} 
For $\varsigma= 1,2$ we will write 
$$
	z_{j,\varsigma}(\zeta) = \sum_{n=1}^\infty Z_{j,\varsigma}^n(\zeta) \ , 
	$$
	where the $Z_{j,\varsigma}^n(\zeta)$ are bounded $n$-homogeneous polynomials in $\zeta$.	
To obtain the explicit expression for the $Z^n_{j,\varsigma}$'s, we begin by  expanding  $f_{j}^+(\zeta)$ and $f_{j}^-(\zeta)$
in Taylor series. Since 
$$f_{j}^\pm(\zeta)=U_{j}(\zeta)f_{j0}^\pm=\Big(\uno-\left(P_{j}(\zeta)-P_{j0}\right)^2\Big)^{-1/2}\Big(\uno+(P_{j}(\zeta)-P_{j0})\Big)f_{j0}^\pm$$
we  expand the r.h.s. above in power series of $P_{j}(\zeta)-P_{j0}$,
getting:
\begin{equation}
\label{espansione di fj}
f_{j}^\pm(\zeta)=\sum_{m=0}^{\infty}c_m\left(P_{j}(\zeta)-P_{j0}\right)^m f_{j0}^\pm \ ,
\end{equation}
where the $c_m$'s are the coefficients of the Taylor series of the
 function $\phi(x)=\frac{1+x}{(1-x^2)^{1/2}}$.
 In particular $c_{2k+1}  = c_{2k} := (-1)^k \binom{-1/2}{k} \equiv \left( \frac{1}{4} \right)^k \binom{2k}{k}$. Using also  that  ${\sqrt {2\pi n}}\left({\frac {n}{e}}\right)^{n}<n!<{\sqrt {2\pi n}}\left({\frac {n}{e}}\right)^{n}e^{\frac {1}{12n}} $ one has $ 0 < c_{m} \leq 1$ $, \forall m \geq 0$.

 Now we expand $P_{j}(\zeta)$ in Taylor series:  for $\zeta$ sufficiently small
\begin{equation}
P_{j}(\zeta)-P_{j0} = \sum_{n=1}^\infty (-1)^n \sP^n(\zeta) \ , 
 \qquad  \sP^n(\zeta) : =-\frac{1 }{2\pi \im  } \oint_{\Gamma_j}{\left(L_0-\lambda\right)^{-1}  T^n(\zeta, \lambda)\, \di {\lambda}} 
\label{espansionePj}
\end{equation}
where the $\Gamma_j$'s are defined as in  \eqref{gamma.def},  and 
$$ T(\zeta,\lambda):=V(\zeta) \, \left(L_0-\lambda\right)^{-1}\ .$$
Substituting \eqref{espansionePj}  into \eqref{espansione di fj} we get that
\begin{equation}
\label{expressionfj}
\begin{aligned}
&f_{j}^\pm(\zeta)=f_{j0}^\pm+\sum_{n\geq 1}\sum_{1\leq m \leq n}c_m \sum_{\alpha = (\alpha_1, \ldots, \alpha_m) \in \N^m \atop |\alpha| = n}f_{j,m}^{\pm,\alpha}(\zeta),\\
& \qquad  f_{j,m}^{\pm,\alpha}(\zeta):= (-1)^{|\alpha|} \sP^{\alpha_m}(\zeta) \circ \cdots \circ \sP^{\alpha_1}(\zeta) f_{j0}^\pm
 \ . 
\end{aligned}
\end{equation}

Consider now $Z_{j,1}^n$. It is obtained by inserting \eqref{expressionfj} into $z_{j,1}(\zeta)$ and collecting the terms of order $n$. Thus $Z_{j,1}^n(\zeta) $ equals 
\begin{align}
\label{Zj1n} 
\sum_{q=(q_1,q_2) \in \N^2 \atop |q|\leq n} 
c_{q_1} c_{q_2} \sum_{\beta=(\beta_1, \ldots, \beta_{|q|}) \in \N^{|q|} \atop |\beta| = n } 
\Big(
 (L_0 - \lambda_{j}^0) f_{j,q_1}^{-, (\beta_1, \ldots, \beta_{q_1})}(\zeta) \ ,\imath  f_{j,q_2}^{-, (\beta_{|q|}, \ldots, \beta_{q_1+1})}(\zeta)
 \Big)_\cY \ . 
\end{align}
We claim that for every $\norm{\zeta}_{2} < 1/8$ one has 
\begin{equation}
\label{sP}
\imath \, \sP^n(\zeta) = \sP^n(\zeta)^* \, \imath \ . 
\end{equation}
Indeed by  Lemma \ref{key.pert},   $\Gamma_j \ni \lambda  \mapsto \left(L_0-\lambda\right)^{-1}  T^n(\zeta, \lambda) \in \cL(\cY)$ is  continuous. By
\eqref{i.comm}  
$$
 \imath \, \left(L_0-\lambda\right)^{-1}  T^n(\zeta, \lambda) = [\left(L_0-\lambda\right)^{-1}  T^n(\zeta, \lambda)]^* \, \imath \ , 
$$
thus \eqref{sP} follows by a direct computation.
Using \eqref{sP},  the scalar product of \eqref{Zj1n} becomes
\begin{align}
\label{Zj1n2}  
 (-1)^n
\Big(
 \sP^{\beta_{|q|}}(\zeta)\circ \ldots \circ \sP^{\beta_{q_1+1}}(\zeta)(L_0 - \lambda_{j}^0) \
  \sP^{\beta_{q_1}}(\zeta)\circ \ldots \circ \sP^{\beta_{1}}(\zeta)
  f_{j0}^- \ ,\imath  f_{j0}^-
 \Big)_\cY \ . 
\end{align}
To write it explicitly remark that by (H4b)
\begin{align}
\left(L_0-\lambda\right)^{-1} f_{j0}^\pm = \frac{1}{\lambda_{j}^0 - \lambda} \  f_{j0}^\pm \ , \qquad  V(\zeta) f_{j0}^\mp = \sum_{i \in \Z } ( V(\zeta) f_{j0}^\mp, f_{i0}^\pm )_\cY \ f_{i0}^\pm \ .
\end{align}
Therefore  $ \forall a \in \N$
\begin{equation}
\label{Tafor}
\begin{aligned}
\sP^a &(\zeta) f_{j0}^- =\\ 
& \sum_{i_1, \ldots, i_a  \in \Z }  \left[\frac{\im}{2 \pi } \oint_{\Gamma_j}
 \frac{1}{\lambda_{j}^0 - \lambda} 
 \frac{( V(\zeta) f_{j0}^-, f_{i_1 0}^{+} )}{\lambda_{i_1}^0 - \lambda} \ \frac{( V(\zeta) f_{i_10}^{+}, f_{i_2 0}^{-} )}{\lambda_{i_2 }^0 - \lambda} \ldots \frac{( V(\zeta) f_{i_{a-1}0}^{\sigma_{a-1}}, f_{i_a 0}^{\sigma_a} )}{\lambda_{i_a }^0 - \lambda} \di \lambda \right]  \ f_{i_a 0}^{\sigma_a}
 \end{aligned}
\end{equation}
where $\sigma_a = +$ if $a$ is odd, and $\sigma_a = -$ if $a$ is even.
Using repeatedly \eqref{Tafor} one gets 
\begin{align*}
& \eqref{Zj1n2} = 
\sum_{i_1, \ldots, i_n \in \Z}
\cS_{j,1}^{q,\beta}({\bf i})  \ ( V(\zeta) f_{j0}^-, f_{i_1 0}^{+} )_\cY \ ( V(\zeta) f_{i_10}^{+}, f_{i_2 0}^{-} )_\cY \ldots ( V(\zeta) f_{i_{n-1}0}^{\sigma_{n-1}}, f_{i_n 0}^{\sigma_n} )_\cY  \Big( f_{i_n 0}^{\sigma_n} , \imath \, f_{j0}^- \Big)_\cY \ ,
\end{align*}
with the kernel
\begin{align*}
&\cS^{q, \beta}_{j,1}({\bf i}) := \left(\frac{\im}{2\pi}\right)^{|q|}  (-1)^{n}  \oint_{\Gamma_j}\ldots \oint_{\Gamma_j}
s^{q, \beta}_{j,1}({\bf i}, \overrightarrow{\lambda}) \ \di \lambda_1 \cdots \di \lambda_{|q|}  \ , \\
& s^{q, \beta}_{j,1}({\bf i}, \overrightarrow{\lambda}):= \prod_{m=1}^{n-1} \frac{1}{\lambda(i_m) -\mu_m} 
\times \prod_{\ell = 1}^{|q| -1} \frac{1}{\lambda(i_{\sum_{r=1}^\ell\beta_{r}}) - \lambda_{\ell+1}} 
\times \frac{\lambda(i_{\beta_1 + \cdots + \beta_{q_1}}) - \lambda(j)}{\lambda(j) - \lambda_1} \times \frac{1}{\lambda(i_n) -\lambda_{|q|}} 
\end{align*}
 where we denoted  $\lambda(a) := \lambda_{a}^0$, ${\bf i} = (i_1, \ldots, i_{n})$, $\overrightarrow{\lambda} = (\lambda_1, \ldots, \lambda_{|q|})$, and   $\mu_m=\mu_m(\lambda; q,\beta) \in \Gamma_j$.

Now remark that $\imath f_{j0}^- = f_{j0}^+$, hence   $\Big( f_{i_n 0}^{\sigma_n} , \imath \, f_{j0}^- \Big)_\cY
\equiv \Big( f_{i_n 0}^{\sigma_n} ,  f_{j0}^+ \Big)_\cY 
 \neq 0$ only if $i_n = j$, $\sigma_n = +$. This implies that when $n$ is even  $\eqref{Zj1n2}  =0$, while when  $n$ is  odd 
 \begin{align}
 \label{Zj1n3}
 \eqref{Zj1n2} = \sum_{i_1, \ldots, i_{n-1} \in \Z}
\cS_{j,1}^{q,\beta}({\bf i})  \ ( V(\zeta) f_{j0}^-, f_{i_1 0}^{+} )_\cY \ ( V(\zeta) f_{i_10}^{+}, f_{i_2 0}^{-} )_\cY \ldots ( V(\zeta) f_{i_{n-1}0}^{\sigma_{n-1}}, f_{j 0}^{+} )_\cY  \ . 
 \end{align}
	Altogether one has 
	\begin{align*}
Z_{j,1}^n(\zeta)  & = \sum_{i_1, \ldots, i_{n-1} \in \Z  }
\wt\cK_{j,1}({\bf i})  \
 ( V(\zeta) f_{j0}^-, f_{i_1 0}^{+} )_\cY \ 
 ( V(\zeta) f_{i_10}^{+}, f_{i_2 0}^{-} )_\cY \ldots 
 ( V(\zeta) f_{i_{n-1}0}^{-}, f_{j 0}^{+} )_\cY  \ , 
 \\
& \wt\cK_{j,1}({\bf i}):=  \sum_{q=(q_1,q_2) \in \N^2 \atop |q|\leq n} 
c_{q_1} c_{q_2} \sum_{\beta=(\beta_1, \ldots, \beta_{|q|}) \in \N^{|q|} \atop |\beta| = n } \cS_{j,1}^{q,\beta}({\bf i}) \ . 
\end{align*}
Now consider $\cS_{j,1}^{q,\beta}({\bf i})$.
 Recall that $\lambda_1, \ldots, \lambda_{|q|}  \in \Gamma_j \equiv \{ \lambda \in \C \colon |\lambda - \lambda(j)|  = \pi/2 \}$ and that for any $\lambda \in \Gamma_j$ one has the estimate
 $$
 4 \abs{ \lambda(i) - \lambda } \geq \la \lambda(i) - \lambda(j) \ra  \geq \la i - j \ra  \ , \qquad \forall i \in \Z \ ,  \ \ \ \forall \lambda \in \Gamma_j \ ;
 $$
this  implies 
$$
\abs{\cS_{j,1}^{q,\beta}({\bf i}) } 
\leq 4^{n-1} \prod_{m=1 }^{n-1} \frac{1}{\la i_m - j \ra} \cdot
\sup_{\lambda \in \Gamma_j}\abs{ \frac{\lambda(i_{\beta_1 + \cdots + \beta_{q_1}}) - \lambda(j)}{\lambda(i_{\beta_1 + \cdots + \beta_{q_1}}) - \lambda} } 
 \leq 4^{n} \prod_{m=1 }^{n-1} \frac{1}{\la i_m - j \ra}  \ . 
$$
Since the coefficients $c_q \leq 1$ $\forall q \in \N$, it follows that\footnote{recall that any   $n \in \N$ can be written  as a sum of $k$ positive  integers in $\binom{n-1}{k-1}$ possible ways.}
\begin{align}
\notag
\abs{\wt\cK_{j,1}^n({\bf i})} & 
\leq  4^n \prod_{m=1 }^{n-1} \frac{1}{\la i_m - j \ra}  \sum_{q=(q_1,q_2) \in \N^2 \atop |q|\leq n} 
c_{q_1} c_{q_2} \sum_{\beta=(\beta_1, \ldots, \beta_{|q|}) \in \N^{|q|} \atop |\beta| = n } 1 \\
\notag
&\leq  4^n \prod_{m=1 }^{n-1} \frac{1}{\la i_m - j \ra}  \sum_{r=1}^n \sum_{q=(q_1,q_2) \in \N^2 \atop |q|= r}  \sum_{\beta=(\beta_1, \ldots, \beta_{|q|}) \in \N^{|q|} \atop |\beta| = n } 1 \\
\notag
& \leq  4^n \prod_{m=1}^{n-1} \frac{1}{\la i_m - j\ra} \sum_{r=1}^n 
\binom{r-1}{1} 
\binom{ n-1}{r-1}  
\leq  4^n \prod_{m=1}^{n-1} \frac{1}{\la i_m - j\ra} \sum_{r=0}^{n-1} 
r\binom{ n-1}{r} 
\\
\label{cKjn}
& 
\leq  4^n \prod_{m=1}^{n-1} \frac{1}{\la i_m - j\ra} \, (n-1)2^{n-2}
\leq  16^{n-1} \prod_{m=1}^{n-1} \frac{1}{\la i_m - j \ra} \ , 
\end{align}
Similar computations can be performed for $Z_{j,2}^n(\zeta)$ and one finds
$$
Z_{j,2}^n (\zeta) =
 \sum_{i_1, \ldots, i_{n-1} \in \Z}
\wt\cK_{j,2}^n({\bf i})  \ ( V(\zeta) f_{j0}^-, f_{i_1 0}^{+} )_\cY \ ( V(\zeta) f_{i_10}^{+}, f_{i_2 0}^{-} )_\cY 
\ldots 
( V(\zeta) f_{i_{n-1}0}^{-}, f_{j 0}^{+} )_\cY $$
where the kernel $\wt\cK_{j,2}({\bf i})$ fulfills the same  bound  \eqref{cKjn}.
In such a  way we proved that
\begin{equation}
\label{Z.001}
Z_j^n(\zeta) = \sum_{i_1, \ldots, i_{n-1} \in \Z }
\wt\cK_{j}^n({\bf i})  \ 
( V(\zeta) f_{j0}^-, f_{i_1 0}^{+} )_\cY \
 ( V(\zeta) f_{i_10}^{+}, f_{i_2 0}^{-} )_\cY 
 \ldots 
 ( V(\zeta) f_{i_{n-1}0}^{-}, f_{j 0}^{+} )_\cY
\end{equation}
with 
\begin{equation}
\label{ker.est0}
\abs{\wt\cK_{j}^n({\bf i})} \leq 2\cdot   16^{n-1}  \prod_{m=1 }^{n-1} \frac{1}{\la i_m - j\ra} \ . 
\end{equation}
Now remark that by  (H4b), $j+ i_1$, $i_a + i_{a+1}$ and $i_{n-1} + j$ must be even, so define $k_1, \ldots, k_n$ by
\begin{equation*}
2k_1 =  i_1  +  j \,  , \qquad 2k_\ell = (-1)^{\ell+1} (i_\ell + i_{\ell-1}) \ , \qquad 2k_n = i_{n-1} + j\ . 
\end{equation*}
In this way one has 
\begin{equation}
\label{support}
j = k_1 + \ldots + k_n
\end{equation}
 and  by (H4b), for any $1 \leq m \leq n$, $m$ odd
$$
( V(\zeta) f_{j0}^{-}, f_{i_{1} 0}^{+} )_\cY = \xi_{k_1} \ , 
\qquad
( V(\zeta) f_{i_{m-1} 0}^{-}, f_{i_{m} 0}^{+} )_\cY = \xi_{k_{m}} \ , 
\qquad
( V(\zeta) f_{i_m 0}^{+}, f_{i_{m+1} 0}^{-} )_\cY = \eta_{-k_{m+1}} 
$$
so that \eqref{Z.001} becomes 
$$
Z_j^n(\zeta) = \sum_{k_1 +k_2 + \ldots + k_n = j }
\cK_{j}^n(k_1, \cdots, k_n) \,   \xi_{k_1} \, \eta_{-k_2} \, \ldots \xi_{k_n}
 $$
with kernel 
$$
\cK_{j}^n(k_1, \cdots, k_n) := \wt\cK_j^n\Big((2k_1-j), -(2k_2 + 2k_1 - j), \ldots, -(2k_{n-1} + \ldots + 2k_1 - j)\Big) \ . 
$$
Such kernel is supported in \eqref{support} and by the estimate \eqref{ker.est0} it fulfills  \eqref{eq:K.est}.

\section{Technical results}
\label{app:weight}
First we write explicitly $\fg_{n,r}$ defined in \eqref{def:gg}.
Let  $1 \leq r \leq n$ be integers. 
  For $r$ odd we have 
\begin{equation}
\label{def:gg0}
\begin{aligned}
\fg_{n,r}(\bk  ; j) \equiv & \uno_{ \fS^{n,r}_{-j}}
\prod_{m=1\atop m \, {\rm odd}}^{r-2} \la \sum_{\ell = 1}^m k_\ell  - k_r \ra^{-1}  \la  \sum_{\ell = 1}^{m+1} k_\ell \ra^{-1}
\\
& \times 
\la \sum_{\ell = 1}^{r-1} k_\ell + j - k_r \ra^{-1} \, \la  \sum_{\ell = 1 }^{r-1} k_\ell + j + k_{r+1} \ra^{-1} \\
& \times  \prod_{m=r+1 \atop m {\rm odd}}^{n-1} \la  \sum_{\ell = 1\atop \ell \neq r}^m k_\ell  +j - k_r \ra^{-1}  \la \sum_{\ell = 1 \atop \ell \neq r}^{m+1} k_\ell + j\ra^{-1}
\end{aligned} \ , 
\end{equation}
where $\fS^{n,r}_{a}$ is defined in \eqref{set.2}.
For $r$ even
\begin{equation}
\label{def:gg1}
\begin{aligned}
\fg_{n,r}(\bk ; j) \equiv & \uno_{\fS^{n,r}_{-j}} 
\prod_{m=1\atop m \, {\rm odd}}^{r-3} 
\la \sum_{\ell = 1}^m k_\ell  - k_r \ra^{-1} 
 \la  \sum_{\ell = 1}^{m+1} k_\ell \ra^{-1} \\
 &
\times \la \sum_{\ell = 1}^{r-1} k_\ell  - k_r \ra^{-1} \, 
\la  \sum_{\ell = 1 }^{r-1} k_\ell +j  \ra^{-1} \\
&
\times  \prod_{m=r+1 \atop m {\rm odd }}^{n-1}
\la  \sum_{\ell = 1\atop \ell \neq r}^m k_\ell  +j - k_r \ra^{-1} \, \la \sum_{\ell = 1 \atop \ell \neq r}^{m+1} k_\ell + j\ra^{-1} \ .
\end{aligned}  
\end{equation}

\begin{example}
For $n = 3$, one has
\begin{align*}
&\ff_3(k_1, k_2, k_3; j) = \uno_{\{k_1+k_2+k_3 = j\}}\, \left(\la k_1 - j \ra \la k_1 + k_2 \ra \right)^{-1}  \ , \\
&\fg_{3,1}(k_1, k_2, k_3; j)= \uno_{\{-k_1+k_2+k_3 =- j\}}\,
 (\la j-k_1 \ra \, \la j+k_2  \ra)^{-1}  \ ,  \\
&\fg_{3,2}(k_1, k_2, k_3; j) =  
\uno_{\{k_1-k_2+k_3 = -j\}}
 (\la k_1 - k_2 \ra \, \la k_1 + j \ra )^{-1} \ , \\
 & \fg_{3,3}(k_1, k_2, k_3; j) = 
 \uno_{\{k_1+k_2-k_3 = -j\}}
(\la k_1 - k_3 \ra \, \la k_1 + k_2 \ra )^{-1} \ , 
\end{align*}
while for $n=5$
\begin{align*}
& \ff_5(k_1, k_2, k_3, k_4, k_5; j) = \frac{\uno_{\{k_1+k_2+k_3+k_4 + k_5 = j\}} }{\la k_1 - j \ra \la k_1 + k_2 \ra \, \la k_1 + k_2 + k_3 - j \ra \, \la k_1 + k_2 + k_3 + k_4 \ra } \\
&\fg_{5,1}(k_1, k_2, k_3, k_4, k_5; j) = 
\frac{\uno_{\{-k_1+k_2+k_3+k_4 + k_5 = -j\}} }
{\la j-k_1 \ra \, \la j+k_2  \ra \, \la j+k_2 + k_3 - k_1 \ra \, \la j+ k_2 + k_3 + k_4 \ra }  \\
&\fg_{5,2}(k_1, k_2, k_3, k_4, k_5; j) = 
\frac{\uno_{\{k_1-k_2+k_3+k_4 + k_5 = -j\}} }
{\la k_1 - k_2 \ra \, \la k_1 + j   \ra \, \la k_1 + j + k_3 - k_2 \ra \, \la k_1 + j + k_3 + k_4 \ra } \\
 &\fg_{5,3}(k_1, k_2, k_3, k_4, k_5; j) = 
 \frac{\uno_{\{k_1+k_2-k_3+k_4 + k_5 = -j\}} }
{\la k_1 - k_3 \ra \, \la k_1 + k_2  \ra \, \la k_1 + k_2 + j - k_3 \ra \, \la k_1 + k_2 +j + k_4 \ra} \\
 &\fg_{5,4}(k_1, k_2, k_3, k_4, k_5; j) = 
 \frac{\uno_{\{k_1+k_2+k_3-k_4 + k_5 = -j\}} }
{\la k_1 - k_4 \ra \, \la k_1 + k_2  \ra \, \la k_1 + k_2 + k_3 - k_4 \ra \, \la k_1 + k_2 + k_3 + j \ra } \\
 &\fg_{5,5}(k_1, k_2, k_3, k_4, k_5; j) = 
 \frac{\uno_{\{k_1+k_2+k_3+k_4 - k_5 = -j\}} }
{\la k_1 - k_5 \ra \, \la k_1+k_2  \ra \, \la k_1+k_2 + k_3 - k_5 \ra \, \la k_1+ k_2 + k_3 + k_4 \ra }
\end{align*}
\end{example}

\subsection{Proof of Lemma \ref{f.g.lp}}
\label{app:weight.1}
\paragraph*{Verification of   \eqref{w10}.} We consider $1 < p \leq 2$ and $p=1$ separately.\\
 \underline{Case $1 < p\leq 2$}. The quantity that we have to bound is the $p'$ root of 
\begin{equation}
\label{arit.prop0}
 \sum_{k_1 + \ldots + k_n = j  } 
 \prod_{m=1\atop m\,  {\rm odd}}^{n-1} \frac{1}{\la k_1 + \ldots + k_m - j\ra^{p'} } \, \cdot \, \frac{1}{\la k_1 + \ldots + k_{m+1} \ra^{p'} }  \ . 
\end{equation}
 For any $j \in \Z$ we have  (recall that $n$ is odd, thus $m \in \{ 1, 3, \ldots, n-2\}$)
\begin{align*}
\eqref{arit.prop0}
&\leq  \sum_{k_1} \frac{1}{\la k_1 - j \ra^{p'}} \sum_{k_2} \frac{1}{\la k_1 + k_2 \ra^{p'}}\cdots 
\sum_{k_{n-2}} \frac{1}{\la k_1 + \ldots +k_{n-2} - j \ra^{p'}} \sum_{k_{n-1}} \frac{1}{\la k_1 + \ldots +k_{n-2} +k_{n-1}\ra^{p'}}\\
&\leq  \Big( \sum_k \frac{1}{\la k\ra^{p'}  } \Big)^{n-1}  \equiv R_*^{p'(n-1)} \ .
\end{align*}
Thus  \eqref{w10} follows.\\
\noindent\underline{Case $p=1$}.  In this case the quantity to estimate is
\begin{align*}
 \sup_{k_1 + \ldots + k_n = j  } 
 \prod_{m=1\atop m\,  {\rm odd}}^{n-1} 
\frac{1}{\la k_1 + \ldots + k_m - j\ra } \, \cdot \, \frac{1}{\la k_1 + \ldots + k_{m+1} \ra } 
\end{align*}
which is trivially majorated by $1$,  thus \eqref{w10} follows.

\paragraph*{Verification of  \eqref{w20}.} 
\noindent\underline{Case $1 <p\leq 2$}. To majorate \eqref{w20},   it is sufficient to remark that any index $k_1, \ldots, k_{n-1}$ appears at least once in one term of $\fg_{n,r}(k_1, \ldots, k_n; j)$, as one verifies inspecting formulas \eqref{def:gg0}, \eqref{def:gg1}.
 Then one gets again that  
$\norm{\fg_{n,r}(\cdot ; j)}_{\ell^{p'}(\Z^n)}^{p'} \leq  \Big( \sum_k \frac{1}{\la k\ra^{p'}  } \Big)^{n-1}\equiv R_*^{p'(n-1)}$
and   \eqref{w20} is fulfilled. \\
\underline{Case $p=1$}. The argument is similar to the previous case, and we skip it.

\subsection{Proof of  Proposition \ref{prop:sw} $(i)$}
\label{app:weight.W}
\paragraph*{Verification of  \eqref{w1}.} 
We treat separately the case $1 < p \leq 2 $ and $p=1$. \\

\underline{Case $1<p \leq 2$}. 
For every  $j \in \Z$, $a \geq 0$, $0<b \leq 1$ the l.h.s. of \eqref{w1} is  the $p'^{th}$-root of the supremum over $j$ of
\begin{align*}
\la j \ra^{sp'} e^{p' a |j|^b }
 \sum_{k_1 + \ldots + k_n = j  }  \frac{1}{n^{p'}}
\frac{1}{ \, \prod\limits_{i=1}^n \la k_i\ra^{sp'} e^{{p'} a |k_i|^b }} \prod_{m=1\atop m\,  {\rm odd}}^{n-1} 
\frac{1}{\la k_1 + \ldots + k_m - j\ra^{p'} } \, \cdot \, \frac{1}{\la k_1 + \ldots + k_{m+1} \ra^{p'} } 
\end{align*}
which is majorated by \eqref{arit.prop0} 
since for any $j =k_1 + \cdots + k_n$
\begin{equation}
\label{exp.prop}
e^{p' a |j|^b } \leq e^{{p'} a |k_1|^b } \ \cdots \ e^{{p'} a |k_n|^b}  \ , \qquad
\la j \ra^{sp'} \leq \la k_1 \ra^{sp'} \ \cdots \ \la k_n \ra^{sp'}
 \ . 
\end{equation}
Then the result follows from Lemma \ref{f.g.lp}, with  $R_0 = 1$, $R_1 = R_*$.\\

\underline{Case $p=1$}.  In this case the quantity to estimate is
\begin{align*}
\la j \ra^{s} e^{ a |j|^b }
 \sup_{k_1 + \ldots + k_n = j  } 
\frac{1}{n} \frac{1}{\prod_{i=1}^n \la k_i \ra^s e^{a |k_i|^b }} \prod_{m=1\atop m\,  {\rm odd}}^{n-1} 
\frac{1}{\la k_1 + \ldots + k_m - j\ra } \, \cdot \, \frac{1}{\la k_1 + \ldots + k_{m+1} \ra } 
\end{align*}
which is trivially majorated by $1$, using \eqref{exp.prop}. Thus \eqref{w1} holds with $R_0 = R_1 = 1$.\\

\paragraph*{Verification of \eqref{w2}.} Using \eqref{exp.prop}, one is brought back to estimate \eqref{w20}.

\subsection{Proof of  Proposition \ref{prop:sw} (ii)}
\paragraph*{ Verification of  \eqref{w1}.} As in the previous case we treat two cases:\\

\underline{Case $1 < p \leq 2$.} 
We must  estimate the $p'$-root of 
\begin{align}
\label{schifo}
 \la j \ra^{p's}
 \sum_{k_1 + \ldots + k_n = j  } \ 
\frac{1}{\left(\sum_{l=1}^n \la k_l \ra^{s} \right)^{p'}} \ 
\prod_{m=1\atop m\,  {\rm odd}}^{n-1} \frac{1}{\la k_1 + \ldots + k_m - j\ra^{p'} } \, \cdot \, \frac{1}{\la k_1 + \ldots + k_{m+1} \ra^{p'} } 
\end{align}
For $j=k_1 + \cdots + k_n$, one has the inequalities
\begin{equation}
\label{sum.in}
 \la j \ra^{a} \leq \left(\sum_{l=1}^n \la k_l\ra \right)^a \leq n^{a-1} \ \sum_{l=1}^n \la k_l\ra^a \ , \quad \forall a \geq 1 \ ,  
\end{equation}
which yields 
\begin{align*}
 \eqref{schifo}
 & \leq 
 n^{p's-1} \sum_{k_1 + \ldots + k_n = j  }   \prod_{m=1\atop m\,  {\rm odd}}^{n-1} \frac{1}{\la k_1 + \ldots + k_m - j\ra^{p'} } \, \cdot \, \frac{1}{\la k_1 + \ldots + k_{m+1} \ra^{p'} } \\
 &
 \leq (2^{p's-1})^{n-1} \cdot \eqref{arit.prop0} 
\leq 
 \Big( 2^{p' s} \sum_k \frac{1}{\la k\ra^{p'}  } \Big)^{n-1}
 \end{align*}
 where in the last line we used that $\forall a >1$, $\forall 3 \leq n \in \N$, one has $n^a \leq 2^{a(n-1)}$.
Thus  \eqref{w1} is fulfilled with $R_0 = 1$, $R_1 = 2^s \left(\sum_{k \in \Z} \frac{1}{\la k\ra^{p'} }\right)^{1/p'}$.\\

\underline{Case $p=1$.}  One has to bound the quantity
\begin{align}
\label{schifo3}
 \la j \ra^{s}
 \sup_{k_1 + \ldots + k_n = j  } \ 
\frac{1}{\sum_{l=1}^n \la k_l \ra^{s} } \ 
\prod_{m=1\atop m\,  {\rm odd}}^{n-1} \frac{1}{\la k_1 + \ldots + k_m - j\ra } \, \cdot \, \frac{1}{\la k_1 + \ldots + k_{m+1} \ra } \ ; 
\end{align}
using \eqref{sum.in} one gets the desired bound easily.

\paragraph*{Verification of \eqref{w2}.} One  uses inequality \eqref{sum.in} and proceed as in the proof of Proposition \ref{prop:sw}$(i)$; in turn  \eqref{w2} is fulfilled with $R_0 = 1$, $R_1 = 2^s\left(\frac{1}{\la k\ra^{p'} }\right)^{1/p'}$. \\

\subsection{Proof of  Proposition \ref{prop:sw} $(iii)$}

\paragraph*{Verification of  \eqref{w1}.} As above  we treat two cases:\\

\underline{Case $1 < p \leq 2$.} 
We must  estimate the $p'$-root of the supremum over $j$ of
\begin{align}
\label{schifo2}
 \la j \ra^{p'(s+1)}
 \sum_{k_1 + \ldots + k_n = j  } \ 
\frac{1}{\left(\sum_{l=1}^n \la k_l \ra^{s} \prod_{m\neq l} \la k_m \ra \right)^{p'}} \ 
\prod_{m=1\atop m\,  {\rm odd}}^{n-1} \frac{1}{\la k_1 + \ldots + k_m - j\ra^{p'} } \, \cdot \, \frac{1}{\la k_1 + \ldots + k_{m+1} \ra^{p'} } 
\end{align}
Using \eqref{sum.in},  expression
\eqref{schifo2} is majorated by $n^{p'(s+1)-1}
\sum_{l=1}^n  \cM_{l,n}$, 
\begin{equation}
\label{arit.prop}
  \cM_{l,n}:=
  \sum_{k_1 + \ldots + k_n = j  }   
  \frac{\la k_l \ra^{p'}}{\prod_{m\neq l} \la k_m \ra^{p'}} 
  \prod_{m=1\atop m\,  {\rm odd}}^{n-1} \frac{1}{\la k_1 + \ldots + k_m - j\ra^{p'} } \, \cdot \, \frac{1}{\la k_1 + \ldots + k_{m+1} \ra^{p'} } 
\end{equation}
We claim that $ \forall 1 \leq l \leq n $
\begin{equation}
\label{palle}
\cM_{l,n} \leq  R_{\flat}^{n-1} \ ,\qquad  R_\flat := 2^{p'} R_*^{p'} \  .
\end{equation}
To prove \eqref{palle} we consider separately the case $l$ even and $l$ odd;  if $l$ is even then
\begin{align*}
\cM_{l,n} \leq \la k_l\ra^{p'} 
\sum_{k_m \colon m \neq l}
\frac{1}{\prod_{m \neq l} \la k_m \ra^{p'} } \cdot 
\frac{1}{\la k_1 + \ldots + k_{l-1} + k_l \ra^{p'} } \leq R_\flat^{n-1} \ ,
\end{align*}
which follows using repeatedly the inequalities
\begin{equation}
\label{est.sum0}
 \sum_{k\in \Z} \frac{1}{\la k\ra^{p'} \, \la k-j \ra^{p'} } \leq \frac{R_\flat}{\la j \ra^{p'}} \ , \qquad 
\sum_{k \in \Z}  \frac{1}{\la k\ra^{p'}}  < R_\flat \ .
\end{equation}
Similarly, if $l$ is odd and $l\neq n$, then
\begin{align*}
\cM_{l,n} \leq \la k_l\ra^{p'} 
\sum_{k_m \colon m \neq l}
\frac{1}{\prod_{m \neq l}\la k_m \ra^{p'} } \cdot 
\frac{1}{\la k_1 + \ldots + k_{l} + k_{l+1} \ra^{p'} }
 \leq R_\flat^{n-1} \ ,
\end{align*}
where once again we used \eqref{est.sum0} iteratively.
Finally consider the case $l=n$: using that $j-k_1 = k_2 + \ldots + k_n$,  
\begin{align*}
\cM_{n,n} \leq \la k_n\ra^{p'} 
\sum_{k_m \colon m \neq n}
\frac{1}{\prod_{m \neq n} \la k_m \ra^{p'} } \cdot 
\frac{1}{\la k_2 + \ldots + k_{n-1} + k_{n} \ra^{p'} } \leq R_\flat^{n-1} \ .
\end{align*}
All together  we proved \eqref{palle},  consequently 
$\eqref{schifo2} \leq n^{p'(s+1)-1} \sum_{l=1}^n \cM_{l,n}
\leq \left(2^{s+2} R_* \right)^{p'(n-1)}.$
 Thus \eqref{w1} holds with $R_0 = 1$, $R_1 = 2^{s+2} R_*$.\\

\underline{Case $p=1$.} One has to bound 
\begin{equation}
\label{arit.prop2}
\la j\ra^{s+1} \sup_{k_1 + \ldots + k_n = j  } 
\left| \frac{1}{\sum_{l=1}^n \la k_l\ra^s \ \prod_{m\neq l} \la k_m \ra} \prod_{m=1\atop m\,  {\rm odd}}^{n-1} \frac{1}{\la k_1 + \ldots + k_m - j\ra } \, \cdot \, \frac{1}{\la k_1 + \ldots + k_{m+1} \ra } \right| \ ,
\end{equation}
which is majorated by $n^{s}\sum_{l=1}^n \wt\cM_{l,n}$, 
\begin{equation}
\label{arit.prop20}
\wt\cM_{l,n}:= \sup_{k_1 + \ldots + k_n = j  } 
\left| \frac{\la k_l\ra}{\ \prod_{m\neq l} \la k_m \ra} \prod_{m=1\atop m\,  {\rm odd}}^{n-1} \frac{1}{\la k_1 + \ldots + k_m - j\ra } \, \cdot \, \frac{1}{\la k_1 + \ldots + k_{m+1} \ra } \right| \ .
\end{equation}
We claim that $\forall 1 \leq l \leq n$
\begin{equation}
\label{palle2}
\wt\cM_{l,n} \leq 1 \ ;
\end{equation}
this is proved exactly as \eqref{palle} using iteratively the inequality
\begin{equation}
\label{est.sum00}
\sup_{k \in \Z} \frac{1}{\la k \ra \, \la k-j \ra} \leq \frac{1}{\la j \ra} \ 
\end{equation}
in place of \eqref{est.sum0}.
Thus \eqref{w1} holds with $R_0  =1$, $R_1 = 2^{s+1}$.\\

\paragraph*{ Verification of \eqref{w2}.} We consider two cases.\\

\underline{Case $1 < p \leq 2$.} 
The quantity that we must estimate is the $p'^{th}$-root of
\begin{equation}
\label{g.est}
\la j \ra^{p'(s +1)}
\sum_{{\bf k} \in \fS^{n,r}_{-j}} 
\frac{1}{\left(\sum_{l=1}^n \la k_l\ra^{s} \prod_{m\neq l}\la k_m \ra \right)^{p'}} \ \ \fg_{n,r}(k_1, \cdots, k_n; j)^{p'}
\end{equation}
for every possible choice of $1 \leq r \leq n$ and $n \geq 3$, $n$ odd.
First remark that \eqref{g.est} is majorated by $n^{p'(s +1)-1} \sum_{l=1}^n \cN^l_{n,r}$, 
\begin{equation}
\label{g.est1}
\cN^l_{n,r}:=
\sum_{{\bf k} \in \fS^{n,r}_{-j}} 
\frac{\la k_l \ra^{p'}}{ \prod_{m\neq l}\la k_m \ra^{p'}} \ \ \fg_{n,r}(k_1, \cdots, k_n; j)^{p'} \ . 
\end{equation}
	Once again we claim that $\forall 1 \leq l,r \leq n$,
	\begin{equation}
	\label{N.est}
	\cN^l_{n,r} \leq R_\flat^{n-1} \ .
	\end{equation}
 First let  $r$ be odd. In this case $\fg_{n,r}$ is given by \eqref{def:gg0}. If 
$l \leq r-1$, then we have 
 \begin{align*}
\cN^l_{n,r}
\leq  
\la k_l \ra^{p'} \sum_{k_i \colon i \neq l } \ \ 
\frac{1}{ \prod_{i \neq l} \la k_i\ra^{p'} } \cdot 
\frac{1}{\la  \sum_{\ell=1}^{r-1}{k_\ell}  \ra^{p'} } \leq R_\flat^{n-1} \ , 
\end{align*}
 using estimates \eqref{est.sum0} iteratively.
 If $l=r$, the term $\sum_{\ell=1}^{r-1} k_\ell + j + k_{r+1}$ equals $-\sum_{\ell = r+2}^n k_\ell + k_r$ (due to the condition $\bk \in \fS_{-j}^{n,r}$), thus using again \eqref{est.sum0}
\begin{align*}
\cN^r_{n,r}
\leq  
\la k_r \ra^{p'} \sum_{k_i \colon i \neq r } \ \ 
\frac{1}{ \prod_{i \neq r} \la k_i\ra^{p'} } \cdot 
\frac{1}{\la  \sum_{\ell = r+2}^n k_\ell- k_r\ra^{p'} } \leq R_\flat^{n-1} \ .
\end{align*}
 Finally if $l \geq r+1$, we use that the term  $\sum_{\ell=1}^{r-1} k_\ell + j - k_r$ equals $-\sum_{\ell = r+1}^n k_\ell$ (due to the condition $\bk \in \fS_{-j}^{n,r}$),
thus again \eqref{est.sum0}
 \begin{align*}
\cN^l_{n,r}
\leq  
\la k_l \ra^{p'} \sum_{k_i \colon i \neq l } \ \ 
\frac{1}{ \prod_{i \neq l} \la k_i\ra^{p'} } \cdot 
\frac{1}{\la  \sum_{\ell = r+1}^n k_\ell  \ra^{p'} } \leq R_\flat^{n-1} \ .
\end{align*}
 Now  take  $r$  even;  the relevant formula  for $\fg_{n,r}$ is \eqref{def:gg1}. In case $l \leq r-1$ we have that
  \begin{align*}
\cN^l_{n,r}
\leq  
\la k_l \ra^{p'} \sum_{k_i \colon i \neq l } \ \ 
\frac{1}{ \prod_{i \neq l} \la k_i\ra^{p'} } \cdot 
\frac{1}{\la  \sum_{\ell=1}^{r-1}{k_\ell}  - k_r \ra^{p'} } \leq R_\flat^{n-1} \ .
\end{align*}
  In case $l \geq r$  we use that the term  $\sum_{\ell=1}^{r-1} k_\ell + j $ equals $k_r - 
  \sum_{\ell = r+1}^n k_\ell$ (due to the condition $\bk \in \fS_{-j}^{n,r}$),
 we have that
  \begin{align*}
\cN^l_{n,r}
\leq  
\la k_l \ra^{p'} \sum_{k_i \colon i \neq l } \ \ 
\frac{1}{ \prod_{i \neq r} \la k_i\ra^{p'} } \cdot 
\frac{1}{\la  k_r - \sum_{\ell = r+1}^n k_\ell \ra } \leq R_\flat^{n-1} \ .
\end{align*}
 All together  we have proved \eqref{N.est}, thus 
 $$
 \eqref{g.est} \leq n^{p'(s+1)-1} \sum_{l=1}^n \cN_{n,r}^l \leq (2^{p' (s+1)} R_\flat )^{n-1} \leq (2^{s+2} R_*)^{p'(n-1)} \ ,
 $$
and  \eqref{w2} follows with $R_0 = 1$, $R_1 = 2^{s+2} R_*$.\\

\underline{Case $p=1$.} One proceeds as in the case $1 < p \leq 2$ treating separately $r$ odd and even. One verifies that  \eqref{w2} holds with
$R_0 =  1$, $R_1 = 2^{s+2}$.

\end{document}